
\documentclass{article}
\pagestyle{headings}
\usepackage[utf8]{inputenc} 					
\usepackage[T1]{fontenc}    					


\usepackage[english]{babel}  					
\usepackage{lmodern}							
\usepackage{amsmath}							
\usepackage{amssymb}							
\usepackage{amsthm}								
\usepackage{geometry}
\geometry{a4paper,top=2.5cm,bottom=2cm,inner=3cm,outer=2cm,footskip=1cm}

\usepackage[dvipsnames]{xcolor}				
\usepackage[all]{xy}							

\usepackage{ifthen} 							

\usepackage{tikz}								
	\usetikzlibrary{cd}							
	\usetikzlibrary{calc}						
	\usetikzlibrary{shapes}						
	\usetikzlibrary{fit}						
\usetikzlibrary{arrows,arrows.meta}
\tikzcdset{arrow style=tikz, diagrams={>=latex}}

\usepackage{stmaryrd}							
\usepackage{tabularx}							
\usepackage{multirow}
\usepackage{colortbl}							
\usepackage{enumitem}							
\usepackage{pdflscape}							

\usepackage{makeidx}							
\usepackage[french,refpage]{nomencl}			

\makeindex										
\makenomenclature								

\setcounter{MaxMatrixCols}{22} 				

%


\usepackage{hyperref}							

\hypersetup{									
	colorlinks=true,							
	breaklinks=true,							
	citecolor=Green,       						
	filecolor=magenta,      					
	urlcolor=blue,           					
	linkcolor= violet,							
	bookmarksopen=false,							
	pdftitle={Cluster algebras associated with open Richardson varieties: an algorithm to compute initial seeds},		
	pdfauthor={Etienne Ménard},					
	pdfsubject={Cluster Algebras},			
	pdffitwindow=true,     					
}

\newcommand{\rouge}[1]{\textcolor{red}{\overline{#1}}}
\newcommand{\bleu}[1]{\textcolor{blue}{\underline{#1}}}


\newcommand{\G}{\mathbb{G}}

\newcommand{\N}{\mathbb{N}}

\newcommand{\Z}{\mathbb{Z}}
\newcommand{\CC}{\mathbb{C}}

\newcommand{\Ind}{\mathrm{Ind}}

\newcommand{\C}{\mathcal{C}}
\newcommand{\E}{\mathcal{E}}
\newcommand{\R}{\mathcal{R}}

\newtheorem{theorem}{Theorem}[section]
\newtheorem{remark}[theorem]{Remark}
\newtheorem{conjecture}[theorem]{Conjecture}
\newtheorem{proposition}[theorem]{Proposition}
\newtheorem{lemma}[theorem]{Lemma}
\newtheorem{corollary}[theorem]{Corollary}
\newtheorem{hypothesis}[theorem]{Hypothesis}

\theoremstyle{definition}					

\newtheorem{definition}[theorem]{Definition}
\newtheorem{ex}[theorem]{Example}
\newtheorem{cex}[theorem]{Counter-example}
\newtheorem{nota}[theorem]{Notation}

\newcommand{\Max}{\mathrm{max}}
\newcommand{\Min}{\mathrm{min}}

\newcommand{\Mini}{\mathrm{mini}}
\renewcommand{\mod}{\mathbf{mod}}
\newcommand{\Hom}{\mathrm{Hom}}

\newcommand{\add}{\mathrm{add}}
\newcommand{\Fac}{\mathrm{Fac}}
\newcommand{\Sub}{\mathrm{Sub}}

\newcommand{\End}{\mathrm{End}}
\newcommand{\Ext}{\mathrm{Ext}}
\newcommand{\Pol}{\mathrm{Pol}}

\newcommand{\soc}{\mathrm{soc}}

\newcommand{\Norm}{\mathrm{Norm}}

\newcommand{\decsoc}[1]{\vcenter{\vbox{\xymatrix@=-3pt{#1}}}}

\newcommand{\0}{\mathbf{0}}

\newcommand{\bs}{\backslash}

\renewcommand{\bar}[1]{\overline{#1}}

\newcommand{\rien}{\phantom{ }}		


\title{Cluster algebras associated with open Richardson varieties: an algorithm to compute initial seeds}
\author{Etienne Ménard}


\begin{document}
\maketitle
\begin{abstract}
	We present a new algorithm to compute initial seeds for cluster structures on categories associated with coordinate rings of open Richardson varieties. This allows us to explicitely determine seeds first considered in \cite{leclerc2016ClusterStructuresStrata}.
\end{abstract}
\section{Introduction}

Follwing \cite{leclerc2016ClusterStructuresStrata}, we first recall the context of this paper. 

Let $G$ be a simple simply connected algebraic $\CC$-group. We assume $G$ to be simply-laced and fix $H$ a maximal torus in $G$, $B$ a Borel subgroup of $G$ containing $H$, and we denote by $B^-$ the Borel subgroup opposite to $B$ with respect to $H$. Let $W=\Norm_G(H)/H$ be the Weyl group with length function $w\mapsto\ell(w)$ and longest element $w_0\in W$, of length $r=\ell(w_0)$. We denote by $S=\{s_1,\dots,s_n\}$ its subset of standard Coxeter generators. Reduced representatives of $w\in W$ will be denoted by $\bar{w}=[i_{\ell(w)},\dots,i_1]$ corresponding to products of $\ell(w)$ simple reflections $s_{i_{\ell(w)}}\cdots,s_{i_1}$. Our convention for the Dynkin diagram of $G$ is:
\begin{figure}[ht]
\[
	A_n : \begin{tikzcd}[column sep=10pt, row sep=0pt]1\ar[r,dash]&2\ar[r,dashed,dash]&\cdots\ar[r,dash,dashed]&n-1\ar[r,dash]&n\end{tikzcd}, n\geq 1
\]
\[
	D_n :\begin{tikzcd}[column sep=10pt, row sep=0pt]
	&&&&n-1\\	
	1\ar[r,dash]&2\ar[r,dash,dashed]&\cdots\ar[r,dash,dashed]&n-2\ar[ur,dash]\ar[dr,dash]&\\
	&&&&n
	\end{tikzcd}, n\geq 4
\]
\[
	E_n :\begin{tikzcd}[column sep=10pt, row sep=10pt]
	1\ar[r,dash]&3\ar[r,dash]&4\ar[r,dash]\ar[d,dash]&5\ar[r,dash]&6\ar[r,dash]&7\ar[r,dash]&8\\
	&&2&&&&
	\end{tikzcd}, n=6,7,8
\]
\caption{Dynkin diagrams}
\label{figureDynkinDiagrams}
\end{figure}
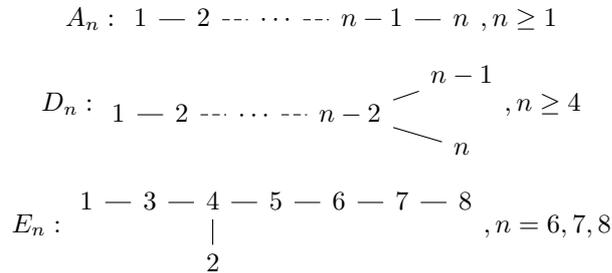

We consider the flag variety $X=B^-\bs G$ and we denote by $\pi:G\rightarrow X$ the natural projection $\pi(g):= B^-g$. The Bruhat decomposition (resp. Birkhoff decomposition):
\[
	G=\bigsqcup_{w\in W}B^-wB^-,\quad (\text{resp. } G=\bigsqcup_{w\in W}B^-wB,)
\]
projects to the Schubert decomposition (resp. \emph{opposite} Schubert decomposition)
\[
	X=\bigsqcup_{w\in W}C_w,\quad (\text{resp. }X=\bigsqcup\limits_{w\in W}C^w,)
\]
where $C_w=\pi(B^-wB^-)$ is the \emph{Schubert cell} attached to $w$, isomorphic to $\CC^{\ell(w)}$ and $C^w=\pi(B^-wB)$ is the \emph{opposite Schubert cell} attached to $w$, isomorphic to $\CC^{\ell(w_0)-\ell(w)}$.

The intersection 
\[
	\R_{v,w}:=C^v\cap C_w
\]
is called an \emph{open Richardson variety}. In \cite{kazhdan1980SchubertVarietiesPoincare,deodhar1985GeometricAspectsBruhat} it is shown that $\R_{v,w}$ is non-empty if and only $v\leq w$ for the Bruhat order of $W$ and that it is a smooth irreducible locally closed subset of $C_w$ of dimension $\ell(w)-\ell(v)$. We get a finer stratification
\[
	X=\bigsqcup_{v\leq w}\R_{v,w}
\]
though the strata $\R_{v,w}$ are no longer isomorphic to affine spaces (in general).

In \cite{lusztig1998TotalPositivityPartial} Lusztig used this stratification for studying the nonnegative part $X_{\geq 0}$ of the flag variety $X$ (more generally a partial flag variety). In \cite{leclerc2016ClusterStructuresStrata}, Leclerc showed that there exists a cluster subalgebra $\widetilde{R}_{v,w}$ contained in the coordinate ring $\CC[\R_{v,w}]$ and proposed the following conjecture:
\begin{conjecture}[Leclerc's conjecture]
	For any strata $\R_{v,w}$ we have $\CC[\R_{v,w}]=\widetilde{R}_{v,w}$.
\end{conjecture}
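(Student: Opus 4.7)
The plan is to prove equality of algebras by establishing the two inclusions separately. The inclusion $\widetilde{R}_{v,w}\subseteq\CC[\R_{v,w}]$ is essentially by construction in \cite{leclerc2016ClusterStructuresStrata}: the generators of $\widetilde{R}_{v,w}$ are produced there as explicit regular functions on $\R_{v,w}$ (built from restrictions of generalized minors to the open stratum) and the exchange relations are verified as polynomial identities in $\CC[\R_{v,w}]$. So the first half requires no new input beyond organizing Leclerc's generators correctly.

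The reverse inclusion $\CC[\R_{v,w}]\subseteq\widetilde{R}_{v,w}$ is the substantial content. My approach would be to factor the problem through the upper cluster algebra $\overline{R}_{v,w}$ and to establish a chain
\[
	\CC[\R_{v,w}]\;\subseteq\;\overline{R}_{v,w}\;=\;\widetilde{R}_{v,w}.
\]
The first containment should follow from a codimension-$2$ argument: starting from the explicit initial seed $(\mathbf{x},Q)$ output by the algorithm of this paper, one produces by iterated mutation a finite family of seeds $\{\Sigma_k\}_k$ whose cluster tori cover $\R_{v,w}$ outside a closed subset of codimension $\geq 2$; by the Laurent phenomenon, any regular function on $\R_{v,w}$ then automatically belongs to $\overline{R}_{v,w}$. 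The equality $\overline{R}_{v,w}=\widetilde{R}_{v,w}$ should follow either from local acyclicity of $Q$ combined with Muller's theorem, or alternatively from the existence of a maximal green sequence via the theta-basis machinery of Gross--Hacking--Keel--Kontsevich, applied to the Frobenius categorification $\C_{v,w}\subseteq\mod\Lambda$ underlying Leclerc's construction.

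The main obstacle I anticipate is the codimension-$2$ step: one must simultaneously control (i) which prime divisors of $\R_{v,w}$ occur as vanishing loci of cluster variables across all reachable seeds, and (ii) that every boundary component of $\R_{v,w}$ inside $C_w$ is accounted for by a frozen variable. Both tasks require a precise combinatorial handle on mutation classes, which is precisely what the algorithm of this paper is designed to supply; the hope is that having effective and explicit initial data transforms (i) and (ii) into a verifiable statement about the underlying quiver and the tropical $g$-vector fan. A secondary difficulty is that for $v<w$ strict the variety $\R_{v,w}$ is quasi-affine rather than affine, so one must be careful to normalize the ring-theoretic setup (in particular the choice of frozen variables inverted in $\widetilde{R}_{v,w}$) so that the cluster algebra / upper cluster algebra comparison applies without loss.
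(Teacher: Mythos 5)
This statement is not proved in the paper: it is stated exactly as a \emph{conjecture} (Leclerc's conjecture), and the paper explicitly positions its contribution as a tool ``to help to solve this conjecture'' by producing explicit initial seeds. There is therefore no paper proof against which to compare your argument, and your submission is itself not a proof but a research programme; so the relevant question is whether there is a genuine gap. There is.

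You correctly observe that $\widetilde{R}_{v,w}\subseteq\CC[\R_{v,w}]$ is already established by Leclerc, and that the substance lies in the reverse inclusion, which you propose to deduce from the chain $\CC[\R_{v,w}]\subseteq\overline{R}_{v,w}=\widetilde{R}_{v,w}$. Neither link of that chain is established, and both are substantive. For $\CC[\R_{v,w}]\subseteq\overline{R}_{v,w}$ you invoke a codimension-$2$ covering of $\R_{v,w}$ by cluster tori attached to a finite family of seeds; constructing such a covering, and in particular accounting for all boundary divisors of $\R_{v,w}$ in $C_w$ by frozen variables (point (ii) of your own obstacle list), is precisely the kind of geometric statement that is \emph{not} an automatic consequence of having an explicit initial seed. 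For $\overline{R}_{v,w}=\widetilde{R}_{v,w}$ you propose either local acyclicity (Muller) or a maximal green / reddening sequence feeding the Gross--Hacking--Keel--Kontsevich machinery; but the present paper itself remarks, in its final section, that the mutation sequence $\mu_\bullet$ produced by the algorithm ``can be neither maximal green nor reddening.'' So the specific sequence the algorithm hands you does not directly supply the input your second route requires, and local acyclicity of the quivers $\mu_\bullet(\Gamma_{\bar{w}})$ is not asserted or checked anywhere in the paper. Your hedged phrasing (``should follow,'' ``the hope is that'') is honest but confirms that these are open problems, not steps. In short: the two main implications in your chain are exactly the content of the conjecture, and the paper does not (and does not claim to) establish either; the algorithm gives you a concrete seed, which is necessary groundwork, but does not by itself close the codimension-$2$ or cluster-vs-upper-cluster gaps.
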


In order to help to solve this conjecture, we want to work with explicit seeds for $\widetilde{R}_{v,w}$. Some seeds are defined (but not explicitly computed) in \cite{leclerc2016ClusterStructuresStrata} using a categorification that we now introduce.

In \cite{geiss2011KacMoodyGroups}, Geiss, Leclerc and Schröer introduced a category called $\C_w$ whose definition will be recalled below. Independently, Buan, Iyama, Reiten and Scott introduced the same category in \cite{buan2009ClusterStructures2Calabi}. The latter also defined the cluster structure on 2-Calabi-Yau categories: a categorical analog of the cluster algebra structure with a notion of seed and its mutations.

In \cite{geiss2011KacMoodyGroups} the authors build explicitely a seed for the cluster structure on $\C_w$. Via the cluster character $\varphi$ that they defined in \cite[Section 9]{geiss2006RigidModulesPreprojective} they found a seed for the cluster algebra on $\CC[C_w]$ as the image of a seed $M$ of $\C_w$ via the cluster character.

We will here use the same strategy to find a seed for $\widetilde{R}_{v,w}$ based on a category $\C_{v,w}$, defined below. In order to do so, we need a seed for the cluster structure on $\C_{v,w}$ and we will compute this seed thanks to a mutation algorithm that we will design and prove. This algorithm takes as entry data a seed for the cluster structure on $\C_w$ computed thanks to a reduced representative $\bar{w}$ of $w$, and the Weyl group element $v$.

In this paper, relying on the existence of a cluster structure for $\C_{v,w}$, proved in \cite{leclerc2016ClusterStructuresStrata}, we will define and prove an algorithm giving an explicit way to find a seed for this cluster structure (in the sense of \cite{buan2009ClusterStructures2Calabi}). This paper summarize the results of the author's thesis \cite{menard2021AlgebresAmasseesAssociees} written during his PhD internship at Université de Caen. 

\subsection{Organisation}

In the second section we will define the categories $\C_w$, $\C^v$, and $\C_{v,w}$, their properties, and we will recall how to compute an initial seed $V_{\bar{w}}$ for $\C_w$. In the third section we will introduce some combinatorial notation that we will use for our algorithm. In the fourth section we will study the quiver associated to the initial seed $V_{\bar{w}}$, its combinatorial definition, and some properties of his graph. In the fifth section we will consider a way to represent the elements of $\C_w$ thanks to an integer vector called $\Delta$-vector. We will study its properties, how it can be combinatorially defined for $V_{\bar{w}}$ and why it plays a key role in our algorithm proof.

We will then define our algorithm in the sixth section and give two equivalent formulations. In the following section we will prove it and end this article with some results obtained thanks to this algorithm.

In appendix we put a discussion about the different configurations of the quiver and their evolution thanks to mutation. We end this article with a step-by-step example of running the algorithm with given data.

\section{Categorification of cluster algebras}
	In this section we will recall some results from Buan, Iyama, Reiten and Scott \cite{buan2009ClusterStructures2Calabi}, from Geiss, Leclerc and Schröer \cite{geiss2011KacMoodyGroups} and from Leclerc \cite{leclerc2016ClusterStructuresStrata} about the categories $\C_w,\C^v$ and $\C_{v,w}$.
	
	\subsection{The preprojective algebra}
	Given a Dynkin diagram of type $\Delta$, we construct the preprojective algebra of type $\Delta$. To do so we take any orientation on the Dynkin diagram to have a quiver $Q=(Q_0,Q_1)$, double each arrow (for each arrow $\alpha\in Q_1$ we now have $\alpha$ and $\alpha^\ast\in Q_1^\ast$, forming a 2-cycle), and obtain the quiver $\bar{Q}=(Q_0,Q_1\sqcup Q_1^\ast)$. 
	
	We then take the $\CC$-path algebra $\CC\bar{Q}$ of $\bar{Q}$ and quotient it by the two-sided ideal generated by:
	\[
		c=\sum\limits_{\alpha\in Q_1}\alpha\alpha^\ast-\alpha^\ast\alpha.
	\]
	We then obtain the preprojective algebra $\Lambda$ of type $\Delta$:
	
	\[
		\Lambda=\CC\bar{Q}/(c)
	\]
	The reader will be able to find more information about this construction in \cite[Section 3]{geiss2008PreprojectiveAlgebrasCluster}. 
	
	In the following we will denote by $\mod(\Lambda)$ the category of finitely generated $\Lambda$-modules. The simple 1-dimensional $\Lambda$-modules in correspondence with the vertices of the Dynkin diagram will be denoted by $S_i$ $(i\in I)$ and their respective injective envelopes by $Q_i$ $(i\in I)$. We will denote by $P_i$ $(i\in I)$ the projective cover of $S_i$.
	
	We will now introduce the categories $\C_{w}$, $\C^v$ and $\C_{v,w}$ and their links with the cluster algebra structures on the rings $\CC[C_w]$, $\CC[C^v]$ and $\CC[\R_{v,w}]$.
	\subsection{Cluster structure on subcategories of \texorpdfstring{$\mod(\Lambda)$}{mod(Lambda)}}\label{sectionClusterStructureModLambda}
		In the following, $\C$ will be an extension closed subcategory of $\mod(\Lambda)$.
		
		The cluster structure will be introduced by first defining a seed (a module and its quiver) and then the notion of mutation of cluster seeds. A seed for this structure is a module with some properties whose definitions are recalled now, adapting \cite{buan2009ClusterStructures2Calabi} to the specific context of  subcategories of $\mod(\Lambda)$.
		
		\begin{definition}
			For a $\Lambda$-module $M$, $\add(M)$ is the full subcategory of $\mod(\Lambda)$ consisting of all modules isomorphic to direct summands of finite direct sums of copies of $M$.			
			
			$M$ is \emph{rigid} if $\Ext^1_\Lambda(M,M)=0$.
			
			It is $\C$-\emph{maximal rigid} if for any $T'\in\C$ such that $M\oplus T'$ is rigid, then $T'\in\add(M)$.

			$M$ is said to be \emph{cluster-tilting} if we have the following equivalence:
			\[
				(X\in \C\text{ and }\Ext^1_{\Lambda}(M,X)=0)\Leftrightarrow(X\in\add(M))
			\]
			
			$M$ is \emph{basic} if its indecomposable direct summands are pairwise non-isomorphic.
		\end{definition}
		
		Then to any basic $\C$-cluster-tilting module $M$ we can associate a seed in the following way:
		\begin{definition}[Seed for a cluster structure]\label{definitionClusterSeed}
			Let $M=\bigoplus\limits_{i=1}^r M_i$ be the indecomposable direct summands decomposition of $M$. Then the collection $\{M_1,\dots,M_r\}$ is a \emph{cluster} whose projective direct summands are coefficients and the others are cluster variables.
			
			To $M$ one can associate a quiver $\Gamma_M$ by taking the Gabriel quiver of the algebra $\End_\Lambda(M)$ associating the vertices $\Hom_\Lambda(M,M_i)$ of the latest to the vertex labelled by $M_i$ in $\Gamma_M$ and considering quivers to be defined up to arrows between two vertices corresponding to coefficients. 			
		\end{definition}		 
		\begin{remark}
			Here we decide not to use the quiver of $\End^{\mathrm{op}}(M)$ contrary to many articles without any loss of generality.
		\end{remark}
		
		To define mutation we have to consider short exact sequences:
		\begin{theorem}[\cite{buan2009ClusterStructures2Calabi}]
			For a seed $M\in\C$, $M_i$ a cluster variable and the quiver $\Gamma_M=(Q_0,Q_1,s,t)$ of $M$, there exists a unique (up to isomorphism) indecomposable module $M_i^\ast$ such that the two following exchange sequences exist:
			\[
				M_i\rightarrow\bigoplus_{\substack{\alpha\in Q_1\\s(\alpha)=M_i}}M_{t(\alpha)}\rightarrow M_i^\ast,\quad M_i^\ast\rightarrow\bigoplus_{\substack{\alpha\in Q_1\\t(\alpha)=M_i}}M_{s(\alpha)}\rightarrow M_i
			\]
			Then $M^\ast=\bigoplus\limits_{\substack{k=1\\k\neq i}}^r M_k \oplus M_i^\ast$ is another cluster-tilting module called the mutation of $M$ in direction $i$, denoted by $\mu_i(M)$. The mutation of the quiver $\mu_i(\Gamma_M)$ is the classical mutation of quivers. 
		\end{theorem}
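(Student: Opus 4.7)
The plan is to treat each conclusion in turn, relying on the 2-Calabi--Yau structure of $\mod(\Lambda)$ which is inherited by the relevant subcategories $\C$.

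First I would establish existence of the two exchange sequences. Set $\bar{M}=\bigoplus_{k\neq i}M_k$. Taking a minimal right $\add(\bar{M})$-approximation $\phi:B\to M_i$, I would argue, using rigidity of $M$ together with the extension-closed hypothesis on $\C$, that $\phi$ is surjective with kernel $M_i^\ast$ in $\C$; this produces the second (``right'') exchange sequence. The middle term $B$ decomposes as $\bigoplus_{\alpha:M_j\to M_i}M_j$ once one identifies irreducible maps in $\add(M)$ with generators of $\Hom_\Lambda(\bar{M},M_i)$ modulo the radical, which is exactly what the arrows of $\Gamma_M$ into $M_i$ enumerate. The first (``left'') exchange sequence is obtained dually from a minimal left $\add(\bar{M})$-approximation of $M_i$.

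Next, for uniqueness of $M_i^\ast$ and the cluster-tilting property of $\mu_i(M)$, I would invoke the 2-Calabi--Yau duality $\Ext^1_\Lambda(X,Y)\cong D\,\Ext^1_\Lambda(Y,X)$ to show that $\Ext^1_\Lambda(M_i,M_i^\ast)$ is one-dimensional, with the two exchange sequences representing (up to scalar) its nonzero class in each direction. Any other indecomposable complement to $\bar{M}$ would fit into an exchange sequence equivalent to one of ours, forcing isomorphism with $M_i^\ast$. To check that $\mu_i(M)=\bar{M}\oplus M_i^\ast$ is cluster-tilting, I would apply $\Hom_\Lambda(\bar{M},-)$ and $\Hom_\Lambda(-,\bar{M})$ to each exchange sequence, use the rigidity identities $\Ext^1_\Lambda(\bar{M},B)=\Ext^1_\Lambda(B,\bar{M})=0$, and deduce $\Ext^1_\Lambda(\mu_i(M),\mu_i(M))=0$; maximality then follows from the Ext-vanishing characterisation of cluster-tilting objects.

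The main obstacle will be the last claim, that $\Gamma_{\mu_i(M)}$ coincides with the Fomin--Zelevinsky mutation of $\Gamma_M$ at vertex $i$. I would analyse the Gabriel quiver of $\End_\Lambda(\mu_i(M))$ by tracking how irreducible morphisms factor through the exchange sequences. Arrows disjoint from $i$ are preserved; arrows incident to $i$ are reversed because source and target are swapped by the two sequences. The subtle part is the creation of new arrows between neighbours of $i$: composing an arrow $M_j\to M_i$ of $\Gamma_M$ with a component $M_i\to M_k$ of the left exchange sequence contributes a new arrow $M_j\to M_k$ in $\Gamma_{\mu_i(M)}$, and one must check that the resulting count matches the Fomin--Zelevinsky rule and that any 2-cycles so created cancel against the reversed arrows coming from $\Gamma_M$. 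Controlling this cancellation through the radical filtration of $\End_\Lambda(\mu_i(M))$ is the technical core of the argument, and is where I would invest the most care, following \cite{buan2009ClusterStructures2Calabi}.
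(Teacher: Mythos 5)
The paper does not prove this theorem; it is cited from Buan--Iyama--Reiten--Scott \cite{buan2009ClusterStructures2Calabi} and merely recalled for later use, so there is no internal proof to compare against. On its own terms your sketch captures the right scaffolding: build the exchange sequences from minimal left and right $\add(M/M_i)$-approximations, deduce $\dim\Ext^1_\Lambda(M_i,M_i^\ast)=1$ from 2-Calabi--Yau duality, and verify Ext-vanishing for $\mu_i(M)$ by applying $\Hom_\Lambda(-,\bar{M})$ and $\Hom_\Lambda(\bar{M},-)$ to the exchange sequences. That is essentially how \cite{buan2009ClusterStructures2Calabi} proceeds.

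Two points would need to be repaired before this counts as a proof. First, the uniqueness of $M_i^\ast$ is not a formal consequence of the one-dimensionality of $\Ext^1_\Lambda(M_i,M_i^\ast)$; the input is the theorem (Iyama--Yoshino) that an almost complete basic cluster-tilting object in a 2-Calabi--Yau setting has \emph{exactly two} complements. Your sentence ``any other indecomposable complement to $\bar{M}$ would fit into an exchange sequence equivalent to one of ours, forcing isomorphism with $M_i^\ast$'' presupposes this, so as written the uniqueness argument is circular. Second, and more seriously: the claim that $\Gamma_{\mu_i(M)}$ is the Fomin--Zelevinsky mutation of $\Gamma_M$ is \emph{not} automatic from 2-Calabi--Yau-ness. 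Quiver mutation is only well-defined when the quiver has no loops and no 2-cycles, and for a general 2-Calabi--Yau category the Gabriel quiver of $\End(M)$ can fail this. \cite{buan2009ClusterStructures2Calabi} build the no-loops, no-2-cycles condition into the very definition of a cluster structure (their ``weak cluster structure'' vs.\ ``cluster structure'' distinction), and then the FZ rule is proved under that standing hypothesis. For the categories $\C_w$, $\C^v$, $\C_{v,w}$ over preprojective algebras, this condition is a separate theorem of Geiss--Leclerc--Schr\"oer relying on the strongly-2-Calabi--Yau property of $\Lambda$. Without importing that fact, the cancellation of new arrows against reversed arrows that you describe in your last paragraph cannot be guaranteed to eliminate all 2-cycles, and the FZ rule would fail. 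You correctly identify the quiver step as the technical core, but the no-loop/no-2-cycle hypothesis must be made explicit and justified (or cited) before the cancellation argument can close.
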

		
	\subsection{The category \texorpdfstring{$\C_{v,w}$}{Cv,w}}
		Given a Coxeter generator $s_i$ $(i\in I)$, as in \cite[Section 3.2]{leclerc2016ClusterStructuresStrata}, we define the endofunctor $\E_i=\E_{s_i}$ on the objects of $\mod(\Lambda)$ as the kernel of the surjection $X\rightarrow S_i^{\oplus m_i(X)}$ where $m_i(X)$ is the multiplicity of $S_i$ in the head of $X$. 
		
		Dually we define the endofunctor $\E^\dagger_i$ on the objects of $\mod(\Lambda)$ as the cokernel of the injection $ S_i^{\oplus m^\dagger_i(X)}\rightarrow X$ where $m_i^\dagger(X)$ is the multiplicity of $S_i$ in the socle of $X$. 
		
		$\E_i$ acts on $X$ by removing the $S_i$-isotypical part of its head, $\E^\dagger_i$ by removing the $S_i$-isotypical part of its socle. As the $\E_i$ and $\E_i^\dagger$ verify the braid relations of $W$, we can define unambiguously $\E_w$ on any $w\in W$ by taking any reduced representative and composing the corresponding endofunctors.
		
		Given these functors we can now define the following categories which will be our main interest:
		\begin{definition}
			For $w\in W$ let $u:=w^{-1}w_0$, $I_w:=\E_u\left(\bigoplus_{i\in I}Q_i\right)$ and $J_w:=\E^\dagger_{w^{-1}}\left(\bigoplus_{i\in I}Q_i\right)$.
			
			We define:
			\[
				\C_w:=\Fac(I_w)=\E_u(\mod(\Lambda)),\quad 
				\C^v:=\Sub(J_v)=\E^\dagger_{v^{-1}}(\mod(\Lambda))
			\]
		\end{definition}
		
		As explained in \cite{leclerc2016ClusterStructuresStrata} and \cite{geiss2011KacMoodyGroups}, these two categories give categorical models of the strata $\R_{e,w}$ ($\R_{w,w_0}$ respectively) of the flag variety $X$.
		
		\begin{ex}
			Let $W$ be of type $A_3$ and $w=s_1s_2s_3s_1$ and $v=s_2s_1$. To give a straightforward visual representation of the endofunctors' action we will represent the modules by their socle decomposition. 
			
			The preprojective algebra of type $A_3$ has 3 maximal indecomposable projective-injective modules:
			\[
				Q_1=\begin{tikzcd}[row sep =-2pt,column sep =-2pt]&&3\ar[dl,dash]\\&2\ar[dl,dash]&\\1&&\end{tikzcd},\quad Q_2=\begin{tikzcd}[row sep =-2pt,column sep =-2pt]&2\ar[dl,dash]\ar[dr,dash]&\\1\ar[dr,dash]&&3\ar[dl,dash]\\&2&\end{tikzcd},\quad Q_3=\begin{tikzcd}[row sep =-2pt,column sep =-2pt]1\ar[dr,dash]&&\\&2\ar[dr,dash]&\\&&3\end{tikzcd}
			\]
			
			We compute $u=w^{-1}w_0=s_2s_1$ and $v^{-1}=s_1s_2$. We have:
			\[
				\E_1(Q_1)=Q_1,\quad \E_1(Q_2)=Q_2,\quad \E_1(Q_3)=\begin{tikzcd}[row sep =-2pt,column sep =-2pt]&2\ar[dr,dash]&\\&&3\end{tikzcd}
			\]
			\[
				\E_u(Q_1)=Q_1=\begin{tikzcd}[row sep =-2pt,column sep =-2pt]&&3\ar[dl,dash]\\&2\ar[dl,dash]&\\1&&\end{tikzcd},\quad \E_u(Q_2)=\begin{tikzcd}[row sep =-2pt,column sep =-2pt]1\ar[dr,dash]&&3\ar[dl,dash]\\&2&\end{tikzcd},\quad \E_u(Q_3)=\begin{tikzcd}3\end{tikzcd}=S_3
			\]
			
			Similarly we have 
			\[
				\E^\dagger_{v^{-1}}(Q_1)=\begin{tikzcd}3\end{tikzcd}=S_3,\quad \E_{v^{-1}}^\dagger(Q_2)=\begin{tikzcd}[row sep =-2pt,column sep =-2pt]&2\ar[dl,dash]\ar[dr,dash]&\\1&&3\end{tikzcd},\quad \E^\dagger_{v^{-1}}(Q_3)=Q_3=\begin{tikzcd}[row sep =-2pt,column sep =-2pt]1\ar[dr,dash]&&\\&2\ar[dr,dash]&\\&&3\end{tikzcd}
			\]
			So
			\[
				I_w=\begin{tikzcd}[row sep =-5pt,column sep =-5pt]&&3\\&2&\\1&&\end{tikzcd}\oplus\begin{tikzcd}[row sep =-2pt,column sep =-2pt]1\ar[dr,dash]&&3\ar[dl,dash]\\&2&\end{tikzcd}\oplus S_3,\quad J_v=S_3\oplus\begin{tikzcd}[row sep =-2pt,column sep =-2pt]&2\ar[dl,dash]\ar[dr,dash]&\\1&&3\end{tikzcd}\oplus\begin{tikzcd}[row sep =-2pt,column sep =-2pt]1\ar[dr,dash]&&\\&2\ar[dr,dash]&\\&&3\end{tikzcd}
			\]
			and eventually the categories $\C_w$ and $\C^v$ have the following sets of indecomposable modules:
			\[
				\Ind(\C_w)=\left\{S_3,S_1,\begin{tikzcd}[row sep =-2pt,column sep =-2pt]1\ar[dr,dash]&&3\ar[dl,dash]\\&2&\end{tikzcd},\begin{tikzcd}[row sep =-2pt,column sep =-2pt]&3\ar[dl,dash]\\2&\end{tikzcd},\begin{tikzcd}[row sep =-2pt,column sep =-2pt]&&3\ar[dl,dash]\\&2\ar[dl,dash]&\\1&&\end{tikzcd}\right\},\quad \Ind(\C^v)=\left\{S_3, S_1, \begin{tikzcd}[row sep =-2pt,column sep =-5pt]&2\ar[dl,dash]\ar[dr,dash]&\\1&&3\end{tikzcd},\begin{tikzcd}[row sep =-2pt,column sep =-5pt]2\ar[dr,dash]&\\&3\end{tikzcd},\begin{tikzcd}[row sep =-2pt,column sep =-5pt]1\ar[dr,dash]&&\\&2\ar[dr,dash]&\\&&3\end{tikzcd}\right\}
			\]
			
			One can easily verify that these categories are closed under extensions, and closed either under factors (for $\C_w$), or under submodules (for $\C^v$).
		\end{ex}
		
		\begin{definition}[\cite{leclerc2016ClusterStructuresStrata}]
			Given $v\leq w\in W$ we define the category:
			\[
				\C_{v,w}=\C^v\cap\C_w
			\]
		\end{definition}
			In \cite[Corollary 3.11]{leclerc2016ClusterStructuresStrata}, Leclerc showed that this category has a cluster structure but did not give an explicit seed of it. 
			
			Given a $\C_{w}$-cluster tilting object $T$, and denoting $t_v(T)$ the maximal submodule of $T$ in $\C_v$, he showed that $T/t_v(T)$ is a $\C_{v,w}$-cluster-tilting object but it is neither basic in general nor do we know its quiver.
	
			We recall some definitions, as in Section \ref{sectionClusterStructureModLambda}, $\C$ is an extension-closed subcategory of $\mod(\Lambda)$.
			\begin{definition}
				Let $T$ be a $\C$-module, $\Sigma(T)$ is the number of isoclasses of indecomposable direct summands of $T$. 
				
				A module $T$ having the maximal number $\Sigma(T)$ in $\C$ is said to be $\C$-\emph{complete}.
			\end{definition}
			
			We then have the following equivalence:
			\begin{theorem}[{\cite[Theorem 2.2.]{geiss2006RigidModulesPreprojective},\cite[Theorem 2.9.]{geiss2011KacMoodyGroups}}]\label{thmMaximalRigidity}
				For a rigid $\Lambda$-module $T$ the following are equivalent:
				\begin{itemize}
					\item $T$ is $\C_w$-maximal rigid;
					\item $T$ is $\C_w$-complete rigid;
					\item $T$ is $\C_w$-cluster-tilting.
				\end{itemize}
			\end{theorem}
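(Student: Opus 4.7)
The plan is to prove the chain of implications cluster-tilting $\Rightarrow$ maximal rigid $\Rightarrow$ cluster-tilting, then derive the equivalence with complete rigid by showing that all maximal rigid objects of $\C_w$ share a common number of non-isomorphic indecomposable summands.

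The direction cluster-tilting $\Rightarrow$ maximal rigid is immediate from the definitions: if $T$ is cluster-tilting and $T\oplus T'$ is rigid, then $\Ext^1_\Lambda(T,T')=0$ forces $T'\in\add(T)$.

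The essential work lies in maximal rigid $\Rightarrow$ cluster-tilting. The crucial input, established in \cite{geiss2011KacMoodyGroups}, is that $\C_w$ is a Frobenius subcategory of $\mod(\Lambda)$ whose stable category is 2-Calabi-Yau; concretely, for $X,Y\in\C_w$ one has a functorial duality
\[
\Ext^1_\Lambda(X,Y)\cong \Ext^1_\Lambda(Y,X)^\ast,
\]
so rigidity is symmetric. Suppose $T\in\C_w$ is maximal rigid and let $X\in\C_w$ satisfy $\Ext^1_\Lambda(T,X)=0$. I would take a right $\add(T)$-approximation $T_0\to X$, which can be arranged to be surjective because $T$ must contain the $\C_w$-projective-injectives, and let $K$ be its kernel. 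Since $\C_w$ is extension-closed in $\mod(\Lambda)$, $K\in\C_w$. Applying $\Hom_\Lambda(T,-)$ to $0\to K\to T_0\to X\to 0$ and using $\Ext^1_\Lambda(T,T_0)=0$ gives $\Ext^1_\Lambda(T,K)=0$; the 2-Calabi-Yau duality then yields $\Ext^1_\Lambda(K,T)=0$, so $T\oplus K$ is rigid and maximality forces $K\in\add(T)$. A second application of $\Hom_\Lambda(X,-)$ to the same sequence, combined with $\Ext^1_\Lambda(X,T)=0$ obtained from 2-CY, forces $\Ext^1_\Lambda(X,X)=0$; then $T\oplus X$ is rigid and maximality yields $X\in\add(T)$, proving $T$ is cluster-tilting.

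The main obstacle is the clean setup of this approximation argument: securing surjectivity of $T_0\to X$ via the $\C_w$-projective summands of $T$, tracking membership of $K$ in $\C_w$, and invoking the 2-Calabi-Yau duality at the two correct places. Once maximal rigid $\Leftrightarrow$ cluster-tilting is in hand, the equivalence with complete rigid follows because any two basic cluster-tilting objects of $\C_w$ are connected by a finite sequence of mutations in the sense of the preceding theorem, and each mutation preserves the number of non-isomorphic indecomposable summands. Hence being maximal rigid is equivalent to attaining the maximal value of $\Sigma(T)$, which is exactly the definition of $\C_w$-complete rigid.
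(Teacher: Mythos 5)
The paper does not actually prove this statement; it imports it from Geiss--Leclerc--Schröer, whose original argument (in the 2006 reference, for $\mod\Lambda$ with $\Lambda$ preprojective of Dynkin type, and its $\C_w$ refinement in 2011) proceeds by module-theoretic dimension counting specific to preprojective algebras rather than by the abstract stably 2-Calabi--Yau mechanism you invoke. So your approach is a genuinely different, more abstract route in the Zhou--Zhu/K\"onig--Zhu spirit, and it is a legitimate one for this category.

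However, there is a gap at the end of your ``maximal rigid $\Rightarrow$ cluster-tilting'' step. Having shown $K\in\add(T)$, you claim that applying $\Hom_\Lambda(X,-)$ to $0\to K\to T_0\to X\to 0$ together with $\Ext^1_\Lambda(X,T)=0$ ``forces $\Ext^1_\Lambda(X,X)=0$.'' It does not: the long exact sequence only gives an injection $\Ext^1_\Lambda(X,X)\hookrightarrow\Ext^2_\Lambda(X,K)$, and in the stable category $\Ext^2_\Lambda(X,K)\cong D\,\underline{\Hom}(K,X)$, which need not vanish. The correct way to close the argument, once $K\in\add(T)$, is to pass to the stable category and use the triangle $K\to T_0\to X\to \Sigma K$: one has $\underline{\Hom}(X,\Sigma K)=\Ext^1_\Lambda(X,K)=0$ (because $K\in\add(T)$ and $\Ext^1_\Lambda(X,T)=0$), so $\underline{\Hom}(X,T_0)\to\underline{\Hom}(X,X)$ is surjective and $\mathrm{id}_X$ lifts; since idempotents split, $X$ is a stable direct summand of $T_0$, hence $X\in\add(T)$ after adding projective-injectives (which are in $\add(T)$ by the easy observation that any projective-injective $P$ has $\Ext^1_\Lambda(T\oplus P,T\oplus P)=0$, so maximality forces $P\in\add(T)$ --- a point you assert but should spell out). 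Finally, your derivation of ``complete rigid'' quietly uses connectedness of the exchange graph under mutation, which the mutation theorem quoted in the paper does not by itself provide; GLS instead prove directly, by a $\dim\Hom$ computation with $V_{\bar w}$, that every maximal rigid module has exactly $\ell(w)$ indecomposable summands.
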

				The same holds for $\C^v$ by duality.

			It is then possible to decide if a rigid module is cluster-tilting by counting its isoclasses of indecomposable summands and we have in addition the following information about the number of isoclasses:
			\begin{theorem}[{\cite[Proposition 4.3]{leclerc2016ClusterStructuresStrata}}]
				A $\C_{v,w}$-maximal rigid basic module $M$ has $\Sigma(M)=\ell(w)-\ell(v)$ indecomposable summands.
			\end{theorem}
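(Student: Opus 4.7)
The plan is to establish the count $\Sigma(M) = \ell(w) - \ell(v)$ in two independent steps: first exhibit one particular basic $\C_{v,w}$-cluster tilting module with exactly $\ell(w)-\ell(v)$ indecomposable direct summands, and then invoke an invariance statement forcing every basic $\C_{v,w}$-maximal rigid module to have the same count. Theorem \ref{thmMaximalRigidity} identifies maximal rigid, complete rigid and cluster tilting objects inside the relevant $2$-Calabi-Yau subcategories of $\mod(\Lambda)$, and the general framework of \cite{buan2009ClusterStructures2Calabi} guarantees that two basic cluster tilting objects of a $2$-CY category with cluster structure share their number of indecomposable summands. Since $\C_{v,w}$ admits such a cluster structure by \cite{leclerc2016ClusterStructuresStrata}, the second step will be essentially immediate once the first is carried out.

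For the construction I would start with a basic $\C_w$-cluster tilting module $T=\bigoplus_{k=1}^{\ell(w)}V_k$ associated with a reduced expression $\bar{w}=[i_r,\dots,i_1]$ of $w$, which is known from \cite{geiss2011KacMoodyGroups} to have $\ell(w)$ pairwise non-isomorphic indecomposable summands. As recalled just after Leclerc's conjecture, if $t_v(T)$ denotes the maximal submodule of $T$ in the appropriate $v$-subcategory, then $T/t_v(T)$ is $\C_{v,w}$-cluster tilting. Thus the whole question boils down to showing that exactly $\ell(v)$ indecomposable summands of $T$ are removed when one passes to the basic version of this quotient.

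To pin down that count, I would choose $\bar{w}$ so that it contains a reduced subword $[i_{k_1},\dots,i_{k_s}]$ for $v$ with $s=\ell(v)$, which is possible since $v\leq w$ in the Bruhat order. Under the building-block description of the $V_k$ in terms of initial products of $\bar{w}$, the indecomposables indexed by $k_1<\dots<k_s$ are exactly those whose composition structure is controlled by $v$, so they assemble into $t_v(T)$ with each summand appearing with multiplicity one. Quotienting by $t_v(T)$ therefore leaves a rigid module whose $\ell(w)-\ell(v)$ remaining summands are still pairwise non-isomorphic, and Leclerc's result guarantees that it is still cluster tilting (hence maximal rigid) in $\C_{v,w}$.

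The main obstacle lies entirely in this bookkeeping: one must verify that the summands of $t_v(T)$ correspond bijectively to the chosen reduced subword of $v$ inside $\bar{w}$, that the images of the remaining $V_k$ in $T/t_v(T)$ do not become isomorphic to each other, and that basicness is preserved. This rests on a careful analysis of the action of the endofunctors $\E_i$ and $\E^\dagger_i$ on the modules $V_k$ and on the compatibility between the submodule lattice of $T$ and the subword combinatorics of $\bar{w}$. Once these details are settled, the invariance of $\Sigma$ across basic $\C_{v,w}$-cluster tilting objects transfers the equality $\Sigma(T/t_v(T))=\ell(w)-\ell(v)$ to every basic $\C_{v,w}$-maximal rigid module $M$.
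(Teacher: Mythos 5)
Your two-step strategy (exhibit one basic $\C_{v,w}$-cluster tilting object with the right count, then invoke invariance of $\Sigma$ across basic cluster tilting objects) is reasonable, and the invariance step is indeed covered by the BIRS framework once one knows $\C_{v,w}$ carries a cluster structure, which is \cite[Corollary 3.11]{leclerc2016ClusterStructuresStrata}. The paper itself only cites this result, so there is no in-text proof to compare against. However, the construction you use to produce the count $\ell(w)-\ell(v)$ does not work as stated.

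The crucial error is the claim that the indecomposables $V_{k_1},\dots,V_{k_s}$ indexed by a reduced subword for $v$ ``assemble into $t_v(T)$ with each summand appearing with multiplicity one.'' The functor $t_v$ assigns to each module its maximal submodule lying in $\C_v$; since $T=\bigoplus_k V_k$, one has $t_v(T)=\bigoplus_k t_v(V_k)$, where $t_v(V_k)\subseteq V_k$ is typically a proper submodule of $V_k$ and in general is not itself one of the $V_j$. In particular $t_v(T)$ is not a sub-direct-sum of $T$ and does not single out $\ell(v)$ of the original summands. Consequently, $T/t_v(T)=\bigoplus_k V_k/t_v(V_k)$ still has $\ell(w)$ summands before identification, and your next assertion --- that the remaining summands of $T/t_v(T)$ are pairwise non-isomorphic --- is directly contradicted by the text, which states after recalling Leclerc's result that $T/t_v(T)$ ``is neither basic in general nor do we know its quiver.'' The whole point of this paper's algorithm is precisely that this basicity failure makes the explicit seed hard to describe, so the bookkeeping you defer to in the last paragraph is not a routine verification but the actual content of the theorem. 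To recover the count you would need an argument of the kind Leclerc actually gives (a dimension/Grothendieck-group computation identifying the number of isomorphism classes among the $V_k/t_v(V_k)$), not a bijection between the summands of $t_v(T)$ and the letters of a reduced subword of $v$.
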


	\subsection{The initial module \texorpdfstring{$V_{\bar{w}}$}{Vw}}
		
		In order to get some examples of Leclerc's conjecture, we need to have an explicit initial seed for the cluster structure on $\C_{v,w}$. We will not compute it directly as the authors did for $\C_w$ in \cite[Section 2.4]{geiss2011KacMoodyGroups} but we will in fact start from such a seed for $\C_w$. In this section we will recall the construction of this seed thanks to a reduced representative $\bar{w}$ of $w\in W$.
			\begin{definition}[Definition of $\soc$ notation]
				For a $\Lambda$-module $X$ and a simple module $S_j$, let $\soc_{(j)}(X):=\soc_{S_j}(X)$ be the $S_j$-isotypical part of the socle of $X$.
				
				For a sequence $[j_1,\dots,j_t]$ with $1\leq j_p\leq n$ for any $p$, there is a unique chain of submodules of $X$
				\[
					0=X_0\subseteq X_1\subseteq\dots\subseteq X_t\subseteq X
				\]
				such that $X_p/X_{p-1}=\soc_{(j_p)}(X/X_{p-1})$. Then we define $\soc_{(j_1,\dots,j_t)}(X):=X_t$.
			\end{definition}
			\begin{ex}\label{exampleSoc}
				Let $W$ be of type $A_4$ and $X=Q_3$, we have:
				\[
					Q_3=\begin{tikzcd}[row sep=-2pt, column sep =-2pt]
						&2\ar[dl,dash]\ar[dr,dash]&&\\
						1\ar[dr,dash]&&3\ar[dl,dash]\ar[dr,dash]&\\
						&2\ar[dr,dash]&&4\ar[dl,dash]\\
						&&3&
					\end{tikzcd}.
				\]
				
				Then $\soc_{(3)}(Q_3)= S_3$, $\soc_{(j)}(Q_3)=0$ for all $j\neq 3$. We have the following chain of submodules of $Q_3$:
				\[
					0\subseteq S_3\subseteq S_3\subseteq\begin{tikzcd}[row sep=-2pt, column sep =-2pt]
						2\ar[dr,dash]&\\
						&3
					\end{tikzcd}\subseteq\begin{tikzcd}[row sep=-2pt, column sep =-2pt]
						2\ar[dr,dash]&\\
						&3
					\end{tikzcd}\subseteq\begin{tikzcd}[row sep=-2pt, column sep =-2pt]
						2\ar[dr,dash]&&4\ar[dl,dash]\\
						&3&
					\end{tikzcd}\subseteq\begin{tikzcd}[row sep=-2pt, column sep =-2pt]
						&2\ar[dl,dash]\ar[dr,dash]&&\\
						1\ar[dr,dash]&&3\ar[dl,dash]\ar[dr,dash]&\\
						&2\ar[dr,dash]&&4\ar[dl,dash]\\
						&&3&
					\end{tikzcd}
				\]
				allowing us to define $\soc_{(3,1,2,3,4)}(Q_3)=\begin{tikzcd}[row sep=-2pt, column sep =-2pt]
						2\ar[dr,dash]&&4\ar[dl,dash]\\
						&3&
					\end{tikzcd}$.
			\end{ex}
			\begin{definition}[Modules $V_{k,\bar{w}}$]\label{definitionVk}
				Given a reduced representative $\bar{w}=[i_{\ell(w)},\dots,i_1]$ of $w\in W$ and $1\leq k\leq\ell(w)$, let
				\[
					V_{k,\bar{w}}:=\soc_{(i_k,\dots,i_1)}(Q_{i_k}).
				\]
			\end{definition}
			\begin{remark}
				Note that defining $V_{k,\bar{w}}$ requires to read $\bar{w}$ from left to right even if it is indexed from right to left.
			\end{remark}
			\begin{ex}\label{exampleVk}
				For $W$ of type $A_4$ let $\bar{w}=[4,2,3,1,2,3,4]$ then, thanks to Example \ref{exampleSoc}, we have 
				\[
					V_{5,\bar{w}}=\begin{tikzcd}[row sep=-2pt, column sep =-2pt]
						2\ar[dr,dash]&&4\ar[dl,dash]\\
						&3&
					\end{tikzcd}.
				\]
			\end{ex}
			These $V_{k,\bar{w}}$ then gives what will be the starting point of our algorithm:
			\begin{definition}[Module $V_{\bar{w}}$]
				Under the same assumptions as before, let:
				\[
					V_{\bar{w}}=\bigoplus\limits_{i=1}^{\ell(w)}V_{i,\bar{w}}.
				\]
			\end{definition}
			\begin{ex}
				For $\bar{w}=[4,2,3,1,2,3,4]$ as in Example \ref{exampleVk}, we have (summands ordered by ascending index):
				\[
					V_{\bar{w}}=S_4
					\oplus\begin{tikzcd}[row sep=-2pt, column sep =-2pt]&4\ar[dl,dash]\\3&\end{tikzcd}
					\oplus\begin{tikzcd}[row sep=-2pt, column sep =-2pt]&&4\ar[dl,dash]\\&3\ar[dl,dash]&\\2&&\end{tikzcd}
					\oplus\begin{tikzcd}[row sep=-2pt, column sep =-2pt]&&&4\ar[dl,dash]\\&&3\ar[dl,dash]&\\&2\ar[dl,dash]&&\\1&&&\end{tikzcd}
					\oplus \begin{tikzcd}[row sep=-2pt, column sep =-2pt]2\ar[dr,dash]&&4\ar[dl,dash]\\&3&\end{tikzcd}
					\oplus\begin{tikzcd}[row sep=-2pt, column sep =-2pt]&2\ar[dl,dash]\ar[dr,dash]&&4\ar[dl,dash]\\1\ar[dr,dash]&&3\ar[dl,dash]&\\&2&&\end{tikzcd}
					\oplus\begin{tikzcd}[row sep=-2pt, column sep =-2pt]2\ar[dr,dash]&&\\&3\ar[dr,dash]&\\&&4\end{tikzcd}.
				\]
			\end{ex}
			
			$V_{\bar{w}}$ is enough to define the whole category $\C_w$ which has the following properties:
			\begin{theorem}[{\cite[Section 2.4]{geiss2011KacMoodyGroups}}]\label{thmNumberOfIndecomposables}
				$V_{\bar{w}}$ is a $\C_w$-cluster-tilting module, $\C_w=\Fac(V_{\bar{w}})$, and in $\C_w$ a module $M$ is a cluster-tilting module iff it is $\C_w$-maximal rigid iff $\Sigma(M)=\ell(w)$.  
			\end{theorem}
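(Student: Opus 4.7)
The strategy is to reduce everything to Theorem \ref{thmMaximalRigidity}. Concretely, I would establish four properties of $V_{\bar{w}}$: (i) each summand $V_{k,\bar{w}}$ lies in $\C_w$; (ii) $V_{\bar{w}}$ is rigid; (iii) its summands are pairwise non-isomorphic, so $\Sigma(V_{\bar{w}})=\ell(w)$; and (iv) $\C_w=\Fac(V_{\bar{w}})$. Once (i)--(iii) are in place, Theorem \ref{thmMaximalRigidity} promotes $V_{\bar{w}}$ from complete rigid to $\C_w$-cluster-tilting, giving the first assertion. The equivalence of the three conditions at the end of the statement then follows because Theorem \ref{thmMaximalRigidity} already identifies $\C_w$-maximal rigid with $\C_w$-complete rigid and with $\C_w$-cluster-tilting; the existence of $V_{\bar{w}}$ pins the common value of $\Sigma$ on these modules to $\ell(w)$.

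For (i), I would induct on $k$. The base case $V_{1,\bar{w}}=S_{i_1}$ lies in $\C_w$ since $i_1$ appears in $\bar{w}$. For the inductive step, let $k^-<k$ be the largest index with $i_{k^-}=i_k$ (zero if none). Unfolding the socle filtration one layer produces a short exact sequence
\[
0\to V_{k^-,\bar{w}}\to V_{k,\bar{w}}\to S_{i_k}^{\oplus m}\to 0
\]
for a suitable $m\geq 1$, and extension-closure of $\C_w$ concludes. For (iii), the summands are distinguished by their dimension vectors, which grow in a controlled way along the word (the multiplicity of the simple $S_{i_k}$ in the head strictly increases between two consecutive occurrences of the letter $i_k$). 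For (iv), the $V_{k,\bar{w}}$ corresponding to rightmost occurrences of their letter in $\bar{w}$ are exactly the indecomposable summands of $I_w=\E_u\bigl(\bigoplus_{i\in I}Q_i\bigr)$; hence $\add(I_w)\subseteq\add(V_{\bar{w}})$, which combined with $\Fac(I_w)=\C_w$ and (i) yields both inclusions.

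The main obstacle is (ii). The cleanest approach is to apply $\Hom_\Lambda(-,V_{\ell,\bar{w}})$ and $\Hom_\Lambda(V_{k,\bar{w}},-)$ to the short exact sequence above and induct on $k$ and $\ell$, reducing everything to the vanishing of $\Ext^1_\Lambda(S_{i_k},V_{\ell,\bar{w}})$. These groups are controlled by the socle of $V_{\ell,\bar{w}}$: one verifies that the socle only involves simples $S_j$ whose indices appear among the letters to the right of the pertinent occurrence in $\bar{w}$, and the preprojective relations then rule out the obstructing extensions. Alternatively, the 2-Calabi-Yau property of the stable category of $\C_w$ symmetrizes the bookkeeping by turning $\Ext^1(V_{k,\bar{w}},V_{\ell,\bar{w}})$ into the dual of $\Ext^1(V_{\ell,\bar{w}},V_{k,\bar{w}})$, halving the casework. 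Assembling (i)--(iv) with Theorem \ref{thmMaximalRigidity} completes the proof.
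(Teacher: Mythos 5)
The paper does not prove this theorem: it is recorded as a citation to \cite[Section 2.4]{geiss2011KacMoodyGroups} and used throughout as an imported black box, so there is no in-house argument for your sketch to be measured against; I can only assess it on its own terms.

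Your skeleton --- show $V_{\bar{w}}$ is rigid with $\ell(w)$ pairwise non-isomorphic summands lying in and generating $\C_w$, then feed Theorem~\ref{thmMaximalRigidity} to upgrade ``complete rigid'' to ``cluster-tilting'' and to fix the common $\Sigma$-value --- is the right strategy and is indeed the shape of GLS's argument. One small slip in (iv): the summands of $I_w$ are the $V_{k_\Max}$, i.e.\ the leftmost (highest-index) occurrence of each letter in $\bar{w}$, not the rightmost.

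The genuine gap is in (i) and (ii). The short exact sequence $0\to V_{k^-,\bar{w}}\to V_{k,\bar{w}}\to S_{i_k}^{\oplus m}\to 0$ that underlies both is false in general. Both $V_{k^-}$ and $V_k$ sit inside $Q_{i_k}$ as successive terms $X_{k^-}\subseteq X_k$ of the defining $\soc$-filtration, so the quotient $V_k/V_{k^-}$ is built by stacking $\soc$-layers of colors $i_{k^-+1},\dots,i_k$; only the top layer is $i_k$-isotypical. That quotient is precisely the module $M_{k,\bar{w}}$ of Section~\ref{sectionDeltaVectors}, which is generically neither semisimple nor concentrated in the single color $i_k$. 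The paper's own data illustrate this: for $\bar{w}=[4,2,3,1,2,3,4]$ in type $A_4$ one computes that $V_5$ has dimension vector $(0,1,1,1)$ and $V_2$ has $(0,0,1,1)$, so $V_5/V_2\cong S_2$ even though $i_5=3$; see also Table~\ref{tableSousQuotientsExemple}, where several of the $M_k$ are visibly non-semisimple. Since your rigidity argument in (ii) rests on applying $\Hom$-functors to this fictional sequence and reducing to $\Ext^1(S_{i_k},V_\ell)=0$, it collapses with it. Replacing the sequence by the true $M_k$-stratification is not an immediate fix either: the fact that every object of $\C_w$ admits an $M_{\bar{w}}$-stratification is Lemma~\ref{lemmaStratificationCw}, which in GLS is a consequence of $V_{\bar{w}}$ being $\C_w$-cluster-tilting, so invoking it here would be circular. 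The original GLS proof of rigidity and $\C_w$-membership proceeds through a separate, more computational analysis rather than the short reduction you envisage.
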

		
\section{Combinatorial notations}
		In the following we will have to study in detail reduced representatives of Weyl group elements and their indices.
		
		\begin{definition}\label{definitionkplus}
			To a product $s_{i_k}\cdots s_{i_1}$ of simple reflections representing the Weyl group element $w$, we associate the integer tuple $\bar{w}=[i_k,\dots,i_1]$. Given a Weyl group element $w$ there exists a minimal number of factors among products representing this element. This number is called the length of $w$, denoted $\ell(w)$. A representative having as few factors as the length is called \emph{reduced}. We will always suppose the representatives to be reduced.
			
			Given a reduced representative $\bar{w}=[i_{\ell(w)},\dots,i_1]$ of $w\in W$ and $1\leq k\leq \ell(w)$ we will define the following notions:
			\begin{itemize}
				\item the \emph{color} of the index $k$ is the numerical value of the integer $i_k$,
				\item the \emph{successor}, $k^+$, of $k$ is the smallest index strictly greater than $k$ such that $k$ and $k^+$ have the same color. If $k$ is the greatest index of its color, then for any $j$ such that $i_j=i_k$ (i.e. of the same color) we note $j_\Max:=k$ and $k^+=\ell(w)+1$,
				\item the \emph{predecessor}, $k^-$, of $k$ is the greatest index strictly smaller than $k$ such that $k$ and $k^-$ have the same color. If $k$ is the smallest index of its color, then for any $j$ of the same color as $k$, we note $j_\Min:=k$ and $k^-=0$.
			\end{itemize}
			
			Two colors are said to be adjacent if the associated vertex of the Dynkin diagram are linked by (at least) one edge.
		\end{definition}
		\begin{ex}
			For instance, if we consider $W$ of type $A_5$ and $w=s_2s_3s_4s_5s_1s_2s_3s_4s_1s_2s_3s_2s_1$, one reduced representative is $\bar{w}=[\textcolor{blue}{2},\textcolor{red}{1},\textcolor{Green}{3},\textcolor{orange}{4},\textcolor{blue}{2},\textcolor{red}{1},\textcolor{Green}{3},\textcolor{violet}{5},\textcolor{blue}{2},\textcolor{orange}{4},\textcolor{Green}{3},\textcolor{blue}{2},\textcolor{red}{1}]=:[i_{13},i_{12},i_{11},i_{10},i_9,i_8,i_7,i_6,i_5,i_4,i_3,i_2,i_1]$
			
			The color of the fifth letter $i_5$ is the integer $\textcolor{blue}{2}$, the color of the tenth is $i_{10}=\textcolor{orange}{4}$.
			
			The successor of the seventh letter is the eleventh: $7^+=11$ as $7$ and $11$ are of color $\textcolor{Green}{3}$ and there is no letter of this colour after the seventh and before the eleventh; its predecessor is $7^-=3$. In the same sense, we have $8^+=12$, $8^-=1$. In order to illustrate the boundary definitions: $6^+=14=\ell(w)+1$ and $6^-=0$ as the sixth letter is the only one of its color ($\textcolor{violet}{5}$).
		\end{ex}
	
\section{Structure of \texorpdfstring{$\Gamma_{V_{\bar{w}}}$}{Gamma V w}}\label{sectionInitialSeedCw}
	\subsection{Definition}\label{sectionInitialQuiverDefinition}
		As explained in Definition \ref{definitionClusterSeed}, the quiver attached to a maximal-rigid basic module $M\in\mod(\Lambda)$ is the quiver of its endomorphism algebra $\End_\Lambda(M)$. This quiver has its vertices labelled by the $\End_\Lambda(M)$-modules $\Hom_\Lambda(M,M_i)$ where $M=\bigoplus M_i$ is a decomposition of $M$ into indecomposable direct summands, and the arrows correspond to irreducible morphisms.
		
		In general it is difficult to determine such a quiver, but thanks to \cite[Section 2.4]{geiss2011KacMoodyGroups} we have a combinatorial description of the quiver $\Gamma_{V_{\bar{w}}}$ (or $\Gamma_{\bar{w}}$). Note that this is a special case of the quivers associated in \cite{berenstein2005ClusterAlgebrasIII} to double Bruhat cells.
		
		The vertices are labelled by the direct summands $V_k$, $1\leq k\leq \ell(w)$. In order to fix a position we will put all vertices on a $n\times \ell(w)$ grid, the vertex labelled by $V_k$ being on the $k$th column (numbered from right to left) and on the $i_k$-th line (numbered from up to bottom).
		
		We will then add arrows by splitting them into two sets:
		\begin{itemize}
			\item the horizontal arrows will go from $k$ to $k^+$ for any $1\leq k\leq\ell(w)$ whenever possible,
			\item the ordinary arrows will go from $k$ to $j$ if $i_j\neq i_k$ and the inequality $j^+\geq k^+>j>k$ is verified.
			
			Then there are $-a_{i_j,i_k}$ arrows $V_{i_j}\rightarrow V_{i_k}$ where $A=(a_{i,j})$ is the Cartan matrix associated to $W$.
		\end{itemize}
		\begin{remark}
			As we are in simply-laced Dynkin cases, we have one arrow $V_{i,j}\rightarrow V_{i,k}$ if $s_{i_k}s_{i_j}s_{i_k}=s_{i_j}s_{i_k}s_{i_j}$ and the inequality is verified and no arrow if $s_{i_j}s_{i_k}=s_{i_k}s_{i_j}$.
		\end{remark}
		\begin{ex}
			Let us consider the element $w$ of the Weyl group $W$ of type $A_4$ and the representative $\bar{w}=[3,4,2,3,1,2,4,1]$. Then we have (indecomposable direct summands ordered by ascending index):
			\[
				V_{\bar{w}}=1\oplus 4\oplus \begin{tikzcd}[column sep=-2pt,row sep=-2pt]1\ar[dr,dash]&\\&2\end{tikzcd}\oplus\begin{tikzcd}[column sep=-2pt,row sep=-2pt]&2\ar[dl,dash]\\1&\end{tikzcd}\oplus\begin{tikzcd}[column sep=-2pt,row sep=-2pt]1\ar[dr,dash]&&&\\&2\ar[dr,dash]&&4\ar[dl,dash]\\&&3&\end{tikzcd}\oplus\begin{tikzcd}[column sep=-2pt,row sep=-2pt]&2\ar[dash,dl]\ar[dash,dr]&&4\ar[dl,dash]\\1\ar[dr,dash]&&3\ar[dl,dash]&\\&2&&\end{tikzcd}\oplus\begin{tikzcd}[column sep=-2pt,row sep=-2pt]1\ar[dr,dash]&&&\\&2\ar[dr,dash]&&\\&&3\ar[dr,dash]&\\&&&4\end{tikzcd}\oplus\begin{tikzcd}[column sep=-2pt,row sep=-2pt]&2\ar[dl,dash]\ar[dr,dash]&&\\1\ar[dr,dash]&&3\ar[dl,dash]\ar[dr,dash]&\\&2\ar[dr,dash]&&4\ar[dl,dash]\\&&3&\end{tikzcd}.
			\]
			
			If we compute the quiver of the endomorphism algebra $\End_\Lambda(V_{\bar{w}})$ we get:
			\[
				\begin{tikzcd}[column sep=-20pt,row sep=5pt]
					&&&&\Hom(V_4,V_{\bar{w}})\ar[dr]&&&\Hom(V_1,V_{\bar{w}})\ar[lll]\\
					&&\Hom(V_6,V_{\bar{w}})\ar[urr]\ar[dr]&&&\Hom(V_3,V_{\bar{w}})\ar[lll]\ar[urr]&&\\
					\Hom(V_8,V_{\bar{w}})\ar[urr]\ar[dr]&&&\Hom(V_5,V_{\bar{w}})\ar[lll]\ar[urr]\ar[drrr]&&&&\\
					&\Hom(V_7,V_{\bar{w}})\ar[urr]&&&&&\Hom(V_2,{\bar{w}})\ar[lllll]&\\
				\end{tikzcd}
			\]
			and thus, the quiver $\Gamma_{\bar{w}}$ is Figure \ref{FigureGammaBarw}.
			\begin{figure}[ht]
			\[
				\begin{tikzcd}[column sep=5pt,row sep=5pt]
					&&&&V_4\ar[dr]&&&V_1\ar[lll]\\
					&&V_6\ar[urr]\ar[dr]&&&V_3\ar[lll]\ar[urr]&&\\
					V_8\ar[urr]\ar[dr]&&&V_5\ar[lll]\ar[urr]\ar[drrr]&&&&\\
					&V_7\ar[urr]&&&&&V_2\ar[lllll]&\\
				\end{tikzcd}
			\]
			\caption{Quiver $\Gamma_{\bar{w}}$}
			\label{FigureGammaBarw}
			\end{figure}
			
			If we follow the steps described above we first draw a $4\times 8$ table in which we put the vertices according to their index and color:
			\begin{center}
			\begin{tabular}{|c|c|c|c|c|c|c|c|c|}
			\hline 
			8 & 7 & 6 & 5 & 4 & 3 & 2 & 1 &  \\ 
			\hline 
			 &  &  &  & $V_4$ &  &  & $V_1$ & \textcolor{red}{1} \\ 
			\hline 
			 &  & $V_6$ &  &  & $V_3$ &  &  & \textcolor{blue}{2} \\ 
			\hline 
			$V_8$ &  &  & $V_5$ &  &  &  &  & \textcolor{Green}{3} \\ 
			\hline 
			 & $V_7$ &  &  &  &  & $V_2$ &  & \textcolor{orange}{4} \\ 
			\hline 
			
			\end{tabular} ,
			\end{center}
			we add the horizontal arrows (between two successive vertices on the same line going from right to left) and, eventually, we add the ordinary arrows and we get Figure \ref{FigureGammaBarw}. For instance we have the arrow $V_5\rightarrow V_2$ as $i_5=\textcolor{Green}{3}\neq\textcolor{orange}{4}=i_2$, $s_{\textcolor{orange}{4}}s_{\textcolor{Green}{3}}s_{\textcolor{orange}{4}}=s_{\textcolor{Green}{3}}s_{\textcolor{orange}{4}}s_{\textcolor{Green}{3}}$ (so $\textcolor{Green}{3}$ and $\textcolor{orange}{4}$ are linked in the Dynkin diagram by one edge) and $5^+=8\geq 2^+=7>5>2$.
			
			There is no arrow $V_6\rightarrow V_1$ as, even if $i_6\neq i_1$ and these colors are related by a braid move, the inequality is not verified:
			\[
				6^+=9\geq 2^+=5\not>6>2
			\]
			
			There is no arrow $V_2\rightarrow V_1$ as $i_2$ and $i_1$ are two colors not linked in the Dynkin diagram and thus generate no arrow, even if the inequality $2^+=7\geq 1^+=4>2>1$ is verified.
		\end{ex}
		
	\begin{definition}[Colored lines]
		Thanks to this way of displaying the quiver, we can now associate a color to each line, the color of the $i_k$-th line being $i_k$.
	\end{definition}
	\subsection{Saw teeth structure}
		We now want to exhibit a remarkable structure on this quiver that we will need later for the proof of our algorithm. We need first to introduce another notion:
		\begin{definition}[Bicolor subquiver]
			Given a quiver $\Gamma$ having a structure of colored lines $c_1,\dots,c_n$, the bicolor subquiver of color $(c_j,c_k)$ is the subquiver of $\Gamma$ obtained by taking:
			\begin{itemize}
				\item all vertices of color $c_j$ or $c_k$
				\item all arrows between vertices both of color $c_j$
				\item all arrows between a vertex of color $c_j$ and one of color $c_k$ (and conversely)
			\end{itemize}
		\end{definition}
		\begin{remark}
			The definition is not symmetric: $c_j$ and $c_k$ do not play the same role.
		\end{remark}
		\begin{ex}
			Taking again Figure \ref{FigureGammaBarw}, we can extract the $(\textcolor{blue}{2},\textcolor{Green}{3})$-bicolor subquiver and get Figure \ref{FigureBicolorSubquiver1} a). We can also look at the $(\textcolor{Green}{3},\textcolor{blue}{2})$-bicolor subquiver of Figure \ref{FigureBicolorSubquiver1} b) and check that they are different.
			\begin{figure}[ht]
			\begin{center}
			\begin{tabular}{cc}$
				\begin{tikzcd}[column sep=5pt]
					&&\textcolor{blue}{V_6}\ar[dr]&&&\textcolor{blue}{V_3}\ar[lll]&&\\
					\textcolor{Green}{V_8}\ar[urr]&&&\textcolor{Green}{V_5}\ar[urr]&&&&
				\end{tikzcd}
			$
			&
			$
				\begin{tikzcd}[column sep=5pt]
					&&\textcolor{blue}{V_6}\ar[dr]&&&\textcolor{blue}{V_3}&&\\
					\textcolor{Green}{V_8}\ar[urr]&&&\textcolor{Green}{V_5}\ar[lll]\ar[urr]&&&&
				\end{tikzcd}
			$\\
			a)&b)
			\end{tabular}
			\end{center}
			\caption{a) $(\textcolor{blue}{2},\textcolor{Green}{3})$-bicolor subquiver of $\Gamma_{\bar{w}}$, b) $(\textcolor{Green}{3},\textcolor{blue}{2})$-bicolor subquiver of $\Gamma_{\bar{w}}$}
			\label{FigureBicolorSubquiver1}
			\end{figure}
		\end{ex}
		\begin{definition}[Saw teeth]\label{definitionSawTeethStructure}
			Let $\Gamma$ be a bicolor subquiver of colors $(i_k,i_j)$. We call a saw tooth of $\Gamma$, a cycle of $\Gamma$ arrows having the following elements:
			\begin{itemize}
				\item an arrow from a vertex of color $i_k$ (let say $R_k$) to a vertex of color $i_j$ (let say $R_j$)
				\item an arrow from $R_j$ to a vertex of color $i_k$ and of index stricly less than $k$, let say $R_{k^{\alpha -}}$, $\alpha>0$
				\item a sequence of arrows between vertices of color $i_k$: $R_k\leftarrow R_{k^-}\leftarrow\cdots\leftarrow R_{k^{(\alpha-1)-}}\leftarrow R_{k^{\alpha -}}$.
			\end{itemize}
			We then say that $R_{k^{\alpha -}}$ is the right end of the saw tooth, $R_k$ its left end and $R_j$ its summit.
			
			We call successive sequence of saw teeth of length $m$ a collection of $m$ teeth that we can index by $1,\dots,m$ such that the left end of the tooth $p$ is the right end of the tooth $p+1$ $(1\leq p\leq m-1)$. We call right end of the sequence the right end of the tooth $1$ and left end of the sequence the left end of the tooth $m$.
		\end{definition}
		\begin{definition}[Saw teeth structure]
			A bicolor subquiver of colors $(i_k,i_j)$ is said to have a saw teeth structure if it has the following structure (by ascending order of indices on vertices on the $i_k$-th line):
			\begin{enumerate}
				\item a sequence (possibly empty) of vertices of color $i_k$ where successive vertices are linked by a horizontal arrow
					\[
						R_{(k_\Min)^{\alpha +}}\leftarrow R_{(k_\Min)^{(\alpha-1)+}}\leftarrow\cdots\leftarrow R_{(k_\Min)^+}\leftarrow R_{k_\Min}
					\]
					where $\alpha$ is the length of this sequence
				\item a unique arrow, called initial barb, $R_{(k_\Min)^{\alpha+}}\rightarrow R_{(j_\Min)^{\beta +}}$ (optional)
				\item a sequence of $m$ saw teeth of right end $R_{(k_\Min)^{\alpha+}}$ and of left end $R_{(k_\Max)^{\gamma-}}$ with $\gamma\geq \alpha$ (equality if empty)
				\item a unique arrow, called final barb, $R_{(j_\Min)^{\delta +}}\rightarrow R_{(k_\Max)^{\gamma-}}$ with $\delta >\beta$ (optional)
				\item a sequence (possibly empty) of vertices of color $i_k$ where successive vertices are linked by a horizontal arrow
					\[
						R_{k_\Max}\leftarrow R_{(k_\Max)^-}\leftarrow\cdots\leftarrow R_{(k_\Max)^{(\gamma-1)-}}\leftarrow R_{(k_\Max)^{\gamma-}}
					\]
			\end{enumerate}
			and some vertices of color $i_j$ not linked to any vertex of color $i_k$ by the arrows of the bicolor subquiver called isolated vertices.
			
			A saw teeth structure is said to be pure if there is no initial barb.
			
			A quiver $\Gamma$ having a structure of colored lines is said to have a saw teeth structure if, any bicolor subquiver of $\Gamma$ has a saw teeth structure.
		\end{definition}
			\begin{ex}
			We will consider the quiver given by the representative 
			\[
				\bar{w}=[4,3,5,2,3,4,1,2,3,5,3,4,1,2,3,1,2]
			\]
			of $w\in W$ of type $D_5$ (see Figure \ref{figureDynkinDiagrams} p. \pageref{figureDynkinDiagrams} for the Dynkin diagram associated with this type). We have the quiver of Figure \ref{figureQuiverD5}.
			
			\begin{figure}[ht]
			\[
				\begin{tikzcd}[column sep=3pt,row sep =10pt]
					&&&&&&V_{11}\ar[dr]&&&&&&V_5\ar[llllll]\ar[dr]&&&V_2\ar[lll]\ar[dr]&\\
					&&&V_{14}\ar[urrr]\ar[dr]&&&&V_{10}\ar[urrrrr]\ar[dr]\ar[llll]&&&&&&V_4\ar[llllll]\ar[urr]\ar[dr]&&&V_1\ar[lll]\\
					&V_{16}\ar[urr]\ar[drrrr]\ar[ddr]&&&V_{13}\ar[urrr]\ar[ddrrrrr]\ar[lll]&&&&V_9\ar[llll]\ar[urrrrr]\ar[drrr]&&V_7\ar[ll]&&&&V_3\ar[llll]\ar[urr]&&\\
					V_{17}\ar[ur]&&&&&V_{12}\ar[lllll]\ar[urrr]&&&&&&V_6\ar[llllll]\ar[urrr]&&&&&\\
					&&V_{15}\ar[uurr]&&&&&&&V_8\ar[lllllll]\ar[uur]&&&&&&&
				\end{tikzcd}
			\]
			\caption{$\Gamma_{\bar{w}}$}
			\label{figureQuiverD5}
			\end{figure}
			
			This quiver has a saw teeth structure. For instance if we check the $(3,2)$ bicolor subquiver we get Figure \ref{figureBicolorSubquiver3} where the initial and final sequence are empty, the initial barb is dashed and the teeth sequence is of length three, with the tooth $V_3\rightarrow V_7\rightarrow V_9\rightarrow V_4\rightarrow V_3$, the tooth $V_{13}\rightarrow V_{10}\rightarrow V_9\rightarrow V_{13}$ and the tooth $V_{16}\rightarrow V_{14}\rightarrow V_{13}\rightarrow V_{16}$ which are consecutives. Here there is neither final barb nor isolated vertex.
			
			Due to the presence of the initial barb, this subquiver is not of pure saw teeth structure. 
			
			\begin{figure}[ht]
				\[
				\begin{tikzcd}[column sep=10pt]
					&&&V_{14}\ar[dr]&&&&V_{10}\ar[dr]&&&&&&V_4\ar[dr]&&&V_1\\
					&V_{16}\ar[urr]&&&V_{13}\ar[urrr]\ar[lll]&&&&V_9\ar[llll]\ar[urrrrr]&&V_7\ar[ll]&&&&V_3\ar[llll]\ar[dashed,urr]&&
				\end{tikzcd}
			\]
			\caption{$(3,2)$ bicolor subquiver}
			\label{figureBicolorSubquiver3}
			\end{figure}
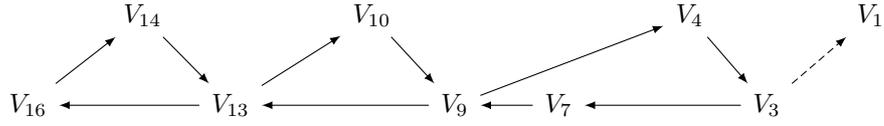
			
			The reader will be able to see a non-empty initial sequence by looking at the $(3,5)$ bicolor subquiver, both an initial and a final barb (as well as an isolated vertex) by looking at the $(5,3)$ bicolor subquiver. 
		\end{ex}
		\begin{cex}
			Let suppose we have the following quiver:
			\[
				\begin{tikzcd}
					&&&V_9\ar[dr]&V_6\ar[l]\ar[dr]&&V_3\ar[ll]&\\
					V_{12}&V_{11}\ar[urr]\ar[dr]\ar[l]&&V_8\ar[ll]\ar[ur]&V_7\ar[l]\ar[dr]&V_4\ar[l]\ar[ur]&V_2\ar[l]&V_1\ar[l]\\
					&&V_{10}\ar[ur]&&&V_5\ar[ur]\ar[lll]&&
				\end{tikzcd}
			\]
			where the first and second lines and second and third lines are of adjacent colors and first and third are not adjacent. Then the $(1,2)$ (Figure \ref{figureBicolorSubquiver5}) and the $(2,3)$ bicolor subquivers (Figure \ref{figureBicolorSubquiver6}) (among others) do not have a saw teeth structure.

			\begin{figure}[h!t]
				\[
				\begin{tikzcd}
					&&&V_9\ar[dr]&V_6\ar[dr]&&V_3&\\
					V_{12}&V_{11}\ar[urr]\ar[l]&&V_8\ar[ll]\ar[ur]&V_7\ar[l]&V_4\ar[l]\ar[ur]&V_2\ar[l]&V_1\ar[l]
				\end{tikzcd}
			\]
			\caption{$(1,2)$-bicolor subquiver}
			\label{figureBicolorSubquiver5}
			\end{figure}
			\begin{figure}[h!t]
				\[
				\begin{tikzcd}
					V_{12}&V_{11}\ar[dr]\ar[l]&&V_8\ar[ll]&V_7\ar[l]\ar[dr]&V_4\ar[l]&V_2\ar[l]&V_1\ar[l]\\
					&&V_{10}\ar[ur]&&&V_5\ar[ur]&&
				\end{tikzcd}
			\]
			\caption{$(2,3)$-bicolor subquiver}
			\label{figureBicolorSubquiver6}
			\end{figure}
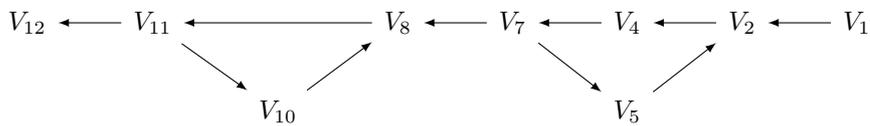
			
			Indeed, on the $(2,1)$ bicolor subquiver, there are some ordinary arrows after the final barb $V_8\rightarrow V_6$, while on the $(1,2)$ bicolor subquiver, the tooth $V_8\rightarrow V_6\rightarrow V_4\rightarrow\cdots$ ends after the tooth $V_{11}\rightarrow V_9\rightarrow V_7$ has begun. In the $(2,3)$ bicolor subquiver, the left end of the first tooth $(V_7)$ is not the right end of the second tooth $(V_8)$. In these two latter cases the teeth are not consecutives.
		\end{cex}
		
		\begin{proposition}\label{propositionStabilitySawTeethStructure}
			If a bicolor subquiver $(i_k,i_j)$ has a saw teeth structure then this structure is preserved by removing the first vertex of any line.
		\end{proposition}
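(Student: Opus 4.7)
The plan is to argue by case analysis on which line carries the removed vertex and on its position within the saw teeth decomposition of Definition \ref{definitionSawTeethStructure}. In each case I identify the arrows of the bicolor subquiver incident to the removed vertex, delete them together with the vertex, and reinterpret the remaining arrows as a (possibly reorganized) saw teeth structure.

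Consider first the removal of the first vertex $R_{k_\Min}$ of the $i_k$-line. If the initial sequence has length $\alpha \geq 1$, then $R_{k_\Min}$ carries only the horizontal arrow to $R_{(k_\Min)^+}$, so removing it merely shortens the initial sequence by one while leaving the initial barb, tooth sequence, final barb, final sequence and isolated vertices untouched. The interesting case is $\alpha = 0$, where $R_{k_\Min}$ simultaneously plays the roles of right end of the first saw tooth and, when present, source of the initial barb. Its incident arrows are then the horizontal arrow to $R_{(k_\Min)^+}$, the return arrow from the first tooth's summit $R_j$ to $R_{k_\Min}$, and, if present, the initial barb to $R_{(j_\Min)^{\beta +}}$. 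After deletion, the remaining color-$i_k$ vertices of the former first tooth form a new initial sequence, the arrow from that tooth's left end to $R_j$ becomes the new initial barb, the sequence of teeth loses one tooth, and the former initial barb target (if any) becomes an isolated vertex.

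The removal of the first vertex $R_{j_\Min}$ of the $i_j$-line is analogous but simpler: by Definition \ref{definitionSawTeethStructure} the bicolor subquiver carries no horizontal arrow on the $i_j$-line, so $R_{j_\Min}$ only has cross arrows as neighbours, and since $\delta > \beta \geq 0$ it can appear at most as an isolated vertex, as the target of the initial barb, or as the summit of one of the saw teeth, but never simultaneously as a barb source and a tooth summit. Each possibility removes a bounded number of arrows and produces either deletion of an isolated vertex, loss of the initial barb with possible promotion of a further $i_j$-vertex to barb target, or contraction of one saw tooth that is absorbed into the initial sequence through a shifted barb. The main obstacle is the book-keeping in the $\alpha = 0$ subcase, where several simultaneous modifications have to be tracked and every defining inequality of Definition \ref{definitionSawTeethStructure} verified in the shifted numbering; I would handle this by drawing the generic local picture of the subquiver in each configuration and matching its elements piece by piece to those of the new saw teeth decomposition.
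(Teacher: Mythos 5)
Your overall strategy — a case analysis on the role of the removed vertex within the saw teeth decomposition — is exactly the paper's approach, and your treatment of the routine cases (initial sequence shortening, initial barb target removal, summit removal) matches the paper's. However there are two genuine gaps in the enumeration.

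First, for the removal of $R_{j_\Min}$: you rule out $R_{j_\Min}$ being the source of the final barb by invoking ``$\delta > \beta \geq 0$,'' but this inequality is only imposed when \emph{both} barbs are present. If there is no initial barb then $\beta$ is undefined and $\delta$ may perfectly well equal $0$, in which case $R_{j_\Min}$ \emph{is} the source of the final barb. The paper treats this as a fourth, separate case: there are then no teeth, the final barb is the only ordinary arrow of the bicolor subquiver, and removing $R_{j_\Min}$ leaves isolated $i_j$-vertices together with a plain horizontal sequence on line $i_k$. Your write-up silently discards this situation.

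Second, for the removal of $R_{k_\Min}$ with $\alpha = 0$: you assume that $R_{k_\Min}$ is ``the right end of the first saw tooth,'' i.e.\ that the teeth sequence is nonempty. But when $\alpha = 0$ and the teeth sequence is empty, $R_{k_\Min}$ can instead be only the source of an initial barb, or the target of a final barb (with $\gamma$ maximal), or carry no ordinary arrow at all. The paper bundles these into its second case for $R_{k_\Min}$ (``source of an initial barb \emph{or} right end of the teeth sequence \emph{or} target of the final barb''), and spells out that after deletion the initial and/or final barb simply disappear, leaving a barbless, toothless structure. These are not deep cases — each is a one-line observation — but they are logically distinct from the ``first tooth'' configuration you analyze, and without them the proof is incomplete. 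Finally, note that your $\alpha = 0$ analysis shows why the remark following the proposition is needed: the promotion of the first tooth's left-side arrow into a new initial barb is precisely what destroys \emph{pure} saw teeth structure, a point worth making explicit rather than leaving implicit in ``the arrow $\dots$ becomes the new initial barb.''
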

		\begin{remark}
			This does not hold in general for the pure saw teeth structure.
		\end{remark}
		\begin{proof}
			We will proceed by case exhaustion. Let us call $R_{j_\Mini}$ the vertex of line $i_j$ of minimal index and $R_{k_\Mini}$ the one of line $i_k$. 
			
			If we remove $R_{j_\Mini}$ we have 4 possible cases:
			\begin{enumerate}
				\item $R_{j_\Mini}$ is an isolated vertex: the structure is preserved
				\item $R_{j_\Mini}$ is the target of an initial barb: the initial barb is removed and now the subquiver starts with a sequence of teeth, a final barb or a sequence of vertices. The structure is preserved.
				\item $R_{j_\Mini}$ is the summit of the first tooth: removing it removes the first tooth and now the line starts by a one vertex more sequence of vertices. There cannot have an initial barb here by minimality of $R_{j_\Mini}$. The structure is preserved.
				\item $R_{j_\Mini}$ is the source of the final barb. In that case it is the only ordinary arrow of the toothless bicolor subquiver and thus after removal, the bicolor subquiver consists in a set of isolated vertices on line $i_j$ and a sequence of vertices on line $i_k$. The structure is preserved.
				\end{enumerate}
				So, in any case, removing $R_{j_\Mini}$ will preserve the struture.
				
				Now we study the effect of removing $R_{k_\Mini}$. We have once again 2 possible cases:
				\begin{enumerate}
					\item $R_{k_\Mini}$ is the first vertex of a vertices sequence linked by consecutives horizontal arrows. Then the sequence is of length one less (or disappeared). The structure is preserved.
					\item $R_{k_\Mini}$ is the source of an initial barb or the right end of the teeth sequence or the target of the final barb (the last two possibilities being incompatible). Then the initial barb disappears (provided it existed) and the line starts by a new initial barb (resulting in the subsequent deletion of the right ordinary arrow of the first tooth provided it existed), or a sequence of horizontal arrows in a subquiver now with no ordinary arrows (initial and final barb being the only ones and both deleted by the vertex deletion). The structure is preserved.				
				\end{enumerate}
				Because of the possible apparition of initial barb due to the deletion of the right side of the initial teeth, even if there was no initial barb before the deletion in the last case, we cannot ensure that the pure saw teeth structure is preserved.
			
		\end{proof}
		\subsection{Saw teeth structure in \texorpdfstring{$\C_w$}{Cw} initial seeds}
			The point of introducing the previous notions was the following proposition that will be of use for the proof of the algorithm.
			
			\begin{proposition}\label{propositionSawTeethStructureVwbar}
				For any representative $\bar{w}$ of an element $w\in W$, $\Gamma_{\bar{w}}$ has a saw teeth structure.
			\end{proposition}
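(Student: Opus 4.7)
The plan is to give a uniform description of each bicolor subquiver of $\Gamma_{\bar{w}}$ and then read off the saw teeth structure by inspection. Fix a pair of colours $i_k,i_j$ and list the indices of each colour in increasing order as $K_1<\cdots<K_p$ and $J_1<\cdots<J_q$. When $i_k$ and $i_j$ are not adjacent in the Dynkin diagram, no ordinary arrow can exist by the combinatorial rule of Section \ref{sectionInitialQuiverDefinition}, so the bicolor subquiver reduces to the horizontal chain $V_{K_1}\to\cdots\to V_{K_p}$ together with isolated $i_j$-coloured vertices; this trivially satisfies Definition \ref{definitionSawTeethStructure} with an empty teeth sequence. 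I therefore concentrate on the case of adjacent colours.

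Next I merge the two lists into a single increasing sequence and partition it into its maximal monochromatic blocks $B^1,B^2,\dots,B^M$, which alternate in colour; write $L^\alpha$ for the largest index of $B^\alpha$. A direct unpacking of the successor inequality governing ordinary arrows in Section \ref{sectionInitialQuiverDefinition} shows that the ordinary arrows of the bicolor subquiver are in bijection with the $M-1$ transitions between consecutive blocks: for each transition $B^\alpha B^{\alpha+1}$ there is exactly one ordinary arrow, linking $V_{L^\alpha}$ and $V_{L^{\alpha+1}}$. In particular the only $i_j$-vertices incident to any ordinary arrow are the last ones $L^\alpha$ of the $J$-blocks; every other $i_j$-vertex is isolated, as demanded by Definition \ref{definitionSawTeethStructure}.

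This uniform description lets us read off Definition \ref{definitionSawTeethStructure} directly. Every $J$-block $B^\alpha$ flanked on both sides by $K$-blocks produces a tooth whose summit is $V_{L^\alpha}$, whose ordinary arrows are those attached to the two transitions bounding $B^\alpha$, and which closes through the horizontal arrows along the $i_k$-line between $V_{L^{\alpha-1}}$ and $V_{L^{\alpha+1}}$; consecutive internal $J$-blocks produce a consecutive sequence of teeth. A $J$-block at the left end of the merged sequence produces the initial barb (only its exit transition survives), and symmetrically a $J$-block at the right end produces the final barb; the first $K$-block fills in the initial $i_k$-sequence of horizontal arrows from $K_1$ to the right end of the teeth sequence. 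In all four configurations determined by the colour of the first and of the last block, the left end of the teeth sequence coincides with $K_p=k_\Max$, so the final $i_k$-sequence is empty.

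The main obstacle is the boundary arithmetic: the combinatorial successor relation uses the convention $k^+=\ell(w)+1$ to mean ``no successor'', and the arrow criterion must be interpreted correctly in that case. This matters precisely for the last $K$-vertex and the last $J$-vertex of the expression; once handled, each of the four combinations of block-ending types produces a structure which term-by-term matches Definition \ref{definitionSawTeethStructure}, finishing the proof.
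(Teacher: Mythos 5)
Your proof is correct and takes a genuinely different route from the paper's. The paper argues dynamically: it scans along the $i_k$-line in increasing index, building the initial horizontal sequence, then the initial barb or the first tooth, and then iterates (the loop marked $(\ast)$ in the paper's proof) until a final barb or final sequence is reached. You argue statically: merge the two colour classes, cut the sorted list into maximal alternating monochromatic blocks $B^1,\dots,B^M$, and prove the bijection between ordinary arrows and block transitions, with transition $B^\alpha B^{\alpha+1}$ contributing exactly the arrow $V_{L^{\alpha+1}}\to V_{L^\alpha}$; the saw-teeth structure is then visible all at once from the block colouring (internal $J$-blocks give summits, boundary $J$-blocks give barbs, $K$-blocks give the horizontal chains, remaining $J$-vertices are isolated). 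This is cleaner and makes the shape of the whole bicolor subquiver legible in a single statement. The boundary check you flag is indeed the only delicate point, and it goes through: for $a=L^{\alpha+1}>b=L^\alpha$ one needs $a^+\geq b^+>a>b$; the middle inequality holds because $(b,a]$ meets only $B^{\alpha+1}$ (all of $a$'s colour), and the first because $a^+$ is the first index of $B^{\alpha+3}$ (or $\ell(w)+1$) while $b^+$ is that of $B^{\alpha+2}$ (or $\ell(w)+1$). A small bonus of your uniform description: it quietly corrects the paper's ``special case'' sentence (when $k_\Max<j_\Min$ with $j_\Min\neq j_\Max$ the paper says there is no ordinary arrow), since the single transition $B^1B^2$ does give the arrow $V_{j_\Max}\to V_{k_\Max}$ (both successors equal $\ell(w)+1$), which is precisely a final barb; the Proposition itself is of course unaffected.
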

			\begin{proof}
				The proof will rely on the combinatorial description of the quiver, given in Section \ref{sectionInitialQuiverDefinition}.
				
				We need to look at any bicolor subquiver of $\Gamma_{\bar{w}}$ to prove the proposition. First, if the two chosen colors are not adjacent (in the sense of Definition \ref{definitionkplus}) then the structure is trivial: there is no ordinary arrow in the subquiver thus all vertices of the first color form an initial sequence, there is not tooth, no barb and all vertices of the second color are isolated vertices. The saw teeth structure is verified.
				
				Let us then consider a bicolor subquiver of adjacent colors. As for the description of the structure, we will here follow the first line (of color $i_k$ and of vertices $k_\Min,\dots,k_\Max$ to fix the notation) by ascending order of indices.
				
				We start by considering the special case $k_\Max<j_\Min$ and $j_\Min\neq j_\Max$, or $k_\Min>j_\Max$ and $k_\Min\neq k_\Max$. Then there is no ordinary arrows and we are again in the trivial case. Apart from this case, while $k^+<j_\Min$ there is no ordinary arrow allowed between $k$ and $j_\Min$ and we are in the initial sequence.
				
				If we have $k_\Min>j_\Min$ then there exists an integer $\gamma$ such that $(j_\Min)^{(\gamma+1)+}>k_\Min>(j_\Min)^{\gamma+}$. Then there is a maximal index $k$ (possibly greater than $k_\Min$) such that $k^+\geq (j_\Min)^{(\gamma+1)+}>k>(j_\Min)^{\gamma+}$. Then there is an ordinary arrow $k\rightarrow (j_\Min)^{\gamma+}$ and the vertices of color $i_j$ and of index $<(j_\Min)^{\gamma+}$ are isolated vertices. This is the initial barb.
				
				If we have $k_\Min>j_\Min$ (so no initial barb) there is a minimal $k$ such that $k^+>j_\Min>k$. Let us call $j$ the greatest index such that $k^+>j>k$ (and thus $j^+\geq k^+$). If we have an initial barb, let $k$ be the source of the initial barb and $j$ the successor of its target. 
				
				($\ast$) In both cases we then have $j^+\geq k^+>j>k$ and then there is an arrow $V_j\rightarrow V_k$. If $j=j_\Max$ then there is no more ordinary arrow beyond this point and this arrow is a final barb and all following vertices of line $i_k$ will form the final sequence. In the opposite case, this arrow will be the right side of the first tooth of the teeth sequence.
				
				In the case where this is not the final barb, there exists a $\gamma>0$ such that $k^{(\gamma+1)+}\geq j^+>k^{\gamma+}>j$ and then we have an ordinary arrow $k^{\gamma+}\rightarrow j$ closing the teeth $k^{\gamma+}\rightarrow j\rightarrow k\rightarrow k^+\rightarrow\cdots\rightarrow k^{\gamma+}$. 
				
				Given these $k$ and $j$ either there is a $\zeta>0$ such that $j^{(\zeta+1)+}\geq k^{(\zeta+1)+}>j^{\zeta +}>k^{\gamma +}$ or not. If it is not true, then we have considered all ordinary arrows and vertices of color $i_j$ and the last remaining $i_k$ vertices form the final sequence. If such a $\zeta$ exist, we start again the reasoning at the point ($\ast$) for the formation of the next tooth or the final barb.
				
				Eventually we end with a possibly empty final sequence and then our subquiver has a saw teeth structure.
			\end{proof}
\section{\texorpdfstring{$\Delta$}{Delta}-vectors}\label{sectionDeltaVectors}
	$\Delta$-vectors are a notion coming from Ringel's work on quasi-hereditary algebras (\cite{ringel1991CategoryModulesGood}) which plays a crucial role in our paper. We present it here in the narrower context of our category $\C_w$.
	\subsection{Definition}
		In the quiver of the algebra $\End_\Lambda(\Gamma_{\bar{w}})$, the horizontal arrows correspond to irreducible morphisms which are in fact inclusions. These inclusions still hold for our $\Gamma_{\bar{w}}$ in the sense that $V_{k}\subset V_{k^+}$ for all $1\leq k\leq \ell(w)$ such that $1\leq k^+\leq \ell(w)$.
		
		This allows us to define a new collection of modules, related to an initial seed $V_{\bar{w}}$.
		\begin{definition}[Modules $M_{k,\bar{w}}$]
			Given a module from an initial seed $V_{\bar{w}}$, we define:
			\[
				M_{k,\bar{w}}:=
				\left\{\begin{array}{lc}V_k&\text{if }k=k_\Min\\V_k/V_{k^-}&\text{otherwise}\end{array}\right.,\quad 1\leq k\leq \ell(w),\quad \text{and }M_{\bar{w}}=\bigoplus\limits_{k=1}^{\ell(w)} M_{k,\bar{w}}.
			\]
		\end{definition}
		
		Following \cite[Section 10]{geiss2011KacMoodyGroups}, we denote $\C_{M_{\bar{w}}}$ the category of all modules having a stratification by the modules $(M_{k,\bar{w}})_k$ in ascending order.
		
		\begin{lemma}[{\cite[Lemma 10.2]{geiss2011KacMoodyGroups}}]\label{lemmaStratificationCw}
			We have $\C_w=\C_{M_{\bar{w}}}$. Every module of $\C_w$ admits a stratification by the family of modules $(M_{k,\bar{w}})_k$ in ascending order.
		\end{lemma}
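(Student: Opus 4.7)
The plan is to establish the equality $\C_w = \C_{M_{\bar{w}}}$ by two inclusions; the stratification statement then follows immediately from the definition of $\C_{M_{\bar{w}}}$.

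For the easier inclusion $\C_{M_{\bar{w}}} \subseteq \C_w$, I would first check that each $M_{k,\bar{w}}$ itself lies in $\C_w$. Since $V_{k,\bar{w}}$ is a direct summand of the cluster-tilting module $V_{\bar{w}}$, it belongs to $\C_w = \Fac(V_{\bar{w}})$; closure of $\Fac$ under quotients then gives $M_{k,\bar{w}} = V_{k,\bar{w}}/V_{k^-,\bar{w}} \in \C_w$. Since $\C_w$ is extension-closed in $\mod(\Lambda)$, any module filtered by the $M_{k,\bar{w}}$'s automatically lies in $\C_w$, proving the inclusion.

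For the harder inclusion $\C_w \subseteq \C_{M_{\bar{w}}}$, I would argue by induction on $\dim X$ for $X \in \C_w$, building the filtration from the bottom. The base case is $M_{1,\bar{w}} = V_{1,\bar{w}} = S_{i_1}$, which is simple: one sets $X_1 := \soc_{(i_1)}(X) \cong M_{1,\bar{w}}^{\oplus a_1}$. Since $\C_w$ is closed under quotients, $X/X_1 \in \C_w$, so by induction $X/X_1$ admits an ascending $(M_{k,\bar{w}})_{k \geq 2}$-stratification which pulls back to the desired stratification of $X$. More generally, at step $k$ one should identify within $X/X_{k-1}$ the maximal submodule isomorphic to a direct sum of copies of $M_{k,\bar{w}}$ and let $X_k$ be its preimage.

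The main obstacle is ensuring both that this inductive process terminates with $X_{\ell(w)} = X$ and that the layer extracted at each step genuinely consists of copies of $M_{k,\bar{w}}$ (rather than of some larger extension). Termination reduces to the fact that every composition factor of a module in $\C_w = \Fac(V_{\bar{w}})$ is a simple $S_{i_k}$ appearing in some $V_{k,\bar{w}}$, so the process exhausts $X$ in finitely many steps. The more delicate point is identifying the layers correctly; the key input here is the $2$-Calabi--Yau duality $\Ext^1_{\Lambda}(X,Y) \cong D\,\Ext^1_{\Lambda}(Y,X)$ on $\mod(\Lambda)$, which together with the explicit socle description of $M_{k,\bar{w}}$ should yield the appropriate $\Ext^1$-vanishing among the $M_{k,\bar{w}}$ needed to guarantee that each layer $X_k/X_{k-1}$ is indeed $M_{k,\bar{w}}$-isotypic. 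This vanishing reflects the fact that $\C_w$ behaves like a highest-weight category with the $M_{k,\bar{w}}$ playing the role of standard modules, and is the essential technical ingredient of the proof.
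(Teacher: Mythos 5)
This lemma is cited verbatim from \cite[Lemma 10.2]{geiss2011KacMoodyGroups}; the paper itself supplies no proof, so there is no internal argument to compare your sketch against.

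Your easy inclusion $\C_{M_{\bar{w}}}\subseteq\C_w$ is fine. In the hard direction the dimension induction does not go through as written. You set $X_1:=\soc_{(i_1)}(X)$, but for $X\in\C_w$ this can be zero---already for $X=V_{k,\bar{w}}$ with $i_k\neq i_1$, since $V_{k,\bar{w}}$ has simple socle $S_{i_k}$---so the inductive step does not reduce $\dim X$ and the argument stalls. Even when $X_1\neq 0$, the inductive hypothesis only gives an ascending $(M_{k,\bar{w}})_{k\geq 1}$-stratification of $X/X_1$, and you assert without justification that its $M_1$-layer vanishes; showing that no further copies of $S_{i_1}$ appear at the bottom of the stratification of $X/X_1$ is exactly what must be established, and it is not implied by $\Ext^1_\Lambda(S_{i_1},S_{i_1})=0$ alone. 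Your ``termination'' argument (the composition factors of $X$ are among the $S_{i_k}$) shows the layer-extraction process is finite but not that it ends with $X_{\ell(w)}=X$: you would also need to rule out the process stalling at a proper submodule when no $M_{k,\bar{w}}$ embeds in the current quotient. You correctly locate the crux in a system of $\Hom$- and $\Ext^1$-constraints among the $M_{k,\bar{w}}$'s, but leaving this as ``2-Calabi--Yau duality \ldots should yield'' is not a proof; that vanishing pattern is precisely what Geiss--Leclerc--Schr\"oer establish in the quasi-hereditary framework surrounding this lemma (the $\Delta_k=\Hom_\Lambda(V_{\bar w},M_{k,\bar w})$ being standard modules over $\End_\Lambda(V_{\bar w})$), and supplying it is where essentially all the work of the proof lies.
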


		\begin{definition}
			Let $B_{\bar{w}}:=\End_\Lambda(V_{\bar{w}})$ and define the standard modules of $B_{\bar{w}}$ by:
			\[
				\Delta_k:=\Hom(V_{\bar{w}},M_k), 1\leq k\leq \ell(w)
			\]
		\end{definition}
		The following results will allow us to introduce $\Delta$-vectors.
		\begin{proposition}[{\cite[Lemmas 10.3 \& 10.4]{geiss2011KacMoodyGroups}}]	
			The dimension vectors $\dim_{B_{\bar{w}}}(\Delta_k)$, $1\leq k\leq r$ are linearly independent and we have
			\[
				\dim_{B_{\bar{w}}}(\Hom_\Lambda(V_{\bar{w}},V_k))=\dim_{B_{\bar{w}}}(\Delta_k)+\dim_{B_{\bar{w}}}(\Delta_{k^-})+\cdots+\dim_{B_{\bar{w}}}(\Delta_{k_\Min})
			\]
		\end{proposition}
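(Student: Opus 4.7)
My plan is to exploit the chain of inclusions
\[
	V_{k_\Min}\subset V_{(k_\Min)^+}\subset\cdots\subset V_{k^-}\subset V_k
\]
furnished by the horizontal arrows of $\Gamma_{\bar{w}}$, together with the short exact sequences
\[
	0\rightarrow V_{k^-}\rightarrow V_k\rightarrow M_k\rightarrow 0,\quad k\neq k_\Min,
\]
whose quotients are exactly the modules $M_{k,\bar{w}}$ by their very definition. The Hom formula will fall out of applying $\Hom_\Lambda(V_{\bar{w}},-)$ to these sequences; the linear independence will then reduce, via the Hom formula, to invertibility of a combinatorially transparent matrix.

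First I would handle the Hom formula. The crucial input is that $V_{k^-}$ is an indecomposable direct summand of the rigid module $V_{\bar{w}}$, so $\Ext^1_\Lambda(V_{\bar{w}},V_{k^-})=0$, and the long exact sequence obtained from applying $\Hom_\Lambda(V_{\bar{w}},-)$ truncates to a short exact sequence of left $B_{\bar{w}}$-modules
\[
	0\rightarrow\Hom_\Lambda(V_{\bar{w}},V_{k^-})\rightarrow\Hom_\Lambda(V_{\bar{w}},V_k)\rightarrow\Delta_k\rightarrow 0.
\]
Taking $B_{\bar{w}}$-dimension vectors and iterating down the predecessor chain of $k$ (with base case $V_{k_\Min}=M_{k_\Min}$, so $\Hom_\Lambda(V_{\bar{w}},V_{k_\Min})=\Delta_{k_\Min}$) would yield the telescoping formula immediately.

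Next, for linear independence, I would recast the Hom formula as a matrix identity. Stacking the $\dim_{B_{\bar{w}}}\Hom_\Lambda(V_{\bar{w}},V_k)$ and the $\dim_{B_{\bar{w}}}\Delta_k$ as columns of matrices $P$ and $D$, the Hom formula reads $P=D\cdot U$, where $U$ is the $\ell(w)\times\ell(w)$ $0/1$-matrix whose column indexed by $k$ records the predecessor chain $\{k_\Min,(k_\Min)^+,\ldots,k\}$. After reordering indices first by color and then by position within each color, $U$ becomes block diagonal with lower-triangular unipotent blocks, hence invertible over $\Z$. Linear independence of the $\dim_{B_{\bar{w}}}\Delta_k$ is thus equivalent to linear independence of the columns of $P$, i.e.\ to invertibility of the Cartan matrix of $B_{\bar{w}}$.

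This final reduction is the main obstacle, since nothing done so far forces the Cartan matrix to be non-singular. I expect to settle it by upgrading the bookkeeping into a genuine quasi-hereditary structure on $B_{\bar{w}}$ with standard modules $\Delta_k$: Lemma \ref{lemmaStratificationCw} supplies a stratification of every $X\in\C_w$ by the $M_k$'s in ascending order, and the rigidity argument above ensures that $\Hom_\Lambda(V_{\bar{w}},-)$ translates such a stratification into a $\Delta$-filtration of the corresponding $B_{\bar{w}}$-module. Once $B_{\bar{w}}$ is known to be quasi-hereditary, its Cartan matrix factors as a product of unitriangular integer matrices and therefore has determinant $1$, which closes the argument.
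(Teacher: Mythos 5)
Your derivation of the Hom formula is correct and is precisely the argument in \cite[Lemma 10.3]{geiss2011KacMoodyGroups}: apply $\Hom_\Lambda(V_{\bar{w}},-)$ to $0\to V_{k^-}\to V_k\to M_k\to 0$, use $\Ext^1_\Lambda(V_{\bar{w}},V_{k^-})=0$ to truncate the long exact sequence, then telescope down the predecessor chain.

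The linear-independence half has a real gap. Your reduction $P=D\cdot U$ with $U$ unipotent is fine and correctly identifies the obstruction as the invertibility of the Cartan matrix of $B_{\bar{w}}$. But your plan for closing that obstruction does not work as stated. Knowing that every $\Hom_\Lambda(V_{\bar{w}},X)$ for $X\in\C_w$ admits a $\Delta$-filtration (which is what Lemma \ref{lemmaStratificationCw} plus your rigidity argument gives) is \emph{not} sufficient to conclude $B_{\bar{w}}$ is quasi-hereditary. Quasi-hereditariness additionally requires the standardizability conditions on the family $(\Delta_k)$: in particular, the triangularity $[\Delta_k:L_j]=0$ for $j>k$ and $[\Delta_k:L_k]=1$ (equivalently, $\Hom_\Lambda(V_j,M_k)=0$ for $j>k$ and $\dim\Hom_\Lambda(V_k,M_k)=1$), together with appropriate $\Hom$- and $\Ext^1$-orthogonality among the $\Delta_k$. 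None of these follow from the $\Delta$-filtration property alone, and you have not argued them. Once you do have the triangularity $[\Delta_k:L_j]=0$ for $j>k$ with $[\Delta_k:L_k]=1$, the matrix $D$ is itself unitriangular and linear independence is immediate, which is exactly the direct argument of \cite[Lemma 10.4]{geiss2011KacMoodyGroups}. So the detour through the Cartan determinant and BGG reciprocity both fails to supply the missing ingredient and, if it were repaired, would be strictly longer than the direct triangularity argument it would have to contain anyway.
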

		
		\begin{proposition}[{\cite[Proposition 10.5]{geiss2011KacMoodyGroups}}]\label{propositionEquivGLS11}
			Let $X\in\C_w$ and $\underline{a}=(a_1,\dots,a_{\ell(w)})$ be. The following assumptions are equivalent:
				\begin{enumerate}
					\item $X\in \C_{M_{\bar{w}}}$ with the sequence $\underline{a}$ as multiplicity of the stratas;
					\item there exists a short exact sequence: $0\rightarrow \bigoplus\limits_{k=1}^{\ell(w)}V_{k^-}^{a_k}\rightarrow\bigoplus\limits_{k=1}^rV_k^{a_k}\rightarrow X\rightarrow 0;$
					\item $\dim_{B_{\bar{w}}}(\Hom_{\Lambda}(V_{\bar{w}},X))=\dim_{B_{\bar{w}}}(\Hom_{\Lambda}(V_{\bar{w}},M_{\bar{w}}(X)))=\sum\limits_{k=1}^r a_k\dim_{B_{\bar{w}}}(\Delta_k)$ where 
				\[ M_{\bar{w}}(X):= M_1^{a_1}\oplus\cdots\oplus M_{\ell(w)}^{a_{\ell(w)}}.\]
				\end{enumerate}
			
		\end{proposition}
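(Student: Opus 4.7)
The plan is to prove the cyclic chain $(1)\Rightarrow(3)\Rightarrow(1)$ and $(1)\Rightarrow(2)\Rightarrow(3)$, relying throughout on the rigidity $\Ext^1_\Lambda(V_{\bar{w}},V_k)=0$ for all $k$, which holds because $V_{\bar{w}}$ is a $\C_w$-cluster-tilting module. The first preliminary step is to control $\Hom_\Lambda(V_{\bar{w}},M_k)$: apply $\Hom_\Lambda(V_{\bar{w}},-)$ to the defining short exact sequence $0\to V_{k^-}\to V_k\to M_k\to 0$ and note that rigidity forces the long exact sequence to truncate, giving
\[
	0\to\Hom_\Lambda(V_{\bar{w}},V_{k^-})\to\Hom_\Lambda(V_{\bar{w}},V_k)\to\Hom_\Lambda(V_{\bar{w}},M_k)\to 0.
\]
Combined with the dimension formula in the preceding proposition, this identifies $\dim_{B_{\bar{w}}}\Hom_\Lambda(V_{\bar{w}},M_k)=\dim_{B_{\bar{w}}}\Delta_k$.

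For $(1)\Rightarrow(3)$, I would argue by induction on the length of a stratification of $X$ with multiplicities $\underline{a}$. At each step we have $0\to X_{i-1}\to X_i\to M_{k_i}\to 0$ with both ends in $\C_w$; applying $\Hom_\Lambda(V_{\bar{w}},-)$ and invoking rigidity preserves exactness, so dimensions add, and telescoping yields $\dim_{B_{\bar{w}}}\Hom_\Lambda(V_{\bar{w}},X)=\sum_k a_k\dim_{B_{\bar{w}}}\Delta_k$. For $(3)\Rightarrow(1)$, I would use Lemma~\ref{lemmaStratificationCw} to obtain \emph{some} stratification of $X$ with multiplicities $\underline{a}'$, apply the direction just proved to get $\sum_k a'_k\dim_{B_{\bar{w}}}\Delta_k=\sum_k a_k\dim_{B_{\bar{w}}}\Delta_k$, and conclude $\underline{a}'=\underline{a}$ by the linear independence of the $\dim_{B_{\bar{w}}}(\Delta_k)$ asserted in the previous proposition.

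The step I expect to be the main obstacle is $(1)\Rightarrow(2)$: producing the global short exact sequence $0\to\bigoplus V_{k^-}^{a_k}\to\bigoplus V_k^{a_k}\to X\to 0$ from a stratification that is only given layer-by-layer. My plan is induction on the stratification length, say $X=X_N\supset X_{N-1}$ with $X_N/X_{N-1}\cong M_{k_N}$: assume the sequence exists for $X_{N-1}$, then splice it with $0\to V_{k_N^-}\to V_{k_N}\to M_{k_N}\to 0$ by the horseshoe construction. The splicing requires lifting the projection $\bigoplus V_k^{a_k-\delta_{k,k_N}}\to X_{N-1}$ to a map into $X_N$, which is possible because $\Ext^1_\Lambda(V_k,M_{k_N})$ controls the obstruction and vanishes after composition thanks to rigidity of $V_{\bar{w}}$ (each $V_k$ is a direct summand of $V_{\bar{w}}$). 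A careful diagram chase then identifies the kernel as $\bigoplus V_{k^-}^{a_k}$. Finally, $(2)\Rightarrow(3)$ is immediate: applying $\Hom_\Lambda(V_{\bar{w}},-)$ to the sequence in (2) is exact by rigidity, and the resulting additive dimension count gives exactly the equality in (3), closing the cycle.
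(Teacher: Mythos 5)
The paper does not prove this proposition; it cites it directly from \cite[Proposition 10.5]{geiss2011KacMoodyGroups}, so there is no in-paper argument to compare with. On its own merits your proposal has two real gaps, both instances of the same missing idea. In $(1)\Rightarrow(3)$ you assert that applying $\Hom_\Lambda(V_{\bar{w}},-)$ to each layer $0\to X_{i-1}\to X_i\to M_{k_i}\to 0$ "preserves exactness by rigidity," but the group that must vanish for this is $\Ext^1_\Lambda(V_{\bar{w}},X_{i-1})$, not $\Ext^1_\Lambda(V_{\bar{w}},V_{\bar{w}})$. Since $V_{\bar{w}}$ is $\C_w$-cluster-tilting, $\Ext^1_\Lambda(V_{\bar{w}},X_{i-1})$ is nonzero whenever $X_{i-1}\notin\add(V_{\bar{w}})$, which is the generic situation. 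The additivity of $\dim_{B_{\bar{w}}}\Hom_\Lambda(V_{\bar{w}},-)$ along the stratification is thus exactly the substantive point to be established, not something that rigidity hands you for free; one needs to show the specific connecting map $\Hom_\Lambda(V_{\bar{w}},M_{k_i})\to\Ext^1_\Lambda(V_{\bar{w}},X_{i-1})$ vanishes, and this is where the quasi-hereditary machinery of \cite[Section 10]{geiss2011KacMoodyGroups} actually does the work.

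In $(1)\Rightarrow(2)$ the horseshoe step is misdiagnosed. You do not need to lift the surjection $\bigoplus V_k^{a_k-\delta_{k,k_N}}\to X_{N-1}$ to a map into $X_N$ --- that map simply gets composed with the inclusion $X_{N-1}\hookrightarrow X_N$, which is no obstruction at all. What must be lifted is the surjection $V_{k_N}\twoheadrightarrow M_{k_N}$ along $X_N\twoheadrightarrow M_{k_N}$, and the obstruction lives in $\Ext^1_\Lambda(V_{k_N},X_{N-1})$, not in $\Ext^1_\Lambda(V_k,M_{k_N})$; again rigidity of $V_{\bar{w}}$ says nothing about it. A cleaner route, if you want to avoid the quasi-hereditary input, is to first invoke the existence of some $\add(V_{\bar{w}})$-presentation $0\to V_1\to V_0\to X\to 0$ with $V_0,V_1\in\add(V_{\bar{w}})$ (a general consequence of $V_{\bar{w}}$ being cluster-tilting in a Frobenius category, for which \cite{buan2009ClusterStructures2Calabi} is the reference); this gives $(3)$ outright because $\Ext^1_\Lambda(V_{\bar{w}},V_1)=0$, and then the multiplicities in $V_0$ and $V_1$ can be pinned down from the linear independence of the $\dim_{B_{\bar{w}}}(\Delta_k)$ together with the explicit formula for $\dim_{B_{\bar{w}}}\Hom_\Lambda(V_{\bar{w}},V_k)$ recalled just before the proposition.
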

		\begin{proposition}[{\cite[Corollary 12.3]{geiss2011KacMoodyGroups}}]
			Let $X$ and $Y$ be indecomposable rigid modules in $\C_w$. If 
			$
				\dim_{B_{\bar{w}}}(\Hom_{\Lambda}(V_{\bar{w}},X))=\dim_{B_{\bar{w}}}(\Hom_\Lambda(V_{\bar{w}},Y))
			$
			then $X\cong Y$.
		\end{proposition}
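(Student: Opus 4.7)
The plan is to reduce the claim to equality of the strata multiplicity vectors of $X$ and $Y$, and then to lift that equality to an isomorphism using rigidity.

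First, since $X,Y\in\C_w=\C_{M_{\bar{w}}}$ by Lemma~\ref{lemmaStratificationCw}, both modules admit stratifications by the family $(M_{k,\bar{w}})_{k}$ in ascending order. Denote the corresponding multiplicity tuples by $\underline{a}^X=(a^X_1,\dots,a^X_{\ell(w)})$ and $\underline{a}^Y=(a^Y_1,\dots,a^Y_{\ell(w)})$. By Proposition~\ref{propositionEquivGLS11}(3), the hypothesis reads
\[
\sum_{k=1}^{\ell(w)} a^X_k\, \dim_{B_{\bar{w}}}(\Delta_k) \;=\; \sum_{k=1}^{\ell(w)} a^Y_k\, \dim_{B_{\bar{w}}}(\Delta_k),
\]
and the linear independence of the vectors $\dim_{B_{\bar{w}}}(\Delta_k)$ (stated just before Proposition~\ref{propositionEquivGLS11}) forces $\underline{a}^X=\underline{a}^Y$. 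Writing $\underline{a}$ for this common value, Proposition~\ref{propositionEquivGLS11}(2) then presents both $X$ and $Y$ as cokernels of morphisms from $\bigoplus_k V_{k^-}^{a_k}$ to $\bigoplus_k V_k^{a_k}$, with identical outer terms.

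The main obstacle is the final step: upgrading the equality $\underline{a}^X=\underline{a}^Y$ to an actual isomorphism $X\cong Y$, which uses in an essential way that both modules are rigid and indecomposable. My intended route is geometric: the $\underline{a}$-vector determines the $\Lambda$-dimension vector of $X$ (and $Y$) via the classes of the strata $M_{k,\bar{w}}$, so $X$ and $Y$ lie in the same affine variety of $\Lambda$-modules under the natural $\mathrm{GL}$-action. Rigidity of a module in a $2$-Calabi-Yau setting ensures that its orbit is open in the irreducible component it sits in, and each such component contains at most one open orbit; combined with indecomposability (which keeps us away from decomposable rigid representatives lying on a different component), this yields $X\cong Y$. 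The delicate point is controlling which irreducible component contains each of $X$ and $Y$, and ruling out the possibility that two non-isomorphic rigid indecomposables could occupy open orbits in distinct components sharing the same $\underline{a}$-vector.

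An equivalent tactic, should the geometric argument require too much apparatus, is to invoke the cluster character $\varphi$ of Geiss-Leclerc-Schr\"oer: under $\varphi$, rigid indecomposables of $\C_w$ map to cluster monomials of $\CC[C_w]$, and the tuple $\underline{a}$ plays the role of the Fomin-Zelevinsky $g$-vector, which is known to parameterize cluster monomials injectively. Injectivity of $\varphi$ on rigid indecomposables (a separate input from \cite{geiss2011KacMoodyGroups}) then transports the equality $\underline{a}^X=\underline{a}^Y$ to $X\cong Y$, completing the proof.
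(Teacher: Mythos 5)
This result is quoted from \cite[Corollary~12.3]{geiss2011KacMoodyGroups}; the paper does not reprove it, so there is no in-text proof to compare against. Your first step is nevertheless the correct opening move and matches the GLS line of argument: by Lemma~\ref{lemmaStratificationCw} both $X$ and $Y$ admit $(M_{k,\bar{w}})$-stratifications, Proposition~\ref{propositionEquivGLS11}(3) translates the hypothesis into $\sum_k a_k^X\dim_{B_{\bar{w}}}(\Delta_k)=\sum_k a_k^Y\dim_{B_{\bar{w}}}(\Delta_k)$, and the linear independence of the $\dim_{B_{\bar{w}}}(\Delta_k)$ forces $\underline{a}^X=\underline{a}^Y$. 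This part is fine.

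The lifting of $\underline{a}^X=\underline{a}^Y$ to $X\cong Y$ is where your sketch does not close, and you have correctly put your finger on the gap yourself. Passing to the module variety $\rep(\Lambda,\underline{\dim})$ and using ``rigid $\Rightarrow$ open orbit, at most one open orbit per component'' is not enough: the common dimension vector does not control \emph{which} irreducible component each of $X$ and $Y$ lies in, and two non-isomorphic rigid indecomposables in the same $\rep(\Lambda,\underline{\dim})$ really can sit with open orbits in distinct components. Indecomposability does not rescue this. The mechanism that actually works is to use the \emph{presentation} you already exhibited via Proposition~\ref{propositionEquivGLS11}(2): both $X$ and $Y$ are cokernels of morphisms in the vector space $\Hom_\Lambda\bigl(\bigoplus_k V_{k^-}^{a_k},\bigoplus_k V_k^{a_k}\bigr)$. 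That space is affine, hence irreducible, so the generic cokernel there is a single isomorphism class; rigidity is precisely what guarantees that $X$ (resp.\ $Y$) is such a generic cokernel. Irreducibility of the morphism space replaces the component-control you were missing in the module-variety picture. Your alternative tactic via the cluster character $\varphi$ is at best delicate and likely circular in this context: the injectivity of $\varphi$ on rigid indecomposables and the $g$-vector parameterization of cluster monomials are themselves established in \cite{geiss2011KacMoodyGroups} by arguments of exactly the present kind, and identifying $\underline{a}$ with the relevant $g$-vector also requires justification rather than being immediate from the definitions here.
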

		Thus, thanks to these propositions we can unambiguously determine an indecomposable rigid $\Lambda$-module $X\in\C_w$ by the $\dim_{B_{\bar{w}}}$-vector of $\Hom_{\Lambda}(V_{\bar{w}},X)$ or equivalently by the corresponding vector $(a_1,\dots,a_{\ell(w)})$, which gives rise to the following definition.
		\begin{definition}
			The previous sequence $\underline{a}=(a_1,\dots,a_{\ell(w)})$ is called the $\Delta_{\bar{w}}$-vector of $X$, written $\Delta_{\bar{w}}(X)=(a_1,\dots,a_{\ell(w)})$. We will write $(\Delta_{\bar{w}}(X))_i=\Delta_{\bar{w},i}(X)$ for the $i$th coordinate of the $\Delta_{\bar{w}}$ vector of $X$.
		\end{definition}
	\subsection{Mutation of \texorpdfstring{$\Delta$}{Delta}-vectors}
		In \cite{geiss2011KacMoodyGroups} the authors give an explicit way to compute $\Delta$-vectors after a mutation. We recall here the formula.
		\begin{definition}
			Given the family of modules $(M_{k,\bar{w}})_{k=1}^{\ell(w)}$ we define the vector 
			\[
				d_\Delta:=(\dim(\Delta_1),\dots,\dim(\Delta_{\ell(w)})).
			\]
			where $\dim(\Delta_s)=\sum\limits_{k=1}^{\ell(w)}\dim\Hom_{\Lambda}(V_k,M_s)$.
		\end{definition}
		\begin{proposition}[{\cite[Proposition 12.6]{geiss2011KacMoodyGroups}}]\label{propositionMutationDeltaVectors}
			In a seed $(R,\Gamma)$, the $\Delta$-vector of $R_{k}^\ast$, image of $R_{k}$ by $\mu_k$ ($1\leq k\leq \ell(w)$) is:
			\[
				\Delta_{\bar{w}}(R_k^\ast):=\left\{\begin{array}{cl}-\Delta_{\bar{w}}(R_k)+\sum\limits_{(R_i\rightarrow R_k)\in\Gamma}\Delta_{\bar{w}}(R_i)&\text{if }\sum\limits_{(R_i\rightarrow R_k )\in\Gamma}\Delta_{\bar{w}}(R_i)\cdot d_\Delta>\sum\limits_{(R_k\rightarrow R_j)\in\Gamma} \Delta_{\bar{w}}(R_k)\cdot d_{\Delta}\\-\Delta_{\bar{w}}(R_k)+\sum\limits_{(R_k\rightarrow R_j)\in\Gamma}\Delta_{\bar{w}}(R_j)&\text{otherwise}\end{array}\right.
			\]
			where $\Delta_{\bar{w}}(M)\cdot d_\Delta=\sum\limits_{i=1}^{\ell(w)}\Delta_{\bar{w},i}(M)d_{\Delta,i}$ is the usual inner product.
		\end{proposition}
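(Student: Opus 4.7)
The plan is to use the two exchange short exact sequences underlying cluster mutation in the $2$-Calabi-Yau Frobenius category $\C_w$, apply $\Hom_\Lambda(V_{\bar{w}},-)$ to each, and then select the correct candidate by the sign condition encoded in the $d_\Delta$-inequality.

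First I would translate everything into $\Hom$-dimension vectors. By Proposition \ref{propositionEquivGLS11}, the $\Delta_{\bar{w}}$-vector of any $X\in\C_w$ is determined by the class of $\Hom_\Lambda(V_{\bar{w}},X)$ in the Grothendieck group of $\mod(B_{\bar{w}})$ in the basis of standard modules $(\Delta_s)$, and by definition of $d_\Delta$ one has $\dim_\CC\Hom_\Lambda(V_{\bar{w}},X)=\Delta_{\bar{w}}(X)\cdot d_\Delta$. Hence for any short exact sequence $0\to X\to Y\to Z\to 0$ in $\C_w$ on which $\Hom_\Lambda(V_{\bar{w}},-)$ remains exact, one would obtain $\Delta_{\bar{w}}(Y)=\Delta_{\bar{w}}(X)+\Delta_{\bar{w}}(Z)$. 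Applied to the two exchange sequences
\[
0\to R_k\to E\to R_k^*\to 0,\qquad 0\to R_k^*\to E'\to R_k\to 0,
\]
with $E=\bigoplus_{(R_k\to R_j)\in\Gamma}R_j$ and $E'=\bigoplus_{(R_i\to R_k)\in\Gamma}R_i$, this yields formally the two candidates
\[
\Delta_{\bar{w}}(R_k^*)=-\Delta_{\bar{w}}(R_k)+\sum_{(R_k\to R_j)\in\Gamma}\Delta_{\bar{w}}(R_j)
\]
and
\[
\Delta_{\bar{w}}(R_k^*)=-\Delta_{\bar{w}}(R_k)+\sum_{(R_i\to R_k)\in\Gamma}\Delta_{\bar{w}}(R_i).
\]
Since the true $\Delta_{\bar{w}}$-vector of $R_k^*\in\C_w$ must have non-negative integer entries by Proposition \ref{propositionEquivGLS11}, generically exactly one of the two formulas can be valid.

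The $d_\Delta$-criterion is what detects which one. Applying $\Hom_\Lambda(V_{\bar{w}},-)$ to either exchange sequence produces a long exact sequence whose failure of right-exactness is controlled by $\Ext^1_\Lambda(V_{\bar{w}},R_k)$, respectively $\Ext^1_\Lambda(V_{\bar{w}},R_k^*)$; only one of these extension groups vanishes, and tallying $\CC$-dimensions in the long exact sequence via $\dim_\CC\Hom_\Lambda(V_{\bar{w}},M)=\Delta_{\bar{w}}(M)\cdot d_\Delta$ identifies the vanishing case with the one whose middle term has strictly larger $d_\Delta$-pairing, which is precisely the inequality in the statement. The main obstacle is to establish this dichotomy cleanly; I would expect the quickest route to pass through the index of $R_k^*$ with respect to the cluster-tilting object $V_{\bar{w}}$, using additivity of indices on short exact sequences in a 2-Calabi-Yau category together with the non-negativity of the stratification multiplicities furnished by Proposition \ref{propositionEquivGLS11} to pin down the correct sign, with the $d_\Delta$-comparison serving as the numerical witness.
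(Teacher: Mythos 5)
The paper does not prove this statement; it is imported as \cite[Proposition 12.6]{geiss2011KacMoodyGroups}, so there is no internal proof to compare against. Your reconstruction does follow the GLS strategy (apply $F=\Hom_\Lambda(V_{\bar w},-)$ to the two exchange sequences, get two additive candidates, and use the scalar $d_\Delta$-pairing $\dim_\CC F(-)=\Delta_{\bar w}(-)\cdot d_\Delta$ as the selector), but the key step is stated incorrectly.

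The claim that ``only one of $\Ext^1_\Lambda(V_{\bar w},R_k)$, $\Ext^1_\Lambda(V_{\bar w},R_k^\ast)$ vanishes'' is false in general: by the cluster-tilting property, $\Ext^1_\Lambda(V_{\bar w},X)=0$ iff $X\in\add(V_{\bar w})$, and after a few mutations neither $R_k$ nor $R_k^\ast$ lies in $\add(V_{\bar w})$, so typically both Ext groups are nonzero. The correct dichotomy, and the actual content of \cite[Proposition 12.4, Lemma 12.5]{geiss2011KacMoodyGroups}, is that at least one of the two exchange sequences is \emph{$F$-exact}, i.e.\ the connecting morphism $F(R_k^\ast)\to\Ext^1_\Lambda(V_{\bar w},R_k)$ (resp.\ $F(R_k)\to\Ext^1_\Lambda(V_{\bar w},R_k^\ast)$) vanishes, which is strictly weaker than vanishing of the whole Ext group. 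Establishing this requires the $2$-Calabi-Yau symmetry, which your sketch never actually invokes; once it is granted, the $d_\Delta$-inequality does correctly identify the $F$-exact sequence as the one whose middle term has the larger $\CC$-dimension of $F$. As a minor orthogonal remark, the displayed inequality in the statement reads $\sum_{(R_k\to R_j)}\Delta_{\bar w}(R_k)\cdot d_\Delta$ on the right, which is surely a typo for $\Delta_{\bar w}(R_j)$; your reconstruction implicitly (and correctly) uses the $R_j$ version. Finally, the closing suggestion to route through the index of $R_k^\ast$ is not obviously a shortcut: $\Delta_{\bar w}$-vectors are stratification multiplicity vectors, not indices (e.g.\ $\Delta_{\bar w}(V_k)=e_k+e_{k^-}+\cdots$ by Proposition~\ref{propositionIntervalModules} while $\mathrm{ind}_{V_{\bar w}}(V_k)=e_k$), and while the two are related by a unipotent change of basis, the same $F$-exactness dichotomy reappears on the index side, so nothing is gained.
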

		\begin{ex}
			Examples of such computation are given in Example \ref{exampleAlgo2}.
		\end{ex}
	\subsection{\texorpdfstring{$\Delta$}{Delta}-vectors related to different representatives}\label{sectionDeltaVectorRepresentatives}
		In our definition of $\Delta$-vector, we have to choose a particular reduced representative, $\bar{w}$, defining our initial seed $V_{\bar{w}}$ and the family of modules $(M_{\bar{w},k})_k$. But given another representative $\bar{w}_2$ of the same Weyl group element $w$ we can define another family of module $(M_{\bar{w}_2,k})_k$ defining another notion of $\Delta_{\bar{w}_2}$-vector. 
		
		Clearly we have:
		\begin{proposition}\label{propositionIntervalModules}
			We have $\Delta_{\bar{w}}(V_{\bar{w},k})=\sum\limits_{j=1}^{k}\delta_{i_k,i_j}e_j$ where $e_j$ is the $j$-th vector of the canonical basis.
		\end{proposition}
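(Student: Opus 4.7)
The plan is to exhibit, for each $k$, an explicit filtration of $V_{k,\bar{w}}$ whose successive quotients are the modules $M_{j,\bar{w}}$ for the indices $j \leq k$ of color $i_j = i_k$, and then to appeal to Proposition \ref{propositionEquivGLS11} to read off the $\Delta$-vector from the multiplicities of strata in that filtration.

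First I would invoke the chain of inclusions discussed at the start of Section \ref{sectionDeltaVectors}: the horizontal arrows in $\Gamma_{\bar{w}}$ correspond to genuine inclusions $V_{j^-,\bar{w}} \subset V_{j,\bar{w}}$. Restricting to the color $i_k$ and to indices bounded by $k$, this produces the chain
\[
	0 \subset V_{k_{\Mini},\bar{w}} \subset V_{(k_{\Mini})^+,\bar{w}} \subset \cdots \subset V_{k^-,\bar{w}} \subset V_{k,\bar{w}}
\]
whose length is precisely the number of indices $j \leq k$ with $i_j = i_k$. By the very definition of $M_{j,\bar{w}}$ as $V_{j,\bar{w}}/V_{j^-,\bar{w}}$ (with $M_{k_{\Mini},\bar{w}} = V_{k_{\Mini},\bar{w}}$), the successive quotients of this chain are exactly the modules $M_{j,\bar{w}}$ for $j \in \{k_{\Mini}, (k_{\Mini})^+, \dots, k\}$, and the indices appear in ascending order. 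Hence $V_{k,\bar{w}}$ admits a stratification (in the sense of Lemma \ref{lemmaStratificationCw}) by the family $(M_{j,\bar{w}})_j$ in which the stratum $M_{j,\bar{w}}$ appears with multiplicity $a_j = 1$ whenever $j \leq k$ and $i_j = i_k$, and with multiplicity $0$ otherwise.

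To conclude I would apply the equivalence (1)$\Leftrightarrow$(3) of Proposition \ref{propositionEquivGLS11}: the sequence of multiplicities $(a_1,\dots,a_{\ell(w)})$ read off from any stratification of a module $X \in \C_w$ coincides with its $\Delta_{\bar{w}}$-vector. With the multiplicities obtained above we get
\[
	\Delta_{\bar{w}}(V_{k,\bar{w}}) = \sum_{\substack{1 \leq j \leq k \\ i_j = i_k}} e_j = \sum_{j=1}^{k} \delta_{i_k,i_j}\, e_j,
\]
which is the required identity. There is no real obstacle here: the only point one must be careful about is that the chain of submodules furnished by the horizontal arrows really is a filtration whose quotients match the $M_{j,\bar{w}}$ on the nose, but this is tautological from the definitions of $V_{j,\bar{w}}$ and $M_{j,\bar{w}}$ together with the inclusion statement recalled at the beginning of Section \ref{sectionDeltaVectors}.
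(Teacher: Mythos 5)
Your proposal is correct and is essentially the paper's own argument, just spelled out: the paper's proof is the single line "By definition of the modules $(M_{\bar{w},k})_k$," and your filtration by the inclusions $V_{k_\Min}\subset V_{(k_\Min)^+}\subset\cdots\subset V_k$ together with Proposition \ref{propositionEquivGLS11} is precisely what that phrase is compressing. Nothing is missing and nothing is done differently in substance.
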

		\begin{proof}
			By definition of the modules $(M_{\bar{w},k})_k$.
		\end{proof}
		
		However it is far more difficult to compute the $\Delta_{\bar{w}_1}$-vector of the indecomposable summands of the initial seed $V_{\bar{w}_2}$ when $\bar{w}_2\neq\bar{w}_1$.
		
		There exists an algorithm to obtain it, based on the fact that if we know a rewriting path from $\bar{w}_2$ to $\bar{w}$, we can decompose this rewriting path in $2$ and $3$-moves (commutation of letters or braid moves) thanks to Matsumoto's lemma \cite{matsumoto1964generateurs}. Then \cite[Propositions 5.26 \& 5.27]{baumann2014AffineMirkovicVilonenPolytopes} gives us a way to compute changes in $\Delta$-vectors and \cite[Theorem 3.5]{shapiro2000SimplyLacedCoxeter} changes in the quiver. It is the idea used in the algorithmic implementation \cite{menard2021ImplementationMenardAlgorithm} of this article's algorithm. 
		
		The drawback of this method is that we do not have a closed formula for the output, other than making the computation. However, we can obtain a closed formula for some particular $\Delta$-coordinates using another method relying on Mirkovi\'c-Vilonen polytopes that we now introduce.
	\subsection{Relation with Mirkovi\'c-Vilonen polytopes}
		In \cite[Section 8]{kashiwara1995CrystalBases}, Kashiwara introduces the crystal $B(\infty)$ as the crystal base of the negative part of the quantized universal envelopping algebra of a Lie algebra $\mathfrak{g}$, $U_q^-(\mathfrak{g})$. Lusztig then gives a more geometric description of $B(\infty)$, linking it with representations of preprojective algebras in \cite{lusztig1990CanonicalBasesArising}. 
		
		More precisely, if we take a quiver $Q=(Q_0,Q_1)$, being an orientation of a Dynkin quiver associated to the root system $\Phi$ of the Lie algebra $\mathfrak{g}$, and a dimension vector $\nu\in\N Q_0$, any $\Lambda$-module can be seen as equivalent to the data of its linear maps. This data can been seen as points of a variety called Lusztig nilpotent variety and denoted by $\Lambda(\nu)$. We denote by $\mathfrak{B}(\nu)$ the set of irreducible components of $\Lambda(\nu)$ and:
		\[
			\mathfrak{B}:=\bigsqcup_{\nu\in\N Q_0}\mathfrak{B}(\nu)
		\]
		\begin{theorem}[{\cite[Theorem 5.3.2]{kashiwara1997GeometricConstructionCrystal} \& \cite[Theorem 1.8]{lusztig1990CanonicalBasesArising}}]
			$\mathfrak{B}$ has a crystal structure and there exists a unique crystal isomorphism:
			\[
				\Phi:\begin{array}{ccc}
					B(\infty)&\rightarrow&\mathfrak{B}\\
					b&\mapsto&\Lambda_b
				\end{array}
			\]
		\end{theorem}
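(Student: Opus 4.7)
The plan is to establish both the crystal structure on $\mathfrak{B}$ and the uniqueness of the isomorphism $\Phi$ in two separate stages, following the geometric approach pioneered by Lusztig and completed by Kashiwara--Saito. First I would equip $\mathfrak{B}$ with Kashiwara operators defined representation-theoretically: given an irreducible component $X \in \mathfrak{B}(\nu)$, a generic point corresponds to a $\Lambda$-module $M$, and for each vertex $i \in Q_0$ one sets $\varepsilon_i(X) := \dim\mathrm{Hom}_\Lambda(M,S_i)$ for generic $M \in X$ (well-defined by upper semicontinuity). The weight is $\mathrm{wt}(X) = -\nu$ and $\varphi_i(X) = \varepsilon_i(X) + \langle h_i, \mathrm{wt}(X)\rangle$.

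To define $\tilde{e}_i, \tilde{f}_i$, I would stratify $\Lambda(\nu)$ by the function $\varepsilon_i$, and identify the open stratum of $X$ with a fibration whose fibres involve short exact sequences $0 \to M' \to M \to S_i \to 0$; the operator $\tilde{e}_i$ then sends $X$ to the irreducible component of $\Lambda(\nu - \alpha_i)$ obtained by taking closure of the generic $M'$ locus (and $0$ if $\varepsilon_i(X) = 0$), and $\tilde{f}_i$ is defined analogously using $0 \to M \to M'' \to S_i \to 0$. The next step is the verification of the crystal axioms: the involutivity $\tilde{e}_i \tilde{f}_i X = X$ when $\tilde{f}_i X \neq 0$, compatibility with $\varepsilon_i, \varphi_i$, and Kashiwara's tensor-product-type relations. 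This requires controlling how generic $\mathrm{Hom}$-spaces behave under the above exact sequences, and relies on the irreducibility and dimension estimates for Lusztig's nilpotent variety.

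For existence and uniqueness of $\Phi$, I would invoke Kashiwara--Saito's axiomatic characterization of $B(\infty)$: a crystal generated by a highest weight element $b_0$ satisfying $\tilde{e}_i b_0 = 0$ and certain bilinear relations is canonically isomorphic to $B(\infty)$. One checks that $\mathfrak{B}(0) = \{*\}$ provides the distinguished highest weight element, and that $\mathfrak{B}$ is generated under the $\tilde{f}_i$'s by this element (because $\Lambda(\nu)$ is not empty for each $\nu \in \mathbb{N}Q_0$ and the operators are surjective in the appropriate sense). Uniqueness of $\Phi$ then follows formally: any morphism of crystals respecting the highest weight is determined by equivariance under the $\tilde{f}_i$.

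The hard part will be the axiom verification in the second stage, especially establishing that the geometric $\tilde{e}_i$ and $\tilde{f}_i$ are mutually inverse on the relevant subsets and that the $\varepsilon_i$ values transform correctly. This involves a delicate analysis of how generic $\Lambda$-modules decompose with respect to their $S_i$-head (or socle), controlling dimensions of strata in $\Lambda(\nu)$, and using the preprojective relation $c = \sum_\alpha [\alpha,\alpha^*]$ to ensure that the spaces of extensions by $S_i$ behave as expected. Once these geometric facts are in hand, matching them with Kashiwara's combinatorial definition of $B(\infty)$ and invoking the uniqueness theorem closes the argument.
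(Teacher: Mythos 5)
The paper does not prove this theorem: it is imported verbatim from Kashiwara--Saito and Lusztig, and the only content supplied is the citation. Your outline is a faithful reconstruction of the Kashiwara--Saito argument underlying those references --- defining $\varepsilon_i(X)$ as the generic multiplicity of $S_i$ in the head, stratifying $\Lambda(\nu)$ by $\varepsilon_i$, building $\tilde{e}_i,\tilde{f}_i$ from the correspondence of one-step extensions by $S_i$, verifying the crystal axioms via semicontinuity and the preprojective relation, and invoking Kashiwara--Saito's axiomatic characterization of $B(\infty)$ --- so there is no argument in the paper to compare it against; your proposal is consistent with the source of the cited result. One small caveat worth flagging: you deduce uniqueness of $\Phi$ from ``equivariance under $\tilde{f}_i$ from a highest-weight generator,'' but that is only a formal consequence once you have established that $\mathfrak{B}$ is \emph{connected} as a crystal, i.e.\ generated by $\mathfrak{B}(0)$ under the lowering operators. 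Connectedness is one of the genuinely geometric points (nonemptiness of $\Lambda(\nu)$ is not enough by itself; one also needs that each component is reached by some $\tilde{f}_{i_1}\cdots\tilde{f}_{i_r}$), so it belongs with the ``hard part'' you identify rather than with the formal closing step.
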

		
		Any irreducible component of $\mathfrak{B}$ can also be associated with a formal sum of positive roots of $\Phi$ given an indexation of elements of $B(\infty)$ by Poincaré-Birkhoff-Witt bases.
		
		 On the other side, Kamnitzer in \cite{kamnitzer2007CrystalStructureSet} associated to each element of $B(\infty)$ a Mirkovi\'c-Vilonen polytope following an idea of Anderson-Mirkovi\'c. Thus we have two bijections linking $B(\infty)$ with irreducible components of the varieties of quiver representations on one hand and with Mirkovi\'c-Vilonen polytopes on the other hand. In \cite{baumann2012PreprojectiveAlgebrasMV}, Baumann and Kamnitzer study directly the connection between Mirkovi\'c-Vilonen polytopes and Luzstig's nilpotent varieties. More precisely they introduce the Harder-Naramsimhan polytopes as:
		 \begin{definition}[\cite{baumann2014AffineMirkovicVilonenPolytopes}]
			The Harder-Naramsimhan polytope $\Pol(T)$ of a $\Lambda$-module $T$ is the convex hull of the dimension vectors of all submodules of $T$ in $\mathbb{R} I$ (where $I$ is the set of vertices in the Dynkin diagram). We denote it $\Pol(T)$.
			\end{definition}
			and they prove:
			\begin{theorem}[{\cite[Section 1.3]{baumann2014AffineMirkovicVilonenPolytopes}}]
			For each $b\in\mathcal{B}(\infty)$ the map $T\mapsto \Pol(T)$ takes a generic value $\Psi(b)$ on the irreducible component $\Lambda_b$. Moreover the map $\Psi\circ\Phi^{-1}:\mathfrak{B}\rightarrow\mathcal{MV}(\infty)$ is a crystal isomorphism.
			\end{theorem}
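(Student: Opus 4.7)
The plan is to establish the two assertions separately. For the first claim --- that $T\mapsto\Pol(T)$ takes a generic value on the irreducible component $\Lambda_b$ --- I would argue by constructibility. For each dimension vector $\mu\leq\nu$, the locus in $\Lambda(\nu)$ of modules admitting a submodule of dimension vector $\mu$ is constructible, being the image under a proper projection of an incidence variety inside $\Lambda(\nu)\times\mathrm{Gr}(\mu,\nu)$. Hence the finite set of dimension vectors realized as submodules of $T$ depends constructibly on $T$, and so does its convex hull $\Pol(T)$ in the finite-dimensional parameter space $\mathbb{R}I$. On the irreducible variety $\Lambda_b$ any constructible function is constant on a dense open subset, which produces the desired generic value $\Psi(b)$.

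For the second claim I must check that $\Psi\circ\Phi^{-1}$ intertwines the crystal structures. The weight function is transparent on both sides: the total dimension vector of $T$ is recorded by the top vertex of $\Pol(T)$. For the Kashiwara operators $\tilde e_i,\tilde f_i$, I would compare the geometric description on $\mathfrak{B}$ (via the stratification of $\Lambda(\nu)$ by the constructible functions $\varepsilon_i(T)=\dim\Hom_\Lambda(T,S_i)$ and its socle-theoretic dual) with Kamnitzer's combinatorial description as shifts of the edges of $\Pol(T)$ in the direction $\alpha_i$ meeting the distinguished vertex. The matching is effected by identifying each vertex of $\Pol(T)$ with the dimension vector of the terms of the \HN{} filtration of $T$ for a generic stability condition indexed by a Weyl group element $w$, and by verifying that the resulting edge lengths recover Lusztig's PBW parameters indexing $b\in\mathcal{B}(\infty)$.

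The main obstacle will be proving compatibility of the crystal operators. Concretely, one must show that lengthening the edge of $\Pol(T)$ in the direction $\alpha_i$ corresponds geometrically to passing from the generic point of $\Lambda_b\subset\Lambda(\nu)$ to that of $\Lambda_{\tilde f_i b}\subset\Lambda(\nu+\alpha_i)$. This requires the tropical Plücker relations characterizing \MV{} polytopes together with a careful analysis of how the \HN{} filtrations vary when adding a simple summand $S_i$, and these two ingredients must be shown to produce matching combinatorial data on the two sides. This interlocking between polytopal combinatorics, stability conditions and variations of modules is precisely the technical heart of the Baumann--Kamnitzer argument; once it is in place, the remaining verifications (weight, $\varepsilon_i$, $\varphi_i$, bijectivity) follow from the uniqueness of the crystal $\mathcal{B}(\infty)$ as a highest-weight crystal.
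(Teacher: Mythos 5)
The paper does not prove this statement: it is quoted verbatim as a background result from Baumann--Kamnitzer(--Tingley) \cite{baumann2014AffineMirkovicVilonenPolytopes}, and the paper relies on it only to justify the later combinatorial computation of $\Delta$-vectors via \MV{} polytopes (Proposition \ref{propositionBKT}). There is therefore no proof in the paper for your attempt to be compared against.

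Taken on its own as a summary of the argument in the cited reference, your outline is reasonable. The constructibility argument for the first assertion is exactly right: the locus of $T\in\Lambda(\nu)$ admitting a submodule of a fixed dimension vector $\mu$ is constructible, the assignment $T\mapsto\Pol(T)$ has finite image, so it is constant on a dense open subset of the irreducible $\Lambda_b$. For the second assertion you correctly identify the matching between vertices of $\Pol(T)$ and dimension vectors of Harder--Narasimhan subquotients for stability conditions indexed by chamber weights, the role of the tropical Pl\"ucker relations, and the comparison with Kamnitzer's crystal structure on \MV{} polytopes; you also candidly flag the crystal-operator compatibility as the technical core and do not claim to close it. One caveat worth noting: invoking ``uniqueness of the crystal $B(\infty)$ as a highest-weight crystal'' is not quite a valid finishing move, since $B(\infty)$ is not a highest-weight crystal and the relevant rigidity statement (Kashiwara--Saito) requires verifying a more substantial list of axioms, including the $\ast$-operator structure; in the actual reference this verification is itself a nontrivial step rather than a formality. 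But since the present paper only cites the theorem, any proof here would in any case have been out of scope.
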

			where $\mathcal{MV}(\infty)$ is the set of Mirkovi\'c-Vilonen polytopes starting at the origin of the underlying space.
			
			Then, the crystal isomorphism $\Psi\circ\Phi^{-1}:\mathfrak{B}\rightarrow \mathcal{MV}(\infty)$ associate to any irreductible component of the Lusztig variety a Mirkovi\'c-Vilonen polytope.			
		
		Finally, in \cite[Theorem 4.15]{naito2009MirkovicVilonenPolytopes}, Naito and Sagaki give a combinatorial description of some Mirkovi\'c-Vilonen polytopes corresponding to extremal vectors of irreducible representations of $\mathfrak{g}$. Given an irreducible representation $L(\lambda)$ with crystal basis $B(\lambda)$ seen as a subset of $B(\infty)$, the extremal weight vectors are the elements of the orbit of the highest weight $W\cdot\lambda$ by the Weyl group. To the datum $(\lambda,w)$ we can associate an extremal weight vector corresponding to a Mirkovi\'c-Vilonen polytope that Naito and Sagaki compute explicitly. To the same datum, Geiss, Leclerc and Schröer associate an indecomposable summand $V_k\in V_{\bar{w}}$ where $\lambda$ is the fundamental weight corresponding to the socle of $V_k$ and $w=s_{i_k}\cdots s_{i_1}$ by \cite[Proposition 9.1]{geiss2011KacMoodyGroups}. Examples and concrete computations can be found in \cite[Sections 2.2 \& 2.9]{menard2021AlgebresAmasseesAssociees}
		
		We will now give an explicit interpretation of this reasoning restricting the combinatorial description of \cite[Theorem 4.15]{naito2009MirkovicVilonenPolytopes} to only two paths along edges of the Mirkovi\'c-Vilonen polytope.

	\subsection{Explicitation of the isomorphism}\label{sectionExplicitation}
		
		In the following we will not only work with two reduced representatives of the same Weyl group element but with two reduced representatives of distincts elements. In order to do so we will in fact see any of these elements as "left parts" of reduced representatives of $w_0$.
		\begin{definition}
			Given a representative $\bar{w}=[i_{\ell(w)},\dots,i_1]$ of $w\in W$, we call \emph{left-completion of} $\bar{w}$ in $w_0$ a reduced representative $\dot{w}=[j_r,\dots,j_1]$ of $w_0$ such that $i_k=j_k$ for all $1\leq k\leq\ell(w)$.
		\end{definition}
		\begin{remark}
			This completion exists due to the lattice structure of $W$ for the weak left Bruhat order.
			
			The completion $\dot{w}$ is not unique in general, and we just pick one of them and fix it.
		\end{remark}
		\begin{definition}
			Given $\bar{w}$ and $\dot{w}=[i_r,\dots,i_1]$, let $1\leq k\leq\ell(w)$. We denote by $u_k\in W$ the element defined by:
			\[
				u_k=\left\{\begin{array}{cc}s_{i_r}\cdots s_{i_{k+1}}&\text{if }1\leq k\leq\ell(w)\\e& k=r\end{array}\right.
			\]
			
			Given $\dot{w}_0=[j_r,\dots,j_1]$ another representative of $w_0$, we define $[j_{q_{r-k}},\dots,j_{q_1}]$ to be the leftmost representative of $u_k$ in $\dot{w}_0$ and we write 
			\[
				[j_{r_k},\dots,j_{r_1}]=[j_r,\dots,j_1]\setminus[j_{q_{r-k}},\dots,j_{q_1}]
			\]
			for the set of letters of $\dot{w}_0$ not taking part in the writing of the leftmost representative of $u_k$ in $\dot{w}_0$.
		\end{definition}
		\begin{ex}
			Let $W$ be of type $A_3$, $\dot{w}=[2,1,2,3,2,1]$, $\dot{w}_0=[1,2,3,1,2,1]$ and $k=4$. Then $u_4=s_2s_1$, the leftmost representative of $u_4$ in $\dot{w}_0$ uses the letters of indices $q_1=3$ and $q_2=5$. Then $r_1=1, r_2=2,r_3=4$ and $r_4=6$.
		\end{ex}
		\begin{proposition}[{\cite[Section 1.7]{humphreys1990ReflectionGroupsCoxeter}}]
			Let $\bar{w_0}=[i_r,\dots,i_1]$ be a reduced representative of $w_0\in W$. Then, given the simple roots $(\alpha_i)_{i\in I}$ and the simple reflections $(s_j)_{j\in I}$ in the root system $\Phi$ associated to $W$, the roots of the sequence $(\beta_i)$:
			\[
				\beta_i=\left\{\begin{array}{lc}
					\beta_1=\alpha_{i_1}&\\
					\beta_k=s_{i_1}\cdots s_{i_{k-1}}(\alpha_{i_k})&2\leq k\leq r
				\end{array}\right.
			\]
			are pairwise distincts and form the set of all positive roots of $\Phi$
		\end{proposition}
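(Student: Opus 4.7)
The plan is to establish three facts in turn: each $\beta_k$ is a positive root, the $\beta_k$ are pairwise distinct, and their total number $r$ equals the cardinality of $\Phi^+$. The last item is immediate, since $r=\ell(w_0)$ and a standard property of the longest element is that $w_0$ sends every positive root to a negative one, so $\ell(w_0)=|\Phi^+|$. Combining the three facts forces the $\beta_k$ to exhaust $\Phi^+$.

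The key technical ingredient is the following positivity lemma, which I would establish first by induction on $m$: if $s_{j_1}s_{j_2}\cdots s_{j_m}$ is reduced, then $s_{j_1}\cdots s_{j_{m-1}}(\alpha_{j_m})$ is a positive root. The induction step uses the standard criterion that for $w\in W$ and a simple reflection $s_i$, one has $\ell(ws_i)>\ell(w)$ if and only if $w(\alpha_i)>0$. Since any contiguous initial subword of a reduced expression is itself reduced, the initial subword $s_{i_1}\cdots s_{i_k}$ of $\bar{w_0}$ is reduced, and applying the lemma to it gives $\beta_k\in\Phi^+$.

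For pairwise distinctness I would argue by contradiction. Suppose $\beta_j=\beta_k$ with $j<k$. Applying the inverse $(s_{i_1}\cdots s_{i_{j-1}})^{-1}$ to both sides yields $\alpha_{i_j}=s_{i_j}s_{i_{j+1}}\cdots s_{i_{k-1}}(\alpha_{i_k})$, and applying $s_{i_j}$ then gives $-\alpha_{i_j}=s_{i_{j+1}}\cdots s_{i_{k-1}}(\alpha_{i_k})$. But the right-hand side must be a positive root by the positivity lemma applied to the contiguous subword $s_{i_{j+1}}\cdots s_{i_k}$ (which is reduced, being a contiguous subword of a reduced expression), contradicting the sign of the left-hand side.

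The main obstacle is genuinely the positivity lemma; everything else is then bookkeeping. One must also be careful about the reducedness hypotheses: the argument uses only that contiguous subwords of reduced words are reduced, which is true, whereas the analogous statement for arbitrary (non-contiguous) subwords would fail. Since this proposition is a classical result quoted from Humphreys, one can in fact simply cite it; the sketch above explains the mechanism for the reader who wants to see where the combinatorial structure of the sequence $(\beta_i)$ comes from.
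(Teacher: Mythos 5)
The paper gives no proof of this proposition; it simply cites it from Humphreys, Section~1.7 (it is also Bourbaki, Lie~VI, \S1, no.~6, Cor.~2 to Prop.~17). So there is no paper argument to compare against, and your closing remark — that this can just be cited — matches exactly what the paper does.

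Your sketch is the correct standard proof, and all the steps hold up. Two small remarks on presentation. First, the positivity lemma needs no induction: with $w=s_{j_1}\cdots s_{j_{m-1}}$, reducedness of $s_{j_1}\cdots s_{j_m}$ gives $\ell(ws_{j_m})=m>m-1=\ell(w)$ outright, and then the criterion $\ell(ws_i)>\ell(w)\Leftrightarrow w(\alpha_i)>0$ applied once yields $\beta_k=w(\alpha_{j_m})\in\Phi^+$; there is no separate inductive step to make. Second, there is a silent reindexing worth flagging: the paper's convention is $\bar{w_0}=[i_r,\dots,i_1]\leftrightarrow s_{i_r}\cdots s_{i_1}$, whereas the sequence $\beta_k$ is built from the reversed word $s_{i_1}s_{i_2}\cdots s_{i_r}$. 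This reversed word is a reduced expression for $w_0^{-1}=w_0$, so the contiguous prefixes and the contiguous subword $s_{i_{j+1}}\cdots s_{i_k}$ you invoke are indeed reduced — but it is the reversed word, not the word $\bar{w_0}$ as literally written, whose contiguous subwords you are using. With those two points tidied, the argument is complete: each $\beta_k$ is positive, they are pairwise distinct by the contradiction you describe (applying $(s_{i_1}\cdots s_{i_{j-1}})^{-1}$ and then $s_{i_j}$ to produce a sign clash with the positivity lemma), and there are $r=\ell(w_0)=|\Phi^+|$ of them, so they exhaust $\Phi^+$.
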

		
			Let $\varpi_i$ be the $i$th fundamental weight associated to the coroot $\alpha_i^\ast$ of the underlying root system $\Phi$ of $W$.
			
			\begin{definition}[{\cite[Section 4.1]{naito2009MirkovicVilonenPolytopes}}]\label{defXi}
			With the previous notations, we define a sequence of weights associated to a representative $\dot{w}_0$ of $w_0$ $(\xi_i^{\dot{w}_0})_i$ in the following way. Let $\xi_0^{\dot{w}_0}=\varpi_{i_k}$ and $r_0=0$. We have
			\[
				\left\{\begin{array}{lc}\xi_{r_c}^{\dot{w}_0}:=s_{\beta_{r_c}^{\dot{w}_0}}s_{\beta_{r_{c-1}}^{\dot{w}_0}}\cdots s_{\beta_{r_1}^{\dot{w}_0}}(\varpi_{i_k})&1\leq c\leq k\\\xi_{q_d}^{\dot{w}_0}:=\xi_{r_c}^{\dot{w}_0}&\text{if }r_{c+1}>q_d>r_c\end{array}\right.
			\]
		\end{definition}
		We can now express the $\Delta$-coordinates of any direct summand $V_{k,\bar{w}}$ of $V_{\bar{w}}$, seen as a direct summand of the module $V_{\dot{w}}\supset V_{\bar{w}}$.
		
		\begin{proposition}[{\cite[Lemma 4.1.3]{naito2009MirkovicVilonenPolytopes}\cite[Example 5.14 \& Proposition 5.24]{baumann2014AffineMirkovicVilonenPolytopes}}]\label{propositionBKT}
			With the previous notations we denote $n_i^{\dot{w}_0}$, $1\leq i\leq \ell(w)$ the coefficient:
			\[
				\xi_{i-1}^{\dot{w}_0}-\xi_{i}^{\dot{w}_0}=n_i^{\dot{w}_0}\beta_i^{\dot{w}_0}
			\]
			and we have $\Delta_{\dot{w}_0,i}(V_{\bar{w},k})=n_i^{\dot{w}_0}$.
		\end{proposition}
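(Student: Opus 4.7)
The plan is to combine the three cited machineries in order: the identification of $V_{\bar w,k}$ with an extremal weight vector in a fundamental crystal from \cite{geiss2011KacMoodyGroups}, the explicit combinatorial description of the corresponding Mirkovi\'c--Vilonen polytope of Naito--Sagaki \cite{naito2009MirkovicVilonenPolytopes}, and the interpretation of $\Delta$-coordinates as edge lengths of the MV polytope along the walk prescribed by a reduced word \cite{baumann2014AffineMirkovicVilonenPolytopes}.

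First, by \cite[Proposition 9.1]{geiss2011KacMoodyGroups}, the indecomposable summand $V_{\bar w,k}$ lies in the irreducible component of Lusztig's nilpotent variety that corresponds, under the crystal isomorphism $\Psi\circ\Phi^{-1}:\mathfrak{B}\to\mathcal{MV}(\infty)$ recalled above, to the extremal weight vector indexed by the pair $(\varpi_{i_k},\,s_{i_k}\cdots s_{i_1})$ inside $B(\varpi_{i_k})\subset B(\infty)$. Consequently, the Harder--Narasimhan polytope $\Pol(V_{\bar w,k})$ (taken for a generic module in that component) is exactly the MV polytope of this extremal vector, and computing $\Delta_{\dot w_0,i}(V_{\bar w,k})$ reduces to a geometric calculation on that polytope.

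Next, apply \cite[Lemma 4.1.3]{naito2009MirkovicVilonenPolytopes}: for an extremal weight vector of the form above, it describes explicitly the vertex of the MV polytope reached after the first $i$ steps of the walk along the word $\dot w_0$. Translating that description through the indexation by $u_k$ and by the leftmost subword $[j_{q_{r-k}},\dots,j_{q_1}]$ introduced at the start of Section~\ref{sectionExplicitation} gives precisely the sequence $(\xi_i^{\dot w_0})_i$ of Definition~\ref{defXi}; thus, the $i$-th edge traversed along $\dot w_0$ is the vector $\xi_{i-1}^{\dot w_0}-\xi_i^{\dot w_0}=n_i^{\dot w_0}\,\beta_i^{\dot w_0}$.

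Finally, \cite[Proposition 5.24]{baumann2014AffineMirkovicVilonenPolytopes}, together with the preparatory \cite[Example 5.14]{baumann2014AffineMirkovicVilonenPolytopes}, states that the $i$-th coordinate of the Lusztig datum (i.e.\ the $\Delta_{\dot w_0}$-coordinate) of a module $T$ is exactly the integer coefficient $n_i$ giving the length of the $i$-th edge of the walk indexed by $\dot w_0$ on $\Pol(T)$. Specializing to $T=V_{\bar w,k}$ yields $\Delta_{\dot w_0,i}(V_{\bar w,k})=n_i^{\dot w_0}$. The only delicate point of the argument is bookkeeping of conventions: one must check that the orientation of the walk, the choice between the extremal vector $s_{i_k}\cdots s_{i_1}\cdot v_{\varpi_{i_k}}$ and its opposite, and the identification between the datum $(\lambda,w)$ in Naito--Sagaki and the summand $V_{\bar w,k}$ in \cite{geiss2011KacMoodyGroups}, all line up so that the $n_i^{\dot w_0}$ produced by Naito--Sagaki's formula is indeed the one extracted by the Baumann--Kamnitzer--Tingley edge reading; once these normalizations are fixed, the equality is a direct combination of the three statements.
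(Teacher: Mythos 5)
Your proposal is correct and takes essentially the same approach as the paper: the paper itself presents Proposition~\ref{propositionBKT} as a direct import from the cited references (identifying $V_{\bar{w},k}$ with an extremal-weight vector via \cite[Proposition 9.1]{geiss2011KacMoodyGroups}, reading off the vertex sequence of the corresponding MV polytope via \cite[Lemma 4.1.3]{naito2009MirkovicVilonenPolytopes}, and interpreting the $\Delta_{\dot{w}_0}$-coordinates as edge lengths of the walk via \cite[Example 5.14 \& Proposition 5.24]{baumann2014AffineMirkovicVilonenPolytopes}), offering no independent proof. Your write-up is simply an explicit unpacking of that citation chain, including the appropriate caveat about convention-matching, and is consistent with how the paper uses the result.
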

		\begin{ex}
			Given $W$ of type $A_3$, we will look at two representatives of $w_0$: $\dot{w}=[2,1,2,3,2,1]$, $\dot{w}_0=[1,2,3,1,2,1]$. We have the quivers given at Figure \ref{figureQuiverTwoRepresentatives} with the modules given by the socle decomposition of Table \ref{tableSocleDecompositionTwoModules}.
			
			\begin{figure}[ht]
				\begin{center}
					\begin{tabular}{cc}
						$
						\begin{tikzcd}[row sep=5pt]
							&5\ar[dr]&&&&1\ar[llll]\\
							6\ar[ur]\ar[drrr]&&4\ar[urrr]\ar[ll]&&2\ar[ll]&\\
							&&&3\ar[ur]&&
						\end{tikzcd}
						$
						&
						$
						\begin{tikzcd}[row sep=5pt]
						6\ar[dr]&&&3\ar[dr]\ar[lll]&&1\ar[ll]\\
						&5\ar[urr]\ar[dr]&&&2\ar[lll]\ar[ur]&\\
						&&4\ar[urr]&&&
						\end{tikzcd}
						$\\
						a)&b)
					\end{tabular}
					\caption{a) $\Gamma_{\dot{w}}$, b) $\Gamma_{\dot{w}_0}$}
					\label{figureQuiverTwoRepresentatives}
				\end{center}
			\end{figure}
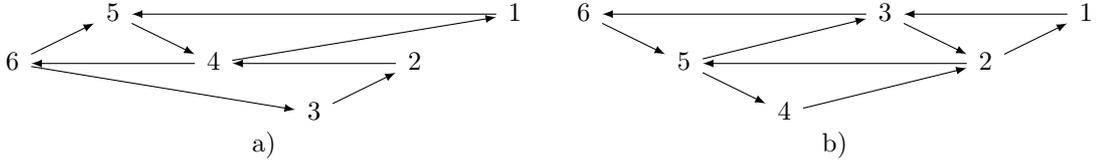
			
			\begin{table}[ht]
				\begin{center}
				$
					\begin{array}{|c||c|c||c||c|c|}
					\hline
						k&V_{\dot{w},k}&V_{\dot{w}_0,k}&k&V_{\dot{w},k}&V_{\dot{w}_0,k}\\
					\hline
						1&\textcolor{red}{1}&\textcolor{red}{1}&4&\begin{tikzcd}[row sep=-3pt, column sep=-3pt]1\ar[dr,dash]&&\textcolor{red}{3}\ar[dl,dash]\\&2&\end{tikzcd}&\begin{tikzcd}[red,row sep=-3pt, column sep=-3pt]1\ar[dr,dash]&&\\&2\ar[dr,dash]&\\&&3\end{tikzcd}\\
					\hline
						2&\begin{tikzcd}[red,row sep=-3pt, column sep=-3pt]1\ar[dr,dash]&\\&2\end{tikzcd}&\begin{tikzcd}[red,row sep=-3pt, column sep=-3pt]1\ar[dr,dash]&\\&2\end{tikzcd}&5&\begin{tikzcd}[row sep=-3pt, column sep=-3pt]&&\textcolor{red}{3}\ar[red,dl,dash]\\&\textcolor{red}{2}\ar[dl,dash]&\\1&&\end{tikzcd}&\begin{tikzcd}[row sep=-3pt, column sep=-3pt]&\textcolor{red}{2}\ar[dl,dash]\ar[dr,dash,red]&\\1\ar[dr,dash]&&\textcolor{red}{3}\ar[dl,dash]\\&2&\end{tikzcd}\\
					\hline
						3&\begin{tikzcd}[red,row sep=-3pt, column sep=-3pt]1\ar[dr,dash]&&\\&2\ar[dr,dash]&\\&&3\end{tikzcd}&\begin{tikzcd}[row sep=-3pt, column sep=-3pt]&\textcolor{red}{2}\ar[dl,dash]\\1&\end{tikzcd}&6&\begin{tikzcd}[row sep=-3pt, column sep=-3pt]&\textcolor{red}{2}\ar[dl,dash]\ar[dr,dash]&\\1\ar[dr,dash]&&3\ar[dl,dash]\\&2&\end{tikzcd}&\begin{tikzcd}[row sep=-3pt, column sep=-3pt]&&\textcolor{red}{3}\ar[dl,dash]\\&2\ar[dl,dash]&\\1&&\end{tikzcd}\\
					\hline
					\end{array}
				$
				\end{center}
				
				\caption{Socle decomposition of indecomposable summands of  $V_{\dot{w}}$ and $V_{\dot{w}_0}$}
				\label{tableSocleDecompositionTwoModules}
			\end{table}
			
			Thanks to Proposition \ref{propositionIntervalModules} we have the $\Delta$ vectors of left hand side of Table \ref{tableInitialDeltaVectors} (seen as a linear combination of the canonical basis $(e_i)_i$ or $(f_j)_j$ of $\Z^r$).
			
			Using the socle decomposition of $M$-modules given by red parts of summands of Table \ref{tableSocleDecompositionTwoModules} we can find the $\Delta_{\dot{w}_0}$-vectors of the indecomposable direct summands of $V_{\dot{w}}$ and, conversely, the $\Delta_{\dot{w}}$-vectors of the indecomposable direct summands of $V_{\dot{w}_0}$.
			
			\begin{table}[h!t]
				\begin{center}
				\begin{tabular}{|c||c|c||c|c|}
					\hline
						$k$&$\Delta_{\dot{w}}(V_{\dot{w},k})$&$\Delta_{\dot{w}_0}(V_{\dot{w}_0,k})$&$\Delta_{\dot{w}_0}(V_{\dot{w},k})$&$\Delta_{\dot{w}}(V_{\dot{w}_0,k})$\\
					\hline
						1&$e_1$&$f_1$&$f_1$&$e_1$\\
					\hline
						2&$e_2$&$f_2$&$f_2$&$e_2$\\
					\hline
						3&$e_3$&$f_1+f_3$&$f_4$&$e_1+e_6$\\
					\hline
						4&$e_2+e_4$&$f_4$&$f_2+f_6$&$e_3$\\
					\hline
						5&$e_1+e_5$&$f_2+f_5$&$f_1+f_3+f_6$&$e_2+e_4+e_6$\\
					\hline
						6&$e_2+e_4+e_6$&$f_1+f_3+f_6$&$f_2+f_5$&$e_1+e_5$\\
					\hline
				\end{tabular}
				\end{center}
				\caption{$\Delta$-vectors directly computable (first two) and computed by hand (last two)}
				\label{tableInitialDeltaVectors}
			\end{table}

			We will now illustrate Proposition \ref{propositionBKT} by computing $\Delta_{\dot{w}}(V_{\dot{w}_0,5})$. Given $\dot{w}_0$, we have $u_5=s_1$ then the leftmost representative of $u_5$ in $\dot{w}$ is given by the letter of index $q_1=5$. We then have $(r_i)=(1,2,3,4,6)$. We can compute the roots $\beta_i^{\dot{w}}$:
			\[
				\beta_1=\alpha_1,\quad \beta_2=s_1(\alpha_2)=\alpha_1+\alpha_2,\quad \beta_3=s_1(s_2(\alpha_3))=\alpha_1+\alpha_2+\alpha_3
			\]
			\[
				\beta_4=s_1s_2s_3(\alpha_2)=\alpha_3,\quad \beta_5=s_1s_2s_3s_2(\alpha_1)=\alpha_2+\alpha_3,\quad \beta_6=s_1s_2s_3s_2s_1(\alpha_2)=\alpha_2
			\]
			
			We can now compute the $(\xi_i^{\dot{w}_0})_i$ sequence. $\xi_0=\varpi_{i_5}=\varpi_2$. We have:
			\[
				\begin{array}{ccccc}
				\xi_1&:=&s_{\alpha_1}(\varpi_2)&=&\varpi_2\\
				\xi_2&:=&s_{\alpha_1+\alpha_2}(\xi_1)&=&\varpi_2-\alpha_1-\alpha_2\\
				\xi_3&:=&s_{\alpha_1+\alpha_2+\alpha_3}(\xi_2)&=&\varpi_2-\alpha_1-\alpha_2\\
				\xi_4&:=&s_{\alpha_3}(\xi_3)&=&\varpi_2-\alpha_1-\alpha_2-\alpha_3\\
				\xi_5&:=&\xi_4&=&\varpi_2-\alpha_1-\alpha_2-\alpha_3\\
				\xi_6&:=&s_{\alpha_2}(\xi_5)&=&\varpi_2-\alpha_1-2\alpha_2-\alpha_3
				\end{array}\quad
				\begin{array}{ccccc}
					n_1\beta_1&=&\xi_0-\xi_1&=&0\alpha_1\\
					n_2\beta_2&=&\xi_1-\xi_2&=&1(\alpha_1+\alpha_2)\\
					n_3\beta_3&=&\xi_2-\xi_3&=&0(\alpha_1+\alpha_2+\alpha_3)\\
					n_4\beta_4&=&\xi_3-\xi_4&=&1\alpha_3\\
					n_5\beta_5&=&\xi_4-\xi_5&=&0(\alpha_2+\alpha_3)\\
					n_6\beta_6&=&\xi_5-\xi_6&=&1\alpha_2
				\end{array}
			\]
			
			and finally we get that $\Delta_{\dot{w}}(V_{\dot{w}_0,5})=(0,1,0,1,0,1)$, as expected.
		\end{ex}
	\subsection{\texorpdfstring{$\Delta$}{Delta}-vectors of \texorpdfstring{$V_{\bar{w}}$}{Vw}}
		We just saw how to compute $\Delta_{\dot{v}}(V_{\bar{w}})$. Even if it is sufficient to use it algorithmically and to do some computation, we will now show a combinatorial description of a part of these coordinates.
		\subsubsection{Specific notations}\label{sectionSpecificNotations}
			We will first introduce some specific notations in order to give a more concise formulation of the main result.
			
			\begin{definition}\label{definitionCombinatorialNumbers}
				Given a reduced representative $\bar{w}=[i_{\ell(w)},\dots,i_1]$ of $w\in W$ and $\bar{v}=[i_{p_{\ell(v)}},\dots,i_{p_1}]$ the rightmost representative of $v\leq w$ in $\bar{w}$, $1\leq k\leq \ell(w)$ and $1\leq m\leq\ell(v)$ we define
				\[
					f_\Min(k):=\min(\{1\leq j\leq \ell(v)\mid i_{p_j}=i_k\}\cup\{0\}),
				\]
				\[
					f(k):=\max(\{1\leq j\leq \ell(v)\mid p_j\leq k\text{ and }i_{p_j}=i_k\}\cup\{0\}),
				\]
				\[
					m^{\oplus}:=\min(\{1\leq j\leq \ell(v)\mid p_j>p_m\text{ et } i_{p_j}=i_{p_m}\}\cup\{\ell(v)+1\}),
				\]
				\[
					\alpha(k,m)=\#\{1\leq j\leq m\mid i_{p_j}=i_k\},\gamma_m:=\alpha(p_m,m)\quad\text{and}
				\]
				\[
					\beta_m=\#\{1\leq j\leq p_m\mid \forall 1\leq k\leq m, p_k\neq j\text{ and }i_j=i_{p_m}\}
				\]
				
				We fix the convention $\alpha(k,0)=0$.
			\end{definition}		
			Here we deal with two systems of indices. $1\leq k\leq\ell(w)$ is the usual indexation (from right to left) of the letters of $\bar{w}$. $1\leq m\leq\ell(v)$ is the indexation of the letters of $\bar{v}$. As $\bar{v}$ is the rightmost subword of $\bar{w}$, any $1\leq m\leq \ell(v)$ correspond to an index $1\leq k\leq \ell(w)$, namely $m\leadsto p_m$. In the following we will speak about $\bar{w}$-indices for $1\leq k\leq \ell(w)$ and $\bar{v}$-indices for $1\leq m\leq\ell(v)$.
			
			$f_\Min(k)$ is the lowest $\bar{v}$-index of color $i_k$. $f(k)$ is the highest $\bar{v}$-index of the letters of color $i_k$ in the subword of $\bar{w}$ consisting of all the letter on the right of the $k$-th ($k$-th included).
			
			$m^\oplus$ is the $\bar{v}$-successor of the letter of $\bar{v}$-index $m$.
			
			$\alpha(k,m)$ is the number of $\bar{v}$-indices of color $i_k$ lower or equal to $m$.
			
			$\beta_m$ is the number of $\bar{w}$-indices of color $i_k$ lower than $p_m$ which have no $\bar{v}$-index. It counts the number of letter of $\bar{w}$ on the right of the $p_m$-th, of color $i_{p_m}$, not taking part in the writing of $\bar{v}$.
			
		\begin{ex}
			Let $W=D_5$ of Dynkin diagram $\begin{tikzcd}[row sep=-2pt,column sep=-2pt]&&&4\\1\ar[r,dash]&2\ar[r,dash]&3\ar[ur,dash]\ar[dr,dash]&\\&&&5\end{tikzcd}$. We have 
			\[
				\bar{w}=[\textcolor{blue}{2}, \underline{\textcolor{Green}{3}}, \underline{\textcolor{orange}{4}}, \underline{\textcolor{red}{1}}, \underline{\textcolor{blue}{2}}, \underline{\textcolor{Green}{3}}, \underline{\textcolor{violet}{5}}, \textcolor{red}{1}, \textcolor{blue}{2}, \underline{\textcolor{Green}{3}}, \underline{\textcolor{orange}{4}}, \textcolor{red}{1}, \textcolor{blue}{2}, \underline{\textcolor{Green}{3}}, \textcolor{red}{1}, \underline{\textcolor{blue}{2}}, \textcolor{red}{1}]
			\]
			where underlined letters give $\bar{v}$.
			
			The notations previously introduced give us Table \ref{tableExNotations}.
			
			\begin{table}[ht]
			\begin{center}
				\begin{tabular}{|c|c||c|c|c|c|c|c||c|c||c|c|c|c|c|c|}
					\hline
					$k$&$m$&$i_k$&$f_\Min(k)$&$f(k)$&$m^\oplus$&$\beta_m$&$\gamma_m$&$k$&$m$&$i_k$&$f_\Min(k)$&$f(k)$&$m^\oplus$&$\beta_m$&$\gamma_m$\\
					\hline
					1&\cellcolor{gray!50}&\textcolor{red}{1}&8&0&\cellcolor{gray!50}&\cellcolor{gray!50}&\cellcolor{gray!50}&10&\cellcolor{gray!50}&\textcolor{red}{1}&8&0&\cellcolor{gray!50}&\cellcolor{gray!50}&\cellcolor{gray!50}\\
					\hline
					2&1&\textcolor{blue}{2}&1&1&7&0&1&11&5&\textcolor{violet}{5}&5&5&18&0&1\\
					\hline
					3&\cellcolor{gray!50}&\textcolor{red}{1}&8&0&\cellcolor{gray!50}&\cellcolor{gray!50}&\cellcolor{gray!50}&12&6&\textcolor{Green}{3}&2&6&10&0&3\\
					\hline
					4&2&\textcolor{Green}{3}&2&2&4&0&1&13&7&\textcolor{blue}{2}&1&7&18&2&2\\
					\hline
					5&\cellcolor{gray!50}&\textcolor{blue}{2}&1&1&\cellcolor{gray!50}&\cellcolor{gray!50}&\cellcolor{gray!50}&14&8&\textcolor{red}{1}&8&8&18&4&1\\
					\hline
					6&\cellcolor{gray!50}&\textcolor{red}{1}&8&0&\cellcolor{gray!50}&\cellcolor{gray!50}&\cellcolor{gray!50}&15&9&\textcolor{orange}{4}&3&9&18&0&2\\
					\hline
					7&3&\textcolor{orange}{4}&3&3&9&0&1&16&10&\textcolor{Green}{3}&2&10&18&0&4\\
					\hline
					8&4&\textcolor{Green}{3}&2&4&6&0&2&17&\cellcolor{gray!50}&\textcolor{blue}{2}&1&7&\cellcolor{gray!50}&\cellcolor{gray!50}&\cellcolor{gray!50}\\
					\hline
					9&\cellcolor{gray!50}&\textcolor{blue}{2}&1&1&\cellcolor{gray!50}&\cellcolor{gray!50}&\cellcolor{gray!50}&\cellcolor{gray!50}&\cellcolor{gray!50}&\cellcolor{gray!50}&\cellcolor{gray!50}&\cellcolor{gray!50}&\cellcolor{gray!50}&\cellcolor{gray!50}&\cellcolor{gray!50}\\
					\hline
				\end{tabular}
			\end{center}
				\caption{Example of the introduced notations}
				\label{tableExNotations}
			\end{table}
		\end{ex}
	\subsubsection{Combinatorial description}
		In the following we will consider $\bar{w}=[i_{\ell(w)},\dots,i_1]$ a reduced representative of $w\in W$ and $\bar{v}=[i_{p_{\ell(v)}},\dots,i_{p_1}]$ the rightmost subword of $\bar{w}$ representing $v\leq w$ for the Bruhat order in $W$. We denote $\dot{w}$ the completion of $\bar{w}$ in a representative of $w_0$ and $\dot{v}$ the one of $\bar{v}$.
		
		By construction we have the following result:
		\begin{proposition}
			The module $V_{\bar{w}}$ is a submodule of $V_{\dot{w}}$ obtained by taking the $\ell(w)$ first summands in the indexation given by Definition \ref{definitionVk}.
			
			The quiver $\Gamma_{\bar{w}}$ is a subquiver of the quiver $\Gamma_{\dot{w}}$ obtained by taking the $\ell(w)$ first vertices and the arrows between them.
		\end{proposition}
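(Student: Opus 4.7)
I would prove both claims essentially by unpacking the definitions, with the quiver part requiring a short combinatorial case analysis at the end. For the module statement, I would start from Definition~\ref{definitionVk}: $V_{k,\bar{w}}=\soc_{(i_k,\ldots,i_1)}(Q_{i_k})$ depends only on the first $k$ letters of the reduced word. Since the left-completion $\dot{w}$ is defined so that its positions $1,\ldots,\ell(w)$ carry the same letters as $\bar{w}$, we obtain $V_{k,\bar{w}}=V_{k,\dot{w}}$ for every $1\leq k\leq\ell(w)$. Consequently $V_{\bar{w}}=\bigoplus_{k=1}^{\ell(w)}V_{k,\bar{w}}$ is the direct summand of $V_{\dot{w}}=\bigoplus_{k=1}^{r}V_{k,\dot{w}}$ obtained by keeping only the first $\ell(w)$ indecomposable summands, and is therefore a submodule.

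For the quiver statement, I would invoke the combinatorial description of $\Gamma_{\bar{w}}$ from Section~\ref{sectionInitialQuiverDefinition}: vertices correspond to indecomposable summands and arrows are built via the horizontal/ordinary rule in terms of the colors $i_k$ and the successors $k^+$. The vertex sets match by the first part. For the arrows, the plan is to compare the rule pair-by-pair on indices $k,j\leq\ell(w)$. The colors $i_k,i_j$ are shared between the two words, so the only possible source of discrepancy is the value of the successors $k^+$ and $j^+$: if $k^+\leq\ell(w)$, the successor is identical in $\bar{w}$ and $\dot{w}$; otherwise, in $\bar{w}$ the convention sets $k^+=\ell(w)+1$, while in $\dot{w}$ the successor lies in $[\ell(w)+1,r]$ or equals $r+1$. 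A short case analysis on the pair $(k^+,j^+)$ settles horizontal arrows immediately and handles almost all ordinary arrows.

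The main obstacle is the boundary configuration in which both $k$ and $j$ are the rightmost occurrences of their (adjacent) colors in $\bar{w}$. Here the inequality $j^+\geq k^+$ is the trivial equality $\ell(w)+1=\ell(w)+1$ in $\bar{w}$, whereas in $\dot{w}$ it becomes the genuine assertion that the first letter of color $i_k$ added by the completion appears no later than the first letter of color $i_j$. To handle this, I would analyze the structure of the completion $\dot{w}=u\cdot\bar{w}$, where $u$ is a reduced word for $w_0w^{-1}$: the reducedness requirement on prepending simple reflections to $w$, together with the maximality of the positions of $i_j$ and $i_k$ in $\bar{w}$, is exactly what constrains the order in which these colors may first occur in $u$. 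Once this delicate boundary case is settled, every horizontal and ordinary arrow of $\Gamma_{\bar{w}}$ matches an arrow of $\Gamma_{\dot{w}}$ between its first $\ell(w)$ vertices (and vice versa), finishing the verification.
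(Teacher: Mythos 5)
The module half of your argument is correct and is essentially what "by construction" means: $V_{k,\bar{w}}=\soc_{(i_k,\dots,i_1)}(Q_{i_k})$ depends only on the first $k$ letters, which $\bar{w}$ and its completion $\dot{w}$ share for $k\leq\ell(w)$, so the identification $V_{k,\bar{w}}=V_{k,\dot{w}}$ is immediate. Your careful case analysis on the arrow rule is also on target, and you are right to isolate as the only delicate case the one where both $k$ and $j$ are the rightmost occurrences of their (adjacent) colors in $\bar{w}$: there the condition $j^+\geq k^+$ is vacuous in $\bar{w}$ but becomes a real constraint in $\dot{w}$.

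However, the fix you propose for that boundary case does not work. The inequality $j^{+,\dot{w}}\geq k^{+,\dot{w}}$ that you plan to extract from the structure of the completion is simply false in general. Take type $A_3$, $\bar{w}=[2,1]$ (so $k=1$ of color $1$, $j=2$ of color $2$, both rightmost in $\bar{w}$), and the left-completion $\dot{w}=[2,1,2,3,2,1]$, which is reduced since $s_2s_1s_2s_3s_2s_1=s_1s_2s_1s_3s_2s_1=w_0$. Here $1^{+,\dot{w}}=5$ while $2^{+,\dot{w}}=4$, so $j^{+,\dot{w}}<k^{+,\dot{w}}$ and $\Gamma_{\dot{w}}$ has no arrow between the first two vertices, whereas the literal combinatorial rule does produce an arrow $V_2\to V_1$ in $\Gamma_{\bar{w}}$. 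No amount of analysis of the reduced word for $w_0w^{-1}$ will rule this out, because the desired ordering of first reappearances simply depends on which completion is picked.

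What actually resolves the boundary case is a convention you have overlooked: in Definition~\ref{definitionClusterSeed} the paper declares quivers of cluster seeds to be defined only up to arrows between two vertices corresponding to coefficients. The vertices at positions $k$ and $j$ that cause the trouble are precisely the rightmost occurrences of their colors in $\bar{w}$, i.e., they label the projective-injective summands of $\C_w$ and hence are the coefficient (frozen) vertices of the seed $(V_{\bar{w}},\Gamma_{\bar{w}})$. An arrow between two such vertices is irrelevant by convention, and this is the only place where $\Gamma_{\bar{w}}$ and the first-$\ell(w)$-vertex subquiver of $\Gamma_{\dot{w}}$ can disagree (your remaining cases show all other arrows agree). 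So the correct proof of the quiver statement replaces your reduced-word argument with the observation that any discrepancy is confined to frozen-frozen arrows, which the paper's definition of the quiver quotients out.
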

		In the following we will identify $V_{k,\bar{w}}$ and $V_{k,\dot{w}}$ for $1\leq k\leq \ell(w)$ and $V_{m,\bar{v}}$ and $V_{m,\dot{v}}$ for $1\leq m\leq\ell(v)$.
		
		Thanks to Proposition \ref{propositionIntervalModules} we already know a formulation for $\Delta_{\bar{w}}(V_{k,\bar{w}})$. We will now establish a combinatorical description of the $\ell(v)$ first coordinates of $\Delta_{\dot{v}}(V_{k,\bar{w}})$.
		\begin{nota}
			With the above notations, we will denote $\widetilde{\Delta}_{\dot{v}}(X):=(\Delta_{\dot{v},i}(X))_{i=1}^{\ell(v)}$ the $\ell(v)$ first $\Delta_{\dot{v}}$-coordinates of a module $X$.
		
		In the following we will use the notations introduced in Section \ref{sectionExplicitation}. Precisely we will focus on the $\Delta_{\dot{v}}$-vector of $V_{k,\bar{w}}$ seen as $V_{k,\dot{w}}$ (thus $1\leq k\leq\ell(w)$). As $V_{k,\bar{w}}$ is only defined by the $k$ first letters of $\dot{w}=[i_r,\dots,\underbrace{i_{\ell(w)},\dots,i_1}_{\bar{w}}]$ we define
		\[
			\bar{w}_k=[i_k,\dots,i_1],\quad w_k=s_{i_k}\cdots s_{i_1},\quad \bar{u}_k=[i_r,\dots,i_{k+1}]
		\]
		and as $u_k=s_{i_r}\cdots s_{i_{k+1}}=w_0w_k^{-1}$ we take $[j_{q_{r-k}},\dots,j_{q_1}]$ as the leftmost representative of $u_k$ in $\dot{v}=[j_r,\dots, j_1]$ (where $r=\ell(w_0)$). We denote $1\leq r_1<\cdots<r_k\leq r$ the elements of $\{1,\dots,r\}\setminus \{q_1,\dots,q_{r-k}\}$.
		\end{nota}
		\begin{proposition}[Relation between indices of representatives of $u_k$]\label{propositionRelationIndices}
		With the above notations
			\begin{enumerate}
				\item $q_1> \ell(v)$ iff $p_{\ell(v)}<k+1$
				\item there exists an integer $0\leq t\leq \ell(v)-1$ such that $q_1=\ell(v)-t$ iff $p_{\ell(v)-t}\geq k+1$ and $p_{\ell(v)-t-1}\leq k$.
			\end{enumerate}
		\end{proposition}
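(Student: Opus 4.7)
The strategy is to combine both parts of the proposition into a single uniform characterisation of $q_1$. Setting $m(k) := \max\{m\in\{0,\dots,\ell(v)\}\mid p_m\leq k\}$ with the convention $p_0=0$, I claim $q_1 = m(k)+1$, understanding $m(k)=\ell(v)$ as corresponding to $q_1>\ell(v)$. The first assertion is then the case $m(k)=\ell(v)$, i.e.\ $p_{\ell(v)}\leq k$, and the second is the case $m(k)=\ell(v)-t-1$, which reads off exactly as $p_{\ell(v)-t-1}\leq k < p_{\ell(v)-t}$.

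The key step is the following Bruhat-theoretic reformulation: $q_1 > m$ iff $u_k \leq w_0 v_{[m]}^{-1}$, where $v_{[m]} := s_{j_m}\cdots s_{j_1} = s_{i_{p_m}}\cdots s_{i_{p_1}}$ is the element represented by the rightmost $m$ letters of $\dot{v}$, so that the leftmost $r-m$ letters of $\dot{v}$ form a reduced word for $w_0 v_{[m]}^{-1}$. I would establish this via the classical greedy characterisation of the leftmost representative: process the letters of $\dot{v}$ from left to right, and include the current letter iff it is a left descent of the current remaining factor of $u_k$. A standard induction using the lifting property of Bruhat order shows that this procedure yields the lex-smallest subword, and that it terminates within the first $r-m$ letters iff $u_k$ is Bruhat-dominated by the element they represent.

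Since $u_k = w_0 w_k^{-1}$ and left multiplication by $w_0$ reverses Bruhat order while inversion preserves it, $u_k \leq w_0 v_{[m]}^{-1}$ is equivalent to $v_{[m]}\leq w_k$. The final input is the equivalence $v_{[m]}\leq w_k \Leftrightarrow p_m\leq k$: $(\Leftarrow)$ is immediate since $[i_{p_m},\dots,i_{p_1}]$ is a subword of $[i_k,\dots,i_1]$ when $p_m\leq k$; for $(\Rightarrow)$ I would show that the maximum index of the rightmost representative of $v_{[m]}$ in $\bar{w}$ is exactly $p_m$, via the following exchange argument. Splicing any strictly smaller candidate rightmost representative of $v_{[m]}$ together with the tail $(p_{m+1},\dots,p_{\ell(v)})$ of $\bar{v}$ would produce a representative of $v$ in $\bar{w}$ which is more to the right than $\bar{v}$, contradicting the definition of $\bar{v}$ as the rightmost representative of $v$ in $\bar{w}$. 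Putting everything together, the sets $\{m\mid q_1>m\}$ and $\{m\mid p_m\leq k\}$ coincide as downward-closed subsets of $\{0,\dots,\ell(v)\}$, giving $q_1 = m(k)+1$.

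The main obstacle is the bookkeeping: the paper's right-to-left indexing of $\dot{v}$ versus the left-to-right positions used by the greedy procedure, as well as the embedding of $\bar{v}$ and of $u_k$'s representatives inside words of length $r$, must be tracked carefully. The only genuinely combinatorial input is the exchange argument identifying the maximum index of the rightmost representative of $v_{[m]}$ with $p_m$; everything else amounts to standard Bruhat-order manipulation.
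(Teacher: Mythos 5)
Your proof is correct and follows essentially the same route as the paper: both pass from the position of the leftmost subword of $u_k$ in $\dot{v}$ to a Bruhat inequality $u_k\leq w_0\,(\text{right segment of }\dot v)^{-1}$, then translate via $u_k=w_0w_k^{-1}$ (using that $w_0$-multiplication reverses Bruhat order and inversion preserves it), and finally reduce to the position of the rightmost subword of a truncation of $v$ in $\bar{w}$, where the ``rightmost subword is the lex-smallest, hence fits in any prefix containing a representative'' lemma (the paper's Lemma~2.1.38 from the thesis, your splicing/exchange argument) is the one genuinely combinatorial input. The difference is one of packaging: the paper proves parts~(1) and~(2) separately, tracking in part~(2) the conjunction of a $\leq$ and a $\not\leq$ for two adjacent truncations of $v$, whereas you fold everything into the single identity $q_1=m(k)+1$ with $m(k)=\max\{m:p_m\leq k\}$ and the family $v_{[m]}$. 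Your unification is cleaner and makes the downward-closedness that drives the equivalence explicit; the trade-off is that the steps you sketch (the greedy left-descent characterisation of the leftmost representative and the coordinate-wise domination property of the rightmost representative) are stated but not carried out in detail — though the paper is in the same position, citing them from the thesis rather than reproving them.
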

		\begin{proof}
			We begin by proving the first part of the statement. Suppose that we have $p_{\ell(v)}\leq k$. Then $\bar{v}=[i_{p_{\ell(v)}},\dots,i_{p_1}]$ is a subword of $[i_k,\dots,i_1]$, representative of $w_k$ and we have $v\leq u_k^{-1}w_0=w_k$.
			
			Conversely if $v\leq w_k$, there is at least one subword of $[i_k,\dots,i_1]$ representing $v$. The rightmost subword representing $v$ is the smallest for the lexicographic order among any representative of $v$ in $\bar{w}_k$ and thus a subword of $w_k$ (see \cite[Lemma 2.1.38]{menard2021AlgebresAmasseesAssociees}). In particular we have $p_{\ell(w)}\leq k$. We then have shown that $p_{\ell(v)}\leq k\Leftrightarrow v\leq u_a^{-1}w_0$.
			
			We now establish the equivalence $v\leq u_a^{-1}w_0\Leftrightarrow q_1>\ell(v)$. Suppose that $q_1>\ell(v)$ then $[j_{q_{r-k}},\dots,j_{q_1}]$ is a subword of $[j_r,\dots,j_{\ell(v)+1}]$. We thus have $u_k\leq w_0v^{-1}$ which is equivalent to $u_k^{-1}\leq vw_0$ (as $w_0^{-1}=w_0$ and by inversion of the words) and finaly $u_k^{-1}w_0\geq v$ (by multiplication by $w_0$).
			
			Conversely, suppose that $u_k\leq w_0v^{-1}$ then $[j_r,\dots,j_{\ell(v)+1}]$ is a representative of $w_0v^{-1}$ and there exists at least one representative of $u_k$ in this representative. As we are looking at the left part of $\dot{v}$, the leftmost representative of $u_k$ in $\dot{v}$ is among them and eventually $[j_{q_{r-k}},\dots,j_{q_1}]$ is a subword of $[j_{r},\dots,j_{\ell(v)+1}]$. Then it is now obvious that $q_1\geq \ell(v)+1$. 
			
			We now prove the second statement by introducing again a third statement and proceed via double equivalence.
			
			Let $t$ be such that $q_1=\ell(v)-t$, we want to show that it is equivalent to 
			\begin{equation}	(u\leq w_0s_{j_1}\cdots s_{j_{\ell(v)-t-1}})\quad\text{and}\quad(u\not\leq w_0s_{j_1}\cdots s_{j_{\ell(v)-t}})\label{eqInegalité}
			\end{equation}
			
			As $q_1=\ell(v)-t$, $[j_{q_{r-k}},\dots,j_{q_1}]$ is a subword of $[j_r,\dots,j_{\ell(v)-t}]$ and we have $u_k\leq w_0s_{j_1}\cdots s_{j_{\ell(v)-t-1}}$.
			
			On the other hand, as $[j_{q_{r-k}},\dots,j_{q_1}]$ is the leftmost representative of $u_k$ in $\dot{v}$, there is no representative of $u_k$ $[j_{n_{r-k}},\dots j_{n_1}]$ such that $n_1>q_1$. Then $u_k$ has no representative in $[j_r,\dots,j_{q_1+1}]$ and we have $u_k\not\leq w_0s_{j_1}\dots s_{j_{\ell(v)-t}}.$
			
			Conversely, by hypothesis, we know that there is a representative of $u_k$ among the subwords of $[j_r,\dots,j_{\ell(v)-t}]$ and none among the subwords of $[j_r,\dots,j_{\ell(v)-t+1}]$. In particular the leftmost representative has as a first letter $j_{\ell(v)-t-1}$ and $\ell(v)-t=q_1$.
			
			We now show the other equivalence. If $p_{\ell(v)-t}\geq k+1$ and $p_{\ell(v)-t-1}\leq k$, $s_{i_{p_{\ell(v)-t-1}}}\cdots s_{i_{p_1}}$ has at least one representative among the subwords of $[i_k,\dots,i_1]$. One of them is $[i_{p_{\ell(v)-t-1}},\dots,i_{p_1}]$ itself and we have $s_{i_{p_{\ell(v)-t-1}}}\cdots s_{i_{p_1}}\leq u_k^{-1}w_0$.
			Conversely, by right minimality, $s_{i_{p_{\ell(v)-t}}}\cdots s_{i_{p_1}}$ has no representative among the subwords of $[i_k,\dots,i_1]$ and $s_{i_{p_{\ell(v)-t}}}\cdots s_{i_{p_1}}\not\leq u_k^{-1}w_0$ and we get the wanted conjunction of inequalities by inverting and multiplication by $w_0$.
			
			On the other hand, if we have the two above inequalities there is no representative of $s_{i_{p_{\ell(v)-t}}}\cdots s_{i_{p_1}}$ in $\bar{u}_k$ and there is at least one of $s_{i_{p_{\ell(v)-t-1}}}\cdots s_{i_{p_1}}$. By minimality of the rightmost representative of $u_k$ we have $p_{\ell(v)-t}>k$ and $p_{\ell(v)-t-1}\leq k$.
			
			The inequalities used in this equivalence are the same as the one of Equation \ref{eqInegalité} as, by definition, $(j_i)_{1\leq i\leq \ell(v)}=(i_{p_m})_{1\leq m\leq \ell(v)}$. 
		\end{proof}
		
		\begin{proposition}\label{propositionSequenceBKT}
			For $m<q_1$, $\dot{x}=[l_m,\dots,l_1]$ a reduced representative of $\dot{w}$ (possibly equal to $\dot{w}$) we have $\xi_m^{\dot{x}}=s_{l_1}\cdots s_{l_m}(\varpi_{i_{k}})$ and
			\[
				\Delta_{\dot{x},m}(V_k)=\left\{\begin{array}{cc}1&\text{if } l_m=i_k\\0&\text{otherwise}\end{array}\right.
			\]
			where $\xi$ is the sequence defined in Definition \ref{defXi} defining the $\Delta_{\dot{x}}$-coordinates of $V_{\bar{w},k}$.
		\end{proposition}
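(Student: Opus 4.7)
The plan is to reduce this to a direct calculation using the definition of the sequence $(\xi_i^{\dot{x}})$ and Proposition \ref{propositionBKT}, exploiting the fact that the condition $m<q_1$ forces all indices up to $m$ to be of $r$-type rather than $q$-type.

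First, I would observe that since $m<q_1$, none of the indices $1,\dots,m$ belongs to $\{q_1,\dots,q_{r-k}\}$. Consequently, by construction of the increasing enumeration $r_1<r_2<\cdots$ of the complement, we have $r_c=c$ for every $1\le c\le m$. Feeding this into Definition \ref{defXi}, the second clause never triggers and we get
\[
\xi_m^{\dot{x}}=s_{\beta_m^{\dot{x}}}s_{\beta_{m-1}^{\dot{x}}}\cdots s_{\beta_1^{\dot{x}}}(\varpi_{i_k}).
\]
Next, I would apply the standard identity $s_{w\alpha}=ws_\alpha w^{-1}$ to each factor, using $\beta_i^{\dot{x}}=s_{l_1}\cdots s_{l_{i-1}}(\alpha_{l_i})$. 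Writing $\sigma_i:=s_{l_1}\cdots s_{l_i}$ (with $\sigma_0=e$), each factor becomes $s_{\beta_i^{\dot{x}}}=\sigma_{i-1}s_{l_i}\sigma_{i-1}^{-1}=\sigma_i\sigma_{i-1}^{-1}$, and the product telescopes to $\sigma_m\sigma_0^{-1}=\sigma_m$. This yields the first claim $\xi_m^{\dot{x}}=s_{l_1}\cdots s_{l_m}(\varpi_{i_k})$.

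For the second claim, I would invoke Proposition \ref{propositionBKT}, which says $\Delta_{\dot{x},m}(V_k)=n_m^{\dot{x}}$ where $n_m^{\dot{x}}$ is defined by $\xi_{m-1}^{\dot{x}}-\xi_m^{\dot{x}}=n_m^{\dot{x}}\beta_m^{\dot{x}}$. Using the formula just obtained, one computes
\[
\xi_{m-1}^{\dot{x}}-\xi_m^{\dot{x}}=\sigma_{m-1}\bigl(\varpi_{i_k}-s_{l_m}(\varpi_{i_k})\bigr)=\langle \varpi_{i_k},\alpha_{l_m}^\vee\rangle\,\sigma_{m-1}(\alpha_{l_m})=\delta_{l_m,i_k}\,\beta_m^{\dot{x}},
\]
from which $n_m^{\dot{x}}=\delta_{l_m,i_k}$ follows immediately.

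There is no real obstacle here: the content of the statement is that, before hitting the first $q$-index, the sequence $(\xi_i^{\dot{x}})$ behaves exactly as in the "generic" position from \cite{baumann2014AffineMirkovicVilonenPolytopes}, and the $\Delta$-coordinates of $V_k$ reduce to the fundamental weight pairings. The only mild pitfall is keeping straight the two indexing conventions (the decreasing convention used to write $\dot{x}=[l_m,\dots,l_1]$ versus the increasing indexation of $(\xi_i^{\dot{x}})$ and $(\beta_i^{\dot{x}})$) so that the telescoping is performed on the correct side; once this is set up properly, the computation is a one-line consequence of $s_{w\alpha}=ws_\alpha w^{-1}$ and the definition of a fundamental weight.
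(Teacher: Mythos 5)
Your proof is correct and follows essentially the same route as the paper's: establish $r_c=c$ for $c\le m$ from $m<q_1$, telescope $s_{\beta_m^{\dot{x}}}\cdots s_{\beta_1^{\dot{x}}}=s_{l_1}\cdots s_{l_m}$ via $s_{\beta_i^{\dot{x}}}=\sigma_{i-1}s_{l_i}\sigma_{i-1}^{-1}$, then compute $\xi_{m-1}^{\dot{x}}-\xi_m^{\dot{x}}=\sigma_{m-1}(\varpi_{i_k}-s_{l_m}(\varpi_{i_k}))$ and read off $n_m^{\dot{x}}=\delta_{l_m,i_k}$. Your write-up is slightly more explicit (naming $\sigma_i$, making the $r_c=c$ step visible, and writing the pairing $\langle\varpi_{i_k},\alpha_{l_m}^\vee\rangle$), but the ideas coincide.
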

		\begin{proof}
			We have $\beta_m^{\dot{x}}=s_{l_1}\cdots s_{l_{m-1}}(\alpha_{l_m})$ and so $s_{\beta_m^{\dot{x}}}=(s_{l_1}\cdots s_{l_{m}-1})s_{l_m}(s_{l_1}\cdots s_{l_{m}-1})$ then 
			\[
				s_{\beta_m^{\dot{x}}}\cdots s_{\beta_1^{\dot{v}}}=s_{l_1}\cdots s_{l_m}.
			\]
			
			As we suppose that $m<q_1$, we have $\xi_m^{\dot{x}}=s_{\beta_m^{\dot{x}}}\cdots s_{\beta_1^{\dot{x}}}(\varpi_{i_k})$ and then 
			\[
				\xi_{m-1}^{\dot{x}}-\xi_{m}^{\dot{x}}=s_{l_1}\cdots s_{l_{m-1}}(\varpi_{i_k}-s_{l_m}(\varpi_{i_k})).
			\]
			
			We have $s_{l_m}(\varpi_{i_k})= \varpi_{i_k}$ if $i_k\neq j_m$ and $\varpi_{i_k}-\alpha_{l_m}$ if $i_k=l_m$. In this last case $\xi_{m-1}^{\dot{x}}-\xi_m^{\dot{x}}=\beta_m^{\dot{x}}$ and then:
			\[
				n_m^{\dot{x}}=\left\{\begin{array}{cc}1&\text{if } l_m=i_k\\0&\text{otherwise}\end{array}\right.
			\]
		\end{proof}
		\begin{lemma}\label{lemmaNoHole}
			If there is $0\leq t\leq \ell(v)-1$ such that $q_1=\ell(v)-t$, we have 
			\[
				[q_{t+1},\dots,q_1]=[\ell(v),\dots,\ell(v)-t].
			\]
		\end{lemma}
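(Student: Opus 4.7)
The plan is to induct on $t$. The base case $t=0$ is tautological, since the conclusion $[q_1]=[\ell(v)]$ is precisely the hypothesis.

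For the inductive step, suppose $q_1=\ell(v)-t$ with $t\geq 1$. By Proposition~\ref{propositionRelationIndices} this is equivalent to $p_{\ell(v)-t}\geq k+1$ and $p_{\ell(v)-t-1}\leq k$. I would introduce the shifted parameter $k':=p_{\ell(v)-t}$, which satisfies $k'\geq k+1$; this yields the strictly shorter element $u_{k'}=s_{i_r}\cdots s_{i_{k'+1}}$ together with the reduced factorization $u_k=u_{k'}\cdot s_{i_{k'}}s_{i_{k'-1}}\cdots s_{i_{k+1}}$. Applying Proposition~\ref{propositionRelationIndices} with parameter $k'$, using that $p_{\ell(v)-t+1}>p_{\ell(v)-t}=k'$ together with $p_{\ell(v)-t}\leq k'$, one obtains $q_1^{(k')}=\ell(v)-t+1$, where $q^{(k')}$ denotes the indices of the leftmost representative of $u_{k'}$ in $\dot{v}$. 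The inductive hypothesis then yields $[q_t^{(k')},\ldots,q_1^{(k')}]=[\ell(v),\ldots,\ell(v)-t+1]$.

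To transfer this back to $u_k$, I would establish the relation $q_{i+1}=q_i^{(k')}$ for $1\leq i\leq t$; combined with the hypothesis $q_1=\ell(v)-t$ this yields the claim. The argument would exploit the lex-maximality of the tuple $(q_{r-k},\ldots,q_1)$: the hypothesis $q_1=\ell(v)-t$ already forces every position strictly less than $\ell(v)-t$ into the complement $\{r_1,\ldots,r_k\}$, and an exchange argument would then show that no position in $\{\ell(v)-t+1,\ldots,\ell(v)\}$ can lie in the complement without contradicting lex-maximality. The intuition is that these positions in $\dot{v}$ correspond precisely to the letters of $\bar{v}$ sitting in $\bar{u}_k$, so including them in the representative of $u_k$ is the leftmost available choice.

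The main obstacle is in making this final exchange argument rigorous: any substitution of a complement position into the subword must still produce the element $u_k$, and this generally requires a compensating adjustment among the positions greater than $\ell(v)$. The fact that such a compensation always exists must be extracted from the explicit structure of the leftmost representative of $u_{k'}$ furnished by the inductive hypothesis, together with the reduced factorization $u_k=u_{k'}\cdot x$ with $x:=s_{i_{k'}}\cdots s_{i_{k+1}}$. This is the delicate point, since the leftmost representative of $u_k$ does not in general split cleanly into a leftmost representative of $u_{k'}$ concatenated with a representative of $x$.
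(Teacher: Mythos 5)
Your strategy (induction on $t$, reducing to $u_{k'}$ with $k':=p_{\ell(v)-t}$) differs from the paper's, which runs a decreasing induction on $k$, moving one letter at a time from $k$ to $k-1$, with a two-case analysis according to whether $p_{\ell(v)-t_k-1}=k$ or $p_{\ell(v)-t_k-1}\leq k-1$. Your setup is sound as far as it goes: $k'\geq k+1$, the factorization $u_k=u_{k'}\cdot s_{i_{k'}}\cdots s_{i_{k+1}}$ is reduced, and Proposition~\ref{propositionRelationIndices} applied at $k'$ indeed gives $q_1^{(k')}=\ell(v)-t+1$, so the inductive hypothesis at $t-1$ is available.

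The genuine gap is the transfer step $q_{i+1}=q_i^{(k')}$ for $1\leq i\leq t$, which you flag yourself as ``the delicate point'' but do not establish. Your sketched exchange argument is not enough, because when $k'>k+1$ (the typical case) there is no clean relation between the leftmost representative of $u_k$ and that of $u_{k'}$. Concretely, deleting the single position $q_1$ from the leftmost representative of $u_k$ produces a representative of $u_k\,s_{j_{q_1}}=u_{k'}\cdot s_{i_{k'}}\cdots s_{i_{k+2}}$, which is not $u_{k'}$ unless $k'=k+1$; so ``lex-maximality'' alone does not let you match positions across the gap of $k'-k$ extra letters. The heuristic that positions $\ell(v)-t+1,\ldots,\ell(v)$ are ``the leftmost available choice'' is a restatement of the goal rather than an argument, and in fact establishing it directly would make the whole detour through $k'$ redundant.

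The paper's one-step induction is designed precisely to avoid this: multiplying by a single $s_{i_k}$ on the right either adds one new position to the leftmost representative (first case, $t_{k-1}=t_k+1$) or, in the second case, removes exactly one position, and the strong exchange condition identifies which position is dropped; the subsequent index chase is then controlled by cardinality bounds on one-element intervals. That mechanism does not survive your larger jump $k\to k'$, so the step you leave open is exactly where all the technical content of the lemma lives. Until it is filled, the proposal is a plan rather than a proof.
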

		\begin{proof}
			We prove this lemma for a given $\dot{w}$ and $\dot{v}$, we will proceed by decreasing induction on the value of $k$. The sequences $(q_i)_{1\leq i\leq r-k}$ depending on the value of $k$ we add an index indicating to which value of $k$ they refer to.
			
			If $k>p_{\ell(v)}-1$, we have $q_{1,k}\geq\ell(v)$ by Proposition \ref{propositionRelationIndices} and thus we are not in the hypothesis of the theorem.
			
			If $k=p_{\ell(v)}-1$ then $p_{\ell(v)}= k+1$ and as $p_{\ell(v)-1}\leq p_{\ell(v)}-1$, $p_{\ell(v)-1}\leq k$. By the same proposition $q_{1,k}=\ell(v)$ and so $[q_{1,k}]=[\ell(v)]$ and the lemma is verified. 
			
			Suppose that $1<k\leq p_{\ell(v)}$, that there exists $0\leq t_k\leq\ell(v)-1$ such that $q_{1,k}=\ell(v)-t_k$ and assume that $[q_{t_k+1,k},\dots,q_{1,k}]=[\ell(v),\dots,\ell(v)-t_k]$. We want to show that there exists $0\leq t_{k-1}\leq \ell(v)-1$ such that $q_{1,k-1}=\ell(v)-t_{k-1}$ and such that $[q_{t_{k-1}+1,k-1},\dots,q_{1,k-1}]=[\ell(v),\dots,\ell(v)-t_{k-1}]$.
			
			First we show that $t_{k-1}$ exists. As $q_{1,k}=\ell(v)-t_k$, we have $p_{\ell(v)-t_k}\geq k+1\text{ and } p_{\ell(v)-t_{k}-1}\leq k$ which can be rewritten as $p_{\ell(v)-t_{k}}\geq (k-1)+2\text{ and }p_{\ell(v)-t_k-1}\leq (k-1)+1$.
			
			We have two possibilities: either $p_{\ell(v)-t_k-1}=(k-1)+1=k$ or $p_{\ell(v)-t_k-1}\leq k-1$.
			
			In the first case we then have $
				p_{\ell(v)-t_k-1}\geq (k-1)+1\text{ and }p_{\ell(v)-t_k-2}<(k-1)+1$ so $p_{\ell(v)-t_k-2}\leq k-1$
			then, according to Proposition \ref{propositionRelationIndices} we have $q_{1,k-1}=\ell(v)-t_k-1$ and so $t_{k-1}=t_k+1$
			
			In the second case we have $p_{\ell(v)-t_k}\geq (k-1)+2\geq (k-1)+1\text{ and }p_{\ell(v)-q_k-1}\leq k-1$ and so $q_{1,k-1}=\ell(v)-t_k$, eventually $t_k=t_{k-1}$.
			
			In both cases, such a $t_{k-1}$ exists. We now show that any of the two cases imply the equality.
			
			In the first case $t_{k-1}=t_k+1$ and $q_{1,k-1}=\ell-t_{k-1}$. We first show that $[q_{r-(k-1),k-1},\dots,q_{2,k-1}]$ is a representative of $u_k$. By definition $u_{k-1}=s_{j_{q_{r-(k-1),k-1}}}\cdots s_{j_{q_{1,k-1}}}=s_{i_r}\cdots s_{i_{(k-1)+1}}$.
			
			Then $u_{k-1}s_{i_{(k-1)+1}}=s_{i_r}\cdots s_{i_{k+1}}=u_{k}$. However, $i_{p_m}=j_m$ for any $1\leq m\leq\ell(v)$ by definition of $\dot{v}$. As $q_{1,k-1}=\ell(v)-t_{k-1}$ we have $j_{q_{1,k-1}}=j_{\ell(v)-t_{k-1}}=i_{p_{\ell(v)-t_{k-1}}}=i_{p_{\ell(v)-t_k-1}}$ and, as we are in the first case, $p_{\ell(v)-t_k-1}=(k-1)+1=k$, $j_{q_{1,k-1}}=i_{k}$. Then $u_k=u_{k-1}s_{i_k}=u_ks_{j_{q_{1,k-1}}}=s_{j_{q_{r-(k-1),k-1}}}\cdots s_{j_{q_{2,k-1}}}$ and $[q_{r-(k-1),k-1},\dots,q_{2,k-1}]$ is a representative of $u_{k}$ in $\dot{v}$. As the leftmost representative of $u_k$ in $\dot{v}$ is $[q_{r-k,k},\dots,q_{1,k}]$ for any $2\leq \alpha\leq r-(k-1)$ we have $q_{\alpha-1,k}\geq q_{\alpha,k-1}$. 
			
			We will show that, in fact, $q_{\alpha-1,k}=q_{\alpha,k-1}$ for any $2\leq \alpha\leq t_k+2$.
			
			As $q_{2,k-1}>q_{1,k-1}$ we have $\ell(v)-t_{k}=q_{1,k}\geq q_{2,k-1}>q_{1,k-1}=\ell(v)-t_{k-1}=\ell(v)-t_k-1$ so $q_{1,k}=q_{2,k-1}$. As for any $2\leq\alpha\leq t_{k}+2$ we have $q_{\alpha-1,k}=\ell(v)-t_k+(\alpha-2)$ by induction hypothesis, we can repeat this proof for any value of $\alpha\leq t_{k}+2$ and get:
			\[
				\forall 2\leq \alpha\leq t_{k}+2,\ q_{\alpha,k-1}=\ell(v)-t_{k-1}+(\alpha-1)=\ell(v)-t_k+(\alpha-2)=q_{\alpha-1,k}
			\]
			and so $[q_{t_{k-1}+1,k-1},\dots,q_{2,k-1}]=[q_{t_{k}+1,k},\dots, q_{1,k}]=[\ell(v),\dots,\ell(v)-t_{k-1}+1].$
			
			As we have already shown that $q_{1,k-1}=\ell(v)-t_{k-1}$ we finally obtain
			\[
				[q_{t_{k-1}+1,k-1},\dots, q_{1,k-1}]=[\ell(v),\dots,\ell(v)-t_{k-1}]
			\]
			
			In the second case, $\ell(v)-t_{k}=q_{1,k}=q_{1,k-1}=\ell(v)-t_{k-1}$. As previously, by definition, $u_{k-1}=u_ks_{i_{k}}$ and so $u_{k-1}=s_{j_{q_{r-k,k}}}\cdots s_{j_{q_{1,k}}}s_{i_{k}}$.
			
			However $\ell(u_{k-1})=r-{k-1}$ and $\ell(u_ks_{j_{q_{1,k-1}}})\leq r-(k-1)-1$ as it admits $s_{j_{q_{r-(k-1),k-1}}}\cdots s_{j_{q_{2,k-1}}}$ as representative. We then have $\ell(u_{k-1})>\ell(u_{k-1}s_{j_{q_{1,k-1}}})$.
			
			According to the exchange relation there is $r-(k-1)\geq\alpha\geq q_{1,k-1}$ such that 
			\[
				u_k=u_{k-1}s_{i_{k}}=s_{j_{q_{r-(k-1),k-1}}}\cdots\hat{s}_{j_{q_{\alpha,k-1}}}\cdots s_{j_{q_{1,k-1}}}.
			\]
			As $[j_{q_{r-k,k}},\dots,j_{1,k}]$ is the leftmost representative of $u_k$ we have:
			\[
				\forall 1\leq \beta\leq \alpha,\  q_{\beta,k-1}\leq q_{\beta,k}\text{ and }\forall \alpha +1\leq \beta\leq r-(k-1),\ q_{\beta,k-1}\leq q_{\beta-1,k}
			\]
			
			We will show by induction that for all $2\leq \beta\leq \min(\alpha,t_{k-1}+1)$, we have $q_{\beta,k-1}=q_{\beta,k}.$
			
			By hypothesis, we have $q_{1,k-1}=q_{1,k}=\ell(v)-t_{k-1}$ and so for $\beta=2$ we have 
			\[
				\ell(v)-t_{k-1}=q_{1,k-1}<q_{2,k-1}\leq q_{2,k}=\ell(v)-t_k+1=\ell(v)-t_{k-1}+1
			\]
			and thus $q_{2,k-1}=\ell-t_{k-1}+1=q_{2,k}$. We then have the initialization.
			
			By induction, we suppose that we have $q_{\beta-1,k-1}=\ell(v)-t_{k-1}+\beta-2=q_{\beta-1,k}$. Then we have: $\ell(v)-t_{k-1}+\beta-2=q_{\beta-1,k-1}<q_{\beta,k-1}$ 
			and, by minimality we have \[q_{\beta,k-1}\leq q_{\beta,k}=\ell(v)-t_{k-1}+\beta-1\] and so we have the following sequence of inequalities.
			\[
				\ell(v)-t_{k-1}+\beta -2=q_{\beta-1,k-1}<q_{\beta,k-1}\leq q_{\beta,k}=\ell(v)-t_{k-1}+\beta-1
			\]
			and so $q_{\beta,k-1}=q_{\beta,k}$.
			
			We know want to determine relative positions of $\alpha$ and $t_{k-1}+1$. Indeed, if $\alpha> t_{k-1}+1$ we can deduce the lemma from the above induction.
			
			By contradiction, let us suppose $\min(\alpha,t_{k-1}+1)=\alpha\leq t_{k-1}+1$ then we have:
			\[
				\ell(v)-t_{k-1}+\alpha-2=q_{\alpha-1,k-1}<q_{\alpha,k-1}<q_{\alpha+1,k-1}\leq q_{\alpha,k}=\ell(v)-t_{k-1}+\alpha-1
			\]
			which is absurd by a cardinality argument. Then $\alpha>t_{k-1}+1$ and $q_{\beta,k}=q_{\beta,k-1}$ for all $2\leq \beta\leq t_{k-1}+1$ and as we previously show that $q_{1,k-1}=q_{1,k}$, we can eventually write:
			\[
				[q_{t_{k-1}+1,k-1},\dots,q_{1,k-1}]=[\ell(v),\dots,\ell(v)-t_{k-1}]
			\]
			and we have the heredity and thus the lemma is proved in both cases.
		\end{proof}
		\begin{theorem}\label{thmCoordinateTranslation}
			We have the following correspondance between $\Delta$-vector coordinates:
			\[
				\Delta_{\dot{v},m}(V_k)=\Delta_{\dot{w},i_{p_m}}(V_k),\quad \forall 1\leq m\leq\ell(v)\text{ and }1\leq k\leq\ell(w).
			\]
		\end{theorem}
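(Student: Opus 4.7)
My plan is to compute both sides of the claimed equality via Proposition \ref{propositionBKT}, which expresses any $\Delta_{\dot{x},i}(V_k)$ as the scalar $n_i^{\dot{x}}$ read off the $\xi$-sequence of Definition \ref{defXi}. For the right hand side (which I read as $\Delta_{\dot{w},p_m}(V_k)$, interpreting the subscript $i_{p_m}$ as a typographical slip for $p_m$), observe that in $\dot{w}$ the leftmost representative of $u_k = s_{i_r}\cdots s_{i_{k+1}}$ is just the tail $[i_r,\ldots,i_{k+1}]$, so $q_1^{\dot{w}} = k+1$; Proposition \ref{propositionSequenceBKT} combined with Proposition \ref{propositionIntervalModules} then yields $\Delta_{\dot{w},p_m}(V_k) = \delta_{i_{p_m},i_k}$ when $p_m \leq k$ and $\Delta_{\dot{w},p_m}(V_k) = 0$ when $p_m > k$. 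The theorem therefore reduces to matching these two values on the $\dot{v}$ side.

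I would handle the two cases separately. When $p_m \leq k$, part (2) of Proposition \ref{propositionRelationIndices} gives $m < q_1^{\dot{v}}$; Proposition \ref{propositionSequenceBKT} applied to $\dot{v}$ then produces $\Delta_{\dot{v},m}(V_k) = \delta_{j_m,i_k} = \delta_{i_{p_m},i_k}$, since $j_m = i_{p_m}$ by construction of $\dot{v}$ as a completion of $\bar{v}$. When $p_m > k$, the same proposition gives $m \geq q_1^{\dot{v}}$, and since $m \leq \ell(v)$, Lemma \ref{lemmaNoHole} forces $m$ to lie in the consecutive block $\{q_1^{\dot{v}},\ldots,q_{t+1}^{\dot{v}}\}$, i.e., $m$ is a $q$-position of $\dot{v}$. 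To conclude $\Delta_{\dot{v},m}(V_k) = 0$ in this case it suffices to observe that the $\xi$-sequence of Definition \ref{defXi} only changes at $r$-positions: whether $m-1$ is itself a $q$-position or is an $r$-position $r_c$, one has $\xi_{m-1}^{\dot{v}} = \xi_{r_c}^{\dot{v}} = \xi_m^{\dot{v}}$, forcing $n_m^{\dot{v}} = 0$.

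The only subtle step is the case $p_m > k$, where I cannot invoke the explicit formula of Proposition \ref{propositionSequenceBKT} and must instead argue vanishing. The crux is not the vanishing itself (which is immediate from the definition of $\xi$ at $q$-positions) but the structural input supplied by Lemma \ref{lemmaNoHole}, which guarantees that \emph{every} integer in $[q_1^{\dot{v}},\ell(v)]$ is a $q$-position and not merely $q_1^{\dot{v}}$. Without this combinatorial no-hole property the argument would fail for intermediate values of $m$, so the main work has effectively already been absorbed into that lemma and into the indexing dictionary furnished by Proposition \ref{propositionRelationIndices}; what remains here is purely a case-by-case matching.
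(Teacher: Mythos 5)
Your proof is correct and follows essentially the same route as the paper's: both compute the easy coordinate via Proposition~\ref{propositionSequenceBKT} and invoke Lemma~\ref{lemmaNoHole} to force the vanishing of $\Delta_{\dot{v},m}(V_k)$ in the remaining case, with Proposition~\ref{propositionRelationIndices} supplying the dictionary between your case split (on $p_m\leq k$ versus $p_m>k$) and the paper's (on $m<q_1^{\dot{v}}$ versus $m\geq q_1^{\dot{v}}$), which are equivalent. Your reading of the subscript $i_{p_m}$ in the statement as a slip for $p_m$ is also what the paper's own proof does, and your explicit remark that the $\xi$-sequence is constant across $q$-positions is the (implicit) mechanism the paper uses in both the $\dot{w}$ and $\dot{v}$ vanishing steps.
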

		\begin{proof}
			First, if $k=\ell(w_0)$ (and thus $w=w_0$), $u_k=e$ and we can always apply Proposition \ref{propositionSequenceBKT} and we have the desired equality.
			
			In any other case we will prove the identities, for any coordinate of index $1\leq m\leq\ell(v)$.
			
			If $m<q_1$, thanks to Proposition \ref{propositionSequenceBKT} we take $\dot{x}=\dot{v}$ and then $\Delta_{\dot{v},m}(V_k)=\left\{\begin{array}{cc}1&\text{if }j_m=i_k\\0&\text{otherwise}\end{array}\right..$
			
			Now we will show we can use the same result but with $\dot{w}=\dot{x}$ and $p_m$ instead of $m$ to get
			\[
				\Delta_{\dot{w},m}(V_k)=\left\{\begin{array}{cc}1&\text{if }i_{p_m}=i_k\\0&\text{otherwise}\end{array}\right.
			\]
			
			As $m<q_1$ we have two possibilities. Either we have $m<m_1$ for any $1\leq m\leq \ell(v)$ (first case of Proposition \ref{propositionRelationIndices}) or not. If we do, $p_{\ell(v)}\leq k$. As $p_m\leq p_{\ell(v)}$, $q_1$ is the first index of $u_k$ in $\dot{w}$, $k+1$ by definition. We then have $p_m\leq q_1$ and we can apply Proposition \ref{propositionSequenceBKT}.
			
			In the other case, there exists $0\leq t\leq\ell(v)-1$ such that $q_1=\ell(v)-t$. However, in any case we have $m<q_1$ so $p_m\leq p_{\ell(v)-t-1}\leq k$ and we can once again apply Proposition \ref{propositionSequenceBKT}. We then have 
			\[
				\Delta_{\dot{w},p_m}(V_k)=\left\{\begin{array}{cc}1&\text{if }i_{p_m}=i_k\\0&\text{otherwise}\end{array}\right.
			\]
			hence the result.
			
			Now we consider the case where $m\geq q_1$. In this case, thanks to Proposition \ref{propositionRelationIndices} we now that $m\geq \ell(v)-t$ and thus $p_m\geq p_{\ell(v)-t}\geq k+1$. By definition of $u_k$ in $\dot{w}$, we know that $\xi_{p_m}^{\dot{w}}=\xi^{\dot{w}}_{p_{m}-1}$ and so $n_{p_m}^{\dot{w}}=0$ and finally $\Delta_{\dot{w},p_m}(V_k)=0$. We then have to prove that, in this case, $\Delta_{\dot{v},m}(V_k)=0$. 
			
			We use Lemma \ref{lemmaNoHole}. We have that $[\ell(v),\dots,\ell(v)-t]=[q_{t+1},\dots,q_1]$ and thus, for all $\ell(v)\geq m\geq\ell(v)-t$, we have $\Delta_{\dot{v},m}(V_k)=0$.
			
			Finally, for all $1\leq m\leq\ell(v)$ we have: $\Delta_{\dot{v},m}(V_k)=\Delta_{\dot{w},p_m}(V_k)$.
		\end{proof}
		\begin{ex}
			In order to illustrate the previous notions let us consider the following case. Here $W=D_6$ 
			
			We consider $\bar{w}=[5, 3, 4, 2, 3, 6, 4, 2, 1, 5, 2, 3, 2, 4, 3, 6, 4, 5, 3, 4, 1, 3]$ and complete it in:
			\[
				\dot{w}=[2, 3, 4, 6, 1, 2, 3, 4, \underline{5}, 3, 4, 2, 3, 6, \underline{4}, 2,\underline{ 1, 5, 2, 3, 2, 4, 3, 6, 4, 5}, 3, \underline{4}, 1, \underline{3}].
			\]
			
			We chose $v=s_4s_5s_1s_2s_3s_4s_6s_2s_3s_4s_5s_4s_2s_3$ (so $\ell(v)=14$) and we have 
			\[
				\dot{v}= [4, 6, 2, 3, 4, 5, 1, 2, 3, 4, 6, 5, 4, 2, 3, 2, \underbrace{5, 4, 1, 5, 2, 3, 2, 4, 3, 6, 4, 5, 4, 3}_{\bar{v}}].
			\]
			
			The indices of $\bar{w}$ letters forming $\bar{v}$ are $(p_m)_{1\leq m\leq 14}=(1,3,5,6,7,8,9,10,11,12,13,14,16,22).$
			
			Let us take $k=10$ we then have
			\[
				\dot{w}=[\underbrace{2, 3, 4, 6, 1, 2, 3, 4, 5, 3, 4, 2, 3, 6, 4, 2, 1, 5, 2, 3}_{\bar{u}_{10}}, \underbrace{2, 4, 3, 6, 4, 5, 3, 4, 1, 3}_{\bar{w}_{10}}].
			\]
			and now we look for the leftmost representative of $u_{10}$ in $\dot{v}$.
			
			We find $[6, 2, 3, 4, 5, 1, 2, 3, 4, 6, 4, 2, 3, 2, 5, 4, 1, 5, 2, 3]$ which uses the letters in positions
				\[
					(q_i)_{1\leq i\leq \ell(w_0)-k}=(9,10,11,12,13,14,15,16,17,18,20,21,22,23,24,25,26,27,28,29).
				\]
				
				In particular we then have $q_1=9$. $p_9=11\geq k+1$ et $p_8=10\leq k$, as in the second case of Proposition \ref{propositionRelationIndices}.
				
				To sum up, we have
				\[
					\dot{w}=[\rouge{2},\rouge{3},\rouge{4},\rouge{6},\rouge{1},\rouge{2},\rouge{3},\rouge{4},\rouge{5},\rouge{3},\rouge{4},\rouge{2},\rouge{3},\rouge{6},\rouge{4},\rouge{2},\rouge{1},\rouge{5},\rouge{2},\rouge{3},2,4,3,6,4,5,3,4,1,3]
				\]
				\[
					\dot{w}=[2,3,4,6,1,2,3,4,\bleu{5},3,4,2,3,6,\bleu{4},2,\bleu{1},\bleu{5},\bleu{2},\bleu{3},\bleu{2},\bleu{4},\bleu{3},\bleu{6},\bleu{4},\bleu{5},3,\bleu{4},1,\bleu{3}]
				\]
				\[
					\dot{v}=[4,\rouge{6},\rouge{2},\rouge{3},\rouge{4},\rouge{5},\rouge{1},\rouge{2},\rouge{3},\rouge{4},\rouge{6},5,\rouge{4},\rouge{2},\rouge{3},\rouge{2},\rouge{5},\rouge{4},\rouge{1},\rouge{5},\rouge{2},\rouge{3},2,4,3,6,4,5,4,3]
				\]
				\[
					\dot{v}=[4,6,2,3,4,5,1,2,3,4,6,5,4,2,3,2,\bleu{5},\bleu{4},\bleu{1},\bleu{5},\bleu{2},\bleu{3},\bleu{2},\bleu{4},\bleu{3},\bleu{6},\bleu{4},\bleu{5},\bleu{4},\bleu{3}]
				\]
				where the overlined red letters represent positions of the letters of the leftmost representative of $u_{10}$ in $\dot{w}$ and $\dot{v}$ and the underlined blue ones, the one of $v$. We can see Lemma \ref{lemmaNoHole} in the fact that, for indices $9=q_1\leq m\leq \ell(v)=14$, all letters are (the right) part of the representative of $u_{10}$.
				
				We now place ourselves in a close case. We keep $\dot{w}$, and take $v=s_4s_5s_1s_2s_3s_4s_6s_2s_3s_4s_5s_4s_2s_3$. We have $\bar{v}=[4, 1, 2, 3, 2, 4, 3, 6, 4, 5, 4, 3]$, so, in particular, $p_{12}=15$ and
				\[
					\dot{v}=[4, 6, 2, 3, 4, 5, 1, 2, 3, 4, 6, 2, 3, 4, 5, 4, 3, 2, 4, 1, 2, 3, 2, 4, 3, 6, 4, 5, 4, 3].
				\] 
				Let $k=16$ be. We have $p_{12}=15<17=k+1$. Thus we have
				\[
					\dot{w}=[\rouge{2},\rouge{3},\rouge{4},\rouge{6},\rouge{1},\rouge{2},\rouge{3},\rouge{4},\rouge{5},\rouge{3},\rouge{4},\rouge{2},\rouge{3},\rouge{6},4,2,1,5,2,3,2,4,3,6,4,5,3,4,1,3]
				\]
				\[
					\dot{w}=[2,3,4,6,1,2,3,4,5,3,4,2,3,6,4,\bleu{2},\bleu{1},5,\bleu{2},\bleu{3},\bleu{2},\bleu{4},\bleu{3},\bleu{6},\bleu{4},\bleu{5},3,\bleu{4},1,\bleu{3}]
				\]
				\[
					\dot{v}=[4, \rouge{6}, \rouge{2}, \rouge{3}, \rouge{4}, \rouge{5}, \rouge{1}, \rouge{2}, \rouge{3}, \rouge{4}, \rouge{6}, \rouge{2}, 3, \rouge{4}, \rouge{5}, 4, \rouge{3}, 2, 4, 1, 2, 3, 2, 4, 3, 6, 4, 5, 4, 3]
				\]
				\[
					\dot{v}=[4, 6, 2, 3, 4, 5, 1, 2, 3, 4, 6, 2, 3, 4, 5, 4, 3, 2, \bleu{4}, \bleu{1}, \bleu{2}, \bleu{3}, \bleu{2}, \bleu{4}, \bleu{3}, \bleu{6}, \bleu{4}, \bleu{5}, \bleu{4}, \bleu{3}]
				\]
				
				The representative of $u_{16}$ is $\bar{u}_{16}=[2,3,4,6,1,2,3,4,5,3,4,2,3,6]$. The leftmost representative of $u_{16}$ in $\dot{v}$ is then: $[4, 6, 2, 3, 4, 5, 1, 2, 3, 4, 6, 2, 4, 5, 3]$. In particular $q_1=14>12=\ell(v)$, we are in the first case of Proposition \ref{propositionRelationIndices}.
		\end{ex}
		
		The previous results give us now a combinatorial description of the first coordinates of the $\Delta_{\dot{v}}$-vectors of indecomposable summands of $V_{\bar{w}}$.
		
		\begin{theorem}[Structure of initial $\Delta_{\dot{v}}$-vectors]\label{theoremStructureInitialDeltaVectors}
			The $\ell(v)$-first coordinates of $\Delta_{\dot{v}}$-vectors of direct indecomposable summands of $V_{\bar{w}}$ are given by:
			\[
				\widetilde{\Delta}_{\dot{v}}(V_k)=\sum_{j=1}^{f(k)}\delta_{i_{p_j},i_k}f_j\quad 1\leq k\leq\ell(w)
			\]
			where $(f_j)_{1\leq j\leq \ell(v)}$ is the canonical basis.
			
			We can also say that the indices of coordinates equal to 1 of $\Delta_{\dot{v}}(V_k)$ are the integers of the set
			\[
				\{1\leq j\leq\ell(v)\mid f_{\Min}(k)\leq j\leq f(k)\mid i_{p_j}=i_k\},
			\]
			the other $\ell(v)$ first being zero.
		\end{theorem}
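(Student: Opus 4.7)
The plan is to derive this statement as a direct consequence of the two heavy-lifting results already established, namely Theorem \ref{thmCoordinateTranslation} and Proposition \ref{propositionIntervalModules}. The main technical work (Proposition \ref{propositionRelationIndices}, Lemma \ref{lemmaNoHole} and the case analysis inside Theorem \ref{thmCoordinateTranslation}) has already been carried out, so what remains is essentially a combinatorial bookkeeping argument.

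First I would apply Proposition \ref{propositionIntervalModules} not to $\bar{w}$ but to its completion $\dot{w}=[i_r,\dots,i_1]$, which is a reduced representative of $w_0$. Since for $1\leq k\leq\ell(w)$ the first $k$ letters of $\dot{w}$ agree with those of $\bar{w}$, the modules $V_{k,\bar{w}}$ and $V_{k,\dot{w}}$ coincide (as already noted in the text just before the statement). Proposition \ref{propositionIntervalModules} then yields
\[
\Delta_{\dot{w}}(V_k)=\sum_{j=1}^{k}\delta_{i_k,i_j}\,e_j,
\]
so that for any index $1\leq p\leq r$ the coordinate $\Delta_{\dot{w},p}(V_k)$ equals $1$ if $p\leq k$ and $i_p=i_k$, and $0$ otherwise.

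Next I would invoke Theorem \ref{thmCoordinateTranslation}, which gives the coordinate-by-coordinate identification $\Delta_{\dot{v},m}(V_k)=\Delta_{\dot{w},p_m}(V_k)$ for every $1\leq m\leq\ell(v)$. Combined with the previous step,
\[
\Delta_{\dot{v},m}(V_k)=\begin{cases}1&\text{if }p_m\leq k\text{ and }i_{p_m}=i_k,\\0&\text{otherwise}.\end{cases}
\]
This already essentially gives the wanted combinatorial description; the final step is only to rewrite the supporting indices using the notations $f_{\Min}(k)$ and $f(k)$ introduced in Definition \ref{definitionCombinatorialNumbers}.

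The translation in the last step is straightforward because $(p_m)$ is strictly increasing in $m$: by definition $f(k)$ is the largest $m$ with $p_m\leq k$ and $i_{p_m}=i_k$, and $f_{\Min}(k)$ is the smallest $m$ with $i_{p_m}=i_k$. Any $m>f(k)$ with $i_{p_m}=i_k$ must have $p_m>k$ by maximality, so the corresponding coordinate vanishes and can be omitted from the sum; conversely, any $m\leq f(k)$ with $i_{p_m}=i_k$ automatically satisfies $p_m\leq p_{f(k)}\leq k$. The Kronecker factor $\delta_{i_{p_j},i_k}$ then absorbs the vanishing indices in between, which yields exactly $\widetilde{\Delta}_{\dot{v}}(V_k)=\sum_{j=1}^{f(k)}\delta_{i_{p_j},i_k}f_j$. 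Since the entire argument is an assembly of prior results, there is no genuine obstacle beyond verifying that the identification $V_{k,\bar{w}}=V_{k,\dot{w}}$ is used consistently and that the degenerate case $f(k)=0$ (empty sum) matches the case where no $m$ satisfies the two conditions.
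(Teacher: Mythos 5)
Your proof is correct, and it is a clean assembly of the right ingredients. The paper's proof nominally starts from Theorem \ref{thmCoordinateTranslation} as well, but then it does not actually use the identity $\Delta_{\dot{v},m}(V_k)=\Delta_{\dot{w},p_m}(V_k)$ directly: it re-invokes the lower-level machinery (Proposition \ref{propositionRelationIndices}, the $\xi$-sequence computation of Proposition \ref{propositionSequenceBKT}, and Lemma \ref{lemmaNoHole}) to establish the two cases $p_m\leq k$ and $p_m>k$, even though those results are precisely the ones that went into proving Theorem \ref{thmCoordinateTranslation} in the first place. Your route is shorter and makes the logical dependence cleaner: you take Theorem \ref{thmCoordinateTranslation} at face value to move to $\Delta_{\dot{w},p_m}(V_k)$, and then evaluate that coordinate by a single application of Proposition \ref{propositionIntervalModules} with $\bar{w}$ replaced by its completion $\dot{w}$ (a reduced word for $w_0$), using the identification $V_{k,\bar{w}}=V_{k,\dot{w}}$ for $1\leq k\leq\ell(w)$. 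The final translation into $f(k)$ and $f_\Min(k)$, using the strict monotonicity of $(p_m)$ and the Kronecker factor to absorb the vanishing indices, is exactly right, as is the observation that the degenerate case $f(k)=0$ gives the empty sum. In short, what you do differently is let the two already-proved results carry the full weight, which is a slight but genuine simplification over the paper's proof.
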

		\begin{proof}
			By definition, $f(k)=\max\{1\leq j\leq\ell(v)\mid p_j\leq k \text{ et }i_{p_j}=i_k\}$. We will use Theorem \ref{thmCoordinateTranslation} and look at any index $1\leq m\leq \ell(v)$ of $\Delta_{\dot{v}}$. 
			
			First suppose that we have $m$ such that $p_m\leq k$. For these coordinates, thanks to Proposition \ref{propositionRelationIndices}, we have that $m<q_1$ and so we can directly use Proposition \ref{propositionBKT}.
			
			In the other case, by Lemma \ref{lemmaNoHole} and the proof of Theorem \ref{thmCoordinateTranslation}, we now that these coordinates are zero and here they are not part of the sum.
			
			For the second formulation, it is equivalent as $\{1\leq j<f_{\Min}(k)\mid i_{p_j}i_k\}$ is empty by minimality of $f_{\Min}(k)$.
		\end{proof}
	\subsection{Criterion for being in \texorpdfstring{$\C^v$}{Cv}}
		We now want to use $\Delta$-vector to be able to tell if a module is in $\C^v$ or in $\C_w$.
		
		\begin{proposition}\label{propositionCwbelonging}
			Let $\bar{w}$ be a  reduced representative of $w\in W$ and $\dot{w}$ a reduced representative of $w_0$ having $\bar{w}$ as a right factor. A module $M\in\mod(\Lambda)$ is in $\C_w$ iff its $\Delta_{\dot{w}}$-vector has zero coordinates for indices $k\geq \ell(w)$.
		\end{proposition}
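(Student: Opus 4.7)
The plan is to prove this as a direct consequence of the stratification description of $\C_w$ (Lemma \ref{lemmaStratificationCw}) together with the characterization of $\Delta$-vectors via stratification multiplicities (Proposition \ref{propositionEquivGLS11}). The statement concerns $\Delta_{\dot{w}}$-coordinates for indices $k>\ell(w)$ (the indices $k\leq\ell(w)$ correspond to the summands of $V_{\bar{w}}$, and the remaining indices $\ell(w)+1\leq k\leq r=\ell(w_0)$ correspond to the extra summands of $V_{\dot{w}}$ arising from the completion).

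The first observation I would record is that for $1\leq k\leq\ell(w)$ one has $V_{k,\dot{w}}=V_{k,\bar{w}}$ and therefore $M_{k,\dot{w}}=M_{k,\bar{w}}$; this is immediate from Definition \ref{definitionVk}, since $V_{k,\bar{w}}$ only depends on the letters $i_1,\ldots,i_k$ of the representative and the left-completion $\dot{w}$ agrees with $\bar{w}$ on those letters. Consequently the family $(M_{k,\bar{w}})_{k=1}^{\ell(w)}$ is the initial segment of the family $(M_{k,\dot{w}})_{k=1}^{r}$.

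For the direction $(\Leftarrow)$, assume $\Delta_{\dot{w},k}(M)=0$ for every $k>\ell(w)$. Since $\dot{w}$ is a reduced representative of $w_0$ and $\C_{w_0}=\mod(\Lambda)$, $M$ lies in $\C_{\dot{w}}$, so Proposition \ref{propositionEquivGLS11} tells us that $M$ admits a stratification by $(M_{k,\dot{w}})_{k=1}^{r}$ with multiplicities $a_k=\Delta_{\dot{w},k}(M)$. By the vanishing hypothesis, only the strata of index $k\leq\ell(w)$ actually appear, i.e.\ $M$ admits a stratification by $(M_{k,\bar{w}})_{k=1}^{\ell(w)}$ in ascending order. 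Lemma \ref{lemmaStratificationCw} then gives $M\in\C_{M_{\bar{w}}}=\C_w$.

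Conversely, for $(\Rightarrow)$, assume $M\in\C_w$. Lemma \ref{lemmaStratificationCw} provides a stratification of $M$ by $(M_{k,\bar{w}})_{k=1}^{\ell(w)}$ in ascending order with some multiplicities $(b_k)_{k=1}^{\ell(w)}$. Extending this sequence by zeros to obtain $\underline{a}=(b_1,\ldots,b_{\ell(w)},0,\ldots,0)\in\N^{r}$, this is still a valid stratification of $M$ by the family $(M_{k,\dot{w}})_{k=1}^{r}$, because the first $\ell(w)$ families coincide and adding strata of multiplicity zero does not alter the filtration. The implication $(1)\Rightarrow(3)$ of Proposition \ref{propositionEquivGLS11} applied inside $\C_{\dot{w}}$ shows that $\dim_{B_{\dot{w}}}\Hom_\Lambda(V_{\dot{w}},M)=\sum_{k=1}^r a_k\dim_{B_{\dot{w}}}\Delta_k$, and the linear independence of the $\dim_{B_{\dot{w}}}\Delta_k$ (stated just before Proposition \ref{propositionEquivGLS11}) forces $\Delta_{\dot{w}}(M)=\underline{a}$. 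In particular $\Delta_{\dot{w},k}(M)=0$ for every $k>\ell(w)$, which is the desired conclusion. There is no real obstacle here: the only delicate point is to invoke the uniqueness of the multiplicity vector at the end, which is exactly what the linear-independence statement preceding Proposition \ref{propositionEquivGLS11} provides.
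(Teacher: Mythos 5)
Your proof is correct and follows essentially the same route as the paper — namely, reducing the claim to the stratification description of $\C_w$ from Lemma \ref{lemmaStratificationCw} — though you helpfully spell out the details the paper leaves implicit, invoking Proposition \ref{propositionEquivGLS11} and the linear independence of the $\dim_{B_{\dot{w}}}(\Delta_k)$ to justify both directions. Note in passing that your reading of the index condition as $k>\ell(w)$ (rather than the literal $k\geq\ell(w)$ in the statement) is the intended one, since $V_{\ell(w),\bar{w}}\in\C_w$ already has a nonzero $\ell(w)$-th coordinate by Proposition \ref{propositionIntervalModules}.
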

		\begin{proof}
			Thanks to Lemma \ref{lemmaStratificationCw}, any module of $\C_w$ admits a stratification by modules $(M_{i,\dot{w}})_{i=1}^{\ell(w)}$ and conversely, any module admitting such a stratification lies in the category. 
		\end{proof}
		
		In order to prove an analoguous result for $\C^v$, we first need some preparatory lemmas.
		\begin{lemma}
			Let $M\in\mod(\Lambda)$ rigid, $\bar{v}$ a representative of $v$. The maximal submodule of $M$ in $\C_v$, denoted $t_v(M)$, has as $\Delta_{\dot{v}}$-vector the one coming from the $\ell(v)$ first coordinates of $\Delta_{\dot{v}}(M)$.
		\end{lemma}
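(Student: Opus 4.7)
My plan is to split $\Delta_{\dot v}(t_v(M))$ into its late ($k > \ell(v)$) and early ($k \leq \ell(v)$) coordinates and handle each separately. The late coordinates vanish immediately: since $t_v(M) \in \C_v$ by definition, Proposition~\ref{propositionCwbelonging} applied with $\dot v$ as a completion of $\bar v$ in $w_0$ forces $\Delta_{\dot v, k}(t_v(M)) = 0$ for all $k > \ell(v)$. So it remains to show that the first $\ell(v)$ coordinates of $\Delta_{\dot v}(t_v(M))$ match those of $\Delta_{\dot v}(M)$.

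Writing $\Delta_{\dot v}(M) = (a_1, \ldots, a_r)$ and $\Delta_{\dot v}(t_v(M)) = (b_1, \ldots, b_{\ell(v)}, 0, \ldots, 0)$, I would use additivity of stratification multiplicities along short exact sequences. This additivity is a Jordan--H\"older-type statement: any filtration of a module with successive quotients among the $M_{k, \dot v}$ has well-defined multiplicities equal to the $\Delta_{\dot v}$-coordinates (by Proposition~\ref{propositionEquivGLS11} together with the linear independence of the vectors $\dim \Delta_k$), and the filtration of $M$ built by concatenating stratifications of $t_v(M)$ and of $M/t_v(M)$ yields the correct multiplicity sum. This gives $a_k = b_k + c_k$ with $c_k := \Delta_{\dot v, k}(M/t_v(M)) \geq 0$, so it is enough to show $c_k = 0$ for all $k \leq \ell(v)$.

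Suppose for contradiction that $c_k > 0$ for some $k \leq \ell(v)$, and take $k$ minimal with this property. In the canonical ascending stratification $0 \subseteq (M/t_v(M))^{(1)} \subseteq \cdots \subseteq (M/t_v(M))^{(r)} = M/t_v(M)$ of Lemma~\ref{lemmaStratificationCw}, the vanishing $c_j = 0$ for $j < k$ forces $(M/t_v(M))^{(k-1)} = 0$, so $(M/t_v(M))^{(k)} \cong M_{k, \dot v}^{\oplus c_k}$ is a nonzero submodule of $M/t_v(M)$. Its preimage $P \subseteq M$ under the quotient map fits into a short exact sequence $0 \to t_v(M) \to P \to M_{k, \dot v}^{\oplus c_k} \to 0$. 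Since $k \leq \ell(v)$, the stratum $M_{k, \dot v}$ has $\Delta_{\dot v}$-vector the unit vector $e_k$, supported on $\{1, \ldots, \ell(v)\}$; hence $M_{k, \dot v} \in \C_v$ by Proposition~\ref{propositionCwbelonging}, and so is $M_{k, \dot v}^{\oplus c_k}$. Extension-closure of $\C_v$ gives $P \in \C_v$, but $P \supsetneq t_v(M)$, contradicting the maximality of $t_v(M)$.

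The main obstacle I anticipate is formalizing the additivity of stratification multiplicities in short exact sequences: although intuitively clear and compatible with the quasi-hereditary flavour of Section~\ref{sectionDeltaVectors}, it is not spelled out in the excerpt and deserves a careful argument, using Proposition~\ref{propositionEquivGLS11} to reduce it to the uniqueness of multiplicities in any $(M_{k, \dot v})_k$-stratification of a fixed module.
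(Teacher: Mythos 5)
Your proof is correct in its essentials and takes a genuinely different route from the paper's. The paper's proof is more direct: it takes $N$ to be the term $X_{\ell(v)}$ of the canonical ascending stratification of $M$, observes $N\in\C_v$ by Proposition~\ref{propositionCwbelonging}, then argues that $N\subseteq t_v(M)$ forces $t_v(M)$ to have a non-zero coordinate at some index $>\ell(v)$ unless $t_v(M)=N$ — which is then ruled out again by Proposition~\ref{propositionCwbelonging}. You instead argue by contradiction from the smallest index $k\leq\ell(v)$ with $\Delta_{\dot v,k}(M/t_v(M))\neq 0$, build the preimage $P$ with $0\to t_v(M)\to P\to M_{k,\dot v}^{\oplus c_k}\to 0$, and invoke extension-closure of $\C_v$ plus $M_{k,\dot v}\in\C_v$ to contradict maximality. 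Both arguments rest on the same implicit additivity of $\Delta_{\dot v}$-vectors along short exact sequences (yours explicitly, the paper's hidden in the step ``any submodule of $M$ containing $N$ has the same first $\ell(v)$ coordinates as $N$''); the paper also slips in the claim ``as $M$ is rigid, so is $N$'', which is not true for arbitrary submodules and relies on $N$ being a torsion part — your argument sidesteps this entirely by never needing $t_v(M)$ to be rigid or to be determined by its $\Delta$-vector, only needing maximality of $t_v(M)$ and extension-closure of $\C_v$. The trade-off is that your approach is slightly longer, but it is more robust and has a clearer logical skeleton. Your flagged concern about additivity is legitimate; it follows from the Ringel-type $\Delta$-filtration theory underlying \cite[Section~10]{geiss2011KacMoodyGroups}, in particular the vanishing of $\Ext^1$ between standard modules in the wrong direction, but this is indeed not spelled out in the excerpt and is equally implicit in the paper's own proof.
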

		\begin{proof}
			Let $N\subset M$ of $\Delta_{\dot{v}}$-vector given by the $\ell(v)$ first coordinates of $M$. As $M$ is rigid, so is $N$ and it is completely determined by its $\Delta_{\dot{v}}$-vector. We will show that it is the maximal submodule of $M$ being in $\C_v$.
			
			$N\subset M\in\C_v$. By maximality, we have $N\subset t_v(M)$ and then $M/N\supseteq M/t_v(M)$. By definition of $\Delta_{\dot{v}}$-vectors, any $M$ submodule containing $N$ has as $\ell(v)$ first coordinates the one of $N$. So if there exists $t_v(M)\supsetneq N$ in $\C_v$, then $t_v(M)$ has at least one coordinate in its $\Delta_{\dot{v}}$-vector nonzero and of index more than $\ell(v)$. Thanks to Proposition \ref{propositionCwbelonging}, such a module does not belong to $\C_v$ which is absurd. So $t_v(M)=N$.
		\end{proof}
		\begin{proposition}[Criterion to be in $\C^v$]\label{propositionCvbelonging}
			Let $v\in W$ and $M$ be a rigid indecomposable $\Lambda$-module then $M\in\C^v$ iff $\widetilde{\Delta}_{\dot{v}}(M)=0$.
		\end{proposition}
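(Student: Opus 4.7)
The plan is to reduce the statement, via the preceding lemma, to the torsion-pair-like equivalence $M\in\C^v\Leftrightarrow t_v(M)=0$, and then to close the reduced equivalence using results of Leclerc on the quotient $M/t_v(M)$ and the dimension of $\C_{v,v}$. By the preceding lemma, the maximal $\C_v$-submodule $t_v(M)$ has $\Delta_{\dot v}$-vector whose first $\ell(v)$ coordinates coincide with $\widetilde{\Delta}_{\dot v}(M)$. Since moreover $t_v(M)\in\C_v$ and $\bar v$ is a right factor of $\dot v$, Proposition~\ref{propositionCwbelonging} forces the remaining coordinates of $\Delta_{\dot v}(t_v(M))$ to vanish. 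Invoking the rigidity of $t_v(M)$ (inherited from that of $M$, as used in the proof of the preceding lemma) together with the fact that a rigid $\Lambda$-module is determined up to isomorphism by its $\Delta_{\dot v}$-vector, I obtain
\[
    \widetilde{\Delta}_{\dot v}(M)=0\ \Longleftrightarrow\ \Delta_{\dot v}(t_v(M))=0\ \Longleftrightarrow\ t_v(M)=0,
\]
so the statement reduces to the equivalence $M\in\C^v\Leftrightarrow t_v(M)=0$.

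For the direction $(\Leftarrow)$, if $t_v(M)=0$ then $M=M/t_v(M)$, and Leclerc's construction (the same that underlies the cluster-tilting module $T/t_v(T)\in\C_{v,w}$ recalled in the introduction) ensures $M/t_v(M)\in\C^v$. For $(\Rightarrow)$, assume $M\in\C^v$ and suppose for contradiction that $N:=t_v(M)\neq 0$. Because $\C^v=\Sub(J_v)$ is closed under submodules, $N\in\C^v$; combined with $N\in\C_v$ by construction, this places $N$ inside $\C^v\cap\C_v=\C_{v,v}$. Since $M$ is rigid, so is $N$, and applying \cite[Proposition 4.3]{leclerc2016ClusterStructuresStrata} with $w=v$ yields $\Sigma(N)\leq\ell(v)-\ell(v)=0$, forcing $N=0$ and contradicting our assumption.

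The main obstacle is the $(\Rightarrow)$ direction: one must simultaneously know that $N=t_v(M)$ is rigid (so that the dimension bound for $\C_{v,v}$ can be applied) and that $N$ lies in $\C^v$ (so that it ends up in the intersection $\C^v\cap\C_v$). Rigidity is inherited from $M$ exactly as in the proof of the preceding lemma, and the $\C^v$-membership is just closure of $\Sub(J_v)$ under submodules; once both are in place, the dimension formula of \cite[Proposition 4.3]{leclerc2016ClusterStructuresStrata} immediately closes the argument without any additional input beyond the preceding lemma and Proposition~\ref{propositionCwbelonging}.
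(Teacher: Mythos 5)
Your proof is correct and follows the same route as the paper: you reduce via the preceding lemma to $M\in\C^v\Leftrightarrow t_v(M)=0$, and close $(\Leftarrow)$ by invoking Leclerc's result that $M/t_v(M)\in\C_{v,w}\subset\C^v$. Where the paper simply asserts the biconditional $M\in\C^v\Leftrightarrow t_v(M)=0$, you make the $(\Rightarrow)$ direction explicit by placing the rigid module $t_v(M)$ inside $\C^v\cap\C_v=\C_{v,v}$ and applying the dimension count $\Sigma\leq\ell(v)-\ell(v)=0$ from Leclerc's Proposition 4.3 — a welcome clarification of a step the paper leaves implicit, not a different approach.
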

		\begin{proof}
			Thanks to \cite[Proposition 3.12]{leclerc2016ClusterStructuresStrata}, if $M$ is a rigid indecomposable module of $\C_w$ then $M/t_v(M)$ is a rigid indecomposable module of $\C_{v,w}$. Then $M/t_v(M)\in\C^v$.
			
			Thus $M=M/t_v(M)$ or, equivalently, $t_v(M)=0$ iff $M\in\C^v$. According to the previous lemma, $t_v(M)=0$ iff $\widetilde{\Delta}_{\dot{v}}(M)=0$.
		\end{proof}

\section{Seed computation algorithm}
	Thanks to Section \ref{sectionInitialSeedCw}, we have an explicit way to build a seed for the cluster structure on $\C_w$ and thanks to Proposition \ref{propositionCvbelonging} we have a way to check if the underlying module of the seed is in $\C^v$. We must now combine both in order to get a $\C_{v,w}$-rigid maximal basic module and its quiver. Doing so we will then have an initial seed for the cluster structure on $\C_{v,w}$.
	\subsection{Concept}
		As it would be very difficult to build the quiver of a seed $G$ of $\C_{v,w}$ by looking at the endormorphism algebra of the module, we want to use the fact that we already know the quiver of $V_{\bar{w}}$ and we can compute, following the mutation, the quiver of any seed mutation-equivalent to $G$. Our idea is then to apply a well-chosen sequence of mutation to $G_{\bar{w}}=(V_{\bar{w}},\Gamma_{\bar{w}})$ in order to get a seed $G'=(V',\Gamma')$ such that a subseed is a seed for the cluster structure on $\C_{v,w}$. That is to say that we want $V'$ to be a $\C_{v,w}$-rigid maximal basic module and $\Gamma'$ the quiver of its endomorphism algebra.
		
		The rigidity of the seed will not be an issue as rigidity is preserved through mutation and cluster reduction as well as the basicity. Regarding the $\C_{v,w}$-maximal rigidity we already saw in Theorem \ref{thmNumberOfIndecomposables} that to check it, it is enough to check that the module has $\ell(w)-\ell(v)$ indecomposable summands. Thus the cluster reduction will have to remove $\ell(v)$ summands to the seed.
		
		Eventually the main issue in this method will be to ensure that the obtained module does lie in $\C_{v,w}$. We will use the criterion of Proposition \ref{propositionCvbelonging} and reach it thanks to a very specific choice of mutations. We will now introduce two different formulations of the sequence of mutations to perform, each one coming from a different idea. We will show in Section \ref{sectionMutationSequenceEquivalence} that these two formulations are in fact equivalent.
	\subsection{First formulation}\label{sectionSchroer}
		The idea of this formulation, due to Jan Schröer, is to mimic the sequence of mutation in \cite[Section 13]{geiss2011KacMoodyGroups}. In this sequence, the authors look at the modules $V_k$ of $V_{\bar{w}}$ which have the $\Delta_{\dot{w}}$-vectors as described in Proposition \ref{propositionIntervalModules}, making them interval modules. The sequence of mutation aims to "shift" these $\Delta_{\dot{w}}$- coordinates in the following sense: initial $V_k$ modules have nonzero (and equal to 1) coordinates for the indices $\leq k$ of letters in $\bar{w}$ of color $i_k$; final $T_k$ modules then have nonzero (equal to 1) $\Delta_{\dot{w}}$-coordinates for the last $k$ indices of color $i_k$.
		
		Here we want to do the same but with $\Delta_{\dot{v}}$-coordinates of modules originally equal to $V_{\bar{w}}$, shifting the coordinates far enough from the start, such that first $\ell(v)$ coordinates are all zero and thus the corresponding modules are in $\C^v$. As we are now dealing with $\Delta_{\dot{v}}$ coordinates of $V_{\bar{w}}$ the sequence of mutation will be slightly different and we will only find the same sequence if $\bar{v}$ is a right factor of $\bar{w}$.
		
		The advantage of this formulation is that we can directly read the sequence of mutation to perform from the positions of the letters of $\bar{v}$ in $\bar{w}$. This formulation will not be very easy to use in a proof but we will see in the next section a more helpful one. 
		
		We will express the mutation sequence using the combinatorial numbers defined in Definition \ref{definitionCombinatorialNumbers}
		
		\begin{definition}
			Given a reduced representative $\bar{w}$ of $w\in W$ and the rightmost representative $\bar{v}$ of $v\leq w$ in $\bar{w}$ we define for any letter $1\leq m\leq \ell(v)$ of $\bar{v}$ the following sequence of mutations:
			\[
				\tilde{\mu}_m:=\left\{\begin{array}{cc}\mu_{(k_\Max)^{\gamma_m-}}\circ\mu_{(k_\Max)^{(\gamma_m+1)-}}\circ\cdots\circ\mu_{(k_\Min)^{(\beta_m+1)+}}\circ\mu_{(k_\Min)^{\beta_m+}}&\text{if } (k_\Max)^{\gamma_m-}\geq (k_\Min)^{\beta +}\\\mathrm{id}&\text{otherwise}\end{array}\right.
			\]
			where $k=p_m$. Then we combine all the $\tilde{\mu}_m$ to form the sequence: $\widetilde{M}=\tilde{\mu}_{\ell(v)}\circ\cdots\circ\tilde{\mu}_1$
		\end{definition}
		
		A way to rephrase this algorithm is to say that given the index $1\leq m\leq \ell(v)$, the sequence $\widetilde{\mu}_m$ consists in mutating all the summands whose color is $i_{p_m}$ by ascending order of index (following the indexation given in Definition \ref{definitionVk}) apart from a number of initial summands (depending on the number of letters of color $i_{p_m}$ among the letters of $\bar{w}$ of index $<p_m$ not used in the writing of $\bar{v}$) and a number of final summands (depending on the number of times we have already mutated this line).
		\begin{ex}\label{exempleAlgo1}
				Here $W$ is of type $A_5$ with $\bar{w}=[\textcolor{red}{1},\textcolor{Green}{3},\textcolor{blue}{2},\textcolor{orange}{4},\textcolor{Green}{3},\underline{\textcolor{blue}{2}},\underline{\textcolor{orange}{4}},\underline{\textcolor{violet}{5}},\textcolor{orange}{4},\underline{\textcolor{Green}{3}},\textcolor{blue}{2},\underline{\textcolor{red}{1}},\underline{\textcolor{blue}{2}}]$ and $ v=s_2s_4s_5s_3s_1s_2.$
				
				The integers $p_m$, $\beta$ and $\gamma$ and the colors $i_{p_m}$ are written for any $1\leq m\leq 6=\ell(v)$ in Table \ref{tableBetaGamma}.
				
				\begin{table}[ht]
					\[
						\begin{array}{|c|c|c|c|c|c|c|}
						\hline
							m&1&2&3&4&5&6\\
						\hline
							p_m&1&2&4&6&7&8\\
						\hline
							i_{p_m}&\textcolor{blue}{2}&\textcolor{red}{1}&\textcolor{Green}{3}&\textcolor{violet}{5}&\textcolor{orange}{4}&\textcolor{blue}{2}\\
						\hline
							\beta_m&0&0&0&0&1&1\\
						\hline
							\gamma_m&1&1&1&1&1&2\\
						\hline
						\end{array}
					\]
					\caption{Values of $p_m, i_{p_m},\beta_m$ and $\gamma_m$}
					\label{tableBetaGamma}
				\end{table}
				
				We then have $\textcolor{blue}{\widetilde{\mu}_1=\mu_8\circ\mu_3\circ\mu_1}$, $\textcolor{red}{\widetilde{\mu}_2=\mu_2}$, $ \textcolor{Green}{\widetilde{\mu}_3=\mu_9\circ\mu_4}$, $\textcolor{violet}{\widetilde{\mu}_4=id}$, $\textcolor{orange}{\widetilde{\mu}_5=\mu_7}$, $\textcolor{blue}{\widetilde{\mu}_6=\mu_3}.$ Note that whenever $\beta_m=0$ we get the same sequence of mutations as in \cite[Section 13]{geiss2011KacMoodyGroups}.			
			\end{ex}
	\subsection{Recursive formulation}\label{sectionRecursive}
		We have another formulation of the algorithm, allowing us to see it as a kind of  greedy algorithm. The idea is to first cancel the first $\Delta_{\dot{v}}$ coordinates of (almost) all the summands, then the second coordinates etc. until the $\ell(v)$-th. Then, removing all the summands which still have nonzero coordinates among the $\ell(v)$ first, we will obtain a module in $\C^v$, thus in $\C_{v,w}$.
		
		As this algorithm is defined by induction we need first to introduce some notations before giving its formulation.
		\begin{nota}
			In the following we will define a sequence of sequences of mutations $(\hat{\mu}_m)_{m=1}^{\ell(v)}$. In order to keep track of the successive modules we will call $V_{\bar{w}}=R_0$ (respectively $V_{k,\bar{w}}=R_{k,0}$ for $1\leq k\leq\ell(w_0)$) and for $0\leq m\leq \ell(v)-1$ $R_{m+1}=\hat{\mu}_{m+1}(R_m)$ (respectively $R_{k,m+1}$ is the $k$-th summand of $R_{m+1}$).
		\end{nota}
		\begin{definition}[Index set $A_m(R)$]
			Let $1\leq m\leq \ell(v)$ and $R_{m-1}=\hat{\mu}_{m-1}\circ\cdots\circ\hat{\mu}_1(R_0)$ with summands indexed as they were in $V_{\bar{w}}=R_0$. Let $b_m=((p_m)_\Max)^{\gamma_m -}$. 
			
			We define: $A_m(R_{m-1})=\{1\leq i\leq b_m\mid \Delta_{\dot{v},m}(R_{i,m-1})\neq 0\}$. We order the elements of $A_m$ by ascending order, denote $(A_m(R_{m-1}))_1$ the smallest one, the $j$-th smaller $(A_m(R_{m-1}))_j$ and $(A_m(R_{m-1}))_\Max$ the largest.
		\end{definition}
		
		We can now define the sequence of mutation $\hat{\mu}_m$ for any $m$ by induction.
		\begin{definition}\label{definitionHatMu}
			Let $m\geq 1$. Suppose we have already defined: $R_{m-1}=\hat{\mu}_{m-1}\circ\cdots\circ\hat{\mu}_1(R_0)$. We write $A_m=A_m(R_{m-1})$. Then 
			\[
				\hat{\mu}_m:=\left\{\begin{array}{cc}\mu_{A_{m,\Max}}\circ\mu_{A_{m,\Max-1}}\circ\cdots\circ \mu_{A_{m,1}}&\text{if }A_m\neq\varnothing\\\mathrm{id}&\text{otherwise}\end{array}\right.
			\]
			
			We can summarize it by saying that we will mutate in increasing order all the summands of indices smaller than a given bound, which have a nonzero $m$-th coordinate in their $\Delta_{\dot{v}}$-vector.
		\end{definition}
		\begin{definition}
			We define $\mu_\bullet(V_{\bar{w}})=S(\hat{\mu}_{\ell(v)}\circ\cdots\circ\hat{\mu}_1(V_{\bar{w}}))$ where $S$ is the deletion of all the direct summands $R_{k,\ell(v)}$ such that $k>(k_\Max)^{\alpha(k,\ell(v))-}$.
		\end{definition}
		\begin{ex}
			Please see Example \ref{exampleAlgo2} p. \pageref{exampleAlgo2}.
		\end{ex}
	\subsection{Main theorem}
		We will now state the main theorem of this article:
		\begin{theorem}\label{theoremMain}
			$\mu_\bullet(V_{\bar{w}})$ is a $\C_{v,w}$-rigid maximal basic module. The quiver of $\mu_\bullet(V_{\bar{w}})$ is obtained from $\hat{\mu}_{\ell(v)}\circ\cdots\circ\hat{\mu}_1(V_{\bar{w}})$ by removing all the arrows adjacent to a vertex deleted by $S$ and only these ones.
		\end{theorem}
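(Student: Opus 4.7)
My plan is to work with the recursive formulation of Section \ref{sectionRecursive}, which is better adapted to inductive arguments on $\Delta_{\dot{v}}$-vectors. The overall strategy is to prove by induction on $1\leq m\leq \ell(v)$ that after applying $\hat\mu_m\circ\cdots\circ\hat\mu_1$ to $V_{\bar{w}}$, the $m$ first $\Delta_{\dot v}$-coordinates of each remaining summand $R_{k,m}$ vanish, provided $k\leq (k_\Max)^{\alpha(k,m)-}$. Together with the cluster reduction of Iyama--Yoshino, this will yield the theorem. The base case ($m=0$) follows from Theorem \ref{theoremStructureInitialDeltaVectors}, which exactly describes the $\ell(v)$ first coordinates of the initial $\Delta_{\dot v}$-vectors.

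For the inductive step I would proceed as follows. Fix $m$ and work inside the bicolor subquiver of colors $(i_{p_m},i_{p_j})$ for each relevant $j$, using Proposition \ref{propositionSawTeethStructureVwbar} to guarantee a saw teeth structure and Proposition \ref{propositionStabilitySawTeethStructure} to ensure it is preserved through the previous rounds of mutation (since the summands removed from consideration each round correspond exactly to vertices becoming ``first of their line''). I would then apply the explicit mutation formula from Proposition \ref{propositionMutationDeltaVectors} coordinate by coordinate: the sum over incoming (resp.\ outgoing) arrows at each mutated vertex can be read off the saw teeth, so one can check directly that mutating the vertices of $A_m$ in ascending order forces the $m$-th $\Delta_{\dot v}$-coordinate to ``slide'' one step to the right along each line of color $i_{p_m}$, eventually disappearing past the bound $b_m=((p_m)_\Max)^{\gamma_m-}$. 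The previously vanishing coordinates (indices $<m$) are stable under these mutations, because the vertices being mutated are precisely those with a nonzero $m$-th coordinate, and the combinatorics of the saw teeth guarantee that the two branches of the mutation formula agree on coordinates $<m$ (inherited from the inductive hypothesis).

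Once the induction is complete, the summands $R_{k,\ell(v)}$ with $k\leq(k_\Max)^{\alpha(k,\ell(v))-}$ have $\widetilde\Delta_{\dot v}=0$, so by Proposition \ref{propositionCvbelonging} they all lie in $\C^v$; being summands of a module mutation-equivalent to $V_{\bar{w}}\in \C_w$, they also lie in $\C_w$, hence in $\C_{v,w}$. Counting the remaining summands line by line (each line of color $c$ of $\Gamma_{\bar w}$ contributes its $\ell(w)_c$ vertices minus the $\ell(v)_c$ indices that get ``pushed past'' the bound) gives exactly $\ell(w)-\ell(v)$ indecomposables, which by the maximal-rigidity characterisation of \cite[Proposition 4.3]{leclerc2016ClusterStructuresStrata} combined with Theorem \ref{thmMaximalRigidity} means $\mu_\bullet(V_{\bar w})$ is $\C_{v,w}$-maximal rigid. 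Rigidity and basicity are automatic since both are preserved by mutation and by deletion of direct summands.

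The statement about the quiver then follows from the general theory of cluster reduction: deleting the summands in $S$ corresponds to cutting the corresponding vertices out of $\Gamma_{\hat\mu_{\ell(v)}\circ\cdots\circ\hat\mu_1(V_{\bar w})}$, and in the $2$-Calabi--Yau setting of \cite{buan2009ClusterStructures2Calabi} the quiver of the endomorphism algebra of the reduced cluster-tilting object is obtained simply by deleting all arrows incident to the removed vertices (no extra arrows are created because the deleted summands have no non-trivial extensions with the surviving ones, being projective-injective in the appropriate relative sense). The main obstacle is unquestionably the inductive tracking of $\Delta_{\dot v}$-coordinates through $\hat\mu_m$: the case analysis in Proposition \ref{propositionMutationDeltaVectors} forces one to verify an inequality between two inner products with $d_\Delta$ at every single mutation step, and it is here that the saw teeth structure, together with a careful bookkeeping of the indices $\beta_m$ and $\gamma_m$, is indispensable to determine which branch of the formula applies and to ensure the ``sliding'' phenomenon described above.
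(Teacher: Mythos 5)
Your skeleton — an induction on $m$ tracking the first $\ell(v)$ entries of the $\Delta_{\dot v}$-vectors, conclusion via Proposition \ref{propositionCvbelonging}, and a summand count for maximal rigidity — does coincide with the paper's. But the inductive step, as you describe it, has gaps that would make it fail. You invoke Proposition \ref{propositionStabilitySawTeethStructure} to claim the saw teeth structure survives the mutations in $\hat\mu_m$; that proposition only treats the \emph{removal} of the first vertex of a line, not mutation. The actual preservation of the saw teeth structure through a mutation is a long case-by-case study (the configurations $\alpha_0,\dots,\alpha_2,\beta_0,\dots,\beta_4$ and the teeth shift of Proposition \ref{propositionTeethShift}), and your sketch offers no substitute for it. You also sidestep the branch choice in Proposition \ref{propositionMutationDeltaVectors}: the paper resolves it not by ``careful bookkeeping'' of inner products but by the negativity argument of Lemma \ref{lemmaComputationDeltavVector} (the ordinary-arrow branch would yield a negative $(m{+}1)$-st coordinate, hence is impossible), and that argument itself requires knowing that no ordinary arrow in the cut quiver has the mutated vertex as source — which is precisely the content of the \emph{pure} saw teeth structure (point \ref{pointPureSawTeeth} of Theorem \ref{theoremInduction}). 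Establishing that purity for the next line to mutate is the whole purpose of the Inclusion Lemma, which identifies the cut quiver $\widetilde\Gamma_m$ with a shifted subquiver of an initial quiver; your sketch never addresses it, and without it the induction cannot recurse.

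Finally, your justification of the quiver statement is incorrect: the summands deleted by $S$ are exactly the ones \emph{not} in $\C_{v,w}$, so they are not projective-injective in any relative sense. The paper's argument is different: Proposition \ref{propositionFrozenVertices} shows that every vertex \emph{adjacent} to a deleted one is $\C_{v,w}$-frozen, so the erased arrows all run between coefficients and are invisible under the quiver convention of Definition \ref{definitionClusterSeed}; this is what guarantees that deleting those arrows, and only those, yields the quiver of the seed.
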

		
		The next section will be dedicated to the proof of this theorem.
\section{Proof}
	As we said, the main issue here is to prove that $\mu_{\bullet}(V_{\bar{w}})$  is in $\C_{v,w}$. In order to prove it we will do an induction which will study some properties of a part of the successive seeds. In the next section we define this subseed.
	
	We recall that $R_0=V_{\bar{w}}$ and for $1\leq m\leq\ell(v)$, $R_m=\hat{\mu}_m\circ\cdots\circ\hat{\mu}_1(R_0)$. Accordingly, $\Gamma_0=\Gamma_{\bar{w}}$ and $\Gamma_m=\hat{\mu}_m\circ\cdots\circ\hat{\mu}_1(\Gamma_0)$.
	\subsection{Cut seed and evicted summands}\label{sectionCutSeed}
		\begin{definition}\label{definitionCutSeed}
			Given a seed $G_m=\hat{\mu}_m\circ\cdots\circ\hat{\mu}_1((V_{\bar{w}},\Gamma_{\bar{w}}))$, we define the cut seed $C(R_m)$ as the couple $(C(R_m),C(\Gamma_m))$ where:
			\begin{itemize}
				\item the module $C(R_m)$ is obtained from $R_m$ by removing the direct indecomposable summands $R_{i,m}$ such that $\widetilde{\Delta}_{\dot{v}}(R_{i,m})=0$. We call them \emph{evicted modules}. We also remove the direct indecomposable summands $R_{i,m}$ such that $i>(i_\Max)^{\alpha(i,m)-}$. We call them \emph{deleted modules}.

				\item the quiver $C(\Gamma_{\bar{w}})$ obtained from $\Gamma_{\bar{w}}$ by removing all deleted or evicted vertices and the arrows iff they are adjacent to at least one of these vertices.
			\end{itemize}
			The other summands keep the same index as in $R_m$.
		\end{definition}
		
		We can note that, for the final step $m=\ell(v)$, the deleted summands correspond to the summands erased by $S$. Provided our algorithm works as intended, at this point, all the other summands must be in $\C_{v,w}$ and thus evicted and so the final cut seed must be empty.
		
		We now state some additional properties of the evicted summands. We will place ourselves under the following hypothesis using the notations introduced in Section \ref{sectionSpecificNotations}. This hypothesis generalizes the description given in Theorem \ref{theoremStructureInitialDeltaVectors} and will be a part of the induction hypothesis for the proof of the algorithm.
		\begin{hypothesis}\label{hypothesisDeltaVector}
			The non-zero coordinates of $\widetilde{\Delta}_{\dot{v}}(R_k)$ are the elements of the subset:
			\[
				\{1\leq j\leq \ell(v)\mid f_{\min}(k)^{\alpha(k,m)\oplus}\leq j\leq f(k^{\alpha(k,m)+})\mid i_{p_j}=i_k\}
			\]
		\end{hypothesis}
		\begin{ex}
			Example \ref{exampleAlgo2} is an example where this hypothesis is always verified.
		\end{ex}
		\begin{lemma}\label{lemmaXiTrivial}
			If $i_k=i_{p_m}$ then $f_\Min(k)^{\alpha(k,m-1)\oplus}=f(p_m)=m$
		\end{lemma}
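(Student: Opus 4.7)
The plan is to unwind the definitions from Section~\ref{sectionSpecificNotations} and verify each of the two equalities separately. The key structural fact used throughout is that because $\bar{v}=[i_{p_{\ell(v)}},\dots,i_{p_1}]$ is the rightmost subword of $\bar{w}$ representing $v$, the indices $(p_j)_{1\leq j\leq\ell(v)}$ are strictly increasing; in particular $p_j\leq p_m$ is equivalent to $j\leq m$, and $p_j>p_{j_0}$ is equivalent to $j>j_0$.

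For the equality $f(p_m)=m$, I would argue directly. By definition $f(p_m)=\max\{1\leq j\leq\ell(v)\mid p_j\leq p_m\text{ and }i_{p_j}=i_{p_m}\}\cup\{0\}$. By monotonicity of $(p_j)$, the condition $p_j\leq p_m$ reduces to $j\leq m$; and since $i_k=i_{p_m}$, the value $j=m$ lies in the set and is visibly the largest such $j$. Hence $f(p_m)=m$.

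For the equality $f_\Min(k)^{\alpha(k,m-1)\oplus}=m$, my plan is to observe that monotonicity reduces the defining condition in $j_0^\oplus$ to $\min\{j>j_0\mid i_{p_j}=i_{p_{j_0}}\}\cup\{\ell(v)+1\}$. Hence the map $j_0\mapsto j_0^\oplus$ is just ``take the next $\bar{v}$-index of the same color''. Iterating $\oplus$ starting from $f_\Min(k)$ therefore enumerates, in strictly ascending order, exactly the set $E_k:=\{1\leq j\leq\ell(v)\mid i_{p_j}=i_k\}$. Writing $E_k=\{j_1<j_2<\cdots\}$ with $j_1=f_\Min(k)$, applying $\oplus$ a total of $\alpha(k,m-1)$ times lands on $j_{\alpha(k,m-1)+1}$. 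Since $i_{p_m}=i_k$ gives $m\in E_k$, and $\alpha(k,m-1)=\#(E_k\cap\{1,\dots,m-1\})$, the element $m$ occupies exactly position $\alpha(k,m-1)+1$ in the enumeration, yielding the claim.

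The only mild subtlety — which I expect to be trivial but should be mentioned — is that the iteration must not overshoot to $\ell(v)+1$ before reaching $m$. This is automatic: at intermediate step $c\leq\alpha(k,m-1)$, the element $j_{c+1}\leq m\leq\ell(v)$ belongs to $E_k$, so the minimum in the definition of $\oplus$ is attained at a genuine $\bar{v}$-index rather than at the boundary value $\ell(v)+1$. There is no real obstacle; the lemma is a purely combinatorial identity that follows by carefully matching the enumerations encoded by $f$, $f_\Min$, $\oplus$ and $\alpha$.
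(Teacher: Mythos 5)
Your proof is correct and follows the same combinatorial approach as the paper's (very terse) proof: both identify $f_\Min(k)^{\alpha(k,m-1)\oplus}$ as the $(\alpha(k,m-1)+1)$-th $\bar{v}$-index of color $i_k$ and observe that, since $i_{p_m}=i_k$, this index is $m$, which is also $f(p_m)$. You simply spell out the monotonicity of $(p_j)$ and the no-overshoot check that the paper leaves implicit.
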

		\begin{proof}
			$f_\Min(k)^{\alpha(k,m-1)\oplus}$ is the $\alpha(k,m-1)$-th successor (among the letters of $\bar{v}$) of the first letter in $\bar{v}$ of color $i_k$. As the $m$-th letter of $\bar{v}$ is of color $i_k$, $f_\Min(k)=f_\Min(p_m)$ and $f_\Min(p_m)^{\alpha(k,m-1)\oplus}=m=f(p_m)$ by definition.
		\end{proof}
		In order to have such a result for any index in $\bar{w}$, we introduce the following notation.
		\begin{definition}
			The application $\xi(k,m)$, $1\leq k\leq\ell(w)$ and $0\leq m\leq\ell(v)$ is defined by
			\[
				\xi(k,m)=\left\{\begin{array}{cc}\min\{p_m<j\leq\ell(w)\mid \exists 1\leq q\leq\ell(v)\mid j=p_q\text{ et }i_j=i_k\}&\text{if non-empty}\\
				\ell(w)+1&\text{otherwise}\end{array}\right.
			\]
			where $p_0=0$.
			
			$\xi(k,m)$ is the $\bar{w}$-coordinate of the first letter of $\bar{v}$ of color $i_k$ and of index strictly greater than $p_m$.
		\end{definition}
		\begin{proposition}
			$f(\xi(k,m))=f_\Min(k)^{\alpha(k,m)\oplus}$.
		\end{proposition}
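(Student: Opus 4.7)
My plan is to show that both sides of the claimed equality name the same integer, namely the $\bar{v}$-index of the first letter of $\bar{v}$ of color $i_k$ whose $\bar{w}$-position lies strictly to the left of $p_m$ (that is, strictly greater than $p_m$). Throughout I use that the map $j\mapsto p_j$ is strictly increasing so $p_j\le p_m\Leftrightarrow j\le m$.

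First I would rewrite the left-hand side. By definition, $\xi(k,m)=\min\{p_m<j\le \ell(w)\mid \exists q,\ p_q=j,\ i_j=i_k\}$, so there is a $\bar{v}$-index $q_0$ with $\xi(k,m)=p_{q_0}$, $i_{p_{q_0}}=i_k$, $p_{q_0}>p_m$, and $q_0$ minimal with these three properties. Plugging into $f$: $f(\xi(k,m))=\max\{1\le j\le\ell(v)\mid p_j\le p_{q_0},\ i_{p_j}=i_k\}$. Taking $j=q_0$ is allowed, and for any $j>q_0$ we have $p_j>p_{q_0}$, so the max is realised at $j=q_0$, giving $f(\xi(k,m))=q_0$.

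Next I would unpack the right-hand side. By definition, $f_\Min(k)$ is the smallest $\bar{v}$-index whose color is $i_k$, and the operation $(\cdot)^\oplus$ advances to the next $\bar{v}$-index of the same color. Iterating $\alpha(k,m)$ times therefore lands on the $(\alpha(k,m)+1)$-th element (in $\bar{v}$-order) of the set $\{1\le j\le\ell(v)\mid i_{p_j}=i_k\}$. Since $\alpha(k,m)=\#\{1\le j\le m\mid i_{p_j}=i_k\}$ and $p$ is increasing, $\alpha(k,m)$ counts exactly those elements of that set that lie at $\bar{v}$-positions $\le m$, equivalently those whose $\bar{w}$-position is $\le p_m$. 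Hence the $(\alpha(k,m)+1)$-th element is the smallest $q$ with $i_{p_q}=i_k$ and $p_q>p_m$, which is $q_0$ by definition.

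Combining, both sides equal $q_0$. The only delicate point — and the one I would flag as the main subtlety — is the boundary case when no $\bar{v}$-letter of color $i_k$ lies to the left of $p_m$ in $\bar{w}$: then $\xi(k,m)=\ell(w)+1$ and, on the right, $f_\Min(k)^{\alpha(k,m)\oplus}=\ell(v)+1$ via the convention that the $\oplus$-successor of the last element is $\ell(v)+1$. Checking this requires extending the convention $f(\ell(w)+1):=\ell(v)+1$, after which the same argument (the max/min ranges over an empty set on both sides) gives the equality. Aside from this bookkeeping, the proposition is a pure unwinding of the notations of Definition~\ref{definitionCombinatorialNumbers}.
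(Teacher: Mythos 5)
Your argument is correct in the main case and takes a genuinely more direct route than the paper's. The paper splits into three cases ($m=0$; then $m\neq 0$ with $i_k=i_{p_m}$ via Lemma~\ref{lemmaXiTrivial}; then $i_k\neq i_{p_m}$ by induction on $m$, where it appeals to the rightmostness of $\bar{v}$ to conclude $\xi(k,m-1)=\xi(k,m)$). You instead identify both sides directly with the $\bar{v}$-index $q_0$ of the first letter of $\bar{v}$ of color $i_k$ with $\bar{w}$-position strictly beyond $p_m$, using only the strict monotonicity of $j\mapsto p_j$. This is cleaner, avoids the case split entirely, and incidentally shows that the paper's appeal to rightmostness in the induction step is superfluous: there can be no $p_j$ strictly between $p_{m-1}$ and $p_m$ simply because $(p_j)_j$ is strictly increasing, regardless of which subword $\bar{v}$ is. You also do better than the paper in flagging the boundary case $\xi(k,m)=\ell(w)+1$, which the paper ignores entirely; however, your proposed blanket convention $f(\ell(w)+1):=\ell(v)+1$ is not quite uniform. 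If the color $i_k$ does not occur in $\bar{v}$ at all, then $f_\Min(k)=0$ and $\alpha(k,m)=0$, so $f_\Min(k)^{\alpha(k,m)\oplus}=0$, while your convention would force the left side to $\ell(v)+1$. One must distinguish that sub-case (where both sides equal $0$ under the natural reading of $f$ with color $i_k$) from the sub-case where $i_k$ does occur in $\bar{v}$ but only at positions $\leq p_m$ (where both sides should be $\ell(v)+1$). This is a bookkeeping wrinkle rather than a real gap, and the overall proof strategy is sound and arguably an improvement on the original.
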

		\begin{proof}
			If $m=0$, $\xi(k,0)=f_\Min(k)$ by definition.
			
			If $m\neq 0$ and $i_k=i_{p_m}$, $f(\xi(k,m))=p_m^\oplus=f_\Min(k)^{(\alpha(k,m-1)+1)\oplus}$ thanks to Lemma \ref{lemmaXiTrivial}. As $\alpha(k,m)=\alpha(k,m-1)+1$ here as $i_k=i_{p_m}$ we deduce the result.
			
			If $i_k\neq i_{p_m}$ we prove by induction on $m$. We already have the initialization when $m=0$. Suppose that $f(\xi(k,m-1))=f_\Min(k)^{\alpha(k,m-1)\oplus}$. As in this case $\alpha(k,m-1)=\alpha(k,m)$, we have to show that $\xi(k,m-1)=\xi(k,m)$. 
			
			If these two numbers were different, there would be a letter of $\bar{v}$ of color $i_k$ in the interval $\llbracket p_{m-1}+1,p_m-1\rrbracket$. This is impossible, as $\bar{v}$ is the rightmost representative. Thus $\xi(k,m-1)=\xi(k,m)$ and the heredity follows.
		\end{proof}
		\begin{corollary}\label{corollaryGeneralEvictions}
			Let $R_k$ a module verifying Hypothesis \ref{hypothesisDeltaVector} then $R_k\in\C^v$ iff $k^{\alpha(k,m)+}<\xi(k,m)$.
		\end{corollary}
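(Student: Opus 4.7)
The plan is to chain together three results already at our disposal: Proposition \ref{propositionCvbelonging} (which characterises membership in $\C^v$ by vanishing of $\widetilde{\Delta}_{\dot{v}}$), Hypothesis \ref{hypothesisDeltaVector} (which describes the support of $\widetilde{\Delta}_{\dot{v}}(R_k)$ explicitly), and the immediately preceding proposition $f(\xi(k,m))=f_\Min(k)^{\alpha(k,m)\oplus}$. Note that rigidity of $R_k$ is automatic since mutation preserves rigidity, starting from the rigid module $V_{\bar{w}}$, so Proposition \ref{propositionCvbelonging} applies. Thus we only have to show that, under Hypothesis \ref{hypothesisDeltaVector}, the set
\[
S_k:=\{1\leq j\leq \ell(v)\mid f_\Min(k)^{\alpha(k,m)\oplus}\leq j\leq f(k^{\alpha(k,m)+}),\ i_{p_j}=i_k\}
\]
is empty if and only if $k^{\alpha(k,m)+}<\xi(k,m)$.

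Write $\alpha=\alpha(k,m)$ and enumerate the $\bar{v}$-indices of color $i_k$ in increasing order as $j_1<j_2<\dots<j_r$, with the convention $j_{r+1}=\ell(v)+1$. The key observation to set up is that $\alpha$ coincides with the number of $j_a\leq m$, hence $f_\Min(k)^{\alpha\oplus}=j_{\alpha+1}$ and, by the preceding proposition, $\xi(k,m)=p_{j_{\alpha+1}}$ (with the convention $p_{\ell(v)+1}=\ell(w)+1$). Everything then becomes a direct comparison of two $\bar{v}$-indices $j_{\alpha+1}$ and $f(k^{\alpha+})$.

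For the forward direction, assume $S_k=\varnothing$ and suppose for contradiction that $\xi(k,m)\leq k^{\alpha+}$. Then $\xi(k,m)\leq \ell(w)$ so $\alpha<r$ and $p_{j_{\alpha+1}}\leq k^{\alpha+}$; this means $j_{\alpha+1}$ itself appears in the set defining $f(k^{\alpha+})$, so $f(k^{\alpha+})\geq j_{\alpha+1}=f_\Min(k)^{\alpha\oplus}$ and $j_{\alpha+1}\in S_k$, a contradiction. For the converse, assume $k^{\alpha+}<\xi(k,m)$. If $\xi(k,m)=\ell(w)+1$ then $\alpha=r$, so $f_\Min(k)^{\alpha\oplus}>\ell(v)$ and $S_k$ is empty for trivial reasons; otherwise any $\bar{v}$-index $j_a$ of color $i_k$ satisfying $p_{j_a}\leq k^{\alpha+}<p_{j_{\alpha+1}}$ must have $a\leq \alpha$, so $f(k^{\alpha+})<j_{\alpha+1}=f_\Min(k)^{\alpha\oplus}$ and again $S_k=\varnothing$.

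The only subtle point is keeping track of the boundary conventions for $\xi(k,m)=\ell(w)+1$ and for $f_\Min(k)^{\alpha\oplus}$ when $\alpha=r$; once these are aligned (both signal the absence of any relevant $\bar{v}$-letter of color $i_k$ past position $p_m$), the equivalence reduces to the one-line comparison above. No further combinatorial machinery is required.
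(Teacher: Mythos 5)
Your proof is correct and follows essentially the same route as the paper: reduce membership in $\C^v$ to vanishing of $\widetilde{\Delta}_{\dot{v}}(R_k)$ via Proposition \ref{propositionCvbelonging}, read off the support from Hypothesis \ref{hypothesisDeltaVector}, and compare $f(k^{\alpha(k,m)+})$ with $f_\Min(k)^{\alpha(k,m)\oplus}=f(\xi(k,m))$ using monotonicity of $f$ along a fixed color together with the fact that $\xi(k,m)$ is itself a $\bar{v}$-position (so the inequality between $f$-values is strict). Your reformulation through the explicit enumeration $j_1<\dots<j_r$ of $\bar{v}$-indices of color $i_k$ makes both implications and the boundary conventions slightly more transparent than the paper's terse one-direction argument, but the underlying combinatorial content is identical.
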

		\begin{proof}
			In this case, as we have $f(k^{\alpha(k,m)+})<f(\xi(k,m))=f_\Min(k)^{\alpha(k,m)\oplus}$, the set of $\widetilde{\Delta}_{\dot{v}}$-coordinates not equal to $0$ is empty.
			
			The inequality is strict. Even if $f$ is not strictly increasing, $\xi(k,m)=p_j$ (for a $1\leq j\leq\ell(v)$) is the $\bar{w}$-index of a letter of $\bar{v}$ thus, as $k^{\alpha(k,m)+}<\xi(k,m)$, $f(k^{\alpha(k,m)+})$ cannot be equal to $f(\xi(k,m))=j$.
		\end{proof}
		\begin{corollary}[Initial evictions]\label{corollaryInitialEvictions}
			The indecomposable summands $V_k$ of $V_{\bar{w}}$ such that $f(k)<f_\Min(k)$ are in $\C^v$ and thus are evicted summands.
		\end{corollary}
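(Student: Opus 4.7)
The plan is to observe that the hypothesis $f(k) < f_\Min(k)$ forces the set defining $f(k)$ to be empty, and then appeal directly to the combinatorial description of initial $\Delta_{\dot{v}}$-vectors (Theorem \ref{theoremStructureInitialDeltaVectors}) together with the $\C^v$-membership criterion (Proposition \ref{propositionCvbelonging}).

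First I would argue that $f(k) < f_\Min(k)$ implies $f(k) = 0$. Indeed, any index $j$ that appears in the set $\{1 \leq j \leq \ell(v) \mid p_j \leq k \text{ and } i_{p_j} = i_k\}$ belongs in particular to $\{1 \leq j \leq \ell(v) \mid i_{p_j} = i_k\}$, so by definition of $f_\Min(k)$ as the minimum of that larger set (together with $0$), we would get $j \geq f_\Min(k)$, hence $f(k) \geq f_\Min(k)$ whenever the smaller set is nonempty. Under the hypothesis $f(k) < f_\Min(k)$ this set must therefore be empty, and the convention built into the definition of $f$ yields $f(k) = 0$.

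Next I would invoke Theorem \ref{theoremStructureInitialDeltaVectors}, which expresses the first $\ell(v)$ coordinates of the $\Delta_{\dot{v}}$-vector of $V_k$ as
\[
\widetilde{\Delta}_{\dot{v}}(V_k) = \sum_{j=1}^{f(k)} \delta_{i_{p_j}, i_k}\, f_j.
\]
With $f(k) = 0$ the sum is empty, so $\widetilde{\Delta}_{\dot{v}}(V_k) = 0$. Since $V_k$ is an indecomposable direct summand of the rigid module $V_{\bar{w}}$, it is itself indecomposable and rigid, so Proposition \ref{propositionCvbelonging} applies and gives $V_k \in \C^v$.

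Finally, the "evicted summand" statement is immediate from Definition \ref{definitionCutSeed} applied to $m = 0$: by construction the evicted summands of $C(R_0)$ are exactly the indecomposable direct summands $R_{i,0} = V_i$ of $R_0 = V_{\bar{w}}$ whose truncated $\Delta_{\dot{v}}$-vector vanishes. I do not expect any genuine obstacle here; the only subtle step is the initial reduction from the inequality $f(k) < f_\Min(k)$ to the equality $f(k) = 0$, after which the result is a direct combination of two previously established facts.
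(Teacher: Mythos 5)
Your proof is correct and reaches the conclusion along essentially the same logical chain as the paper, with one small difference in packaging. The paper's one-line proof applies the general statement, Corollary \ref{corollaryGeneralEvictions}, at $m=0$; that corollary is itself proved by observing that the hypothesis forces the index set in Hypothesis \ref{hypothesisDeltaVector} to be empty (so the truncated $\Delta_{\dot{v}}$-vector vanishes) and then invoking the $\C^v$-membership criterion. You unroll this: at $m=0$ the content of Hypothesis \ref{hypothesisDeltaVector} is exactly Theorem \ref{theoremStructureInitialDeltaVectors}, the reduction $f(k)<f_\Min(k)\Rightarrow f(k)=0$ is the $m=0$ specialization of the inequality $f(k^{\alpha(k,m)+})<f_\Min(k)^{\alpha(k,m)\oplus}$ used in Corollary \ref{corollaryGeneralEvictions}, and you then apply Proposition \ref{propositionCvbelonging} directly. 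Your version is somewhat more self-contained and elementary for this specific statement, at the cost of not reusing the more general corollary; both are valid, and your explicit argument that the set defining $f(k)$ must be empty (since any element of it would already bound $f(k)$ from below by $f_\Min(k)$) is the step the paper leaves implicit. Your closing remark tying the eviction to Definition \ref{definitionCutSeed} at $m=0$ is also correct and fills in a detail the paper's proof omits.
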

		\begin{proof}
			By using Theorem \ref{theoremStructureInitialDeltaVectors} we are in Hypothesis \ref{hypothesisDeltaVector} for $m=0$ and thus the Corollary \ref{corollaryGeneralEvictions} holds.
		\end{proof}
		\begin{corollary}\label{corollaryPredecessorEviction}
			If $R_k$ and $R_{k^-}$ are direct indecomposable summands of $R$ verifying Hypothesis \ref{hypothesisDeltaVector} and $R_k\in\C^v$ then $R_{k^-}\in \C^v$.
		\end{corollary}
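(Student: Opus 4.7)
The plan is to derive the conclusion directly from the combinatorial criterion of Corollary \ref{corollaryGeneralEvictions}, exploiting the fact that $k$ and $k^-$ carry the same color $i_k=i_{k^-}$ by definition of the predecessor.

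First I would observe that $\alpha(\cdot,m)$ and $\xi(\cdot,m)$ depend only on the color of their first argument (and on $m$): $\alpha(k,m)$ counts the letters of color $i_k$ among $i_{p_1},\ldots,i_{p_m}$, while $\xi(k,m)$ is the minimal $\bar{w}$-index of a letter of $\bar{v}$ of color $i_k$ lying strictly after $p_m$. Since $i_k=i_{k^-}$, the equalities $\alpha(k^-,m)=\alpha(k,m)=:\alpha$ and $\xi(k^-,m)=\xi(k,m)=:\xi$ follow immediately.

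Next I would compare the $\alpha$-th same-color successors of $k^-$ and of $k$. By the very definition of the predecessor, $(k^-)^+=k$, and iterating gives $(k^-)^{j+}=k^{(j-1)+}$ for every $j\geq 1$, while $(k^-)^{0+}=k^-$. In the degenerate case $\alpha=0$ one has $(k^-)^{0+}=k^-<k=k^{0+}$, and in the case $\alpha\geq 1$ one has $(k^-)^{\alpha+}=k^{(\alpha-1)+}<k^{\alpha+}$ since same-color successors are strictly increasing. In both situations the strict inequality $(k^-)^{\alpha+}<k^{\alpha+}$ holds.

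Applying Corollary \ref{corollaryGeneralEvictions} to $R_k$ yields $k^{\alpha+}<\xi$; chaining with the previous inequality gives $(k^-)^{\alpha+}<\xi=\xi(k^-,m)$. Applying Corollary \ref{corollaryGeneralEvictions} in the converse direction to $R_{k^-}$, which satisfies Hypothesis \ref{hypothesisDeltaVector} by assumption, concludes $R_{k^-}\in\C^v$. There is no serious obstacle here: the heavy combinatorial lifting is already contained in Corollary \ref{corollaryGeneralEvictions}, and what remains is the purely order-theoretic remark that a smaller element of the same color admits a strictly smaller $\alpha$-th same-color successor.
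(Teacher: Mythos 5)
Your proof is correct and follows the same route as the paper: you chain $(k^-)^{\alpha(k,m)+}<k^{\alpha(k,m)+}<\xi(k,m)=\xi(k^-,m)$ and invoke Corollary~\ref{corollaryGeneralEvictions} on both ends, exactly as the paper does. You merely spell out a bit more explicitly why $(k^-)^{\alpha+}<k^{\alpha+}$ (via $(k^-)^{j+}=k^{(j-1)+}$, including the edge case $\alpha=0$) and why $\alpha$ and $\xi$ are unchanged under $k\mapsto k^-$, details the paper leaves implicit.
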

		\begin{proof}
			$(k^-)^{\alpha(k,m)+}<k^{\alpha(k,m)+}<\xi(k,m)=\xi(k^-,m)$. By Corollary \ref{corollaryGeneralEvictions} $R_k^-\in\C^v$.
		\end{proof}

	\subsection{Induction theorem}
		The goal of the greedy algorithm is to annihilate all $\Delta_{\dot{v}}$-coordinates of index $\leq \ell(v)$. This will translate on the cut seed as a greedy induction where the goal is to remove all summands. In order for the heredity to be fulfilled we will have to introduce some technical conditions. 
		\begin{theorem}[Induction Theorem]\label{theoremInduction}
			With the previously defined notations, let $1\leq k\leq \ell(w)$ and $0\leq m\leq\ell(v)$, $G_{m}=C(R_m)$ and $\widetilde{\Gamma}_m=C(\Gamma_m)$. We have that:
			\begin{enumerate}
				\item\label{pointCoordinateAnnulation} any summand $G_{k,m}$ has $\Delta_{\dot{v},j}(G_{k,m})=0$ for any $1\leq j\leq m$.
				\item\label{pointHorizontalArrow} For any vertex $G_{k,m}$ such that $G_{k^+,m}$ is a vertex of $\widetilde{\Gamma}_m$, there is an arrow $G_{k^+,m}\leftarrow G_{k,m}$ in $\widetilde{\Gamma}_m$. Such arrows are called horizontal arrows.
				\item\label{pointOrdinaryArrow} The other arrows of $\widetilde{\Gamma}_m$ are between vertices of adjacent colors . The vertices are called ordinary arrows.
				\item\label{pointSawTeeth} For any couple of adjacent colors $(i_k,i_j)$, the bicolor subquiver of $\widetilde{\Gamma}_m$ according to $(i_k,i_j)$ has a saw-teeth configuration. In particular for a given vertex $k$ and a given adjacent color to $i_k$ there is at most two ordinary arrows between $k$ and vertices of color $i_k$.
				\item\label{pointPureSawTeeth} for any color $i_j$ adjacent to $i_{p_{m+1}}$, the bicolor subquiver of $\widetilde{\Gamma}_m$ according to $(i_{p_{m+1}},i_j)$ has a pure saw teeth configuration.
				\item\label{pointDeltavcoordinates} the coordinate $1\leq j\leq \ell(v)$ of  $\widetilde{\Delta}_{\dot{v}}(G_{k,m})$ is:
				\begin{itemize}
					\item $1$ if $i_{p_j}=i_k$ and $j$ satisfies $f_\Min(k)^{\alpha(k,m)\oplus}\leq j\leq f(k^{\alpha(k,m)+})$
					\item $0$ in any other case.
				\end{itemize}
				If $m=\ell(v)$ the point \ref{pointPureSawTeeth} will be considered as trivially true.
			\end{enumerate}
		\end{theorem}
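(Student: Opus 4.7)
The proof will proceed by strong induction on $m$, from $m=0$ to $m=\ell(v)$, simultaneously establishing all six properties at each stage. The driving idea is that properties (2)--(5) describe a rigid \emph{quiver-side} invariant (saw-teeth, with pureness on the next line to be processed) which, together with the $\Delta$-vector formula (6), allows us to predict exactly what the mutation sequence $\hat{\mu}_{m+1}$ does, and which then lets us re-establish the same invariant at stage $m+1$.

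For the base case $m=0$, one has $R_0 = V_{\bar{w}}$ and $\Gamma_0 = \Gamma_{\bar{w}}$. Property (1) is vacuous. Properties (2) and (3) follow directly from the combinatorial description of $\Gamma_{\bar{w}}$ in Section \ref{sectionInitialQuiverDefinition}: every arrow in $\Gamma_{\bar{w}}$ is either horizontal from a vertex to its successor, or an ordinary arrow between vertices of adjacent colors. Property (4) is Proposition \ref{propositionSawTeethStructureVwbar}, and Proposition \ref{propositionStabilitySawTeethStructure} guarantees that removing the evicted vertices of Corollary \ref{corollaryInitialEvictions} together with the deleted ones preserves the saw-teeth structure in $\widetilde{\Gamma}_0 = C(\Gamma_{\bar{w}})$. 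Property (6) at $m=0$ reduces, since $\alpha(k,0)=0$ gives $k^{\alpha(k,0)+}=k$ and $f_\Min(k)^{\alpha(k,0)\oplus}=f_\Min(k)$, to exactly Theorem \ref{theoremStructureInitialDeltaVectors}. For property (5), one checks that no initial barb can appear in a bicolor subquiver of type $(i_{p_1}, i_j)$ of $\widetilde{\Gamma}_0$: the smallest $\bar{v}$-indexed letter has the smallest possible index within its color, so the source of a potential initial barb on the $i_{p_1}$-line would be a summand forced into $\C^v$ by Corollary \ref{corollaryInitialEvictions} and therefore already evicted.

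For the inductive step, assume the theorem at stage $m$ and prove it at stage $m+1$. The sequence $\hat{\mu}_{m+1}$ mutates the vertices of $A_{m+1}$ in ascending index order. By the inductive hypothesis, these vertices lie on the $i_{p_{m+1}}$-colored line, and by (5) the bicolor subquivers $(i_{p_{m+1}}, i_j)$ are of pure saw-teeth type for each adjacent $i_j$. The plan is then to analyse each elementary mutation in the sequence, tracking its effect on the $(m+1)$-th $\Delta_{\dot{v}}$-coordinate via Proposition \ref{propositionMutationDeltaVectors}, where the $d_\Delta$-inner products are computable directly from the formulas given by (6), and its effect on the quiver via classical quiver mutation, using the pure saw-teeth configuration to predict exactly which arrows are created, reversed or cancelled. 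Each individual mutation either cancels the $(m+1)$-th coordinate of the mutated summand (making it a candidate for eviction at stage $m+1$, consistent with Corollary \ref{corollaryGeneralEvictions}) or shifts it in the manner prescribed by (6) applied at $m+1$, where $\alpha(k,m+1)$ increments exactly when $i_k = i_{p_{m+1}}$. Parallel to this, one reproves on a case-by-case basis, in the spirit of Proposition \ref{propositionStabilitySawTeethStructure}, that after the full $\hat{\mu}_{m+1}$ the saw-teeth structure is restored, and moreover that the bicolor subquivers involving $i_{p_{m+2}}$ become pure, giving (5) at the next stage.

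The main obstacle is precisely this combinatorial case analysis for the quiver evolution under $\hat{\mu}_{m+1}$: successive mutations inside one $\hat{\mu}_{m+1}$ reshuffle teeth, barbs and isolated vertices along the $i_{p_{m+1}}$-line, and it must be verified that the ascending-index ordering is exactly what ensures that any initial barb created at an intermediate stage is resolved by the subsequent mutation, so that at the end no initial barb remains on the lines relevant for stage $m+2$. Combined with the computation of $d_\Delta$-inner products needed to select the correct branch in Proposition \ref{propositionMutationDeltaVectors}, this is a finite analysis of local saw-teeth configurations; it is the content deferred to the appendix announced in the introduction. Once the induction is completed, applying it at $m=\ell(v)$ yields $\widetilde{\Delta}_{\dot{v}}(G_{k,\ell(v)})=0$ for every surviving summand, which by Proposition \ref{propositionCvbelonging} places $\mu_\bullet(V_{\bar{w}})$ in $\C^v$, providing the essential input for Theorem \ref{theoremMain}.
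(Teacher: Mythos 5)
Your overall strategy — an induction on $m$ with the base case established from the combinatorics of $\Gamma_{\bar{w}}$, the quiver-side claims tracked through a deferred case analysis, and the $\Delta$-vector formula driving the selection of the correct exchange sequence — is the strategy the paper follows. Two of your specific claims, however, conceal genuine gaps.

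First, your plan to select the correct branch in Proposition \ref{propositionMutationDeltaVectors} by "computing the $d_\Delta$-inner products directly from the formulas given by (6)" does not work as stated: property (6) only controls $\widetilde{\Delta}_{\dot{v}}$, i.e.\ the first $\ell(v)$ coordinates, while the inner products against $d_\Delta$ run over all $\ell(w_0)$ coordinates, and the induction gives you no control over the tail. The paper circumvents this by a positivity argument (Lemma \ref{lemmaComputationDeltavVector}): if one computed with the ordinary arrows, the $(m+1)$-st $\widetilde{\Delta}_{\dot{v}}$-coordinate of $G_{j,m+1}$ would come out strictly negative, which is impossible, so the exchange sequence must be the one read off the horizontal arrows. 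You need some substitute for this; the inner-product route is blocked.

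Second, and more seriously, you treat property (5) at stage $m+1$ — the pure saw-teeth configuration on the line $i_{p_{m+2}}$ — as an output of the same local case analysis that controls the mutations along the $i_{p_{m+1}}$-line, "so that at the end no initial barb remains on the lines relevant for stage $m+2$." But the mutations in $\hat{\mu}_{m+1}$ are all on the $i_{p_{m+1}}$-line; tracking them does not by itself say anything about the presence or absence of an initial barb in a bicolor subquiver whose first color is $i_{p_{m+2}}$. Indeed, after applying $\hat{\mu}_{m+1}$, the line $i_{p_{m+1}}$ itself is no longer guaranteed to be pure (the paper notes exactly this after Proposition \ref{propositionStabilitySawTeethStructure}), so purity must be re-derived from scratch for $i_{p_{m+2}}$. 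The paper does this with a dedicated structural argument, the Inclusion Lemma of Section 7.7: it builds injective graph morphisms $\rho_m$ identifying $\widetilde{\Gamma}_m$ with a subquiver of $\Gamma_{\bar{w}_{\geq p_{m+1}}}$, a fresh initial quiver to which the base-case argument for purity can be re-applied (Lemma \ref{lemmaPoint5}). Without something playing the role of that comparison, the induction does not close; this is the missing key idea in your outline.
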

		\begin{nota}
			We recall that the notion of horizontal/ordinary arrow is introduced in Section \ref{sectionInitialQuiverDefinition}, as well as the notion of bicolor subquiver and (pure) saw teeth configuration, the notion of adjacent colors in Definition \ref{definitionkplus} and the notations of point \ref{pointDeltavcoordinates} in Definition \ref{definitionCombinatorialNumbers}.
		\end{nota}
		\begin{remark}
			Note that point \ref{pointDeltavcoordinates} is the same hypothesis as Hypothesis \ref{hypothesisDeltaVector}.
		\end{remark}
	\subsection{Initialization}\label{sectionInitialization}
		
		For $m=0$ we have $R_0=V_{\bar{w}}$.
		
		We first need to look at vertices evicted or deleted in the cut seed. There is no deleted summands as $i_{\Max}^{\alpha(i,m)-}=i_\Max$ and, by definition, there exists no index $i$ such that $i>i_\Max$.
		
		The point \ref{pointCoordinateAnnulation} is an empty condition. 
		
		For the points \ref{pointHorizontalArrow} and \ref{pointOrdinaryArrow}  we remove only arrows adjacent to vertices also removed from $\Gamma_0$ which satisfy the condition by definition in Section \ref{sectionInitialQuiverDefinition}. Then $\widetilde{\Gamma}$ verifies also the condition.
		
		For the point \ref{pointSawTeeth}, thanks to Corollary \ref{corollaryInitialEvictions} we know that, on the one hand, thanks to Proposition \ref{propositionSawTeethStructureVwbar} the quiver has a saw teeth structure. On the other hand, $\widetilde{\Gamma}_0$ contains only vertices of index $k$ such that $f(k)\geq f_\Min(k)$. In particular, $\widetilde{\Gamma}_0$ is a subquiver of the quiver $\Gamma_{\bar{w}_{\geq p_1}}$ where $\bar{w}=[i_{\ell(w)},\cdots,i_{p_1+1},i_{p_1}]$. The quiver $\Gamma_{\bar{w}_{\geq p_1}}$ is obtained by removing consecutive initial vertices of $\Gamma_0$. By Proposition \ref{propositionStabilitySawTeethStructure}, the saw teeth structure is preserved by such a removal of initial vertices. Now, in order to get $\widetilde{\Gamma}_0$ from $\Gamma_{\bar{w}_{\geq p_1}}$ we need to remove vertices on lines other than $i_{p_1}$. However, thanks to Corollary \ref{corollaryPredecessorEviction} we remove consecutively the first vertices of given lines. Then, using once again Proposition \ref{propositionSawTeethStructureVwbar} we have that the saw teeth structure is preserved.
		
		We will now prove that $V_{\bar{w}}$ verifies the point \ref{pointPureSawTeeth} by contradiction.
		
		\begin{proof}
			First, note that, due to the rightmostness of $\bar{v}$, $(p_1)_\Min=p_1$. Thus any vertex of color $i_{p_1}$ is in $\widetilde{\Gamma}_0$. Now suppose that for one line of color $i_j$ adjacent to the line of color $i_{p_1}$ the ordinary arrow of minimal target index is an arrow $V_{(p_1)^{\gamma+}}\rightarrow V_j$ for $\gamma\geq 0$ and $1\leq j\leq \ell(w_0)$ of color $i_j$. We will show that there exists an arrow $V_j\rightarrow V_{(p_1)^{\zeta +}}$ where $\zeta<\gamma$. This implies that the inequality: $p_1^{(\gamma+1)+}\geq j^+>p_1^{\gamma+}>j$ holds and we want to show that there exists $\zeta <\gamma$ such that $j^+\geq p_1^{(\zeta+1)+}>j>p_1^{\zeta +}.$
			
			We have $\gamma>0$: if $\gamma=0$ then we have $j$ such that $j<p_1$ which is impossible as the quiver is a subquiver of $\Gamma_{\bar{w}_{\geq p_1}}$. So $p_1^{\gamma+}>j>1$ and there existe $\zeta<\gamma$ such that $p_1^{(\zeta +1)+}>j>p_1^{\zeta +}$. As, in addition we have $j^+>p_1^{\gamma +}$ and $p_1^{\gamma +}>1^{\zeta +}$, eventually we have: $j^+\geq p_1^{(\zeta+1)+}>j>p_1^{\zeta +}$ and, as $j$ and $p_1^{\zeta +}$ are vertices of $\widetilde{\Gamma}_0$ subquiver of $\Gamma_{\bar{w}}$ there is an arrow $V_j\rightarrow V_{p_1^{\zeta +}}$ and so the arrow $V_j\rightarrow V_{p_1^{\gamma+}}$ was not the one of minimal target index. Then the ordinary arrow of minimal target index between both lines has as target a vertex of color $i_{p_1}$. So the bicolor subquiver has a pure saw teeth structure.
		\end{proof}
		
		For point \ref{pointDeltavcoordinates}, the considered modules are a subset of the indecomposable summands of $V_{\bar{w}}$. Thanks to Theorem \ref{theoremStructureInitialDeltaVectors} we know that each of these summands has a $\widetilde{\Delta}$-vector as described. 
		
		Then we have proved that $R_0$ satisfies all the properties of Theorem \ref{theoremInduction}.
	\subsection{Induction lemma}
		We now want to prove that, given the seed $(G_m,\widetilde{\Gamma}_m)$ satisfying all properties of Theorem \ref{theoremInduction}, the next seed $(G_{m+1},\widetilde{\Gamma}_{m+1})$ satisfies all properties of Theorem \ref{theoremInduction} for any $0\leq m\leq\ell(v)$.
		
		We know how to get a cut seed from $(R_m,\Gamma_m)$ and how to mutate it in $(R_{m+1},\Gamma_{m+1})$ via $\hat{\mu}_m$ but obtaining $(G_{m+1},\widetilde{\Gamma}_{m+1})$ from $(G_{m},\widetilde{\Gamma}_{m})$ is more subtle and will be proven by another induction. We can summarize the situation by Figure \ref{figureCutSeedMutation}.
		\begin{figure}[h!t]
		\[
			\begin{tikzcd}(R_m,\Gamma_m)\ar[r,"\hat{\mu}_{m+1}"]\ar[d,"C"]&(R_{m+1},\Gamma_{m+1})\ar[d,"C"]\\(G_m,\widetilde{\Gamma}_m)\ar[r,dashed,"?"]&(G_{m+1},\widetilde{\Gamma}_{m+1})\end{tikzcd}
		\]
		\caption{Cut seeds and mutated seeds}
		\label{figureCutSeedMutation}
		\end{figure}
		
		\begin{nota}
			In this section we will consider the seed
			\[
				(R,\Gamma)=\mu_{j^{\gamma +}}\circ\cdots\circ\cdots \mu_j\circ\hat{\mu}_m\circ\cdots\circ\hat{\mu}_1(R_0,\Gamma_0)=\mu_{j^{\gamma+}}\circ\cdots\circ\mu_j(R_{m},\Gamma_m)
			\]
			where $\hat{\mu}_{m+1}=\mu_{j^{\zeta +}}\circ\cdots \circ \mu_{j^{\gamma +}}\circ\cdots\circ\mu_j$ and $\zeta\geq \gamma\geq 0$.
			
			We denote by $R_{i,m+1}$ the $i$-th summand of $R$ if $i\in\{j,j^+,\dots,j^{\gamma+}\}$ and $R_{i,m}$ otherwise. 
			
			We define the cut seed $C(R,\Gamma)=:(G,\widetilde{\Gamma})$ as the seed where
			\begin{itemize}
				\item the module $G$ is obtained from $R$ by removing, for $p\in\{m,m+1\}$, the direct indecomposable summands $G_{k,p}$ such that $\Delta_{\dot{v}}(R_{k,p})=0$ which we call evicted and the direct indecomposable summands $G_{k,p}$ such that $k>(k_\Max)^{\alpha(k,p)-}$ which we call deleted.
				\item the quiver $\widetilde{\Gamma}$ is obtained from $\Gamma$ by removing all vertices corresponding to evicted or deleted summand and arrows adjacents to these vertices.
			\end{itemize}
		\end{nota}
		We will now prove by induction the following lemma 
		\begin{lemma}[Induction lemma]\label{lemmaInduction}
			In the cut seed $(G,\widetilde{\Gamma})$:
			\begin{enumerate}
				\item\label{pointCoordinateAnnulationlemma} The summands $G_{k,m}$, provided $i_k\neq i_j$, and the summands $G_{j,m+1},\dots, G_{j^{\gamma+},m+1}$ have all their $\Delta_{\dot{v}}$-coordinates of index $\leq m+1$ equal to $0$.
				\item\label{pointHorizontalArrowlemma} The horizontal arrows are as in Theorem \ref{theoremInduction}, apart from the horizontal arrow $G_{j^{(\gamma+1)+},m}\rightarrow G_{j^{\gamma+},m+1}$ if $(G,\widetilde{\Gamma})\neq(G_m,\widetilde{\Gamma}_m)$.
				\item\label{pointOrdinaryArrowlemma} All non-horizontal arrows are between lines with adjacent colors.
				\item\label{pointSawTeethlemma} For any couple of color $(i_k,i_l)$ with $i_k,i_l\neq i_j$, the bicolor subquiver $(i_k,i_l)$ has a saw teeth configuration. The bicolor subquivers $(i_j,i_l)$ with $i_l$ adjacent to $i_j$, where we remove all vertices of color $i_j$ and of index $>j^{\gamma +}$, have a saw teeth configuration.
				\item\label{pointPureSawTeethlemma} The bicolor subquivers $(i_j,i_l)$ where we remove the vertices of color $i_j$ and of index $<j^{\gamma+}$ have a pure saw teeth structure.
				\item\label{pointDeltavcoordinateslemma} $\widetilde{\Delta}_{\dot{v}}(G_{k,p})$ has all coordinates equal to 0 except from the one of color $i_j$ and of index in the interval:
				\[
					f_{\min}(k)^{\alpha(k,p)\oplus}\leq l\leq f(k^{\alpha(k,p)+})
				\]
				which are equal to 1.
			\end{enumerate}
		\end{lemma}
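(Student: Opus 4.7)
The plan is to prove Lemma~\ref{lemmaInduction} by induction on $\gamma$, the number of mutations of $\hat{\mu}_{m+1}$ already performed. For the base case, before any mutation of $\hat{\mu}_{m+1}$ is applied we have $(R, \Gamma) = (R_m, \Gamma_m)$ and $(G, \widetilde{\Gamma}) = (G_m, \widetilde{\Gamma}_m)$; all six points then follow immediately from the corresponding points of Theorem~\ref{theoremInduction} at step $m$, observing in particular that the pure saw teeth condition at colors adjacent to $i_{p_{m+1}} = i_j$ (point~\ref{pointPureSawTeethlemma} of the lemma) is exactly point~\ref{pointPureSawTeeth} of that theorem.

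For the inductive step, assume the lemma after $\gamma - 1$ mutations of $\hat{\mu}_{m+1}$, and apply the next mutation $\mu_{j^{\gamma+}}$. I first compute $\Delta_{\dot{v}}(R^\ast_{j^{\gamma+}})$ via Proposition~\ref{propositionMutationDeltaVectors}. Using the induction hypothesis, the arrows incident to $j^{\gamma+}$ are the outgoing horizontal arrow to $j^{(\gamma+1)+}$, the incoming (inverted) horizontal arrow from $j^{(\gamma-1)+}$, and ordinary arrows prescribed by the saw teeth / pure saw teeth configurations in each bicolor subquiver of color $(i_j, i_l)$ with $i_l$ adjacent to $i_j$. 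Plugging the explicit $\widetilde{\Delta}_{\dot{v}}$-coordinates of point~\ref{pointDeltavcoordinateslemma} into each side of the mutation formula lets one check the inequality involving $d_\Delta$ and verify that the resulting $\Delta$-vector has the prescribed support, yielding points 1 and 6 at the new step.

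For the quiver update (points 2, 3, 4, 5), I invoke the standard rule of quiver mutation: reverse all arrows incident to $j^{\gamma+}$ and add an arrow $R_i \to R_k$ for every length-two path $R_i \to R_{j^{\gamma+}} \to R_k$, cancelling pairs of opposite arrows. Since no arrow outside a bicolor subquiver of color $(i_j, i_l)$ with $i_l$ adjacent to $i_j$ is incident to $j^{\gamma+}$, all other bicolor subquivers are unchanged and retain their saw teeth structure. In the subquivers $(i_j, i_l)$, the induction hypothesis provides a pure saw teeth structure on the vertices of color $i_j$ of index $\geq j^{(\gamma-1)+}$; the mutation swaps the roles of $j^{\gamma+}$ and $j^{(\gamma-1)+}$ at the right boundary, converting the leftmost tooth / initial barb into a final barb on the truncation at indices $\leq j^{\gamma+}$ and producing a shifted pure saw teeth structure on the truncation at indices $\geq j^{\gamma+}$. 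The horizontal arrow exception in point~\ref{pointHorizontalArrowlemma} is precisely the reverted arrow $G_{j^{(\gamma+1)+},m} \to G_{j^{\gamma+},m+1}$ created at this step.

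The main obstacle is the case analysis needed to show that the cancellation of two-cycles and the composition of paths through $j^{\gamma+}$ produce exactly the ordinary arrows required by the new (pure) saw teeth configurations, without introducing spurious arrows between vertices of non-adjacent colors. This requires enumerating, for each color $i_l$ adjacent to $i_j$, the possible positions of ordinary arrows at $j^{\gamma+}$ (summit of a tooth, source/target of a barb, absence of such arrow) and tracking how each configuration transforms under the mutation rule. I expect this enumeration to match the configuration-by-configuration discussion the author announces in the paper's appendix, which should then be invoked to close the induction.
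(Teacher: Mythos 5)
Your overall strategy — induction on $\gamma$, the number of mutations of $\hat{\mu}_{m+1}$ already applied, with the base case translated from Theorem~\ref{theoremInduction} and the quiver update delegated to the configuration-by-configuration discussion in the appendix — is exactly the structure of the paper's proof. Your anticipation that the configuration enumeration (Section~\ref{sectionConfigurationsInheritageTeethShift}) will close points~\ref{pointOrdinaryArrowlemma}--\ref{pointPureSawTeethlemma}, and your remark that the exceptional reverted horizontal arrow in point~\ref{pointHorizontalArrowlemma} is precisely the one just mutated, also coincide with what the paper does. Similarly, the paper deduces point~\ref{pointCoordinateAnnulationlemma} from point~\ref{pointDeltavcoordinateslemma} exactly as you intend.

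There is, however, a genuine gap in your treatment of point~\ref{pointDeltavcoordinateslemma}. You propose to decide which branch of the mutation formula of Proposition~\ref{propositionMutationDeltaVectors} to use by ``checking the inequality involving $d_\Delta$'' after plugging in the $\widetilde{\Delta}_{\dot{v}}$-coordinates from point~\ref{pointDeltavcoordinateslemma}. But that inequality is an inner product of the \emph{full} $\Delta$-vectors with $d_\Delta$, ranging over all $\ell(w)$ (or $\ell(w_0)$) coordinates, while the induction hypothesis in point~\ref{pointDeltavcoordinateslemma} only controls the truncated vectors $\widetilde{\Delta}_{\dot{v}}$, i.e.\ the first $\ell(v)$ coordinates; it also says nothing about the dimension vector $d_\Delta$ itself. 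As stated, the inequality cannot be evaluated from the data you carry in the induction. The paper circumvents this entirely via Lemma~\ref{lemmaComputationDeltavVector}: rather than deciding the inequality, it observes that the branch taking the ordinary-arrow neighbours would force the $(m+1)$-st coordinate of $\widetilde{\Delta}_{\dot{v}}(G_{j,m+1})$ to be $0-1<0$, which is impossible for a $\Delta$-vector, so the horizontal-arrow branch is the only possibility. This positivity argument needs only the truncated vectors available from the induction hypothesis. Replacing your $d_\Delta$ check with this sign argument (and then carrying out the three-interval computation, as the paper does) closes the gap; otherwise the inductive step for point~\ref{pointDeltavcoordinateslemma} does not go through with the data at hand.
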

		\begin{ex}
			There are examples of these properties in Example \ref{exampleAlgo2}, with the notation $V_k^\ast=R_{k,1}$ and $R_{k,m}^\ast=R_{k,m+1}$. See especially Figure \ref{figureMutation1} \& \ref{figureMutation2}.
		\end{ex}
		We will first prove the initialization of Lemma \ref{lemmaInduction}, i.e., the case where $(G,\widetilde{\Gamma})=(G_m,\widetilde{\Gamma}_m)$. It consists in translating the points of Theorem \ref{theoremInduction} into the points of Lemma \ref{lemmaInduction}. 
		
		The point \ref{pointCoordinateAnnulationlemma} of Lemma \ref{lemmaInduction} comes from the points \ref{pointCoordinateAnnulation} and \ref{pointDeltavcoordinates} of Theorem \ref{theoremInduction}. The first point ensures that all coordinates of index $\leq m$ are equal to $0$. As the $m+1$ summand is of color $i_{p_{m+1}}=i_j$, for summands of color different from $i_j$, the $m+1$th coordinate is equal to zero thanks to point \ref{pointDeltavcoordinates} of the Theorem. The list of summands $G_{j,m+1},\dots, G_{j^{\gamma+},m+1}$ is empty. The point is proved.
		
		The points \ref{pointHorizontalArrowlemma} and \ref{pointOrdinaryArrowlemma} of the Lemma are equivalent to the points \ref{pointHorizontalArrow} and \ref{pointOrdinaryArrow} of the Theorem.
		
		The point \ref{pointSawTeethlemma} of the Lemma consists in looking only at bicolor subquivers of colors $(i_k,i_l)$ with $i_k,i_l\neq i_j$, a subset of the bicolor subquivers known to have a saw teeth structure thanks to the point \ref{pointSawTeeth} of the Theorem.
		
		The points \ref{pointPureSawTeethlemma} and \ref{pointDeltavcoordinateslemma} of the lemma are exactly the points \ref{pointPureSawTeeth} and \ref{pointDeltavcoordinates} of the Theorem.
		
		Then the seed $(G_m,\widetilde{\Gamma}_m)$ satisfies all conditions of Lemma \ref{lemmaInduction}.
	\subsection{Heredity of the induction lemma}
		We will now establish the heredity of the hypotheses of Lemma \ref{lemmaInduction}.
		
		We will deduce point \ref{pointCoordinateAnnulationlemma} of the lemma from point \ref{pointDeltavcoordinateslemma}.
		
		For the point \ref{pointHorizontalArrowlemma}, we have two cases. If the vertex  we are mutating, $G_{j^{\gamma +},m}$, is the first of its line, it has only one horizontal arrow $G_{j^{(\gamma+1) +},m}\leftarrow G_{j^{\gamma +},m}$. As, from point \ref{pointPureSawTeethlemma} we know that there is no ordinary arrow in $\widetilde{\Gamma}$ having $G_{j^{\gamma+},m}$ as source, we know that there will not be any disturbance of other horizontal arrow on other lines. 
		
		Then the only horizontal arrow to be changed will be the one having $G_{j^{\gamma +},m}$ as source which will become $G_{j^{(\gamma+1) +},m}\rightarrow G_{j^{\gamma +},m+1}$, which is precisely the only derogation we allowed, now considering $G_{j^{(\gamma+1)+},m}$. Now, in order to get the next $\widetilde{\Gamma}$, either we evict $G_{j^{\gamma +},m+1}$ and we are once again in the initial case with no derogation, or not and we still validate the hypothesis.
		
		If we are not in an initial case then we have $G_{j^{(\gamma+1)+},m}\leftarrow G_{j^{\gamma+},m}\rightarrow G_{j^{(\gamma-1)+},m+1}$ before mutation. As before, thanks to point \ref{pointPureSawTeethlemma}, we know that the only horizontal arrow to be changed will be the one adjacent to $G_{j^{\gamma +},m}$ and after mutation we have $G_{j^{(\gamma+1)+},m}\rightarrow G_{j^{\gamma+},m+1}\leftarrow G_{j^{(\gamma-1)+},m+1}$ so we still validate the hypothesis when looking at $G_{j^{(\gamma+1)+},m}$. We cannot have eviction here thanks to Corollary \ref{corollaryPredecessorEviction}.
		
		The points \ref{pointOrdinaryArrowlemma}, \ref{pointSawTeethlemma} and \ref{pointPureSawTeethlemma} will result from an enumeration of cases, which we postpone to Section \ref{sectionConfigurationsInheritageTeethShift} in the appendix.
		
		Indeed, we initially verify the condition of the case enumeration and then with the evictions and mutation we stay in one of the studied case.
		
		As all the bicolor subquivers of first color $i_j$ are in pure saw teeth structure, all the ordinary arrow adjacent to a vertex to mutate will have it as target and thus it is impossible, in the cut quiver, to create arrow between two lines of non-adjacent colors. 
		
		By the study of configurations, looking only at what happens at the already mutated vertices, we see that we keep the saw teeth structure. By the same point, when only looking at the vertices still to mutate, we see that we still are in a pure saw teeth structure. 
		
		We now need to compute the $\Delta$-vectors after mutation. Using Proposition \ref{propositionMutationDeltaVectors} we have that $\Delta$-vectors can be computed by exactly one of the two computations, one corresponding to taking into account neighbours of $G_j$ via arrow of source $G_j$, the other one via arrow of target $G_j$. 
		
		Here we will only focus on the $\widetilde{\Delta}$-vector, only the $\ell(v)$ first coordinates being relevant to our study and then we allow ourself to only consider non-evicted vertices, the evicted one having zero $\widetilde{\Delta}_{\dot{v}}$-vectors. 
		
		We have a more precise criterion to know which summands to take into account on the computation:
		\begin{proposition}\label{propositionUnidirectionArrowQuiver}
			The neighbouring of $G_j$ in $\widetilde{\Gamma}$ just before mutating is composed of the ordinary arrow having $G_j$ as target, the horizontal arrows having $G_j$ as source, and no other arrows.
		\end{proposition}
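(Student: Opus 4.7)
The proof will assemble the local neighborhood of $G_j$ in $\widetilde{\Gamma}$ from three complementary points of the induction hypothesis Lemma \ref{lemmaInduction}: the pure saw teeth structure at the vertex about to be mutated, the description of horizontal arrows after partial mutation, and the confinement of non-horizontal arrows to adjacent colors. Writing $c=i_{p_{m+1}}$ for the color of $G_j$, my plan is to split the neighborhood of $G_j$ into ordinary and horizontal arrows and treat each separately.

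For the ordinary arrows I would invoke point \ref{pointPureSawTeethlemma} of Lemma \ref{lemmaInduction}: for every color $i_l$ adjacent to $c$, the bicolor subquiver of $\widetilde{\Gamma}$ on vertices of color $c$ of index at least that of $G_j$, together with all vertices of color $i_l$, carries a pure saw teeth structure. In that restricted subquiver $G_j$ is the leftmost vertex of color $c$, and the defining feature of a \emph{pure} saw teeth structure is precisely the absence of an initial barb, i.e., of an outgoing ordinary arrow from the leftmost $c$-colored vertex. Combined with point \ref{pointOrdinaryArrowlemma} (which forbids non-horizontal arrows between non-adjacent colors), this forces every ordinary arrow incident to $G_j$ to have $G_j$ as target.

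For the horizontal arrows I would use point \ref{pointHorizontalArrowlemma}, which lists exactly the horizontal arrows present at this stage of the algorithm. At $G_j$ these consist of the standard arrow $G_j\to G_{j^+}$ (whenever $G_{j^+}$ survives in $\widetilde{\Gamma}$) and, if applicable, the reversed derogation arrow $G_j \to G_{j^-,\mathrm{mut}}$ produced by the preceding mutation of $\hat{\mu}_{m+1}$ at the nearest surviving $c$-colored predecessor of $G_j$. Both have $G_j$ as source. I then need to rule out an incoming horizontal arrow from some unmutated $c$-colored predecessor of $G_j$ still present in the cut quiver: by the greedy ordering built into the definition of $\hat{\mu}_{m+1}$, such a predecessor must have zero $(m+1)$-th $\Delta_{\dot{v}}$-coordinate, and Corollary \ref{corollaryPredecessorEviction} then forces it to be evicted from $\widetilde{\Gamma}$.

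The main obstacle lies in exactly this synchronization step: combining the greedy order of $\hat{\mu}_{m+1}$, the eviction rule defining the cut quiver, and Corollary \ref{corollaryPredecessorEviction} so as to exclude every surviving unmutated $c$-colored predecessor of $G_j$. Once this bookkeeping is controlled, the absence of any further arrow at $G_j$ is immediate from point \ref{pointOrdinaryArrowlemma} of Lemma \ref{lemmaInduction}, and the three bullets of the proposition follow.
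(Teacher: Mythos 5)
The paper's own justification is a one-liner deferring to the case exhaustion in the appendix: in each of the local configurations $\alpha_0,\alpha_1,\alpha_2,\beta_0,\dots,\beta_4$, the vertex to be mutated carries at most one incoming ordinary arrow per adjacent color, the outgoing horizontal arrow to $G_{j^+}$, and (past the first step) the reversed horizontal arrow to the previously mutated $G_{j^-}$. You instead read the same conclusion directly off the structural invariants of Lemma \ref{lemmaInduction}: point \ref{pointPureSawTeethlemma} (pure saw teeth, hence no initial barb at $G_j$) forbids any ordinary arrow out of $G_j$, point \ref{pointHorizontalArrowlemma} names the two horizontal arrows and shows both leave $G_j$, and point \ref{pointOrdinaryArrowlemma} rules out anything else. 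This is a genuine alternative route: the enumeration is replaced by the invariants that the enumeration maintains, which makes the claim look structural rather than accidental. One citation to tighten, precisely at the step you flag as the obstacle: to exclude an incoming horizontal arrow from a still-unmutated lower-indexed vertex of color $i_{p_{m+1}}$, the key fact is not Corollary \ref{corollaryPredecessorEviction} itself but the combination of Lemma \ref{lemmaXiTrivial} with point \ref{pointDeltavcoordinateslemma}: for a vertex of color $i_{p_{m+1}}$ the interval of nonzero $\widetilde{\Delta}_{\dot{v}}$-coordinates necessarily begins at index $m+1$, so a vanishing $(m+1)$-th coordinate forces the whole truncated vector to vanish and hence forces eviction (essentially Corollary \ref{corollaryGeneralEvictions}); Corollary \ref{corollaryPredecessorEviction} then only adds that such evictions form an initial segment of the line. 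With that substitution the bookkeeping closes and your proof goes through.
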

		
		It follows from the study of all cases. Thus the computation of the $\widetilde{\Delta}_{\dot{v}}$-vectors amounts to taking into account only the ordinary arrows or only the horizontal arrows.
		
		We now should use the computational criterion of \cite{geiss2011KacMoodyGroups} to know which one to use for the computation but we will show, by contradiction, that in fact we can only consider the horizontal arrows.
		\begin{lemma}[Evolution of $\Delta_{\dot{v}}$-vectors coordinates by mutation]\label{lemmaComputationDeltavVector}
			In the cut quiver $\widetilde{\Gamma}$, if the vertex $G_j$ we mutate has a configuration described by points \ref{pointSawTeethlemma} and \ref{pointPureSawTeethlemma} of the lemma \ref{lemmaInduction}, then
			\[
				\widetilde{\Delta}_{\dot{v}}(G_{j,m+1}=\widetilde{\Delta}_{\dot{v}}(G_{j^+,m})+\widetilde{\Delta}_{\dot{v}}(G_{j^-,m+1})-\widetilde{\Delta}_{\dot{v}}(G_{j,m})
			\]
			where we write $\Delta_{\dot{v}}(G_{j^-,m+1})=0$ if there is no summand/vertex of index $j^-$ in the cut module/quiver.
		\end{lemma}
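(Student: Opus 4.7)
The strategy is to compare the two candidate expressions for $\Delta_{\dot{v}}(G_{j,m+1})$ produced by the general mutation formula of Proposition \ref{propositionMutationDeltaVectors} and to eliminate the wrong one by a positivity argument.

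First I would invoke Proposition \ref{propositionUnidirectionArrowQuiver}: at the instant we mutate $G_j$ in the cut quiver $\widetilde{\Gamma}$, the arrows incident to $G_j$ are exactly the ordinary arrow(s) into $G_j$ and the horizontal arrows out of $G_j$. Using the inductive control on horizontal arrows (point \ref{pointHorizontalArrowlemma} of Lemma \ref{lemmaInduction}), the outgoing horizontal arrows go to $G_{j^+,m}$ and, when it still belongs to the cut quiver, to $G_{j^-,m+1}$. Substituting this neighbourhood into Proposition \ref{propositionMutationDeltaVectors} leaves two candidate values for $\Delta_{\dot{v}}(G_{j,m+1})$:
\[
    -\Delta_{\dot{v}}(G_{j,m}) + \sum_{G_l \to G_j} \Delta_{\dot{v}}(G_l)
    \quad\text{or}\quad
    -\Delta_{\dot{v}}(G_{j,m}) + \Delta_{\dot{v}}(G_{j^+,m}) + \Delta_{\dot{v}}(G_{j^-,m+1}),
\]
with the convention $\Delta_{\dot{v}}(G_{j^-,m+1})=0$ when that vertex is absent. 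Restricting the second expression to its first $\ell(v)$ coordinates reproduces verbatim the identity claimed in the lemma.

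I would then rule out the first candidate by contradiction. Suppose the mutation formula selects it. By the inductive hypothesis (point \ref{pointDeltavcoordinateslemma} of Lemma \ref{lemmaInduction}), the support of $\widetilde{\Delta}_{\dot{v}}(G_{j,m})$ lies at indices $l$ with $i_{p_l}=i_j$, while any source $G_l$ of an ordinary incoming arrow has colour $i_l\neq i_j$ (point \ref{pointOrdinaryArrowlemma}), so the support of $\widetilde{\Delta}_{\dot{v}}(G_l)$ lies on indices of colour $i_l$. The two supports are therefore disjoint, so the subtraction $-\widetilde{\Delta}_{\dot{v}}(G_{j,m})$ would produce strictly negative entries at every non-zero coordinate of $\widetilde{\Delta}_{\dot{v}}(G_{j,m})$, which do exist because $G_{j,m}$ was selected into the mutation set $A_{m+1}$ and in particular $\Delta_{\dot{v},m+1}(G_{j,m})\neq 0$. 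But $\Delta_{\dot{v}}(G_{j,m+1})$ is a genuine $\Delta$-vector, hence non-negative by Proposition \ref{propositionEquivGLS11}, a contradiction. Only the second candidate can be the true value, which is the formula of the lemma.

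The main obstacle lies upstream of this argument: the positivity step itself is short, but it rests crucially on Proposition \ref{propositionUnidirectionArrowQuiver} (the pure saw teeth structure forbidding any ordinary outgoing arrow at $G_j$ the instant it is mutated) and on the sharp inductive description of $\widetilde{\Delta}_{\dot{v}}$-vectors in point \ref{pointDeltavcoordinateslemma}. These two inputs themselves depend on the configuration analysis deferred to Section \ref{sectionConfigurationsInheritageTeethShift}; once both are granted, Lemma \ref{lemmaComputationDeltavVector} falls out cleanly from the comparison above.
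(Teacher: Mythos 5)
Your proof is correct and follows essentially the same route as the paper's: both invoke Proposition \ref{propositionUnidirectionArrowQuiver} and point \ref{pointDeltavcoordinateslemma} of Lemma \ref{lemmaInduction}, then rule out the ordinary-arrow branch of Proposition \ref{propositionMutationDeltaVectors} by a positivity argument, with coordinate $m+1$ as the decisive witness. The only cosmetic difference is that the paper inspects coordinate $m+1$ directly ($0-1<0$), while you phrase it via the disjointness of supports by color; the underlying observation is identical.
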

		\begin{proof}
			First we will show that we have to consider summands of the same color as they are linked to $G_{j,m}$ by horizontal arrows.
			
			If we look at ordinary arrows, we will have the computation 
			\[
				\sum_{G_{i,m}\rightarrow G_{j,m}}\widetilde{\Delta}_{\dot{v}}(G_{i,m})-\widetilde{\Delta}_{\dot{v}}(G_{j,m})
			\]
			and if we look at the $m+1$st coordinate, then $\Delta_{\dot{v},m+1}(G_{i,m})=0$ for any $G_i$ adjacent to $G_j$ via an ordinary arrow and $\Delta_{\dot{v},m+1}(G_{j,m})=1$ by the induction hypothesis \ref{pointDeltavcoordinateslemma} of Lemma \ref{lemmaInduction}. Thus we would have $\Delta_{\dot{v},m+1}(G_{j,m+1})=0-1<0$. This is absurd by the definition of $\Delta$-vectors and then we have to consider horizontal arrows.
			
			Then there are only two neighbours of $G_{j,m}$: $G_{j^-,m+1}$ (if it exists) and $G_{j^+,m}$ (which always exists). Thus the computation is the one given in the lemma. 
		\end{proof}
		
		Now we suppose that, according to point \ref{pointDeltavcoordinateslemma} of Lemma \ref{lemmaInduction}, we have that the non-zero and equal to 1 coordinates of $\widetilde{\Delta}_{\dot{v}}(G_{k,p})$ are the one of index $1\leq l\leq\ell(v)$ having the color $i_k$ and being in the interval:
		\[
			f_\Min(k)^{\alpha(k,p)\oplus}\leq l\leq f(k^{\alpha(k,p)+})
		\]
		
		In particular we have:
		\[
			f_\Min(j)^{(\alpha(j,m)+1)\oplus}\leq l\leq f(j^{\alpha(j,m)+})\quad\text{ for }G_{j^-,m+1}
		\]
		\[
			f_\Min(j)^{\alpha(j,m)\oplus}\leq l\leq f(j^{\alpha(j,m)+})\quad\text{ for }G_{j,m}
		\]
		\[
			f_\Min(j)^{\alpha(j,m)\oplus}\leq l\leq f(j^{(\alpha(j^+,m)+1)+})\quad\text{ for }G_{j^+,m}
		\]
		using the facts that $f_\Min(j)=f_\Min(j^+)=f_\Min(j^-)$, that $\alpha(j,m)=\alpha(j^+,m)=\alpha(j^-,m)$ and that $\alpha(j,m)+1=\alpha(j,m+1)$.
		
		We want to show that 
		\[
			f_\Min(j)^{\alpha(j,m+1)\oplus}\leq l\leq f(j^{\alpha(j,m+1)+})\quad\text{ for }G_{j,m+1}
		\]

		Then we can cut these intervals in three intervals:
		\[
			f_\Min(j)^{\alpha(j,m)\oplus}\leq l< f_\Min(j)^{(\alpha(j,m)+1)\oplus}
		\]
		\[
			f_\Min(j)^{(\alpha(j,m)+1)\oplus}\leq l\leq f(j^{\alpha(j,m)+})
		\]
		\[
			f(j^{\alpha(j,m)+})<l\leq f(j^{(\alpha(j,m)+1)+})
		\]
		
		One these intervals, we have respectively :
		\[
			(\Delta_{\dot{v},l}(G_{j^-,m+1}),\Delta_{\dot{v},l}(G_{j,m}),\Delta_{\dot{v},l}(G_{j^+,m}))=(0,1,1), \text{on the first},
		\]
		\[
			(\Delta_{\dot{v},l}(G_{j^-,m+1}),\Delta_{\dot{v},l}(G_{j,m}),\Delta_{\dot{v},l}(G_{j^+,m}))=(1,1,1), \text{on the second},
		\]
		\[
			(\Delta_{\dot{v},l}(G_{j^-,m+1}),\Delta_{\dot{v},l}(G_{j,m}),\Delta_{\dot{v},l}(G_{j^+,m}))=(0,0,1), \text{on the third.}
		\]
		So we have as coordinate for $\widetilde{\Delta}_{\dot{v},l}(G_{j,m+1})$, on the first interval $0-1+1=0$, on the second: $1-1+1=1$, on the third $0-0+1=1$. 
		
		Finally, in $\widetilde{\Delta}_{\dot{v}}(G_{j,m+1})$, the non-empty coordinates are equal to 1 and are the ones of index $l$ of color $i_j$ and in the interval $f_\Min(j)^{\alpha(j,m+1)\oplus}\leq l\leq f(j^{\alpha(j,m+1)+})$ whence the heredity of point \ref{pointDeltavcoordinateslemma} of Lemma \ref{lemmaInduction} and the point \ref{pointCoordinateAnnulationlemma} coming from point \ref{pointDeltavcoordinateslemma}. 
		
		Then we have proved that Lemma \ref{lemmaInduction} holds by induction.
	\subsection{Toward heredity proof of the induction theorem}
		As Lemma \ref{lemmaInduction} is proved, when taking the result for the seed $(R,\Gamma)=\hat{\mu}_{m+1}\circ\cdots\circ\hat{\mu}_1(V_{\bar{w}},\Gamma_{\bar{w}})$ we have:
		\begin{enumerate}
			\item The summands $G_{k,m+1}$ belonging to $C(R_{m+1})$ have all their coordinates of $\Delta_{\dot{v}}$-vector of index $\leq m+1$ equal to $0$.
			\item In the cut quiver $\widetilde{\Gamma}_{m+1}$ (obtained from $\Gamma$ by removing the last vertex of line $i_j$ and the evicted vertices), all the horizontal arrows are conform to \ref{pointHorizontalArrow} of Theorem \ref{theoremInduction}.
			\item All ordinary arrows are between lines of adjacent colors in $\widetilde{\Gamma}_{m+1}$.
			\item The quiver $\widetilde{\Gamma}_{m+1}$ has a saw teeth structure.
			\item This condition is empty.
			\item The description of $\widetilde{\Delta}_{\dot{v}}$-coordinates of the summands is conform to point \ref{pointDeltavcoordinates} of Theorem \ref{theoremInduction}.
		\end{enumerate}
		
		Then, up to point \ref{pointPureSawTeeth} we have the heredity of the hypothesis. It remains to show that the next line to mutate will have a pure saw teeth structure.
		
		To show that final point we need a new lemma, introduced and proved in the next section.
	\subsection{Heredity: inclusion lemma}
		Our goal here is to show that in the seed $\widetilde{\Gamma}_{m+1}$ the line of color $i_{p_{m+1}}$ has a pure saw teeth structure for any bicolor subquiver having $i_{p_{m+1}}$ as first color. In order to do so, we will use (and prove) the fact that the cut seed $\widetilde{\Gamma}_{m+1}$ has the same quiver as the left (maximal indices) part of the quiver of $\Gamma_0$.
		
		\begin{definition}[Graph morphism]
			Let $\Gamma=(\Gamma_0,\Gamma_1)$ and $\Gamma'=(\Gamma_0',\Gamma_1')$ be two graphs. A map $f:\Gamma_0\rightarrow\Gamma_0'$ is a graph morphism if, for any vertex $\alpha_{j_s,j_t}\in\Gamma_1$ of source the vertex $V_{j_s}\in\Gamma_0$ and of target $V_{j_t}$, the image vertex $f(\alpha_{j_s,j_t})$ of source $f(V{j_s})$ and of target $f(V_{j_t})$ belongs to $\Gamma'_1$. If $f$ is injective, we say that $\Gamma$ is included in $\Gamma'$.
		\end{definition}
		\begin{lemma}[Inclusion lemma]
			Let $0\leq m\leq\ell(v)$, $\Gamma_{W_{m}}=(W_{i,m},\beta_{j_s,j_t,m})$ and $\widetilde{\Gamma}_m=(G_{i,m},\alpha_{j_s,j_t,m})$ be the quivers of the modules (respectively) $V_{\bar{w}_{\geq p_{m+1}}}\text{ and } C(\hat{\mu}_m\circ\cdots\circ\hat{\mu}_1(V_{\bar{w}}))$,using the convention that $s(\gamma_{j_s,j_t,m})=j_s$ and $t(\gamma_{j_s,j_t,m})=j_t$ for any vertex.
			
			We define the family of maps $(\rho_m)_{m=0}^{\ell(v)}$ by: $\rho_m:\left\{\begin{array}{ccc}(G_{i,m},\alpha_{j_s,j_t,m})&\rightarrow&(W_{i,m+1},\beta_{j_s,j_t,m+1})\\G_{i,m}&\mapsto&W_{i^{\alpha(i,m)+},m+1}\end{array}\right..$
			
			These maps are injective graph morphisms and we have a partial inverse:
			\[
				\rho_m^\ast:\left\{\begin{array}{ccl}(W_{i,m+1},\beta_{j_s,j_t,m+1})&\rightarrow &(G_{i_m},\alpha_{j_s,j_t,m})\\W_{i,m+1}&\mapsto&\left\{\begin{array}{ccc}G_{i^{\alpha(i,m)-},m}&\text{if}&i^{\alpha(i,m)-}\geq \xi(i,m)\\\notin D_{\rho_m^\ast}&\text{otherwise}&\end{array}\right.\end{array}\right.
			\]
		\end{lemma}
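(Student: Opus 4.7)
The plan is to argue by induction on $m$.

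For the base case $m=0$, the identity $\alpha(i,0)=0$ forces $\rho_0$ to act as the identity on $\bar{w}$-indices, sending $G_{i,0}$ to $W_{i,1}$. The vertices of $\widetilde{\Gamma}_0$ are the non-evicted summands of $V_{\bar{w}}$; these are the $V_i$ with $1 \leq f_{\min}(i) \leq f(i)$, so in particular $i \geq p_{f_{\min}(i)} \geq p_1$, which places $W_{i,1}$ among the vertices of $\Gamma_{W_0}$, the quiver of $V_{\bar{w}_{\geq p_1}}$. Injectivity is immediate, and arrow-preservation follows from the combinatorial description of Section \ref{sectionInitialQuiverDefinition}: arrows depend only on indices, colors, and successor relations among the vertices still present, which agree in $\Gamma_{\bar{w}}$ and $\Gamma_{\bar{w}_{\geq p_1}}$ on the shared vertex set. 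The partial inverse $\rho_0^{\ast}$ is a direct unpacking of its defining formula, using that $\xi(i,0)=p_1$.

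For the inductive step, I assume the statement at level $m$ and aim to prove it at level $m+1$. On the target side, $\Gamma_{W_{m+1}}$ arises from $\Gamma_{W_m}$ by deleting the vertices with $\bar{w}$-index in $[p_{m+1},p_{m+2}-1]$; on the source side, $\widetilde{\Gamma}_{m+1}$ is built from $\widetilde{\Gamma}_m$ by applying the mutation block $\hat{\mu}_{m+1}$ and then cutting (evicting the summands whose $(m{+}1)$st $\Delta_{\dot{v}}$-coordinate vanishes, and deleting the appropriate boundary vertices). The core idea is to align these two operations via $\rho_m$ and $\rho_{m+1}$: a summand $G_{i,m}$ should become evicted or deleted in the passage precisely when its image $W_{i^{\alpha(i,m)+},m+1}$ lies in the range excised from $\Gamma_{W_m}$. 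The exponent shift from $\alpha(i,m)$ to $\alpha(i,m+1)$, which increments by $1$ exactly for indices $i$ of color $i_{p_{m+1}}$, then accounts for the relabeling of $\rho_{m+1}$ relative to $\rho_m$ on the surviving vertices.

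Arrow-preservation is the heart of the step. Using the (pure) saw teeth structure guaranteed by points \ref{pointSawTeeth}--\ref{pointPureSawTeeth} of Theorem \ref{theoremInduction} together with Lemma \ref{lemmaInduction} and Proposition \ref{propositionUnidirectionArrowQuiver}, I would verify, following the tooth configurations enumerated for the proof of Lemma \ref{lemmaInduction}, that each individual mutation $\mu_{j^{\gamma+}}$ inside $\hat{\mu}_{m+1}$ rearranges arrows in a locally controlled way, and that the cumulative effect---after the full block and the subsequent cut---produces, under $\rho_{m+1}$, exactly the arrows prescribed by the combinatorial description of $\Gamma_{V_{\bar{w}_{\geq p_{m+2}}}}$. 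The partial inverse $\rho_{m+1}^{\ast}$ then falls out by combining Corollary \ref{corollaryGeneralEvictions} (which identifies the non-evicted preimages via the inequality $i^{\alpha(i,m+1)-} \geq \xi(i,m+1)$) with the defining formula.

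The hard part will be the arrow bookkeeping in the inductive step. Horizontal arrows adjacent to the line of color $i_{p_{m+1}}$ are locally reshuffled at every mutation in $\hat{\mu}_{m+1}$, and ordinary arrows between adjacent-colored lines can appear or disappear depending on the precise tooth configuration; one must confirm that, after all of this, the net rearrangement matches the static description of $\Gamma_{W_{m+1}}$ under the shifted labeling $i \mapsto i^{\alpha(i,m+1)+}$. Since Lemma \ref{lemmaInduction} already pins down the local picture, the verification reduces to a finite list of cases, but the precise matching of each cut-quiver configuration with its counterpart in $\Gamma_{V_{\bar{w}_{\geq p_{m+2}}}}$ is the subtlest ingredient of the argument.
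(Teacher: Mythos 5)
Your proposal follows essentially the same route as the paper: an induction on $m$ in which the morphism property of $\rho_m$ is carried to level $m+1$ by comparing the mutation-plus-cut operation on the source side with a vertex deletion on the target side. The paper packages this comparison as the commutativity of a square $\rho_m\circ\bar{\mu}_m=\iota_m\circ\rho_{m-1}$, with explicit formulas for the auxiliary maps $\iota_m$ (deletion) and $\bar{\mu}_m$ (mutation plus cut), and then checks it by a case analysis on vertices and arrows --- precisely the ``alignment'' you describe, and the base case and partial inverse are handled the same way.

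One point deserves a flag, because it is not just ``bookkeeping.'' The teeth-shift description (Proposition \ref{propositionTeethShift}, second derogation) produces in $\widetilde{\Gamma}_m$ an ordinary arrow $\alpha_{((p_m)_\Max)^{\alpha(p_m,m)-},j,m}$ that has no antecedent in $\widetilde{\Gamma}_{m-1}$, so this arrow cannot be handled by transporting an existing arrow through the commuting square, nor by the configuration tables of Lemma \ref{lemmaInduction} alone. The paper treats it as a separate closing step: it shows that $j^{(\alpha(j,m)+1)+}>(p_m)_\Max>j^{\alpha(j,m)+}$, so the required arrow does exist in $\Gamma_{W_{m+1}}$, by assuming the contrary and using the already-established partial inverse $\rho_{m-1}^\ast$ to produce an arrow in $\widetilde{\Gamma}_{m-1}$ contradicting the final-barb hypothesis. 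Your plan subsumes this under the general case enumeration; when you write it up you will need to isolate this step, since it rests on the inductive availability of $\rho_{m-1}^\ast$ and on the index inequalities of the initial quiver rather than on a local configuration.
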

		\begin{proof}
			We need two more functions (generally not graph morphisms) to prove the result by induction.
			
			We define $\iota_m:\Gamma_{W_m}\rightarrow \Gamma_{W_{m+1}}$ by the following identifications:
			\[
				\iota_m:\left\{\begin{array}{ccl}W_{j,m}&\mapsto&\left\{\begin{array}{ccl}W_{j^+,m+1}&\text{if}&i_j=i_{p_m}\text{ and }(p_m)_\Max\geq j^+\geq p_{m+1}\\W_{j,m+1}&\text{ otherwise if }&i_j\neq i_{p_m} \text{ and } j\geq p_{m+1}\\\notin D_{\iota_m}&\text{otherwise,}& \end{array}\right.\\
				\beta_{j_s,j_t,m}&\mapsto&\left\{\begin{array}{ccc}\notin D_{\iota_m}&\text{if}&W_{j_s,m}\notin D_{\iota_m}\text{ or }W_{j_t}\notin D_{\iota_m}\\\beta_{j_s^+,j_t^+,m+1}&\text{otherwise if}& i_{j_s}=i_{j_t}=i_{p_m}\\\beta_{j_s,j_t,m+1}&\text{otherwise,}&\end{array}\right.\end{array}\right.
			\]
			where $D_{\iota_m}$ is the domain of $\iota_m$.
			
			We define $\bar{\mu}_m:\widetilde{\Gamma}_{m-1}\rightarrow \widetilde{\Gamma}_m$ as:
			\[
				\bar{\mu}_m:\left\{\begin{array}{ccl}G_{j,m-1}&\mapsto&\left\{\begin{array}{ccl}G_{j,m}&\text{if}&i_j\neq i_{p_m}\text{ or } j_\Max>j^{\alpha(j,m)+}\geq \xi(j,m)\\\notin D_{\bar{\mu}_m}&\text{otherwise,}&\end{array}\right.\\\alpha_{j_s,j_t,m-1}&\mapsto&\left\{\begin{array}{ccc}\\\alpha_{j_s,j_t,m}&\text{if}&i_{j_s}\neq i_{p_m}\neq i_{j_t}\text{ or }i_{j_s}=i_{p_m}=i_{j_t}\\\alpha_{j_s^-,j_t,m}&\text{if}&i_{j_s}=i_{p_m}\neq i_{j_t}\text{ and } G_{j_s^-,m-1},G_{j_t,m-1}\in D_{\bar{\mu}_m}\\\alpha_{j_s,j_t^-,m}&\text{if}&i_{j_s}\neq i_{p_m}= i_{j_t}\text{ and } G_{j_s,m-1},G_{j_t^-,m-1}\in D_{\bar{\mu}_m}\\\notin D_{\bar{\mu}_m}&\text{otherwise}&\end{array}\right.\end{array}\right.
			\]
			and for any arrow $\alpha_{j_s,j_t,m-1}$ such that there is no arrow $\alpha_{j_t^{\gamma+},j_s,m-1}$ with $\gamma>0$ and verifying that $j_t^{\alpha(j_t,m-1)+}\neq (j_t)_\Max$ and $i_{j_t}=i_{p_m}$ (when we are on a final barb shifted), we add an arrow $\alpha_{j_{t_2},j_s,m}$ where $j_{t_2}=(j_t)_\Max^{\alpha(j_t,m)-}$.
			
			First we show that $\iota_m$ is well-defined, and that $\iota_m(\Gamma_{W_m})=\Gamma_{W_{m+1}}$. Then we will show that $\bar{\mu}_m$ is well-defined, that $\bar{\mu}_m(\widetilde{\Gamma}_{m-1})=\widetilde{\Gamma}_m$ and that $\rho_m$ is a graph morphism.
			
			Let $0\leq m<\ell(v)$. By definition, $\Gamma_{W_{m+1}}$ is the quiver obtained from $\Gamma_{W_m}$ by removing all summands of index $p_{m+1}>j\geq p_m$ and the adjacent arrows.
			
			The vertices having no image by $\iota_m$ are $W_{(p_m)_\Max,m}$, the vertices $W_{j,m}$ such that $i_j\neq i_{p_m}$ and $p_{m+1}>j>p_m$, and the vertices $W_{j,m}$ such that $i_j=i_{p_m}$ and $p_{m+1}>j^+>p_m$.
			
			Thus among the vertices having an image via $\iota_m$, we have a bijection $W_{j,m}\leftrightarrow W_{j,m+1}$ if $i_j\neq i_{p_m}$ and $W_{j,m}\leftrightarrow W_{j^+,m}$ if $i_j=i_{p_m}$.
			
			In addition the vertices of $\Gamma_{W_{m+1}}$ are the same of those of $\Gamma_{W_m}$ except the ones adjacent to a vertex of index $p_{m+1}>j\geq p_m$. Among the preserved arrows, for the arrows $\alpha_{j_s,j_t,m}$ such that at least one of the extremity is not of color $i_{p_m}$ we have a bijection $\alpha_{j_s,j_t,m}\leftrightarrow\alpha_{j_s,j_t,m+1}$. If both extremities are of color $i_{p_m}$ then the arrow is an horizontal arrow $\alpha_{k,k^+,m}$ and we have the bijection $\alpha_{k,k^+,m}\leftrightarrow\alpha_{k^+,k^{2+},m+1}$. The arrow $\alpha_{k,k^{2+},m+1}$ belongs to $\Gamma_{W_{m+1}}$ because we suppose that $k^+\geq p_{m+1}$ and $k^+\neq k_\Max$ so $W_{k^{2+},m+1}\in\Gamma_{W_{m+1}}$.
			
			Thus we have a bijection between $\Gamma_{W_{m+1}}$ and $\iota_m(\Gamma_{W_m})$ for $0 \leq m< \ell(v)$.
			
			Now we consider $\widetilde{\Gamma}_{m+1}$. We will show that $\bar{\mu}_m(\widetilde{\Gamma}_{m+1})$. To fix notations we will suppose that $\hat{\mu}_m=\mu_{k^{\gamma+}}\circ\cdots\circ\mu_{k}$.
			
			Then $\bar{\mu}_m$ amounts exactly to the modifications described by Proposition \ref{propositionTeethShift} and the cut.
			
			Thus $\iota_m$ and $\bar{\mu}_m$ are well-defined and the images are the one expected.
			
			We will now show that $\rho_m$ is a morphism of graphs for any $0\leq m\leq \ell(v)$ by induction.
			
			For $\rho_0$, $\widetilde{\Gamma}_0$ is the subquiver of $\Gamma_{\bar{w}}$ containing all vertices $R_{j,0}$ such that $j\geq \xi(j,0)$ (by Corollary  \ref{corollaryInitialEvictions}), $\Gamma_{W_1}$ is the subquiver of $\Gamma_{\bar{w}}$ containing all summands such that $j\geq p_1$ and for $1\leq j\leq\ell(w)$ we have $\xi(j,0)\geq p_1$. $\widetilde{\Gamma}_0$ is then a subquiver of $\Gamma_{W_1}$. As for all $j$ we have $\alpha(j,0)=0$, there is no vertex shift nor arrows shift thus $\rho_0$ is well-defined and is just an inclusion. The partial inverse exists and is trivial.
			
			Now we will show the heredity. Suppose that $\rho_{m-1}$ is a morphism of graphs. We will show that $\rho_m$ is also a morphism of graphs. To do so, we will show that we have $\rho_m\circ\bar{\mu}_m=\iota_m\circ\rho_{m-1}$ on their definition domain, having the commutative diagram:
			\[
				\begin{tikzcd}\widetilde{\Gamma}_{m-1}\ar[r,"\rho_{m-1}"]\ar[d,"\bar{\mu}_m"']&\Gamma_{W_m}\ar[d,"\iota_m"]\\
				\widetilde{\Gamma}_m\ar[r,"\rho_m"']&\Gamma_{W_{m+1}}
				\end{tikzcd}
			\]
			
			We first show the equality on vertices. We have four cases (provided the previous one are not verified) for the index of a vertex $G_{j,m-1}$:
			\begin{enumerate}
				\item either $j^{\alpha(j,m-1)+}=(p_m)_\Max$
				\item or $j^{\alpha(j,m)+}<p_{m+1}$
				\item or $\xi(j,m)>j^{\alpha(j,m)+}\geq p_{m+1}$
				\item or $j^{\alpha(j,m)+}\geq \xi(j,m)$
			\end{enumerate}
			In the first case $G_{j,m-1}\not\in D_{\bar{\mu}_m}$. On the other side $\rho_{m-1}(G_{j,m-1})=W_{j^{\alpha(j,m-1)+},m}$ and $W_{j^{\alpha(j,m-1)+},m}\notin D_{\iota_m}$ as $j^{\alpha(j,m-1)+}=(p_m)_\Max$ and thus $(j^{\alpha(j,m-1)+})^+>(p_m)_\Max$.
			
			In the second case, if $i_j=i_{p_m}$, then $G_{j,m-1}\notin D_{\bar{\mu}_m}$ as $\xi(i,m)\geq p_{m+1}$ for any $1\leq i\leq \ell(w)$. On the other side we have $\rho_{m-1}(G_{j,m-1})=W_{j^{\alpha(j,m-1)+},m}\notin D_{\iota_m}$. If $i_j\neq i_{p_m}$ then $j^{\alpha(j,m)+}<p_{m+1}\leq\xi(j,m)=\xi(j,m-1)$ and thus this vertex is not in $\widetilde{\Gamma}_{m-1}$.
			
			In the third case, it depends on the color. If $i_j\neq i_{p_m}$ then $\xi(j,m)=\xi(j,m-1)$ and thus $G_{j,m-1}$ does not exist. Then $i_j=i_{p_m}$ and as $\xi(j,m)=p_{m+1}$, the interval is empty.
			
			In the fourth case we have $\bar{\mu}_m(G_{j,m-1})=G_{j,m}$ and $\rho_m(G_{j,m})=W_{j^{\alpha(j,m)+},m+1}$. On the other side, we have $\rho_{m-1}(G_{j,m-1})=W_{j^{\alpha(j,m-1)+},m}$ and then $\iota_m(W_{j^{\alpha(j,m-1)+},m})=_{j^{\alpha(j,m)+},m+1}$ using the fact that $\alpha(j,m-1)=\alpha(j,m)$ if $i_j\neq i_{p_m}$ and $\alpha(j,m-1)+1=\alpha(j,m)$ if $i_j=i_{p_m}$.
			
			Thus we do have $\rho_m\circ\bar{\mu}_m=\iota_m\circ\rho_{m-1}$ for any vertex of $\widetilde{\Gamma}_m$.
			
			We will now prove the equality for all arrows. We have four cases for $\alpha_{j_s,j_t,m-1}$
			\begin{enumerate}
				\item $G_{j_s,m-1}$ or $G_{j_t,m-1}$ is not in $D_{\rho_m\circ\bar{\mu}_m}=D_{\iota_m\circ\rho_{m-1}}$,
				\item $i_{j_s}\neq i_{p_m}\neq i_{j_t}$,
				\item $i_{j_s}=i_{p_m}=i_{j_t}$,
				\item $i_{j_s}=i_{p_m}$ or $i_{j_t}=i_{p_m}$ but not both.
			\end{enumerate}
			
			In the first case, the arrow will not be in $D_{\bar{\mu}_m}$ neither in $D_{\iota_m\circ\rho_{m-1}}$.
			
			In the second case $\bar{\mu}_m(\alpha_{j_s,j_t,m-1})=\alpha_{j_s,j_t,m}$ and $\rho_m(\alpha_{j_s,j_t,m})=\beta_{j_s^{\alpha(j_s,m)+},j_t^{\alpha(j_t,m)+},m+1}$. On the other hand we have $\rho_{m-1}(\alpha_{j_s,j_t,m-1})=\beta_{j_s^{\alpha(j_s,m-1)+},j_t^{\alpha(j_t,m-1)+},m}$ and
			\[
				\iota_m(\beta_{j_s^{\alpha(j_s,m-1)+},j_t^{\alpha(j_t,m-1)+},m})=\beta_{j_s^{\alpha(j_s,m-1)+},j_t^{\alpha(j_t,m-1)+},m+1}.
			\]
			 As $\alpha(j_t,m-1)=\alpha(j_t,m)$ and $\alpha(j_s,m-1)=\alpha(j_s,m)$ we have the equality.
			
			In the third case, we have $\bar{\mu}_m(\alpha_{j_s,j_t,m-1})=\alpha_{j_s,j_t,m}$ and then $\rho_m(\alpha_{j_s,j_t,m})=\beta_{j_s^{\alpha(j_s,m)+},j_t^{\alpha(j_t,m)+},m+1}$. On the other hand, we have $\rho_{m-1}(\alpha_{j_s,j_t,m-1})=\beta_{j_s^{\alpha(j_s,m-1)+},j_t^{\alpha(j_t,m-1)+},m}$ and 
			\[\iota_m(\beta_{j_s^{\alpha(j_s,m-1)+},j_t^{\alpha(j_t,m-1)+},m})=\beta_{j_s^{(\alpha(j_s,m-1)+1)+},j_t^{(\alpha(j_t,m-1)+1)+},m}\] and, as $\alpha(j_s,m)=\alpha(j_t,m)=\alpha(j_s,m-1)+1=\alpha(j_t,m-1)+1$ we have the equality.
			
			The last case is symmetrical, and we will suppose that $i_{j_s}=i_{p_m}$. Thus $\bar{\mu}_m(\alpha_{j_s,j_t,m-1})=\alpha_{j_s^-,j_t,m}$ and we have $\rho_m(\alpha_{j_s^-,j_t,m})=\beta_{j_s^{(\alpha(j_s,m)-1)+},j_t^{\alpha(j_t,m)+},m+1}$. On the other side we have $\rho_{m-1}(\alpha_{j_s,j_t,m-1})=\beta_{j_s^{\alpha(j_s,m-1)+},j_t^{\alpha(j_t,m-1)+},m}$ and $\iota_m(\beta_{j_s^{\alpha(j_s,m-1)+},j_t^{\alpha(j_t,m-1)+},m})=\beta_{j_s^{\alpha(j_s,m-1)+},j_t^{\alpha(j_t,m-1)+},m+1}$. As $\alpha(j_s,m)=\alpha(j_s,m-1)+1$ and $\alpha(j_t,m)=\alpha(j_t,m-1)$ we have the equality.
			
			Finally in all cases we have the equality, and the diagram commutes. Thus for any vertex or arrow in $\widetilde{\Gamma}_{m-1}$ being in the domain, we have $\rho_m\circ\bar{\mu}_m=\iota_m\circ\rho_{m-1}$.

			As $\iota_{m}$ is clearly invertible for vertices of index $\geq p_{m+1}$ and for adjacent arrows, we can define $\rho_m^\ast=\bar{\mu}_m\circ\rho_{m-1}^\ast\circ\iota_m^{-1}$ on any vertex of $\Gamma_{W_{m+1}}$ of index $\geq p_{m+1}$.
			
			To end our proof we need to show that all arrows created between $\widetilde{\Gamma}_{m-1}$ and $\widetilde{\Gamma}_m$ (by tooth shift of final barbs) have an image through $\rho_m$. In that case we have an arrow $\alpha_{((p_m)_\Max)^{\alpha(p_m,m)-},j,m}$ where $i_j$ is adjacent to $i_{p_m}$. In addition we know that in $\widetilde{\Gamma}_{m-1}$ the last ordinary arrow between $i_j$ and $i_{p_m}$ was the arrow $\alpha_{j,k,m-1}$ where $i_k=i_{p_m}$ and $k\leq ((p_m)_\Max)^{\alpha(p_m,m)-}$. As $\rho_{m-1}$ is a graph morphism by hypothesis, there exists an arrow $\beta_{j^{\alpha(j,m-1)+},k^{\alpha(k,m-1)+},m}$ in $\Gamma_{W_m}$. As $\Gamma_{W_m}$ is a subquiver of $V_{\bar{w}}$, its arrows satisfies the index condition of Section \ref{sectionInitialSeedCw} and then:
			\begin{equation}
				j^{(\alpha(j,m-1)+1)+}\geq k^{(\alpha(k,m-1)+1)+}>j^{\alpha(j,m-1)+}>k^{\alpha(k,m-1)+}\label{eqFinalBarb}
			\end{equation}
			which can be rewritten, using the facts that $\alpha(j,m)=\alpha(j,m-1)$ and $\alpha(k,m)=\alpha(k,m-1)+1$:
			\begin{equation}
				j^{(\alpha(j,m)+1)+}\geq k^{\alpha(k,m)+}>j^{\alpha(j,m)+}>k^{\alpha(k,m-1)+}\label{eqFinalBarb2}
			\end{equation}
			
			To show that $\rho_m$ is a graph morphism, we must show that the image $\rho_m(\alpha_{((p_m)_\Max)^{\alpha(p_m,m)-},j,m})$ is an arrow of $\Gamma_{W_{m+1}}$. In that case the image arrow is:
			\[
				\beta_{(p_m)_\Max,j^{\alpha(j,m)+},m+1}.
			\]
			This arrow exists iff the following inequality is verified:
			\[
				\ell(w)+1\geq j^{(\alpha(j,m)+1)+}> (p_m)_\Max>j^{\alpha(j,m)+}
			\]
			
			As we have $((p_m)_\Max)^{\alpha(p_m,m)-}>k$, $k^{\alpha(k,m)+}>j^{\alpha(j,m)+}$ and $\alpha(k,m)=\alpha(p_m,m)$, we obtain 
			\[
				(p_m)_\Max>j^{\alpha(j,m)+}.
			\]
			We always have $\ell(w)+1\geq j^{(\alpha(j,m)+1)+}$ with equality iff $j^{\alpha(j,m)+}=j_\Max$. In that case it is obvious that $j^{(\alpha(j,m)+1)+}>(p_m)_\Max$.
			
			Suppose that we have $(p_m)_\Max>j^{(\alpha(j,m)+1)+}$. Then there exists $\delta>0$ such that:
			\[
				\ell(w)+1\geq j^{(\delta+\alpha(j,m)+1)+}>(p_m)_\Max>j^{(\delta+\alpha(j,m))+}
			\]
			and thus we have the arrow $\beta_{(p_m)_\Max,j^{(\delta+\alpha(j,m))+},m+1}\in\Gamma_{W_{m+1}}$ and then $\beta_{(p_m)_\Max,j^{(\delta+\alpha(j,m))+},m+1}\in\Gamma_{W_{m}}$. Then, by $\rho_{m-1}^\ast$ we have that $\alpha_{((p_m)_\Max)^{\alpha(k,m-1)-},j^{\delta+},m-1}\in \widetilde{\Gamma}_{m-1}$. That is impossible by hypothesis on the final barb. 
			
			Thus $j^{(\alpha(j,m)+1)+}>(p_m)_\Max$ and $\rho_m$ is a graph morphism.
			
		\end{proof}
		\begin{lemma}\label{lemmaPoint5}
			In the cut seed $(G_m,\widetilde{\Gamma}_m)$, $0\leq m<\ell(v)$ the line of color $i_{p_{m+1}}$ has a pure saw teeth structure. 
		\end{lemma}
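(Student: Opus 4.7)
The plan is to deduce the pure saw teeth structure on the line of color $i_{p_{m+1}}$ in $\widetilde{\Gamma}_m$ from the analogous property in the reference quiver $\Gamma_{W_{m+1}}$ by pulling back along the injective graph morphism $\rho_m$ furnished by the preceding Inclusion Lemma. This reduces the inductive step to a statement about a ``pure initial-type'' quiver, which is amenable to the combinatorial analysis already used in the initialization.

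First I would establish that the bicolor subquivers of $\Gamma_{W_{m+1}}$ with first color $i_{p_{m+1}}$ have pure saw teeth structure. By Proposition~\ref{propositionSawTeethStructureVwbar}, $\Gamma_{W_{m+1}}$ (being the quiver of $V_{\bar{w}_{\geq p_{m+1}}}$) has a saw teeth structure. Moreover, because $\bar{v}$ is the rightmost subword of $\bar{w}$ representing $v$, the index $p_{m+1}$ is the smallest index appearing in $\bar{w}_{\geq p_{m+1}}$, so in particular $(p_{m+1})_\Min = p_{m+1}$ on the line of color $i_{p_{m+1}}$. The argument from Section~\ref{sectionInitialization} for the initialization (case $m=0$, line $i_{p_1}$) then applies almost verbatim: assuming for contradiction an initial barb $W_{(p_{m+1})^{\gamma+}} \to W_j$ to a vertex of adjacent color $i_j$ of minimal target index, one shows $\gamma > 0$ and produces, using the defining inequality of ordinary arrows, a strictly earlier arrow $W_j \to W_{(p_{m+1})^{\zeta+}}$ with $\zeta < \gamma$, contradicting the minimality assumption.

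Second I would transfer this property back to $\widetilde{\Gamma}_m$. The Inclusion Lemma gives an injective graph morphism $\rho_m : \widetilde{\Gamma}_m \hookrightarrow \Gamma_{W_{m+1}}$ acting by $G_{k,m} \mapsto W_{k^{\alpha(k,m)+}, m+1}$, with partial inverse $\rho_m^\ast$ well defined on the vertices involved. Since $\alpha(k,m)$ depends only on the color $i_k$ for fixed $m$, every vertex of a given line is shifted by the same number of successors along that line; in particular the relative order of line-$i_{p_{m+1}}$ vertices is preserved by $\rho_m$, and the leftmost such vertex in $\widetilde{\Gamma}_m$ is mapped to a line-$i_{p_{m+1}}$ vertex of $\Gamma_{W_{m+1}}$ that still precedes the first tooth. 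Any hypothetical initial barb in a bicolor subquiver $(i_{p_{m+1}}, i_j)$ of $\widetilde{\Gamma}_m$ would then push forward to an initial barb in the same bicolor subquiver of $\Gamma_{W_{m+1}}$, contradicting the first step.

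The main obstacle is precisely this second step: one has to verify that the shift built into $\rho_m$, which is nonuniform across lines of different colors, cannot reinterpret an initial barb of $\widetilde{\Gamma}_m$ as a non-barb arrow in $\Gamma_{W_{m+1}}$ (or vice versa). The coherence needed rests on two facts: $\alpha(\cdot,m)$ is constant along each line, so intra-line order is preserved exactly; and the index-successor inequalities $k^{(\alpha(k,m)+1)+} \geq j^{(\alpha(j,m)+1)+} > k^{\alpha(k,m)+} > j^{\alpha(j,m)+}$ defining the ordinary arrows pass through $\rho_m$ by direct arithmetic. Together these force the push-forward of a would-be barb to remain a barb in $\Gamma_{W_{m+1}}$, completing the contradiction and the proof.
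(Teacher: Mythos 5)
Your overall strategy mirrors the paper's: establish pure saw teeth for $\Gamma_{W_{m+1}}$ by the Section~\ref{sectionInitialization} argument (correct, since $p_{m+1}$ is the minimal index in $\bar w_{\geq p_{m+1}}$), and then transfer to $\widetilde{\Gamma}_m$ through the Inclusion Lemma. However, the transfer step as written has a genuine gap.

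You claim that an initial barb of a bicolor subquiver $(i_{p_{m+1}},i_j)$ in $\widetilde{\Gamma}_m$ pushes forward under $\rho_m$ to an initial barb in $\Gamma_{W_{m+1}}$, and you justify this by two facts (constancy of $\alpha(\cdot,m)$ along each line, compatibility of the index inequalities with $\rho_m$). Neither of these addresses the actual obstruction: $\rho_m$ realizes $\widetilde{\Gamma}_m$ only as a \emph{proper} full subquiver of $\Gamma_{W_{m+1}}$ --- some vertices of $\Gamma_{W_{m+1}}$ have no preimage, namely those $W_{i,m+1}$ with $i^{\alpha(i,m)-}<\xi(i,m)$. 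An arrow that is a lone initial barb in $\widetilde{\Gamma}_m$ could a priori sit inside a tooth of $\Gamma_{W_{m+1}}$ whose closing arrow passes through one of these missing vertices, so ``barb pushes to barb'' is not automatic from order- and inequality-preservation alone.

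What closes this gap (and what the paper relies on implicitly when it observes that ``for $i_k=i_{p_{m+1}}$ this interval is empty'') is the fact that $\rho_m$ restricts to a \emph{bijection} on the line of color $i_{p_{m+1}}$: since $\xi(k,m)=p_{m+1}$ for $i_k=i_{p_{m+1}}$, the removed range $\xi(k,m)>k\geq p_{m+1}$ is empty, so no $i_{p_{m+1}}$-vertex of $\Gamma_{W_{m+1}}$ is lost. With that in hand, a hypothetical return arrow $\rho_m(G_{j'})\to W_c$ turning your barb into a tooth has its target $W_c$ on the $i_{p_{m+1}}$-line, hence in the image, and $\rho_m^\ast$ (also part of the Inclusion Lemma) pulls it back to an arrow $G_{j'}\to G_{c'}$ in $\widetilde{\Gamma}_m$, contradicting that $G_{k'}\to G_{j'}$ was a barb. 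Equivalently, one can argue in the removal direction as the paper does: passing from $\Gamma_{W_{m+1}}$ to $\widetilde{\Gamma}_m$ deletes only initial vertices of lines $\neq i_{p_{m+1}}$, i.e.\ only vertices playing the role of $R_{j_{\Mini}}$ in the bicolor subquiver, and the case analysis in the proof of Proposition~\ref{propositionStabilitySawTeethStructure} shows that such deletions never create a new initial barb. Either way, the argument needs this ``line $i_{p_{m+1}}$ is untouched'' fact made explicit; without it your final assertion is unsupported.
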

		\begin{proof}
			We consider the quiver $\Gamma_{W_{m+1}}$. Thanks to Section \ref{sectionInitialization} we know it has a saw teeth structure and a pure saw teeth structure for the line $i_{p_{m+1}}$. Then we know that $\widetilde{\Gamma}_{m}=\rho^\ast_m(\Gamma_{W_{m+1}})$ and so we get $\widetilde{\Gamma}_m$ from $\Gamma_{W_{m+1}}$ by removing all vertices $G_{k,m}$ such that $\xi(k,m)>k\geq p_{m+1}$ and for $i_k=i_{p_m}$ this interval is empty. Thus we remove initial vertices of some line but not of $i_{p_m}$ and, using the same reasonning as Section \ref{sectionInitialization}, we have the result. 
		\end{proof}
	\subsection{Conclusion of the induction}
		Lemma \ref{lemmaPoint5} gives us point \ref{pointPureSawTeeth} of the induction hypothesis of Theorem \ref{theoremInduction} and thus the induction is proved.
		
		If we look at point \ref{pointDeltavcoordinates} of Theorem \ref{theoremInduction} when $m=\ell(v)$ we have that $f_\Min(k)^{\alpha(k,m)\oplus}=\ell(v)+1$ and then the interval is empty and in fact $(G_{\ell(v)},\widetilde{\Gamma}_{\ell(v)})$ is the empty seed.
		
		Then $\mu_\bullet(V_{\bar{w}})$ is the sum of all evicted summands and the quiver $\mu_\bullet(\Gamma_{\bar{w}})$ is the quiver obtained by applying the mutations to $\Gamma_{\bar{w}}$ and removing all deleted vertices. We will now show that $\mu_\bullet(V_{\bar{w}})$ is a $\C_{v,w}$-rigid maximal basic module and its Gabriel quiver is the quiver $\mu_\bullet(\Gamma_{\bar{w}})$.
	\subsection{Conclusion}
		
		\begin{proposition}\label{propositionRigidity}
				The module $\mu_\bullet(V_{\bar{w}})$ is rigid.
			\end{proposition}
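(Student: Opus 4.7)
The plan is to reduce the proposition to two well-known facts of the Buan--Iyama--Reiten--Scott cluster theory on $\mod(\Lambda)$: cluster mutation preserves the cluster-tilting property (and hence rigidity) within $\C_w$, and rigidity is inherited by direct summands.

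First, I would start from the fact that, by Theorem \ref{thmNumberOfIndecomposables}, the initial module $V_{\bar{w}}$ is $\C_w$-cluster-tilting, hence rigid. Each elementary step composing $\hat{\mu}_{\ell(v)}\circ\cdots\circ\hat{\mu}_1$ is a cluster mutation in the $2$-Calabi--Yau sense of \cite{buan2009ClusterStructures2Calabi} recalled in Section \ref{sectionClusterStructureModLambda}, and such mutations send $\C_w$-cluster-tilting modules to $\C_w$-cluster-tilting modules. Theorem \ref{theoremInduction}, already proved by induction above, guarantees in particular that every intermediate seed $R_m$ stays inside $\C_w$ and that each mutation $\mu_{A_{m,j}}$ appearing in $\hat{\mu}_m$ is performed at a cluster variable, so the sequence is admissible. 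It follows that $R_{\ell(v)} := \hat{\mu}_{\ell(v)}\circ\cdots\circ\hat{\mu}_1(V_{\bar{w}})$ is still a $\C_w$-cluster-tilting module, and in particular $\Ext^1_\Lambda(R_{\ell(v)}, R_{\ell(v)}) = 0$.

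Next, by the very definition of $\mu_\bullet$, the module $\mu_\bullet(V_{\bar{w}}) = S(R_{\ell(v)})$ is obtained from $R_{\ell(v)}$ by discarding exactly the deleted indecomposable summands, that is, it is a direct summand of $R_{\ell(v)}$. Writing $R_{\ell(v)} = \mu_\bullet(V_{\bar{w}}) \oplus N$ where $N$ is the sum of the deleted summands and using the additivity of $\Ext^1_\Lambda(-,-)$ in both arguments, the vanishing of $\Ext^1_\Lambda(R_{\ell(v)}, R_{\ell(v)})$ immediately forces $\Ext^1_\Lambda(\mu_\bullet(V_{\bar{w}}), \mu_\bullet(V_{\bar{w}})) = 0$, which is precisely the required rigidity.

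No genuine obstacle is expected here: once the induction machinery of Theorem \ref{theoremInduction} has been put in place to ensure that the mutation sequence is well-defined at every stage and that $S$ simply removes a direct summand, the proposition reduces to the sub-summand principle for $\Ext^1$. The more delicate assertions (membership of $\mu_\bullet(V_{\bar{w}})$ in $\C_{v,w}$, its maximality in that category, and the identification of its quiver) will rely on the finer information recorded in Theorem \ref{theoremInduction} and are the subject of the following statements, rather than of this proposition.
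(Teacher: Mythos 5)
Your proof is correct and follows essentially the same path as the paper's: start from the $\C_w$-cluster-tilting module $V_{\bar{w}}$, invoke preservation of rigidity under mutation, and then pass to the direct summand $\mu_\bullet(V_{\bar{w}})$. If anything your wording is slightly more careful than the paper's at the last step — the paper loosely asserts that rigidity descends to any submodule, whereas you correctly restrict to direct summands and justify the descent via the additivity of $\Ext^1_\Lambda(-,-)$ in both arguments.
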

			\begin{proof}
				$V_{\bar{w}}$ is rigid as it is the module of a seed for the cluster structure of $\C_w$. Moreover, rigidity is preserved through mutation. Then $\hat{\mu}_{\ell(v)}\circ\cdots\circ\hat{\mu}_1(V_{\bar{w}})$ is rigid. In addition, if the module $M$ is rigid, then any $N\subset M$ is also rigid. Then $\mu_{\bullet}(V_{\bar{w}})\subset \hat{\mu}_{\ell(v)}\circ\cdots\circ\hat{\mu}_1(V_{\bar{w}})$ is rigid.
			\end{proof}
		\begin{proposition}\label{propositionBasicity}
			The module $\mu_{\bullet}(V_{\bar{w}})$ is basic.
		\end{proposition}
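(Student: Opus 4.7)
The proof should be short: basicity is preserved at every step of the construction, so I would just chain these preservation facts.

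First I would observe that the initial module $V_{\bar{w}}$ is basic. By construction (Definition \ref{definitionVk}) the indecomposable summands $V_{k,\bar{w}}$, $1\leq k\leq \ell(w)$, are pairwise non-isomorphic: indeed, by Theorem \ref{theoremStructureInitialDeltaVectors} their $\widetilde{\Delta}_{\dot{v}}$-vectors (or more simply their $\Delta_{\bar{w}}$-vectors given by Proposition \ref{propositionIntervalModules}) are pairwise distinct, and the $\Delta$-vector determines a rigid indecomposable module up to isomorphism.

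Next I would invoke the fact that mutation of a basic cluster-tilting module produces another basic cluster-tilting module. Each mutation $\mu_k$ exchanges the summand $M_k$ for the unique indecomposable $M_k^\ast$ determined by the exchange sequences of Theorem 2.2, and $M_k^\ast \not\cong M_k$, nor is $M_k^\ast$ isomorphic to any other summand $M_j$ ($j\neq k$) — otherwise $M_k^\ast \in \add(M)$, contradicting the non-triviality of the exchange. This is the content of \cite{buan2009ClusterStructures2Calabi} in the present 2-Calabi--Yau setting. Applying this inductively along the mutation sequence $\hat{\mu}_{\ell(v)}\circ\cdots\circ\hat{\mu}_1$ shows that $\hat{\mu}_{\ell(v)}\circ\cdots\circ\hat{\mu}_1(V_{\bar{w}})$ is basic.

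Finally, $\mu_\bullet(V_{\bar{w}}) = S(\hat{\mu}_{\ell(v)}\circ\cdots\circ\hat{\mu}_1(V_{\bar{w}}))$ is obtained by deleting a subset of the indecomposable direct summands of $\hat{\mu}_{\ell(v)}\circ\cdots\circ\hat{\mu}_1(V_{\bar{w}})$. Deleting summands from a basic module yields a basic module, hence $\mu_\bullet(V_{\bar{w}})$ is basic. There is essentially no obstacle here; the only subtle point is making sure that at each intermediate step of the composite mutation the module being mutated is still a basic cluster-tilting module so that the BIRS exchange applies, but this is already guaranteed by Proposition \ref{propositionRigidity} together with Theorem \ref{thmMaximalRigidity} applied to the ambient category $\C_w$ (the mutations are performed inside $\C_w$, not inside $\C_{v,w}$).
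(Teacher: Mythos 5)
Your proposal follows essentially the same route as the paper's proof: observe that $V_{\bar{w}}$ is basic, that mutation preserves basicity, and that deleting direct summands preserves basicity. The paper states these three facts in one line; you simply flesh out the justifications (distinct $\Delta$-vectors, the BIRS exchange, etc.), but the argument is identical in structure.
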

			\begin{proof}
				$V_{\bar{w}}$ was basic. Mutations preserves basicity and so does removing direct summands thus $\mu_{\bullet}(V_{\bar{w}})$ is basic.
			\end{proof}
		\begin{proposition}\label{propositionCvwbelonging}
			The module $\mu_{\bullet}(V_{\bar{w}})$ is in the category $\C_{v,w}$.
		\end{proposition}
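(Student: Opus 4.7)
The plan is to deduce this proposition directly from Theorem \ref{theoremInduction} applied at $m=\ell(v)$, combined with the $\C^v$-membership criterion of Proposition \ref{propositionCvbelonging} and the fact that mutation and direct-summand-selection preserve membership in $\C_w$.

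First, I would examine point \ref{pointDeltavcoordinates} of Theorem \ref{theoremInduction} specialized to $m=\ell(v)$. For any summand $G_{k,\ell(v)}$ of the cut seed, the non-zero coordinates of $\widetilde{\Delta}_{\dot{v}}(G_{k,\ell(v)})$ are supported in the index set $\{f_\Min(k)^{\alpha(k,\ell(v))\oplus}\leq j\leq f(k^{\alpha(k,\ell(v))+})\}$. Since $\alpha(k,\ell(v))$ counts \emph{all} letters of $\bar{v}$ of color $i_k$, the $\alpha(k,\ell(v))$-th $\oplus$-successor of $f_\Min(k)$ exceeds $\ell(v)$, so by the boundary convention in Definition \ref{definitionCombinatorialNumbers} the interval is empty. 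Hence every surviving summand of the cut seed at step $\ell(v)$ must in fact have $\widetilde{\Delta}_{\dot{v}}=0$; equivalently, the cut seed $(G_{\ell(v)},\widetilde{\Gamma}_{\ell(v)})$ is empty, and every indecomposable summand $R_{k,\ell(v)}$ of $\hat{\mu}_{\ell(v)}\circ\cdots\circ\hat{\mu}_1(V_{\bar{w}})$ that was not removed as a \emph{deleted} summand must have been removed as an \emph{evicted} summand, i.e.\ satisfies $\widetilde{\Delta}_{\dot{v}}(R_{k,\ell(v)})=0$.

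Since $\mu_\bullet(V_{\bar{w}})=S(\hat{\mu}_{\ell(v)}\circ\cdots\circ\hat{\mu}_1(V_{\bar{w}}))$ is precisely the sum of the non-deleted summands, each of its indecomposable direct summands is evicted, and by Proposition \ref{propositionRigidity} each is moreover rigid and indecomposable. I would then apply Proposition \ref{propositionCvbelonging} summand by summand: each indecomposable direct summand of $\mu_\bullet(V_{\bar{w}})$ lies in $\C^v$, and since $\C^v$ is extension-closed (and in particular closed under finite direct sums), this yields $\mu_\bullet(V_{\bar{w}})\in \C^v$.

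Finally I would verify membership in $\C_w$: the initial module $V_{\bar{w}}$ lies in $\C_w$, and the mutation of a $\C_w$-cluster-tilting module remains a $\C_w$-cluster-tilting module by \cite{buan2009ClusterStructures2Calabi}, so $\hat{\mu}_{\ell(v)}\circ\cdots\circ\hat{\mu}_1(V_{\bar{w}})\in \C_w$. The category $\C_w=\Fac(V_{\bar{w}})$ is closed under direct summands, and $\mu_\bullet(V_{\bar{w}})$ is obtained by discarding certain summands, so $\mu_\bullet(V_{\bar{w}})\in \C_w$. Combining the two, $\mu_\bullet(V_{\bar{w}})\in \C^v\cap \C_w=\C_{v,w}$, as claimed. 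The real work has already been done in proving Theorem \ref{theoremInduction}; the only subtlety here is the combinatorial verification that the index interval in point \ref{pointDeltavcoordinates} collapses when $m=\ell(v)$, which is the step I expect to warrant the most care.
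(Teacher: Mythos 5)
Your proof is correct and takes essentially the same route as the paper's: you establish $\C_w$-membership via extension-closure of $\C_w$ under mutation and its closure under direct summands, and you establish $\C^v$-membership by showing that each surviving summand is an evicted one (i.e.\ has $\widetilde{\Delta}_{\dot{v}}=0$) and then invoking Proposition \ref{propositionCvbelonging} summand by summand. The one small difference is that you re-derive, inside the proof, the emptiness of the cut seed $(G_{\ell(v)},\widetilde{\Gamma}_{\ell(v)})$ from point \ref{pointDeltavcoordinates} of Theorem \ref{theoremInduction}; the paper had already recorded this observation in the paragraph preceding the proposition and simply cites it.
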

			\begin{proof}
				We will show first that $\mu_{\bullet}(V_{\bar{w}})\in \C_w$ and then that $\mu_{\bullet}(V_{\bar{w}})\in\C^v$. 
				
				$V_{\bar{w}}\in\C_w$. $\C_w$ is stable by extensions so $\hat{\mu}_{\ell(v)}\circ\cdots\circ\hat{\mu}_1\in\C_w$. In addition $\C_w$ is stable by taking direct summands and direct sums so $\mu_{\bullet}(V_{\bar{w}})\in\C_w$.
				
				To show that $\mu_{\bullet}(V_{\bar{w}})\in\C^v$ we use Proposition \ref{propositionCvbelonging}. We know that every direct summands of $\mu_{\bullet}(V_{\bar{w}})$ is an evicted module of the cut module $G_{\ell(v)}$ so, if $M$ is a direct summand of $\mu_{\bullet}(V_{\bar{w}})$, $\widetilde{\Delta}_{\dot{v}}(M)=0$ and, by Proposition \ref{propositionCvbelonging}, $M\in\C^v$. As $\C_v$ is stable by taking direct summands and direct sums, $\mu_{\bullet}(V_{\bar{w}})\in\C^v$. 
				
				Then $\mu_{\bullet}(V_{\bar{w}})\in\C_{v,w}$.
			\end{proof}
		\begin{proposition}\label{propositionMaximalRigidity}
			The module $\mu_{\bullet}(V_{\bar{w}})$ is $\C_{v,w}$-maximal rigid.
		\end{proposition}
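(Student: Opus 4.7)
My plan is to reduce maximal rigidity to a counting argument, combined with Leclerc's upper bound on the number of summands of $\C_{v,w}$-maximal rigid basic modules (the theorem cited just before Section 2.4 of the paper). Since $\mu_\bullet(V_{\bar{w}})$ has already been shown to be rigid (Proposition \ref{propositionRigidity}), basic (Proposition \ref{propositionBasicity}) and contained in $\C_{v,w}$ (Proposition \ref{propositionCvwbelonging}), it remains to count its indecomposable summands and verify that this count equals the universal upper bound $\ell(w)-\ell(v)$.

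For the counting step, I would track $\Sigma$ through each stage of the algorithm. The module $V_{\bar{w}}$ has $\ell(w)$ indecomposable summands. Mutation preserves $\Sigma$, so $\hat{\mu}_{\ell(v)}\circ\cdots\circ\hat{\mu}_1(V_{\bar{w}})$ still has $\ell(w)$ indecomposable summands. The operator $S$ then removes the summands indexed by $\{k:k>(k_\Max)^{\alpha(k,\ell(v))-}\}$. For a fixed color $i$, if $n_i$ denotes the number of letters of color $i$ in $\bar{w}$ and $v_i$ the number in $\bar{v}$, then $\alpha(k_\Max^i,\ell(v))=v_i$, so the summands of color $i$ removed by $S$ are exactly the $v_i$ largest ones. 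Summing over colors, $S$ deletes $\sum_i v_i=\ell(v)$ summands, whence
\[
\Sigma(\mu_\bullet(V_{\bar{w}}))=\ell(w)-\ell(v).
\]

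To conclude, suppose for contradiction that there exists $T'\in\C_{v,w}$ with $\mu_\bullet(V_{\bar{w}})\oplus T'$ rigid and $T'\notin\add(\mu_\bullet(V_{\bar{w}}))$. Then some indecomposable summand $T''$ of $T'$ is not isomorphic to any summand of $\mu_\bullet(V_{\bar{w}})$, and $M:=\mu_\bullet(V_{\bar{w}})\oplus T''$ is a basic rigid object of $\C_{v,w}$ with $\Sigma(M)=\ell(w)-\ell(v)+1$. Using the standard procedure (Zorn's lemma, or direct extension to a complete rigid object in the 2-CY Frobenius subcategory $\C_{v,w}$ as in Leclerc's setup), $M$ extends to a basic $\C_{v,w}$-maximal rigid module with at least $\ell(w)-\ell(v)+1$ indecomposable summands, contradicting the cited theorem of Leclerc (Proposition 4.3 of \cite{leclerc2016ClusterStructuresStrata}). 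Hence $T'\in\add(\mu_\bullet(V_{\bar{w}}))$, and $\mu_\bullet(V_{\bar{w}})$ is $\C_{v,w}$-maximal rigid.

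The main conceptual step is the counting: one must verify carefully that the indices deleted by $S$ in each color $i$ are precisely the $v_i$ largest, which relies on the identity $\alpha(k_\Max^i,\ell(v))=v_i$ together with the combinatorial interpretation of $(\,\cdot\,)^{\gamma-}$. Everything else reduces either to standard preservation properties (rigidity and basicity under mutation and taking direct summands, already invoked in Propositions \ref{propositionRigidity} and \ref{propositionBasicity}) or to the known bound of Leclerc; there is no new homological computation required here.
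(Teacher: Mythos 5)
Your proof takes essentially the same route as the paper's: both reduce to a counting argument showing $\Sigma(\mu_\bullet(V_{\bar w}))=\ell(w)-\ell(v)$ and then invoke Leclerc's count of indecomposable summands of a $\C_{v,w}$-maximal rigid basic module. The only differences are presentational: you count the $S$-deleted summands color by color using $\alpha(k_{\Max},\ell(v))=v_i$ (the paper counts one deletion per mutation block $\hat\mu_m$, yielding the same total $\ell(v)$), and you spell out the Zorn-type extension argument for why hitting the bound forces maximality, a step the paper leaves implicit.
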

		\begin{proof}
			We know from Theorem \ref{thmNumberOfIndecomposables} that a basic rigid module of $\C_{v,w}$ is maximal if it has $\ell(w)-\ell(v)$ direct summands. We know that $V_{\bar{w}}$ had $\ell(w)$ direct summands and that we removed all summands whose indices are in the set $\bigcup_{k\in I}\{1\leq j\leq \ell(w)\mid i_j=i_k\mid j>(j_\Max)^{\alpha(k,\ell(v))-}\}$ or equivalently, as seen with the cut seed: to remove one summand for each sequence of mutation $\hat{\mu}_m$. As we perform $\ell(v)$ such sequencess, we remove $\ell(v)$ summands and the final total number of summands is $\ell(w)-\ell(v)$ and so $\mu_{\bullet}(V_{\bar{w}})$ is $\C_{v,w}$-rigid maximal.
		\end{proof}
		\begin{proposition}\label{propositionFrozenVertices}
			The $\C_{v,w}$-projectives direct indecomposable summands of $\mu_{\bullet}(V_{\bar{w}})$ are the summands corresponding to vertices of $\Gamma_{R_{\ell(v)}}$ adjacent to summands deleted by $S$ and to the vertices of $S(\Gamma_{\ell(v)})$ adjacent to no other vertex of $S(\Gamma_{\ell(v)})$.
		\end{proposition}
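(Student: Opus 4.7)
The plan is to use the cluster reduction framework of \cite{buan2009ClusterStructures2Calabi} together with the combinatorial data of the quiver $\Gamma_{R_{\ell(v)}}$ already computed in the course of the algorithm. Recall that in a Frobenius $2$-Calabi--Yau subcategory such as $\C_{v,w}$, the projective-injective direct summands of a cluster-tilting object are exactly the coefficients of the corresponding cluster seed: these are the indecomposable summands $M_i$ at which the exchange sequences of Section~\ref{sectionClusterStructureModLambda} fail to produce an indecomposable $M_i^\ast$ living in $\C_{v,w}$. So the proposition reduces to characterising combinatorially which summands of $\mu_\bullet(V_{\bar w})$ are not mutable inside $\C_{v,w}$.

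First I would view the passage from $R_{\ell(v)}$ to $\mu_\bullet(V_{\bar w})$ as a cluster reduction in the sense of \cite{buan2009ClusterStructures2Calabi}: one writes $R_{\ell(v)} = T' \oplus T''$ where $T''$ is the sum of summands deleted by $S$ and $T' = \mu_\bullet(V_{\bar w})$. By Proposition~\ref{propositionCvwbelonging} and the proof of Theorem~\ref{theoremInduction}, $T'$ lies in $\C_{v,w}$ while each summand of $T''$ has non-zero $\widetilde{\Delta}_{\dot v}$-vector and hence lies outside $\C^v$ by Proposition~\ref{propositionCvbelonging}. This places us in the setting where the frozen vertices of the induced cluster structure on $T'$ are, by general principles of cluster reduction, exactly those vertices of $T'$ that interact with $T''$ in the quiver of $T$, together with the vertices already isolated after the reduction.

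Then I would perform a case distinction for each indecomposable summand $M_i$ of $\mu_\bullet(V_{\bar w})$. If $M_i$ is adjacent in $\Gamma_{R_{\ell(v)}}$ to some deleted summand $N$, then $N$ appears in the middle term of one of the exchange sequences of the theorem of Section~\ref{sectionClusterStructureModLambda} when mutating at $M_i$ inside $R_{\ell(v)}$. Deleting $N$ changes the middle term, so by Proposition~\ref{propositionMutationDeltaVectors} the naive $\widetilde{\Delta}_{\dot v}$-vector of the putative $M_i^\ast$ gains a non-zero entry, and by Proposition~\ref{propositionCvbelonging} the resulting candidate lies outside $\C^v$; hence $M_i$ is not mutable inside $\C_{v,w}$. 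If on the contrary $M_i$ has no neighbour at all in $S(\Gamma_{\ell(v)})$, both exchange sequences in $\C_{v,w}$ would have zero middle term, so no indecomposable $M_i^\ast \in \C_{v,w}$ exists and $M_i$ is again a coefficient. In all remaining cases, every neighbour of $M_i$ in $\Gamma_{R_{\ell(v)}}$ survives in $S(\Gamma_{\ell(v)})$, the exchange sequences computed in $\C_w$ stay inside $\C_{v,w}$, and $M_i$ is a genuine cluster variable.

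The main obstacle I anticipate is the first case: one must rigorously justify that the middle term of the exchange sequence really loses a summand when $N$ is deleted (i.e.\ that the arrow visible in $\Gamma_{R_{\ell(v)}}$ corresponds to an actual irreducible morphism in the endomorphism algebra, not merely a combinatorial artifact), and that the resulting $M_i^\ast$ computed in $\C_w$ does not accidentally fall back into $\C^v$. The combinatorial description of $\widetilde{\Delta}_{\dot v}$-vectors in Theorem~\ref{thmCoordinateTranslation} and the explicit mutation rule of Proposition~\ref{propositionMutationDeltaVectors} should suffice to rule this out, but the bookkeeping will require a careful inspection of the saw-teeth structure established in Theorem~\ref{theoremInduction} to identify precisely which coordinate becomes non-zero. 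Once this is done, the count and description of the coefficients reduces to a direct reading of $\Gamma_{R_{\ell(v)}}$ and $S(\Gamma_{\ell(v)})$, exactly as stated in the proposition.
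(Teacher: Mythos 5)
Your case distinction mirrors the paper's exactly: a summand of $\mu_\bullet(V_{\bar w})$ is $\C_{v,w}$-projective precisely when it is adjacent in $\Gamma_{R_{\ell(v)}}$ to a deleted summand, or isolated in $S(\Gamma_{\ell(v)})$. Where you depart from the paper is in how you argue that adjacency to a deleted summand forces frozenness. The paper's argument is qualitative: when every neighbour of $R_k$ lies in $\C_{v,w}$, the two exchange sequences place $R_k^\ast$ in $\C_w$ (as a factor of the middle term) and in $\C^v$ (as a submodule of the other middle term), so $R_k^\ast\in\C_{v,w}$; a deleted neighbour obstructs this and the mutated module is pushed outside $\C_{v,w}$. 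You instead run the argument numerically through Proposition \ref{propositionMutationDeltaVectors}, claiming the $\widetilde{\Delta}_{\dot v}$-vector of the putative $M_i^\ast$ becomes non-zero. That is a reasonable strategy, but the gap you flag in your final paragraph is genuine and not merely bookkeeping: Proposition \ref{propositionMutationDeltaVectors} selects one of two possible sums according to the $d_\Delta$-inner-product criterion, and unless the selected sum is the one involving the deleted neighbour $N$, the resulting $\widetilde{\Delta}_{\dot v}$-vector could vanish and the argument would collapse. Resolving this would require an analogue of Lemma \ref{lemmaComputationDeltavVector} for the final quiver, which you do not provide. You also assert that every summand deleted by $S$ has non-zero $\widetilde{\Delta}_{\dot v}$-vector; this is true, but it needs a justification: it follows from Corollary \ref{corollaryGeneralEvictions} applied with $m=\ell(v)$, because $\xi(k,\ell(v))=\ell(w)+1$ forces the summands $R_k$ with $k>(k_\Max)^{\alpha(k,\ell(v))-}$ to lie outside $\C^v$. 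Finally, the BIRS cluster-reduction framing is not quite what is going on — $\C_{v,w}$ is a full extension-closed subcategory of $\C_w$, not a subfactor category — so invoking that theorem wholesale is misleading even if the intuition is right.
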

		\begin{proof}
			A module which is not in any of the two cases is in $\C_{v,w}$ and only adjacent to direct summands of $\C_{v,w}$. So, as $\C_{v,w}$ is extension-closed, mutating this summands will still give a $\C_{v,w}$-module. If the summand $R_k$ to be mutated is adjacent to a summand of $\C_w\setminus\C_{v,w}$ then the mutated summand will be in $\C_{w}\setminus\C_{v,w}$ so it is non $\C_{v,w}$-mutable and so it has to be $\C_{v,w}$-projective.
			
			If the summand is adjacent to no other vertex, then it is non-mutable and thus $\C_{v,w}$-projective.
		\end{proof}
		\begin{proposition}
			The quiver $\mu_\bullet(\Gamma_{\bar{w}})$ obtained from $\Gamma_{\bar{w}}$ by mutations and vertex removal is the quiver of $\mu_\bullet(V_{\bar{w}})$
		\end{proposition}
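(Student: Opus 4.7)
The plan is to bootstrap from the observation that $R_{\ell(v)} = \hat{\mu}_{\ell(v)} \circ \cdots \circ \hat{\mu}_1(V_{\bar{w}})$ is itself a $\C_w$-cluster-tilting module, since cluster-tilting is preserved under mutation in the 2-Calabi-Yau framework and $V_{\bar{w}}$ is cluster-tilting by Theorem \ref{thmNumberOfIndecomposables}. Consequently $\Gamma_{\ell(v)}$, obtained by applying the corresponding sequence of quiver mutations to $\Gamma_{\bar{w}}$, is the Gabriel quiver of $\End_\Lambda(R_{\ell(v)})$. Writing $R_{\ell(v)} = \mu_\bullet(V_{\bar{w}}) \oplus D$, where $D$ collects the summands removed by $S$, the statement reduces to showing that the Gabriel quiver of $\End_\Lambda(\mu_\bullet(V_{\bar{w}}))$ is the full subquiver of $\Gamma_{\ell(v)}$ on the vertices of $\mu_\bullet(V_{\bar{w}})$, that is, no new arrows appear when the vertices of $D$ are deleted.

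The only way a new irreducible morphism $M_i \to M_j$ between summands of $\mu_\bullet(V_{\bar{w}})$ could appear is if, in $\End_\Lambda(R_{\ell(v)})$, the corresponding class in $\mathrm{rad}/\mathrm{rad}^2$ was representable only as a composition factoring through some deleted summand $D_k$. I would rule this out by a local analysis at each deleted vertex, using the saw-teeth structure from the Induction Theorem \ref{theoremInduction} together with Proposition \ref{propositionFrozenVertices}: each deleted summand sits at the extremity of its colored line (the indices $k > (k_\Max)^{\alpha(k,\ell(v))-}$), and its neighbours in $\mu_\bullet(V_{\bar{w}})$ are $\C_{v,w}$-projective, hence boundary objects of the reduced quiver. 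A case analysis on the horizontal and ordinary arrows incident to such a vertex then shows that any length-two composition through a deleted vertex is already represented by arrows among the remaining summands, so no new entry appears in $\mathrm{rad}/\mathrm{rad}^2$ of $\End_\Lambda(\mu_\bullet(V_{\bar{w}}))$.

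The main obstacle is the verification of this last factorization claim in every local configuration. A cleaner alternative, which I would favour if the direct combinatorial check becomes unwieldy, is to invoke Iyama-Yoshino cluster reduction: since $\mu_\bullet(V_{\bar{w}})$ is $\C_{v,w}$-rigid maximal basic by Propositions \ref{propositionBasicity}, \ref{propositionRigidity}, \ref{propositionCvwbelonging} and \ref{propositionMaximalRigidity}, and $\C_{v,w}$ is the extension-closed subcategory obtained inside $\C_w$ by killing the additive closure of $D$, the Gabriel quiver of its endomorphism algebra is determined from $\Gamma_{\ell(v)}$ by the standard reduction rule for cluster-tilting objects in 2-Calabi-Yau categories. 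In the present situation, this rule specializes to exactly the prescribed vertex-and-arrow deletion, precisely because the summands of $D$ have been mutated so as to sit on the boundary of the final quiver, as already established in the proof of Theorem \ref{theoremMain}.
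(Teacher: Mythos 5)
The paper's own proof is a two-line deduction: by Proposition \ref{propositionFrozenVertices}, the summands deleted by $S$ are adjacent only to $\C_{v,w}$-projective (frozen) summands, and by the convention in Definition \ref{definitionClusterSeed} the quiver of a seed is only considered up to arrows between two frozen vertices; hence any arrow that might be created or destroyed by deleting these vertices is an arrow between two frozen vertices and is irrelevant. Your proposal never invokes this convention, and both of your proposed routes try to establish something strictly stronger, which is where the gaps appear.

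Your first route aims to show that ``no new entry appears in $\rad/\rad^2$ of $\End_\Lambda(\mu_\bullet(V_{\bar{w}}))$'' after deleting the summands of $D$. This is not what you need, and it may well be false: when you delete a vertex $D_k$ sitting in a path $M_i\to D_k\to M_j$, the composite can become a new irreducible morphism, i.e.\ a genuinely new arrow $M_i\to M_j$. The paper does not exclude this; it simply observes that $M_i$ and $M_j$ are both frozen, so the new arrow is one of those the seed quiver ignores. You gesture at a ``case analysis on the horizontal and ordinary arrows'' to rule out new arrows, but you never carry it out, and the claim it would have to establish is not the right one. Your second route, via Iyama--Yoshino cluster reduction, has the same problem in different dress: the quiver reduction rule for deleting a summand is not ``delete vertex and incident arrows'' in general (one composes arrows through the deleted vertex and cancels $2$-cycles), and your justification that it ``specializes to exactly the prescribed vertex-and-arrow deletion'' because $D$ is ``on the boundary'' is an assertion, not an argument. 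What actually makes the naive deletion rule correct here is not that no composite arrows appear, but that they only appear between coefficients, where the convention discards them. In short: you correctly reduce the problem to the behaviour around deleted vertices and you correctly identify Proposition \ref{propositionFrozenVertices} as the key input, but you do not combine it with the defining convention on seed quivers, and without that the two arguments you sketch are both incomplete and aim at the wrong target.
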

		\begin{proof}
			By Proposition \ref{propositionFrozenVertices} the deleted vertices are adjacent to only $\C_{v,w}$-frozen vertices and then deleting these summands does not change the quiver of the neighbours of mutable vertices.
			
			The quiver obtained is thus the quiver of a $\C_{v,w}$-seed.
		\end{proof}
		
		This finishes the proof of Theorem \ref{theoremMain}.
\section{Corollaries}
	We now state some consequences of the existence of this algorithm. 
	\subsection{Mutation sequence equivalence}\label{sectionMutationSequenceEquivalence}
		In Sections \ref{sectionSchroer} \& \ref{sectionRecursive} we saw two different formulations for the sequence of mutations, respectively $\tilde{\mu}_m$ and $\hat{\mu}_m$ for $1\leq m\leq\ell(v)$. We now show that these two formulations are in fact equivalent.
		\begin{proposition}
			For any $1\leq m\leq\ell(v)$ we have $\tilde{\mu}_m=\hat{\mu}_m$.
		\end{proposition}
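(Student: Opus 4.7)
The plan is to proceed by induction on $m$. By the induction hypothesis, $\tilde{\mu}_j=\hat{\mu}_j$ for $j<m$, so the module $R_{m-1}$ reached before applying the $m$-th sequence is the same in both formulations. It then suffices to show two things: that the ordered set of indices mutated by $\tilde{\mu}_m$ coincides with $A_m(R_{m-1})$, and that both sequences perform their mutations in ascending order. The latter is immediate from the definitions of $\tilde{\mu}_m$ and $\hat{\mu}_m$, so the whole content lies in matching the two index sets, setting $k=p_m$ throughout.

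The key input is point \ref{pointDeltavcoordinates} of Theorem \ref{theoremInduction} (equivalently Hypothesis \ref{hypothesisDeltaVector}) applied to $R_{m-1}$: the coordinate $\Delta_{\dot{v},m}(R_{i,m-1})$ is nonzero precisely when $i_i=i_{p_m}$ and
\[
f_\Min(i)^{\alpha(i,m-1)\oplus}\leq m\leq f(i^{\alpha(i,m-1)+}).
\]
Because $i$ has color $i_k=i_{p_m}$, one computes $\alpha(i,m-1)=\gamma_m-1$ (the letter at $\bar{v}$-index $m$ is itself of color $i_k$, and $\gamma_m=\alpha(p_m,m)$). Therefore $f_\Min(k)^{(\gamma_m-1)\oplus}$ is the $\gamma_m$-th $\bar{v}$-letter of color $i_k$, namely $m$ itself, and the lower inequality is automatic.

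The upper inequality $m\leq f(i^{(\gamma_m-1)+})$ is the substantive constraint. Since $m$ is itself the smallest $\bar{v}$-index $\geq m$ of color $i_k$, the condition reduces to $p_m\leq i^{(\gamma_m-1)+}$. The decisive combinatorial identity is that $p_m=(k_\Min)^{(\beta_m+\gamma_m-1)+}$: indeed, among the $\bar{w}$-letters of color $i_k$ with index $\leq p_m$, exactly $\gamma_m$ belong to $\bar{v}$ (including $p_m$), and $\beta_m$ do not. Writing $i=(k_\Min)^{(a-1)+}$, we have $i^{(\gamma_m-1)+}=(k_\Min)^{(a+\gamma_m-2)+}$, so $p_m\leq i^{(\gamma_m-1)+}$ is equivalent to $a\geq\beta_m+1$, i.e.\ $i\geq(k_\Min)^{\beta_m+}$. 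Combined with $i\leq b_m=((p_m)_\Max)^{\gamma_m-}=(k_\Max)^{\gamma_m-}$ coming from the definition of $A_m$, we obtain exactly the range of indices mutated by $\tilde{\mu}_m$.

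The boundary cases align automatically: $A_m=\varnothing$ iff the interval $[(k_\Min)^{\beta_m+},(k_\Max)^{\gamma_m-}]$ is empty, in which case both $\hat{\mu}_m$ and $\tilde{\mu}_m$ are defined as the identity. The expected main obstacle is purely bookkeeping: correctly translating between the $\bar{w}$-indexing used by $\tilde{\mu}_m$ and the $\bar{v}$-indexing underlying the $\Delta_{\dot{v}}$-description, and in particular keeping track of the two identities $\alpha(i,m-1)=\gamma_m-1$ and $p_m=(k_\Min)^{(\beta_m+\gamma_m-1)+}$ that convert the abstract interval condition into the concrete range of mutated vertices.
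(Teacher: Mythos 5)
Your proof is correct and follows essentially the same route as the paper: both identify the mutated index set via point \ref{pointDeltavcoordinates} of Theorem \ref{theoremInduction}, observe that the lower bound of the interval always equals $m$ (so only the upper bound matters), and reduce the upper bound to the same combinatorial identity $p_m=(k_\Min)^{(\beta_m+\gamma_m-1)+}$ before matching it against $(k_\Min)^{\beta_m+}$. The explicit induction you set up to justify that both formulations act on the same $R_{m-1}$ is harmless but not really needed, since $\tilde{\mu}_m$ is defined purely from the combinatorics of $(\bar w,\bar v)$ and the comparison is between abstract mutation sequences; the paper simply appeals to Theorem \ref{theoremInduction} directly.
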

		\begin{proof}
			Using the results of Theorem \ref{theoremInduction}, we can describe the vertices/summands mutated by $\hat{\mu}_m$. We know that the summands $G_{k,m-1}$ of the cut seed having $\widetilde{\Delta}_{\dot{v},m}(G_{k,m-1})\neq 0$ are the one mutated by $\hat{\mu}_m$. It amounts to the summands of color $i_{p_m}$ with index $f_\Min(k)^{\alpha(k,m-1)\oplus}\leq m\leq f(k^{\alpha(k,m-1)+})$ and such that $k<(k_\Max)^{\alpha(k,m)-}$.
			
			The smallest index to be mutated is $k$ such that $m=f(k^{\alpha(k,m-1)+})=f_\Min(k)^{\alpha(k,m-1)\oplus}$ and the bigest is $(k_\Max)^{\alpha(k,m)-}$. 
			
			On the other side, following the $\tilde{\mu}_m$ sequence, we mutate the indices $((p_m)_\Min)^{\beta_m+},\dots,((p_m)_\Max)^{\gamma_m-}$.
			
			We recall that $\gamma_m=\alpha(p_m,m)$ so $\widetilde{\mu}_m$ mutates all indices $((p_m)_\Min)^{\beta_m +},\dots,((p_m)_\Max)^{\alpha(p_m,m)-}$.
			
			So we have to show that $k=(k_\Max)^{\alpha(k,m)-}\Leftrightarrow k=((p_m)_\Max)^{\alpha(p_m,m)-}$ and 
			\[
				m=f(k^{\alpha(k,m-1)+})\Leftrightarrow k=((p_m)_\Min)^{\beta_m+}.
			\]
			
			The first equivalence is direct as $(p_m)_\Max=k_\Max$ and $\alpha(k,m)=\alpha(p_m,m)$ as $i_k=i_{p_m}$.
			
			For the second equivalence, by definition of $f$, the smallest $k$ such that $f(k^{\alpha(k,m-1)+})=m$ verifies $k^{\alpha(k,m-1)+}=p_m$.
			
			We can now rewrite the equivalence as $p_m=k^{\alpha(k,m-1)+}\Leftrightarrow ((p_m)_\Min)^{\beta_m+}=k$.
			
			Then, applying $(\gamma_m-1)+=(\alpha(k,m)-1)+=\alpha(k,m-1)+$ to the right side we obtain:
			\[
				((p_m)_\Min)^{(\beta_m+\gamma_m-1)+}=k^{\alpha(k,m-1)+}
			\]
			
			However we have by definition $\gamma_m+\beta_m=\#\{1\leq k\leq p_m\mid i_k=i_{p_m}\}$ that is, the number of letters of color $i_{p_m}$ and of index $\leq p_m$ in $\bar{w}$. So, the $\beta_m+\gamma_m-1$-th successor of $(p_m)_\Min$ is $p_m$ and so we have $p_m=k^{\alpha(k,m-1)+}$ (see Section \ref{sectionCutSeed}). Thus we have proved the equivalence.
		\end{proof}
		
	\subsection{Red-green sequences}
		In this section we examine the mutation sequence according to the theory of Keller's red-green sequences. We recall some definitions based on  \cite{demonet2019SurveyMaximalGreen}.
		
		Given a quiver $Q$, we define the framed quiver $\widehat{Q}$ from $Q$ by adding a vertex $i'$ for any vertex $i\in Q_0$ and an arrow $i'\rightarrow i$. In this context the vertices $i'$ are called frozen.
		
		Let $\Gamma$ be a quiver obtained by $\widehat{Q}$ by a sequence of mutations. A vertex $i$ is said to be green with respect to $\widehat{Q}$ if it is the source of no arrow $i\rightarrow j'$ where $j$ is a frozen vertex. Conversely a vertex $i$ is said to be red if it is the target of no arrow $j'\rightarrow i$.
		
		\begin{remark}
			Here we use the opposite conventions of \cite{demonet2019SurveyMaximalGreen} as we are working with the quiver of $\End_{\Lambda}(V_{\bar{w}})$. 
		\end{remark}
		
		We have the following Theorem:
		\begin{theorem}[{\cite[Theorem 1.7.]{demonet2019SurveyMaximalGreen}}]
			With respect to any quiver of the mutation class of $\Gamma$, every non-frozen vertex of $\Gamma$ is either green or red.
		\end{theorem}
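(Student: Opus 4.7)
The plan is to translate the combinatorial dichotomy into the language of $c$-vectors. Given $\Gamma$ in the mutation class of $\widehat{Q}$, for each non-frozen vertex $i$ define the integer vector $c_i \in \mathbb{Z}^{Q_0}$ whose $j$-th entry is the signed count of arrows between $i$ and the frozen vertex $j'$, counted positively for $i \to j'$ and negatively for $j' \to i$. With this definition, saying that $i$ is green is exactly the statement that $c_i$ has no negative entries, and saying that $i$ is red is exactly the statement that $c_i$ has no positive entries. Hence the theorem is equivalent to the claim that every $c_i$ is sign-coherent, i.e.\ either lies in $\mathbb{Z}_{\geq 0}^{Q_0}$ or in $\mathbb{Z}_{\leq 0}^{Q_0}$.

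Next I would verify that this combinatorial $c_i$ coincides with the usual $c$-vector attached to the seed obtained from $\widehat{Q}$ by the same mutation sequence. This reduces to an induction on the length of the mutation sequence: at step zero the framed quiver gives $c_i = e_i$ (the standard basis vector), which is positive, and the quiver mutation rule applied at a vertex $k$ reproduces exactly the Fomin-Zelevinsky tropical mutation law for $c$-vectors relative to the semifield $\mathbb{P} = \mathrm{Trop}(y_1,\dots,y_{|Q_0|})$. This step is essentially a bookkeeping exercise comparing the formula for $c$-vector mutation with the quiver mutation rule on the bipartite arrow data between $Q_0$ and its frozen copy.

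Once the identification is set up, the conclusion follows by invoking the sign-coherence theorem for $c$-vectors, originally conjectured by Fomin and Zelevinsky and established in the skew-symmetric case by Derksen, Weyman and Zelevinsky via representations of quivers with potentials, and also obtainable through the $2$-Calabi-Yau categorification used in the body of the present paper (where $c$-vectors are realised as dimension vectors of the simple modules over the endomorphism algebra of the corresponding cluster-tilting object, hence automatically sign-coherent).

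The main obstacle is the sign-coherence input itself, which is a deep theorem; everything else in the argument is a direct dictionary between the arrow-counting definition of green/red and the algebraic definition of $c$-vectors. In particular, no case analysis on the mutation sequence leading to $\Gamma$ is needed beyond the inductive comparison of the two mutation rules.
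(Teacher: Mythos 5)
The paper does not prove this statement; it is quoted verbatim from Demonet's survey, and indeed in that reference (and in the earlier work of Br\"ustle--Dupont--P\'erotin that it surveys) the proof is exactly the one you outline: identify the signed arrow count between a mutable vertex and the frozen copies with the $c$-vector of that vertex via an induction comparing the quiver-mutation rule on the bipartite part of $\widehat{Q}$ with the tropical mutation rule for $c$-vectors, and then invoke sign-coherence of $c$-vectors (Derksen--Weyman--Zelevinsky in the skew-symmetric case, or the $2$-Calabi--Yau categorical realisation of $c$-vectors as dimension vectors of simples over $\End(T)$). So your approach is the standard one and is correct in substance. One small bookkeeping caveat: this paper uses the \emph{opposite} green/red convention to Demonet's (a vertex is green here if it is the source of no arrow $i\to j'$), whereas your dictionary ``green $\Leftrightarrow$ $c_i$ has no negative entries'' follows Demonet's convention; with your own sign rule ($+$ for $i\to j'$) the paper's ``green'' would instead correspond to ``no positive entries.'' Since the dichotomy is symmetric under the swap, this does not affect the validity of the argument, but it is worth fixing the labels consistently if you want to match the conventions used in the body of the paper.
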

		
		A sequence of vertices $(k_1,\dots,k_N)$ of an initial quiver $Q$ is said to be green if for any $1\leq i\leq N$, the vertex $k_i$ is green with respect to $Q$ in the mutated quiver $\mu_{k_{i-1}}\circ\cdots\circ\mu_{k_1}(\widehat{Q})$.
		
		A sequence is maximal green with respect to $\Gamma$ if it is green with respect to $\Gamma$ and if all the non-frozen vertices of $\mu_{k_N}\circ\cdots\mu_{k_1}(\widehat{Q})$	are red with respect to $\Gamma$.
		
		A sequence is reddening with respect to $\Gamma$ if all non-frozen vertices of $\mu_{k_N}\circ\cdots\mu_{k_1}(\widehat{Q})$ are red with respect to $\Gamma$ (the sequence does not have to be green).
		
		We have the following conjecture and proposition:
		\begin{conjecture}
			The sequence of vertices of $\Gamma_{\bar{w}}$ mutated by $\mu_\bullet$ is a green sequence with respect to $\Gamma_{\bar{w}}$.
		\end{conjecture}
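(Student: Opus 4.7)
The plan is to prove greenness by tracking the $c$-vectors of the framed quiver $\widehat{\Gamma_{\bar{w}}}$ through the sequence $\hat{\mu}_1, \ldots, \hat{\mu}_{\ell(v)}$, and showing that the vertex being mutated always carries a non-negative $c$-vector at the moment it is mutated. The natural tool is the correspondence, standard in the Geiss--Leclerc--Schr\"oer categorification, between $\Delta$-vectors and $g$-vectors: the $\Delta_{\dot{v}}$-vectors computed along the algorithm play the role of $g$-vectors with respect to the initial seed $V_{\bar{w}}$, and dually govern the evolution of the arrows to frozen vertices in $\widehat{\Gamma_{\bar{w}}}$.

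The central input is Lemma \ref{lemmaComputationDeltavVector}: at every mutation $\mu_{A_{m,i}}$ along the sequence, the new $\Delta_{\dot{v}}$-vector is computed via the outgoing (horizontal) arrow rule of Proposition \ref{propositionMutationDeltaVectors}, never via the incoming one, because the latter would yield a negative coordinate and violate the non-negativity of $\Delta$-vectors. In the general cluster framework, the choice between the two rules in Proposition \ref{propositionMutationDeltaVectors} is dictated precisely by the sign of the $c$-vector attached to the mutated vertex, with the outgoing rule corresponding to a non-negative $c$-vector, i.e. a green vertex. Thus Lemma \ref{lemmaComputationDeltavVector} can be read as asserting that every vertex we mutate is green at the moment of mutation; the conjecture follows by assembling these local assertions along the sequence.

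The main obstacle will be to make this dictionary rigorous in our categorical setting, since the paper works with $\Delta$-vectors rather than directly with $c$-vectors and uses the quiver of $\End_\Lambda(M)$ rather than of $\End_\Lambda(M)^{\mathrm{op}}$. One concrete route is to augment the induction of Theorem \ref{theoremInduction} by keeping track of the frozen part of the quiver: using the local case analysis performed in the appendix, together with Proposition \ref{propositionUnidirectionArrowQuiver}, one would verify inductively that no arrow from a mutable vertex to a frozen vertex is ever created before that mutable vertex is mutated. The saw-teeth structure localizes each check to finitely many standard configurations. We do not expect the sequence to be maximal green nor reddening in general, since $\mu_\bullet$ halts as soon as a $\C_{v,w}$-seed is isolated, long before all non-frozen vertices have become red.
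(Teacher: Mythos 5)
Your proposal sets out to prove the conjecture about greenness with respect to $\Gamma_{\bar{w}}$, but the mechanism you invoke does not point at $\Gamma_{\bar{w}}$ at all: it points at $\Gamma_{\dot{v}}$, and what it actually establishes is the proposition that the paper states and proves immediately after the conjecture, not the conjecture itself.

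The gap is concentrated in the sentence ``the $\Delta_{\dot{v}}$-vectors computed along the algorithm play the role of $g$-vectors with respect to the initial seed $V_{\bar{w}}$.'' They do not. By their very definition in Section \ref{sectionDeltaVectors}, the entries of $\Delta_{\dot{v}}(X)$ record the multiplicities of the standard $B_{\dot{v}}$-modules in $\Hom_\Lambda(V_{\dot{v}},X)$, where $B_{\dot{v}}=\End_\Lambda(V_{\dot{v}})$. They are an invariant attached to the reference seed $V_{\dot{v}}$, not to $V_{\bar{w}}$. The sign dichotomy exploited in Lemma \ref{lemmaComputationDeltavVector} --- that only the exchange sequence in which the summand to be mutated is the kernel produces a non-negative $\widetilde{\Delta}_{\dot{v}}$-vector --- is precisely the $F_{\dot{v}}$-exactness criterion of \cite[Proposition 12.4]{geiss2011KacMoodyGroups} for the functor $F_{\dot{v}}=\Hom_\Lambda(V_{\dot{v}},-)$, and via Keller's observation this certifies that each mutation is green with respect to $V_{\dot{v}}$. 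It says nothing a priori about the $c$-vectors of the framed quiver $\widehat{\Gamma_{\bar{w}}}$, which is what the conjecture asks about; greenness is not invariant under change of the reference seed, and the fact that the paper proves one statement while leaving the other as a conjecture is evidence that the author is aware the two are genuinely different. To attack the conjecture as stated, you would need to control $\Delta_{\bar{w}}$-vectors (or, equivalently, the arrows to the frozen copy of $\Gamma_{\bar{w}}$) along the sequence, and there is no analogue of Lemma \ref{lemmaComputationDeltavVector} in that reference frame: the recursive formulation of $\mu_\bullet$ and the entire induction of Theorem \ref{theoremInduction} are organized around $\widetilde{\Delta}_{\dot{v}}$, and that control does not transfer automatically. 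This is exactly what remains open.
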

		\begin{proposition}
			The sequence of mutated vertices of $\Gamma_{\dot{w}}$ by $\mu_{\bullet}$ is a green sequence with respect to $\Gamma_{\dot{v}}$.
		\end{proposition}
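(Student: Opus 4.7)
The plan is to reduce greenness of $\mu_\bullet$ to the structural data on $\Delta_{\dot{v}}$-vectors already established in Theorem \ref{theoremInduction} and Lemma \ref{lemmaInduction}. The underlying principle is that for the $2$-Calabi-Yau cluster structure on $\C_{w_0}$ with initial seed $V_{\dot{v}}$, the $\Delta_{\dot{v}}$-vector of a rigid indecomposable coincides with its $g$-vector; by the sign-coherence / tropical duality of $c$- and $g$-vectors, the sign of the $c$-vector of a vertex $k$ in a mutated seed is controlled by which of the two branches of the exchange formula of Proposition \ref{propositionMutationDeltaVectors} is selected at $k$.

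First I would pin down which branch is used at each mutation inside $\mu_\bullet$. Lemma \ref{lemmaComputationDeltavVector} shows that in the cut seed the updated $\Delta$-vector is necessarily given by the horizontal-arrows-out-of-$k$ branch
\[
\Delta_{\dot{v}}(R_k^\ast) = -\Delta_{\dot{v}}(R_k) + \sum_{(R_k \to R_j) \in \Gamma} \Delta_{\dot{v}}(R_j),
\]
because the other branch would produce a negative coordinate. This is exactly the exchange direction corresponding to a green mutation; the opposite choice would correspond to a red mutation.

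Next I would verify inductively that the vertex mutated at each step carries a non-negative $c$-vector with respect to $\widehat{\Gamma_{\dot{v}}}$. The relevant data is the explicit description in point \ref{pointDeltavcoordinateslemma} of Lemma \ref{lemmaInduction}: the vertex $k$ mutated by $\hat{\mu}_m$ satisfies $\Delta_{\dot{v},m}(R_k)=1$, its coordinates of index strictly less than $m$ all vanish, and its horizontal neighbours carry the specific non-negative $\Delta$-vectors computed there. Under the identification $\Delta_{\dot{v}}$-vector $=$ $g$-vector, this positivity, combined with the (pure) saw-teeth control of $\widetilde{\Gamma}_m$ supplied by Theorem \ref{theoremInduction}, propagates via tropical duality into positivity of the $c$-vector of $k$ at the moment of mutation.

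The main obstacle is making the identification of $\Delta_{\dot{v}}$-vectors with $g$-vectors precise for the cluster structure on $\C_{w_0}$, and checking that the horizontal-out branch of Lemma \ref{lemmaComputationDeltavVector} is indeed the green one; both ingredients rest on the sign-coherence theorem for preprojective algebra cluster structures, which I would import from the literature rather than reprove. Once this dictionary is fixed, the conclusion is essentially a corollary of the inductive analysis already carried out for Theorem \ref{theoremMain}: no further combinatorial work is needed beyond reading off greenness from the $\Delta$-vectors that were computed step by step along the execution of $\mu_\bullet$.
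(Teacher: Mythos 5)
Your proposal isolates the same central observation the paper relies on: by Lemma~\ref{lemmaComputationDeltavVector}, each mutation performed inside $\mu_\bullet$ necessarily uses the branch of the exchange formula in Proposition~\ref{propositionMutationDeltaVectors} corresponding to the short exact sequence with $R_k$ as kernel, and this choice is exactly what characterizes a green mutation. The paper reaches the conclusion at once from that point, by combining \cite[Proposition 12.4]{geiss2011KacMoodyGroups} (which identifies that choice with $F_{\dot{v}}$-exactness) with Keller's observation that $F_{\dot{v}}$-exactness of the kernel-side sequence is equivalent to the mutation being green; no further induction is needed. Your plan to then \emph{additionally} verify $c$-vector non-negativity at each step by propagating tropical duality through the cut seed is redundant work: once the branch is determined, greenness is immediate from the cited equivalence, and trying to rederive it through the $g$/$c$-vector dictionary would only reintroduce the technicalities you explicitly want to avoid. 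One small imprecision: the $\Delta_{\dot{v}}$-vector of a rigid module is not literally its $g$-vector; by Proposition~\ref{propositionEquivGLS11} the two are related by the unipotent substitution $g = \sum_k a_k\bigl(e_k - e_{k^-}\bigr)$, so the identification is correct in spirit but should be stated as a change of basis rather than an equality. That does not affect the validity of the branch argument, which is basis-independent.
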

		\begin{proof}
			The quiver $\Gamma_{\dot{w}}$ is in the mutation class of $\Gamma_{\dot{v}}$. Indeed, thanks to Matsumoto's lemma (see Section \ref{sectionDeltaVectorRepresentatives}) we have a sequence of $2$ and $3$-moves linking $\dot{v}$ and $\dot{w}$. Moreover $2$-moves translate in a reindexation of the vertices and $3$-moves as a mutation and a reindexation of vertices.
			
			Let $F_{\dot{v}}$ denote the functor $\Hom_\Lambda(V_{\dot{v}},-)$. According to the proof of the algorithm (and particularly the proof of Lemma \ref{lemmaComputationDeltavVector}), in any of our mutation to compute $\widetilde{\Delta}_{\dot{v}}$-vectors of mutated summands, we only consider short exact sequences where the summand to mutate is the kernel. Thanks to \cite[Proposition 12.4.]{geiss2011KacMoodyGroups}, there is an equivalence between the $F_{\dot{v}}$-exactness of a short exact sequence and a condition on dimension vectors of the summands which is equivalent to taking the sequence where $R_k$ is the kernel. According to \cite{keller2020CommunicationPrivee} this is equivalent to the fact that the mutation is green with respect to $V_{\dot{v}}$.
			
			Thus the mutations of $\mu_\bullet$ are green mutations with respect to $V_{\dot{v}}$. 
		\end{proof}
		\begin{remark}
			According to \cite[Exemple 5.2.7.]{menard2021AlgebresAmasseesAssociees} the mutation sequence of $\mu_\bullet$ can be neither maximal green nor reddening.
		\end{remark}
	\section{Aknowledgements}
		The author wants to thank Bernard Leclerc for his supervision, and Pierre-Guy Plamondon for fruitful conversations on implementing the computation, leading to study the $\Delta$-vectors. He also wants to thank Bernhard Keller for the discussion about red/green sequences and the rewieving of the PhD thesis from which these results are taken.
\newpage
\appendix
	\section{Configurations, inheritance and teeth shift principle}\label{sectionConfigurationsInheritageTeethShift}
		In this section, we translate \cite[Sections 4.10 \& 4.11]{menard2021AlgebresAmasseesAssociees}.
		
		Consider the seed $(R_m,\Gamma_m)$ with the notations of Theorem \ref{theoremInduction}, verifying in particular the points \ref{pointSawTeeth} and \ref{pointPureSawTeeth}.
		
		We will consider the configurations in the cut seed $(G,\widetilde{\Gamma})$ (with the notations of Lemma \ref{lemmaInduction}) around the vertex that will be mutated in any bicolor subquiver $(i_k,i_j)$. 
		
		Due to the definition of the sequence of mutations in Definition \ref{definitionHatMu}, and the fact we are working in the cut seed, the mutation sequence amounts to mutating all vertices of the line except the last which is deleted. There can be eviction, but, according to Corollary \ref{corollaryPredecessorEviction}, the vertex $G_{k^+}$ can only be evicted if $G_{k}$ has been evicted. Conversely, if $G_k$ is not evicted, no successor of $G_k$ can be evicted. Thus we will only study evictions in initial cases here.
		
		At any time we will take configurations of Tables \ref{tableConfigurationsInitiales} \& \ref{tableConfigurationsGénérales}, adapting if necessary the indexation of vertices. We will always suppose that $G_{k^{2+}}$ exists, on the contrary, $G_{k^+}$ is the last vertex of its line so it will not be mutated and it will be erased when considered.
		\subsection{Initial configurations}
			If $G_k$ is the first of its line (in $\widetilde{\Gamma}$) then, according to hypotheses \ref{pointSawTeeth} and \ref{pointPureSawTeeth} of Theorem \ref{theoremInduction}, the only possible configurations are the one listed in Table \ref{tableConfigurationsInitiales}. Due to point \ref{pointPureSawTeeth}, the first vertex of the line can have at most one ordinary arrow, being its target.
			
			\paragraph{Case $\alpha_0$ if eviction}
				If we are in case $\alpha_0$ before mutation, after its mutation we evict $G_k$ and now $G_{k^+}$ is the first vertex of the line in the new quiver $\widetilde{\Gamma}$.If $G_{k^+}$ had no ordinary arrow we are once again in a case $\alpha_0$. If there is one, it has $G_{k^+}$ as target, being $G_j\rightarrow G_{k^+}$ and then, we are in case $\alpha_1$ if there is no arrow $G_{k^{2+}}\rightarrow G_j$ or in case $\alpha_2$ if it exists. 
				
			\paragraph{Case $\alpha_0$ without eviction}
				If there is no eviction, we continue by now considering the vertex $G_{k^+}$. If $G_{k^+}$ had no ordinary arrow, it still does not and we are in case $\beta_0$. As previously if it has an ordinary arrow, it has only one $G_j\rightarrow G_{k^+}$, being its target. Then, depending on the existence of an arrow $G_{k^+}\rightarrow G_j$ we are in case $\beta_1$ or $\beta_2$.
				
			\paragraph{Case $\alpha_1$ with eviction}
				If $G_{k}^\ast$ is evicted after mutation, $G_k^+$ becomes the first vertex of the line. However, we have an arrow $G_j\rightarrow G_k$ coming from the mutation at $G_k^\ast$. It is the only ordinary arrow adjacent to $G_{k^+}$ because either the initial arrow $G_j\rightarrow G_k$ was the only ordinary between two lines (as a final barb) or it was the beginning of a tooth $G_{k^{\gamma +}}\rightarrow G_j\rightarrow G_k$ with $\gamma>1$ as the tooth is long, the left arrow being preserved in spite of the eviction. Then we have only one ordinary arrow pointing at $G_{k^+}$ and if we were on the beginning of tooth and $\gamma=2$ the we are now in case $\alpha_2$, otherwise in case $\alpha_1$.
				
			\paragraph{Case $\alpha_1$ without eviction}
				If we have now eviction, we still know there is only one ordinary arrow adjacent to $G_{k^+}$: $G_j\rightarrow G_{k^+}$ and there is one ordinary arrow $G_j\leftarrow G_k$. Thus, depending on the length of the tooth or, equivalently, the configuration around $G_{k^{2+}}$ we are either in case $\beta_3$ or $\beta_4$.
				
			\paragraph{Case $\alpha_2$ with eviction}
			
			If we have eviction $G_{k^+}$ becomes the first vertex of its line. In addition there is no arrow between $G_{k^+}$ and $G_j$ (the original $G_{k^+}\rightarrow G_j$ being deleted by the mutation at $G_k$). Then either there exists a $\gamma>0$ such that $G_{j^{\gamma+}}\rightarrow G_{k^+}$ exists and then we are now in case $\alpha_1$ or $\alpha_2$ depending on what happens at $G_{k^{2+}}$, or not and there was only one ordinary arrow, the final barb and now we are on case $\alpha_0$ then $\beta_0$ (when evictions stop) until the end of the line.
				
			\paragraph{Case $\alpha_2$ without eviction}
				Without eviction then $G_{k^+}$ is, after mutation of $G_k$, the source of two horizontal arrows and, potentially an arrow $G_{j^{\gamma+}}\rightarrow G_{k^+}$ where $\gamma>0$. The ordinary arrow $G_{j}\rightarrow G_k$ is no longer our concern so, up to renaming $j^{\gamma +}\leadsto j$ and $k^+\leadsto k$ we are in case $\beta_0$ if there was no ordinary arrow or $\beta_1$ or $\beta_2$ if there was one.
				
			Finally, for any of the 3 initial possible configurations, after mutation, we are in only 8 possible different configurations for $G_{k^+}$.				
			
			\begin{landscape}
					\begin{table}[h]
						\begin{center}
						\begin{tabular}{|c||c|c|c|}
							\hline Config.&$\alpha_0$&$\alpha_1$&$\alpha_2$\\
							\hline Nickname&Initial simple&Initial long tooth&Initial short tooth\\
							\hline
							$G$&{\begin{tikzcd}[column sep=tiny]\rien&G_{k^+}\ar[l,dashed,gray]&G_k\ar[l]\end{tikzcd}}&
							{\begin{tikzcd}[column sep=tiny,row sep=small]G_{j^+}&&G_j\ar[ll,dashed,gray]\ar[dr]\ar[dl,phantom,"\emptyset"]&\ar[l,dashed,gray]\\\rien&G_{k^+}\ar[l,dashed,gray]&&G_k\ar[ll]\end{tikzcd}}&
							{\begin{tikzcd}[column sep=tiny,row sep=small]G_{j^+}&&G_j\ar[ll,dashed,gray]\ar[dr]&\ar[l,dashed,gray]\\\rien&G_{k^+}\ar[l,dashed,gray]\ar[ur]&&G_k\ar[ll]\end{tikzcd}}\\
							\hline
							$\mu_k(G)$&
							{\begin{tikzcd}[column sep=tiny]\rien&G_{k^+}\ar[l,dashed,gray]\ar[r]&G_k^\ast\end{tikzcd}}&
							{\begin{tikzcd}[column sep=tiny,row sep=small]G_{j^+}&&G_j\ar[ll,dashed,gray]\ar[dl]&\ar[l,dashed,gray]\\\rien&G_{k^+}\ar[rr]\ar[l,dashed,gray]&&G_k^\ast\ar[ul]\end{tikzcd}}&
							{\begin{tikzcd}[column sep=tiny,row sep=small]G_{j^+}&&G_j\ar[ll,dashed,gray]&\ar[l,dashed,gray]\\\rien&G_{k^+}\ar[l,dashed,gray]\ar[ur,phantom,"\emptyset"]\ar[rr]&&G_k^\ast\ar[ul]\end{tikzcd}}\\
							\hline
							Next if no eviction&$\beta_0,\beta_1,\beta_2$&$\beta_3,\beta_4$&$\beta_0,\beta_1,\beta_2$\\
							Next if eviction&$\alpha_0,\alpha_1,\alpha_2$&$\alpha_1,\alpha_2$&$\alpha_0,\alpha_1,\alpha_2$\\
							\hline
						\end{tabular}
						\end{center}
						\caption{Initial possible configurations and evolutions}
						\label{tableConfigurationsInitiales}
					\end{table}
					
				\begin{table}[h]
					\begin{center}
					\begin{tabular}{|c||c|c|c|c|c|}
						\hline 
						Config.&$\beta_0$&$\beta_1$&$\beta_2$&$\beta_3$&$\beta_4$\\
						\hline
						Nickname&Start and end&Entrance long tooth&Entance short tooth&Middle (long) tooth&End tooth\\
						\hline
						$G$&\begin{tikzcd}[column sep=tiny]G_{k^+}&G_k\ar[l]\ar[r]&G_{k^-}^\ast\end{tikzcd}&
						\begin{tikzcd}[column sep=tiny,row sep =small]&G_j\ar[dl,phantom,"\emptyset"]\ar[dr,phantom,"\emptyset"]\ar[d]&\\G_{k^+}&G_k\ar[l]\ar[r]&G_{k^-}^\ast\end{tikzcd}&
						\begin{tikzcd}[column sep=tiny,row sep =small]&G_j\ar[d]\ar[dr,phantom,"\emptyset"]&\\G_{k^+}\ar[ur]&G_k\ar[l]\ar[r]&G_{k^-}^\ast\end{tikzcd}
						&\begin{tikzcd}[column sep=tiny,row sep =small]&G_j\ar[d]\ar[dl,phantom,"\emptyset"]&\\G_{k^+}&G_k\ar[l]\ar[r]&G_{k^-}^\ast\ar[ul]\end{tikzcd}&
						\begin{tikzcd}[column sep=tiny,row sep =small]&G_j\ar[d]&\\G_{k^+}\ar[ur]&G_k\ar[l]\ar[r]&G_{k^-}^\ast\ar[ul]\end{tikzcd}\\
						\hline 
						$\mu_k(G)$&\begin{tikzcd}[column sep=tiny,row sep =small]G_{k^+}\ar[r]&G_k^\ast&G_{k^-}^\ast\ar[l]\end{tikzcd}&
						\begin{tikzcd}[column sep=tiny,row sep =small]&G_j\ar[dl]\ar[dr]&\\G_{k^+}\ar[r]&G_k^\ast\ar[u]&G_{k^-}^\ast\ar[l]\end{tikzcd}&
						\begin{tikzcd}[column sep=tiny,row sep =small]&G_j\ar[dr]\ar[dl,phantom,"\emptyset"]&\\G_{k^+}\ar[r]&G_k^\ast\ar[u]&G_{k^-}^\ast\ar[l]\end{tikzcd}&
						\begin{tikzcd}[column sep=tiny,row sep =small]&G_j\ar[dl]\ar[dr,phantom,"\emptyset"]&\\G_{k^+}\ar[r]&G_k^\ast\ar[u]&G_{k^-}^\ast\ar[l]\end{tikzcd}&
						\begin{tikzcd}[column sep=tiny,row sep =small]&G_j\ar[dl,phantom,"\emptyset"]\ar[dr,phantom,"\emptyset"]&\\G_{k^+}\ar[r]&G_k^\ast\ar[u]&G_{k^-}^\ast\ar[l]\end{tikzcd}\\
						\hline
						Next&$\beta_0,\beta_1,\beta_2$&$\beta_3,\beta_4$&$\beta_0,\beta_1,\beta_2$&$\beta_3,\beta_4$&$\beta_0,\beta_1,\beta_2$\\
						\hline
					\end{tabular}
					\end{center}
					\caption{General configuration possible and evolutions}
					\label{tableConfigurationsGénérales}
				\end{table}
				\end{landscape}
				
				\begin{ex}
				We illustrate all this reasoning by Figure \ref{figureConfigurationsInitiales}.
					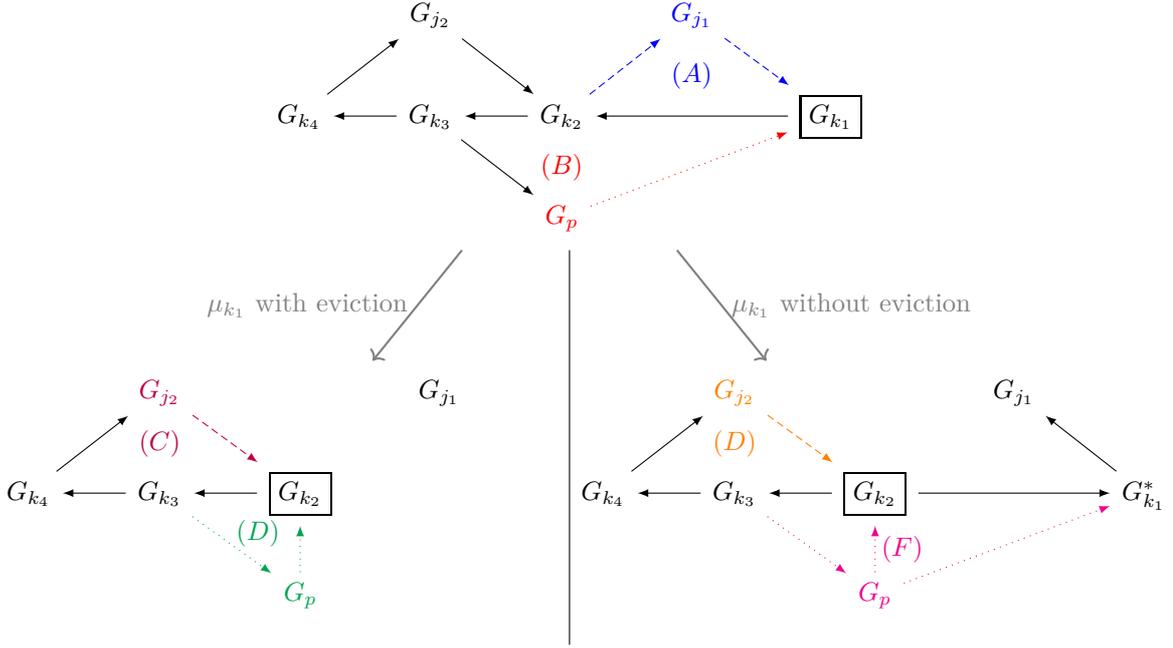
\begin{figure}[ht]
						\begin{center}
						\begin{tikzpicture}[scale=2]
							\coordinate (A) at (0,0);
							\coordinate (B) at (-2,-2.5);
							\coordinate (C) at (2,-2.5);
							\node (carquoisInitial) at (A) {
							$
							\begin{tikzcd}
							&G_{j_2}\ar[dr]&&\textcolor{blue}{G_{j_1}}\ar[dr,blue,dashed]&\\
							G_{k_4}\ar[ur]&G_{k_3}\ar[dr]\ar[l]&G_{k_2}\ar[l]\ar[blue,dashed,ur]\ar[red,d,phantom,"(B)"]&\rien\ar[blue,u,phantom,"(A)"]&\boxed{G_{k_1}}\ar[ll]\\
							&&\textcolor{red}{G_p}\ar[red,dotted,urr]&&
							\end{tikzcd}
							$};
							\node (carquoisEvince) at (B) {
							$
							\begin{tikzcd}
							&\textcolor{purple}{G_{j_2}}\ar[purple,dashed,dr]\ar[d,phantom,"(C)",purple]&&G_{j_1}&\\
							G_{k_4}\ar[ur]&G_{k_3}\ar[dr,Green,dotted]\ar[l]&\boxed{G_{k_2}}\ar[l]&&\\
							&&\textcolor{Green}{G_p}\ar[Green,dotted,u]\ar[uul,phantom,Green,"(D)" near start]&&
							\end{tikzcd}
							$
							};
							\node (carquoisMute) at (C) {
							$
							\begin{tikzcd}
							&\textcolor{orange}{G_{j_2}}\ar[dashed,orange,dr]&&G_{j_1}&\\
							G_{k_4}\ar[ur]&G_{k_3}\ar[dr,magenta,dotted]\ar[u,orange,phantom,"(D)"]\ar[l]&\boxed{G_{k_2}}\ar[l]\ar[magenta,d,phantom,"(F)" right]\ar[rr]&&G_{k_1}^\ast\ar[ul]\\
							&&\textcolor{magenta}{G_p}\ar[magenta,dotted,urr]\ar[dotted,magenta,u]&&
							\end{tikzcd}
							$
							};
							\draw[gray,->,thick] (carquoisInitial)--(carquoisEvince) node[midway, left]{$\mu_{k_1}$ with eviction};
							\draw[gray,->,thick] (carquoisInitial)--(carquoisMute) node[midway, right]{$\mu_{k_1}$ without eviction};
							\draw (carquoisInitial)--(0,-3.5);
						\end{tikzpicture}
						\end{center}
						\caption{Evolutions of initial configurations}
						\label{figureConfigurationsInitiales}
					\end{figure}
					
					Initially we have a quiver whose vertices $j_2$ and $j_1$ are of color $i_j$, $k_i$ of color $i_k$ and $p$ of color $i_p$.
					
					The bicolor subquiver $(i_k,i_j)$ is in configuration $\alpha_2$ for $G_{k_1}$ (arrows in blue dashes and horizontal arrows around (A)). The bicolor subquiver $(i_k,i_p)$ is in configuration $\alpha_1$ around $G_{k_1}$ (dotted red arrows around (B) and horizontal arrows).
					
					Then, after mutation a $G_{k_1}$, if it is evicted, we observe that the bicolor subquiver $(i_k,i_j)$ is now in configuration $\alpha_1$ around $G_{k_2}$ (violet dashes, letter (C)). On the other side, the bicolor subquiver $(i_k,i_p)$ is now in configuration $\alpha_2$ (green dots, (D)). We remark that if the initial teeth was $G_{k_4}\rightarrow G_p\rightarrow G_{k_2}$, (D)  would still be in configuration $\alpha_1$.
					
					If there is no eviction, then the bicolor subquiver $(i_k,i_j)$ configuration around $G_{k_2}$ is of type $\beta_1$ (orange dashes, (D)). If the teeth $G_{k_4}\rightarrow G_{j_2}\rightarrow G_{k_2}$ was shorter $(G_{k_3}\rightarrow G_{j_2}\rightarrow G_{k_2})$, we would have a configuration of type $\beta_2$.
					
					The configuration of the bicolor subquiver $(i_k,i_p)$ is of type $\beta_4$ (3 magenta dotted ordinary arrows, (F)). We remark, as before that if the tooth was longer and if we had originally $G_{k_4}\rightarrow G_p\rightarrow G_{k_1}$ we would be in configuration $\beta_3$.
			\end{ex}
			
	\subsection{General configurations}
		At this point we will now encounter at least 5 general configurations. We will show that under our hypothesis, there is no other configuration emerging. Recall that thanks to Corollary \ref{corollaryPredecessorEviction}, as here $G_{k^-}$ is not evicted, $G_k$ cannot be evicted either.
			
		\paragraph{Case $\beta_0$}
			After mutating $G_k$, $G_{k^+}$ can have at most one ordinary arrow (whom it is the target). If there is not such an arrow we are again in a case $\beta_0$, if there is one, we are in case $\beta_1$ or $\beta_2$.
			
		\paragraph{Case $\beta_1$}
			After mutation at $G_k$, there is a triangle $G_k^\ast\rightarrow G_j\rightarrow G_{k^+}$. Moreover, as in case $\alpha_1$, there cannnot be an arrow $G_{j^{\gamma+}}\rightarrow G_{k^+}$ for any $\gamma>0$. Thus we are in the case $\beta_3$ or $\beta_4$
		
		\paragraph{Case $\beta_2$}
			In that case, as in $\alpha_2$, we will potentially have an arrow $G_{j^{\gamma +}}\rightarrow G_{k^+}$ and, as there is no arrow $G_{k^+}\ \varnothing\  G_j$, we can rename the vertices. If the arrow $G_{j^{\gamma +}}\rightarrow G_{k^+}$ does not exist, we are in case $\beta_0$ and that was the last tooth with no final barb and we will only have $\beta_0$ cases until the end. If this arrow exists, we are now in case $\beta_1$ or $\beta_2$.
		
		\paragraph{Case $\beta_3$}
			As in $\beta_1$, we have only one ordinary arrow $G_{j}\rightarrow G_{k^+}$. Thus, depending on $G_{k^{2+}}$ we are in case $\beta_3$ or $\beta_4$
			
		\paragraph{Case $\beta_4$}
			As in $\beta_2$ there is no connection between $G_j$ and $G_{k^+}$ and, if there is an ordinary arrow it is $G_{j^{\gamma+}}\rightarrow G_{k^+}$. If there is no such arrow we are in case $\beta_0$ until the end, if there is one we are in case $\beta_1$ or $\beta_2$.
			
			\begin{ex}\label{exConfigurationsGénérales}
					We will illustrate this on Figure \ref{figureConfigurationsGénérales}.
					\begin{figure}[ht]
						\begin{center}
						\begin{tikzpicture}[scale=1.7]
							\coordinate (A) at (0,0);
							\coordinate (B) at (-2.5,-2.5);
							\coordinate (C) at (2.5,-2.5);
							\node (carquoisInitial) at (A) {
							$
							\begin{tikzcd}[column sep= 15pt]
							&G_{j_2}\ar[dr]&\rien&\textcolor{Green}{G_{j_1}}\ar[Green,dashed,d]&\\
							G_{k_4}\ar[dr]&G_{k_3}\ar[u]\ar[l]&G_{k_2}\ar[l]\ar[ur,dashed,Green]&\boxed{G_{k_1}}\ar[r]\ar[l]\ar[u,phantom,"(A)" right,Green]&G_{k_0}^\ast\ar[Green,ul,dashed]\\
							&\textcolor{red}{G_p}\ar[red,dotted,urr]\ar[phantom,ur,red,"(B)"]&&&
							\end{tikzcd}
							$};
							\node (carquoisEvince) at (B) {
							$
							\begin{tikzcd}[column sep= 15pt]
							&\textcolor{purple}{G_{j_2}}\ar[purple,dashed,dr]&\rien&G_{j_1}&\\
							G_{k_4}\ar[dr]&G_{k_3}\ar[dashed,purple,u]\ar[l]\ar[ur,phantom,purple,"(C)" near start]&\boxed{G_{k_2}}\ar[r]\ar[l]&G_{k_1}^\ast\ar[u]\ar[dotted,blue,dll]&G_{k_0}^\ast\ar[l]\\
							&\textcolor{blue}{G_p}\ar[urrr]\ar[blue,dotted,ur]\ar[u,phantom,blue,"(D)"]&&&
							\end{tikzcd}
							$
							};
							\node (carquoisMute) at (C) {
							$
							\begin{tikzcd}[column sep= 15pt]
							&G_{j_2}\ar[drr]&\rien&G_{j_1}&\\
							G_{k_4}\ar[dr,dotted,magenta]&\boxed{G_{k_3}}\ar[r]\ar[l]\ar[u,phantom,"(E)"]&G_{k_2}^\ast\ar[ul]\ar[dotted,magenta,dl]&G_{k_1}^\ast\ar[u]\ar[l]&G_{k_0}^\ast\ar[l]\\
							&G_p\ar[urrr]\ar[magenta,dotted,u]\ar[u,magenta,phantom,"(F)" right]&&&
							\end{tikzcd}
							$
							};
							\draw[gray,->,thick] (carquoisInitial)--(carquoisEvince) node[midway, left]{$\mu_{k_1}$};
							\draw[gray,->,thick] (carquoisEvince)--(carquoisMute) node[midway,above]{$\mu_{k_2}$};
						\end{tikzpicture}
						\end{center}
						\caption{Evolutions of general configurations}
						\label{figureConfigurationsGénérales}
					\end{figure}
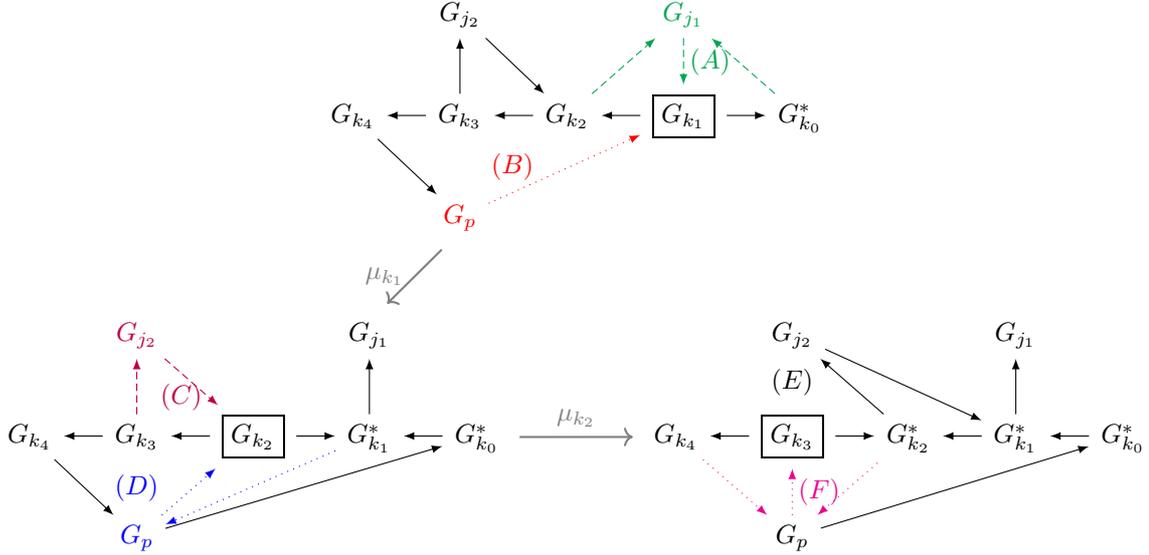
					
					Initially in the bicolor subquiver $(i_k,i_j)$ we have a configuration $\beta_4$ around $G_{k_1}$ (green dashes, (A)). In the bicolor subquiver  $(i_k,i_p)$ we have a configuration $\beta_1$ (red dots, (B)).
					
					After mutation $\mu_{k_1}$, in the subquiver $(i_k,i_j)$, we have around $G_{k_2}$ a configuration $\beta_2$ (violet dashed, (C)). In the subquiver$(i_k,i_p)$, we have a configuration $\beta_3$ (blue dots, (D)).
					
					After mutation $\mu_{k_2}$, in the subquiver $(i_k,i_j)$ we have a configuration $\beta_0$ around $G_{k_3}$ ((E), no ordinary arrow). In the subquiver $(i_k,i_p)$, we have a configuration $\beta_4$ once again (magenta dots).
					
					\end{ex}
		\subsection{Teeth shift}
			Using the previous knowledge of configuration, we will describe a more "macro" behaviour on the bicolor subquiver $(i_k,i_j)$ after performing the whole sequence of mutation $\hat{\mu}$.
			\begin{proposition}[Teeth shift]\label{propositionTeethShift}
				Suppose that a line $i_k$ of the subquiver $(i_k,i_j)$ is in pure saw teeth configuration. After mutating all the vertices of the line except the last one that we delete, we get a saw teeth structure. Precisely, any tooth $G_{k^{\alpha +}}\rightarrow G_j\rightarrow G_k$ becomes a tooth $G_{k^{(\alpha-1) +}}^\ast\rightarrow G_j\rightarrow G_{k^-}^\ast$ with the following derogation:
				\begin{itemize}
					\item if $k$ is the first of its line, the tooth $G_{k^{\alpha+}}\rightarrow G_j\rightarrow G_k$ becomes the arrow $G_{k^{(\alpha-1)+}}^\ast\rightarrow G_j$.
					\item If the last ordinary arrow is in the orientation $G_j\rightarrow G_k$ and that $k$ is not the last vertex of the line, then there appears a tooth $G_{k^{\alpha +}}^\ast\rightarrow G_j\rightarrow G_{k^-}^\ast$ (with the previous correction if $k$ is the first of its line) where $G_{k^{\alpha +}}^\ast$ is the last vertex of the line after deletion of the previous one.
				\end{itemize}
			\end{proposition}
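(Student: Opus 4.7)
The plan is to establish the teeth shift via a direct induction along the vertices of line $i_k$, reading off the resulting local configurations from Tables \ref{tableConfigurationsInitiales} and \ref{tableConfigurationsGénérales}. Because we are in a pure saw teeth configuration, the only local patterns that can arise around the vertex to be mutated are $\alpha_0,\alpha_1,\alpha_2$ (if it is the first of its line) and $\beta_0,\beta_1,\beta_2,\beta_3,\beta_4$ (otherwise), and the last two rows of each table already encode the transition rule between these patterns after mutation without eviction (eviction was handled earlier in the proof of point \ref{pointHorizontalArrow}; here $G_{k^-}$ is not evicted, so by Corollary \ref{corollaryPredecessorEviction} we never evict $G_k$ either).

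First I would fix a tooth $G_{k^{\alpha +}}\rightarrow G_j\rightarrow G_k$ sitting inside the sequence of teeth of the bicolor subquiver and follow what the mutation sequence does to its two constituent ordinary arrows. The right side $G_j\rightarrow G_k$ is the incoming ordinary arrow at $G_k$, so when we mutate $G_k$ it flips to $G_k^\ast\rightarrow G_j$; by the case analysis this is precisely the transition from $\alpha_1,\beta_1,\beta_3$ to their successors, producing the new outgoing arrow $G_j\rightarrow G_{k^-}^\ast$ via the triangle created on $G_{k^-}^\ast \leftarrow G_k \rightarrow G_{k^+}$ and $G_j\rightarrow G_k$. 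The left side $G_{k^{\alpha+}}\rightarrow G_j$ is preserved through every intermediate mutation (it is never adjacent to any mutated vertex except at the final moment, which is handled in cases $\alpha_2$ and $\beta_2,\beta_4$ where the absence of a reciprocal arrow $G_{k^+}\rightarrow G_j$ guarantees that mutating $G_{k^{(\alpha-1)+}}$ converts $G_{k^{\alpha +}}\rightarrow G_j$ into $G_{k^{(\alpha-1)+}}^\ast\rightarrow G_j$ as required).

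Next I would treat the two boundary derogations. If $k=k_{\min}$ is the first vertex of the line, then by purity there is no tooth to the left of $G_k$, hence after mutating $G_k$ no vertex of color $i_k$ of index less than $k$ exists to receive an arrow from $G_j$: configuration $\alpha_1$ thus produces only $G_{k^{(\alpha-1)+}}^\ast\rightarrow G_j$, which is exactly the claimed degenerate tooth (an arrow). For the second derogation, suppose the last ordinary arrow is $G_j\rightarrow G_k$ with $k$ not the last vertex; this is precisely the $\beta_1$ (or $\alpha_1$) pattern with no subsequent incoming arrow on line $i_j$, so mutating creates the outgoing arrow $G_j\rightarrow G_{k^-}^\ast$, and the preserved source $G_{k^{\alpha+}}^\ast$ (the last non-deleted vertex of the line) then provides the left side of the new tooth $G_{k^{\alpha+}}^\ast\rightarrow G_j\rightarrow G_{k^-}^\ast$.

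The main obstacle is not any single case but bookkeeping the interaction between successive mutations when several teeth of the same bicolor subquiver share adjacent vertices of color $i_k$: I would organize the argument as an induction on the number of already-mutated vertices of line $i_k$, using the transition rows of the tables to check that after each step the local configuration around the next vertex to mutate is again one of $\beta_0,\ldots,\beta_4$, and in particular that no new ordinary arrows between non-adjacent colored lines are created. The case analysis of Example \ref{exConfigurationsGénérales} illustrates the typical shift of a single tooth by one step to the left, which is exactly the statement of the proposition. Finally, deleting the last (non-mutated) vertex of line $i_k$ removes only the left end $G_{k_\Max}$ of the former final structure, which matches the indexing $G_{k^{(\alpha-1)+}}^\ast$ in the shifted formula.
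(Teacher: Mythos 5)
Your plan is the same as the paper's: a case-by-case trace through Tables \ref{tableConfigurationsInitiales} and \ref{tableConfigurationsGénérales}, following each tooth through the mutation sequence and relying on Corollary \ref{corollaryPredecessorEviction} to confine evictions to the initial segment. However, a key mechanistic claim in your sketch is stated backwards. In configurations $\alpha_2$, $\beta_2$, and $\beta_4$ the arrow $G_{k^+}\rightarrow G_j$ is \emph{present}, not absent---it is the left side of the tooth that is about to close. It is \emph{removed} by $\mu_k$ because the composed arrow along $G_j\rightarrow G_k\rightarrow G_{k^+}$ cancels against it, while the new left side $G_k^\ast\rightarrow G_j$ arises from the flip of the traveling arrow $G_j\rightarrow G_k$, not from any ``conversion'' of $G_{k^+}\rightarrow G_j$. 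It is precisely in the \emph{long-tooth} cases $\alpha_1$, $\beta_1$, $\beta_3$ that $G_{k^+}\rightarrow G_j$ is absent, which is why the composed arrow $G_j\rightarrow G_{k^+}$ survives there and propagates to the next step.

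A second, smaller imprecision: the mutation at a $\beta_3$ vertex does not leave $G_j\rightarrow G_{k^-}^\ast$ in the quiver. That composed arrow is created and immediately cancelled against the pre-existing $G_{k^-}^\ast\rightarrow G_j$; only the right-end transitions ($\alpha_1$, $\beta_1$, or the short-tooth cases $\alpha_2$, $\beta_2$) produce a persistent $G_j\rightarrow G_{k^-}^\ast$. These cancellations are not cosmetic---they are exactly what keeps the shifted subquiver a saw-teeth structure instead of accumulating spurious arrows at the summit $G_j$. The treatment of the second derogation is also glossed: the claimed ``preserved source'' $G_{k^{\alpha+}}^\ast$ acquires its arrow to $G_j$ only at the final mutation (flip of the traveling arrow), and the stray arrow $G_j\rightarrow G_{k^{\zeta+}}$ produced along the way disappears with the deletion of the last vertex, a step that deserves to be made explicit. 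With these corrections, your sketch lines up with the detailed case study in the appendix.
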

			\begin{proof}
				We will rely on the previous study of all cases.
				
				We will start the reasoning at the first teeth non evicted.
				
				If the line starts with a sequence of vertices linked by horizontal arrows without any ordinary arrow (in the sense of Definition \ref{definitionSawTeethStructure}), we do the mutations until reaching the first tooth (there is no initial barb due to point \ref{pointPureSawTeeth} of Theorem \ref{theoremInduction}). For the sequence of initial vertices, the arrow $G_{k^+}\leftarrow G_k$ is reversed by mutation at $G_k$ but reversed back by mutation at $G_{k^+}$ so, globally there is no change on it.
				
				When reaching the right end of the teeth sequence, we have 4 possible cases. The vertex is either the initial one or not and the initial tooth has length one or more. We then have the configuration shown in Table \ref{tableConfigInitialeDécalage}.
				
				\begin{table}[ht]
					 		\begin{center}
					 			\begin{tabular}{|c|c|c|}
					 				\cline{2-3}
					 				\multicolumn{1}{c|}{}&Long tooth&Short tooth\\
					 				\hline
					 				Initial vertex&$\alpha_1$&$\alpha_2$\\
					 				\hline
					 				Non-initial vertex.&$\beta_1$&$\beta_2$\\
					 				\hline
					 			\end{tabular}
					 			\caption{Initial configurations}
					 			\label{tableConfigInitialeDécalage}
					 		\end{center}
					 	\end{table}
				
				We first consider short cases. If we are in case $\alpha_2$, after mutation, we have an only arrow $G_k^\ast\rightarrow G_j$ as remnant of the initial tooth as instructed by the first derogation. Then there is no arrow between $G_j$ and another vertex of the line of $G_k$ and thus we will be in a case $\beta$ for the next summuit, considering a new tooth of the sequence.
				
				If we are in case $\beta_2$, we directly have the tooth switch: $G_{k^+}\rightarrow G_j\rightarrow G_k$ becoming $G_{k}\rightarrow G_j\rightarrow G_{k^-}$. By the same reasoning we will then be in case $\beta_1$ or $\beta_2$ for the next vertex, right end of the next tooth.
				
				We now study the long cases. The case $\alpha_1$ will be almost the same reasoning as for the case $\beta_1$ provided we consider that the arrow $G_j\rightarrow G_k$ is deleted as in case $\alpha_2$. We will see this case through the $\beta_1$ case (but there $\alpha_1$ leads to the apparition of an inital barb).
				
				For the general case of a long tooth, we start with configuration $\beta_1$. The arrow $G_{k^{\gamma+}}\rightarrow G_j$ forming the left side of the tooth is not represented on the drawing, as it is too remote from $G_k$ ($\gamma>1$). Mutation of $G_k$ gives us the arrow $G_j\rightarrow G_{k^-}^\ast$ (provided we are not looking at the $\alpha_1$ case), the tilting of the ordinary arrow in $G_k^\ast\rightarrow G_j$ and the appearance of an arrow $G_j\rightarrow G_{k^+}$.  The first ordinary arrow targeting $G_{k^-}$ will now stay untouched by following mutations and will form the right end of the shifted tooth.
				
				We now consider the next vertex, being in case $\beta_3$ if $\gamma>2$ or $\beta_4$ if $\gamma=2$.
				
				If we are now in case $\beta_3$ around the new vertex $G_{k^+}$, by mutating it, the arrow $G_k\rightarrow G_j$ is deleted, the arrow $G_{j}\rightarrow G_{k^+}$ is tilted and there is a new arrow $G_j\rightarrow G_{k^{2+}}$. Then the following vertex will be in configuration $\beta_3$ if the left end of the initial tooth is not $G_{k^{3+}}$ and then we use the same reasoning as the current one. 
				
				On the contrary if we reach a configuration $\beta_4$ we will mutate $G_{k^{\gamma-1}+}$ and from the initially considered tooth, we have:
				\begin{itemize}
					\item the shifted arrow $G_j\rightarrow G_{k^-}^\ast$ (not existing if $G_k$ was the initial vertex of the line)
					\item the arrow $G_{k^{(\gamma-2)+}}\rightarrow G_j$
					\item the arrow $G_j\rightarrow G_{k^{(\gamma-1)+}}$
					\item the arrow $G_{k^{\gamma +}}\rightarrow G_j$ which is the initial left side of the tooth
				\end{itemize}
				
				After mutation the first arrow in unchanged, the second one is deleted, the third is tilted in, $G_{k^{(\gamma-1)+}}\rightarrow G_j$, the left side of the new tooth and the fourth arrow is deleted. In total the initial tooth $G_{k^{\gamma+}}\rightarrow G_j\rightarrow G_{k}$ has been shifted in $G_{k^{(\gamma -1)+}}\rightarrow G_j\rightarrow G_{k^-}$.
				
				After the tooth shift, the next vertex will be on configuration $\beta_1$ or $\beta_2$ (if there is another tooth or a final barb) or $\beta_0$ (if we are at the end of the tooth sequence). In that final case we use the same reasoning as the first $\beta_0$ until the end of the line.
				
				If we have a tooth we repeat the reasoning as before.
				
				If we have a final barb, then we will have a configuration $\beta_1$ followed by configurations $\beta_3$ but with no $\beta_4$ configuration to end the tooth. If $G_{k^{\zeta+}}$ is the last vertex of the line, the last mutated vertex will be $G_{k^{(\zeta-1)+}}$. After mutation  we will have an arrow $G_{k^{(\zeta-1)+}}\rightarrow G_j$ closing the end having the shift of the final barb as right side and an arrow $G_j\rightarrow G_{k^{\zeta+}}$ which will disappear immediatly with the deletion of the vertex $G_{k^{\zeta +}}$. The new appeared left side of the tooth will be a minor inconvenience as we will never more mutate on $G_{k^{(\zeta-1)+}}$, the new last vertex of the line.
			\end{proof}
			\begin{ex}
					 	We will take again Example \ref{exConfigurationsGénérales}
					 	
					 	We have initially the tooth $G_{k_3}\rightarrow G_{j_2} \rightarrow G_{k_2}$ shifted in $G_{k_2}^\ast\rightarrow G_{j_2}\rightarrow G_{k_1}^\ast$ after $\mu_{k_2}\circ\mu_{k_1}$, this is the non-initial short tooth case.
					 	
					 	Let us look at the configuration before the mutation $\mu_{k_0}$. It is:
					 	\[
					 		\begin{tikzcd}
							&G_{j_2}\ar[dr]&\rien&{G_{j_1}}\ar[dr]&\\
							G_{k_4}\ar[dr]&G_{k_3}\ar[u]\ar[l]&G_{k_2}\ar[l]\ar[ur]&G_{k_1}\ar[r]\ar[l]\ar[u,phantom,"(A)"]&\boxed{G_{k_0}}\\
							&{G_p}\ar[urr]&&&
							\end{tikzcd}
					 	\]
					 	
					 	Then we see a tooth (A) $G_{k_2}\rightarrow G_{j_1}\rightarrow G_{k_0}$ that, after $\mu_{k_1}\circ\mu_{k_0}$ reduces to the initial barb $G_{k_1}^\ast\rightarrow G{j_1}$. That is the case of an initial long teeth.
					 	
					 	To illustrate the second part of Proposition \ref{propositionTeethShift}, we consider the quiver:
					 	\[
					 		\begin{tikzcd}
					 			&G_{j_2}\ar[dr]&&G_{j_1}\ar[dr]&&\\
					 			G_{k_5}&G_{k_4}\ar[l]&G_{k_3}\ar[ur]\ar[l]&&G_{k_2}\ar[ll]&G_{k_1}\ar[l]
					 		\end{tikzcd}.
					 	\]
					 
					 In that case, after $\mu_{k_4}\circ\cdots\circ\mu_{k_1}$ we have:
						\[
					 		\begin{tikzcd}
					 			&G_{j_2}\ar[drrr]\ar[dl]&&G_{j_1}\ar[drr]&&\\
					 			G_{k_5}\ar[r]&G_{k_4}^\ast\ar[u]&G_{k_3}^\ast\ar[l]&&G_{k_2}^\ast\ar[ll]\ar[ul]&G_{k_1}^\ast\ar[l]
					 		\end{tikzcd},
					 	\]
					 	which becomes, after deletion of $G_{k_5}$:
					 	\[
					 		\begin{tikzcd}
					 			&G_{j_2}\ar[drrr]&&G_{j_1}\ar[drr]&&\\
					 			&G_{k_4}^\ast\ar[u]&G_{k_3}^\ast\ar[l]&&G_{k_2}^\ast\ar[ll]\ar[ul]&G_{k_1}^\ast\ar[l]
					 		\end{tikzcd}.
					 	\]
					 	
					 	We have here the appearance of a tooth $G_{k_4}^\ast\rightarrow G_{j_2}\rightarrow G_{k_2}^\ast$ when we initially only have a final barb $G_{j_2}\rightarrow G_{k_2}$.
					 	
					 	Nevertheless, we remark that it does not perturbate the saw teeth structure neither upcoming mutations, the vertex $R_{k_4}$ will not be mutated anymore and then the arrow will not be taken into account in the computation of $\Delta$-vectors in upcoming mutations.
		\end{ex}
			
	\section{Examples}
			\begin{ex}\label{exampleAlgo2}
				We take again the data of Example \ref{exempleAlgo1} p.\pageref{exempleAlgo1}. Recall that we are in the case $W$ of type $A_5$ with $\bar{w}=[1,3,2,4,3,\underline{2},\underline{4},\underline{5},4,\underline{3},2,\underline{1},\underline{2}]$ and $v=s_2s_4s_5s_3s_1s_2$. We represent the initial seed on Figure \ref{figureModuleVbarW} with the quiver $\Gamma_{\bar{w}}$ labelled by socle decomposition of the direct summands of $V_{\bar{w}}$. On Example \ref{exempleAlgo1} we saw that we had the mutation sequences:
				
				\[
					\tilde{\mu}_1=\mu_8\circ\mu_3\circ\mu_1,\quad\tilde{\mu}_2=\mu_2,\quad\tilde{\mu}_3=\mu_9\circ\mu_4,\quad \tilde{\mu}_4=\text{id},\quad \tilde{\mu}_5=\mu_7,\quad \tilde{\mu}_6=\mu_3
				\]
				
				We recall on Table \ref{tableSousQuotientsExemple} the modules defining the $\Delta_{\dot{v}}$-vectors. This gives the $\Delta$-vectors of the Table \ref{tableDeltaVecteurs1}. On this table, the color of the canonical vector $f_m$ relates to the color $i_{p_m}$. The indices $m>6$ are not colored as their refer to an arbitrary completion of $\bar{v}$ in $\dot{v}$.
				
				In order to define the first sequence of mutation $\hat{\mu}_1(V_{\bar{w}})$ we determine the set:
			
			\[
				A_1(V_{\bar{w}})=\{1\leq j\leq 8\mid \Delta_{\dot{v},1}(V_{\bar{w}})\neq 0\}=\{1,3,8\}
			\]
			We then have $\hat{\mu}_1(V_{\bar{w}})=\mu_8\circ\mu_3\circ\mu_1$ that we apply. Note that, as expected, we have $\hat{\mu}_1=\widetilde{\mu}_1$ of Example \ref{exempleAlgo1}.

			\begin{figure}[h!t]
				\begin{center}\resizebox{!}{7cm}{
				\begin{tikzpicture}[>=stealth, yscale=0.9]
					\node[blue] (V1) at (-1,-4) {$V_1=2$};
					\node[red] (V2) at (-2,-2) {$V_2=\xymatrix@=-5pt{&2\\1&}$};
					\node[blue] (V3) at (-3,-4) {$V_3=\xymatrix@=-5pt{1&\\&2}$};
					\node[Green] (V4) at (-4,-6) {$V_4=\xymatrix@=-5pt{1&&\\&2&\\&&3}$};
					\node[Orange] (V5) at (-4.5,-8) {$V_5=\xymatrix@=-5pt{1&&&\\&2&&\\&&3&\\&&&4}$};
					\node[violet] (V6) at (-6,-10) {$V_6=\xymatrix@=-5pt{1&&&&\\&2&&&\\&&3&&\\&&&4&\\&&&&5}$};
					\node[Orange] (V7) at (-7,-8) {$V_7=\xymatrix@=-5pt{1&&&&\\&2&&&\\&&3&&5\\&&&4&}$};
					\node[blue] (V8) at (-8,-4) {$V_8=\xymatrix@=-5pt{&2&\\1&&3\\&2&}$};
					\node[Green] (V9) at (-9,-6) {$V_9=\xymatrix@=-5pt{&2&&&\\1&&3&&5\\&2&&4&\\&&3&&}$};
					\node[Orange] (V10) at (-10,-8) {$V_{10}=\xymatrix@=-5pt{&2&&&\\1&&3&&\\&2&&4&\\&&3&&5\\&&&4&}$};
					\node[blue] (V11) at (-11,-4) {$V_{11}=\xymatrix@=-5pt{&&&&5\\&2&&4&\\1&&3&&\\&2&&&}$};
					\node[Green] (V12) at (-12,-6) {$V_{12}=\xymatrix@=-5pt{&2&&4&\\1&&3&&5\\&2&&4&\\&&3&&}$};
					\node[red] (V13) at (-13,-2) {$V_{13}=\xymatrix@=-5pt{&&&&5\\&&&4&\\&&3&&\\&2&&&\\1&&&&}$};

					\draw (V1);
					\draw (V2);
					\draw (V3);
					\draw (V4);
					\draw (V5);
					\draw (V6);
					\draw (V7);
					\draw (V8);
					\draw (V9);
					\draw (V10);
					\draw (V11);
					\draw (V12);
					\draw (V13);
					
					\draw[->] (V1)--(V3);
					\draw[->] (V3)--(V8);
					\draw[->] (V8)--(V11);
					
					\draw[->] (V2)--(V13);
					
					\draw[->] (V4)--(V9);
					\draw[->] (V9)--(V12);
					
					\draw[->] (V5)--(V7);
					\draw[->] (V7)--(V10);
					
					\draw[->] (V2)--(V1);
					\draw[->] (V11)--(V2);
					\draw[->] (V4)--(V3);
					\draw[->] (V7)--(V4);
					\draw[->] (V8)--(V4);
					\draw[->] (V6)--(V5);
					\draw[->] (V10)--(V6);
					\draw[->] (V9)--(V7);
					\draw[->] (V9)--(V8);
					\draw[->] (V10)--(V9);
					\draw[->] (V11)--(V9);
					\draw[->] (V12)--(V10);
					\draw[->] (V12)--(V11);
					\draw[->] (V13)--(V11);
					
				\end{tikzpicture}}
				\end{center}
				\caption{Seed $(V_{\bar{w}},\Gamma_{\bar{w}})$}
				\label{figureModuleVbarW}
			\end{figure}
			
			\begin{table}[h!t]
				\[
					\begin{array}{|c|c|c|c|c|c|}
						\hline
						k&1&2&3&4&5\\
						\hline
						M_{k,\dot{v}}&\xymatrix@=-5pt{2}&\xymatrix@=-5pt{&2\\1&}&\xymatrix@=-5pt{2&\\&3}&\xymatrix@=-5pt{5}&\xymatrix@=-5pt{2&&&\\&3&&5\\&&4&}\\
						\hline
						k&6&7&8&9&10\\
						\hline
						M_{k,\dot{v}}&\xymatrix@=-5pt{&2&\\1&&3}&\xymatrix@=-5pt{3}&\xymatrix@=-5pt{&2&&&\\1&&3&&5\\&&&4&}&\xymatrix@=-5pt{3&&5\\&4&}&\xymatrix@=-5pt{&5\\4&}\\
						\hline
						k&11&12&13&14&15\\
						\hline
						M_{k,\dot{v}}&\xymatrix@=-5pt{1}&\xymatrix@=-5pt{&2&&\\&&3&\\&&&4}&\xymatrix@=-5pt{&2&&\\1&&3&\\&&&4}&\xymatrix@=-5pt{3&\\&4}&\xymatrix@=-5pt{4}\\
						\hline
					\end{array}
				\]
				\caption{Strata defining the $\Delta_{\dot{v}}$-stratification}
				\label{tableSousQuotientsExemple}
			\end{table}
			
			\begin{table}[h!t]
				\[
					\begin{array}{|c|c|c|c|c|c|c|c|}
						\hline
						k&1&2&3&4&5&6&7\\
						\hline
						\Delta_{\dot{v}}(V_k)&\textcolor{blue}{f_1}&\textcolor{red}{f_2}&\textcolor{blue}{f_1}+f_{11}&\textcolor{Green}{f_3}+f_{11}&f_{12}&\textcolor{violet}{f_4}+f_{12}&\textcolor{orange}{f_5}+f_{11}\\
						\hline
						k&8&9&10&11&12&13&\\
						\hline
						\Delta_{\dot{v}}(V_k)&\textcolor{blue}{f_1}+\textcolor{blue}{f_6}&\textcolor{Green}{f_3}+f_8&\textcolor{orange}{f_5}+f_{11}+f_{13}&\textcolor{blue}{f_1}+\textcolor{blue}{f_6}+f_{10}&\textcolor{Green}{f_3}+f_8+f_{15}&\textcolor{red}{f_2}+f_7+f_{10}&\\
						\hline
					\end{array}
				\]
				\caption{$\Delta_{\dot{v}}$-vectors of $V_{\bar{w}}$ direct indecomposable summands}
				\label{tableDeltaVecteurs1}
			\end{table}
			
			We apply Proposition \ref{propositionMutationDeltaVectors} to compute the $\Delta_{\dot{v}}$-vector of $V_1^\ast$. Here we do not do the auxilliary computation with $d_\Delta$ as, thanks to the proof of Lemma \ref{lemmaComputationDeltavVector}, only one of the two computations gives a vector of $\N^{\ell(w_0)}$. 
			
			We either have
			\[
				\Delta_{\dot{v}}(V_1^\ast)\overset{?}{=}\Delta_{\dot{v}}(V_3)-\Delta_{\dot{v}}(V_1)=f_1+f_{11}-f_1=f_{11}
			\]
			or
			\[
				\Delta_{\dot{v}}(V_1^\ast)\overset{?}{=}\Delta_{\dot{v}}(V_2)-\Delta_{\dot{v}}(V_1)=f_2-f_1.
			\]
			Only the first computation is then possible and $\Delta_{\dot{v}}(V_1^\ast)=f_1$.
			
			Note that these computations echoes the short exact sequences used to determine the mutated modules:
			\begin{equation}\label{eq1V1}
				2\rightarrow \xymatrix@=-5pt{1&\\&2}\rightarrow 1,\quad
				1\rightarrow \xymatrix@=-5pt{&2\\1&}\rightarrow 2.
			\end{equation}
			and then the mutated module $V_1^\ast$ is the simple module $S_1$. But only one short exact sequence translated into a valid computation regarding $\Delta_{\dot{v}}$-vectors: the first one.

			After this mutation, we have the module of Figure \ref{figureMutation1} (modified arrows are in thick magenta).
			
			\begin{figure}[ht]
				\begin{center}\resizebox{!}{7cm}{
				\begin{tikzpicture}[>=stealth, yscale=0.85]
					\node[blue] (V1) at (-1,-4) {$V_1^\ast=1$};
					\node[red] (V2) at (-2,-2) {$V_2=\xymatrix@=-5pt{&2\\1&}$};
					\node[blue] (V3) at (-3,-4) {$V_3=\xymatrix@=-5pt{1&\\&2}$};
					\node[Green] (V4) at (-4,-6) {$V_4=\xymatrix@=-5pt{1&&\\&2&\\&&3}$};
					\node[Orange] (V5) at (-4.5,-8) {$V_5=\xymatrix@=-5pt{1&&&\\&2&&\\&&3&\\&&&4}$};
					\node[violet] (V6) at (-6,-10) {$V_6=\xymatrix@=-5pt{1&&&&\\&2&&&\\&&3&&\\&&&4&\\&&&&5}$};
					\node[Orange] (V7) at (-7,-8) {$V_7=\xymatrix@=-5pt{1&&&&\\&2&&&\\&&3&&5\\&&&4&}$};
					\node[blue] (V8) at (-8,-4) {$V_8=\xymatrix@=-5pt{&2&\\1&&3\\&2&}$};
					\node[Green] (V9) at (-9,-6) {$V_9=\xymatrix@=-5pt{&2&&&\\1&&3&&5\\&2&&4&\\&&3&&}$};
					\node[Orange] (V10) at (-10,-8) {$V_{10}=\xymatrix@=-5pt{&2&&&\\1&&3&&\\&2&&4&\\&&3&&5\\&&&4&}$};
					\node[blue] (V11) at (-11,-4) {$V_{11}=\xymatrix@=-5pt{&&&&5\\&2&&4&\\1&&3&&\\&2&&&}$};
					\node[Green] (V12) at (-12,-6) {$V_{12}=\xymatrix@=-5pt{&2&&4&\\1&&3&&5\\&2&&4&\\&&3&&}$};
					\node[red] (V13) at (-13,-2) {$V_{13}=\xymatrix@=-5pt{&&&&5\\&&&4&\\&&3&&\\&2&&&\\1&&&&}$};

					\draw (V1);
					\draw (V2);
					\draw (V3);
					\draw (V4);
					\draw (V5);
					\draw (V6);
					\draw (V7);
					\draw (V8);
					\draw (V9);
					\draw (V10);
					\draw (V11);
					\draw (V12);
					\draw (V13);
					
					\draw[->] (V3)--(V8);
					\draw[->] (V8)--(V11);
					
					\draw[->] (V2)--(V13);
					
					\draw[->] (V4)--(V9);
					\draw[->] (V9)--(V12);
					
					\draw[->] (V5)--(V7);
					\draw[->] (V7)--(V10);
					
					\draw[->] (V11)--(V2);
					\draw[->] (V4)--(V3);
					\draw[->] (V7)--(V4);
					\draw[->] (V8)--(V4);
					\draw[->] (V6)--(V5);
					\draw[->] (V10)--(V6);
					\draw[->] (V9)--(V7);
					\draw[->] (V9)--(V8);
					\draw[->] (V10)--(V9);
					\draw[->] (V11)--(V9);
					\draw[->] (V12)--(V10);
					\draw[->] (V12)--(V11);
					\draw[->] (V13)--(V11);
					
					\draw[magenta,very thick,->] (V3)--(V1);
					\draw[magenta,very thick,->] (V2)--(V3);
					\draw[magenta,very thick,->] (V1)--(V2);
					
				\end{tikzpicture}}
				\end{center}
				\caption{Module $\mu_1(V_{\bar{w}})$}
				\label{figureMutation1}
			\end{figure}
			
			We now compute $\Delta_{\dot{v}}(V_3^\ast)$ by the same method.
			
			\[
				\Delta_{\dot{v}}(V_3^\ast)=\Delta_{\dot{v}}(V_1^\ast)+\Delta_{\dot{v}}(V_8)-\Delta_{\dot{v}}(V_3)=f_{11}+f_1+f_6-f_1-f_{11}=f_6
			\]
			
			We have $V_3^\ast=\xymatrix@=-5pt{&2&\\1&&3}$  and the module $\mu_3\circ\mu_1(V_{\bar{w}})$ is the one of Figure \ref{figureMutation2}.
		
			\begin{figure}[h!t]
				\begin{center}\resizebox{!}{7cm}{
				\begin{tikzpicture}[>=stealth, yscale=0.9]
					\node[blue] (V1) at (-1,-4) {$V_1^\ast=1$};
					\node[red] (V2) at (-2,-2) {$V_2=\xymatrix@=-5pt{&2\\1&}$};
					\node[blue] (V3) at (-3,-4) {$V_3^\ast=\xymatrix@=-5pt{&2&\\1&&3}$};
					\node[Green] (V4) at (-4,-6) {$V_4=\xymatrix@=-5pt{1&&\\&2&\\&&3}$};
					\node[Orange] (V5) at (-4.5,-8) {$V_5=\xymatrix@=-5pt{1&&&\\&2&&\\&&3&\\&&&4}$};
					\node[violet] (V6) at (-6,-10) {$V_6=\xymatrix@=-5pt{1&&&&\\&2&&&\\&&3&&\\&&&4&\\&&&&5}$};
					\node[Orange] (V7) at (-7,-8) {$V_7=\xymatrix@=-5pt{1&&&&\\&2&&&\\&&3&&5\\&&&4&}$};
					\node[blue] (V8) at (-8,-4) {$V_8=\xymatrix@=-5pt{&2&\\1&&3\\&2&}$};
					\node[Green] (V9) at (-9,-6) {$V_9=\xymatrix@=-5pt{&2&&&\\1&&3&&5\\&2&&4&\\&&3&&}$};
					\node[Orange] (V10) at (-10,-8) {$V_{10}=\xymatrix@=-5pt{&2&&&\\1&&3&&\\&2&&4&\\&&3&&5\\&&&4&}$};
					\node[blue] (V11) at (-11,-4) {$V_{11}=\xymatrix@=-5pt{&&&&5\\&2&&4&\\1&&3&&\\&2&&&}$};
					\node[Green] (V12) at (-12,-6) {$V_{12}=\xymatrix@=-5pt{&2&&4&\\1&&3&&5\\&2&&4&\\&&3&&}$};
					\node[red] (V13) at (-13,-2) {$V_{13}=\xymatrix@=-5pt{&&&&5\\&&&4&\\&&3&&\\&2&&&\\1&&&&}$};

					\draw (V1);
					\draw (V2);
					\draw (V3);
					\draw (V4);
					\draw (V5);
					\draw (V6);
					\draw (V7);
					\draw (V8);
					\draw (V9);
					\draw (V10);
					\draw (V11);
					\draw (V12);
					\draw (V13);
					
					\draw[->] (V8)--(V11);
					
					\draw[->] (V2)--(V13);
					
					\draw[->] (V4)--(V9);
					\draw[->] (V9)--(V12);
					
					\draw[->] (V5)--(V7);
					\draw[->] (V7)--(V10);
					
					\draw[->] (V11)--(V2);
					\draw[->] (V7)--(V4);
					\draw[->] (V6)--(V5);
					\draw[->] (V10)--(V6);
					\draw[->] (V9)--(V7);
					\draw[->] (V9)--(V8);
					\draw[->] (V10)--(V9);
					\draw[->] (V11)--(V9);
					\draw[->] (V12)--(V10);
					\draw[->] (V12)--(V11);
					\draw[->] (V13)--(V11);

					\draw[magenta,very thick,->] (V1)--(V3);
					\draw[magenta,very thick,->] (V4)--(V1);
					\draw[magenta,very thick,->] (V3)--(V4);
					\draw[magenta,very thick,->] (V8)--(V3);
					\draw[magenta,very thick,->] (V2)--(V8);
					\draw[magenta,very thick,->] (V3)--(V2);
					
				\end{tikzpicture}}
				\end{center}
				\caption{Module $\mu_3\circ\mu_1(V_{\bar{w}})$}
				\label{figureMutation2}
			\end{figure}
			
			And finally we get $\Delta_{\dot{v}}(V_8^\ast)$ by the computation:
			\[
				\Delta_{\dot{v}}(V_8^\ast)=\Delta_{\dot{v}}(V_3^\ast)+\Delta_{\dot{v}}(V_{11})-\Delta_{\dot{v}}(V_8)=f_6+f_1+f_6+f_{10}-f_1-f_6=f_6+f_{10}.
			\]
			
			We then have as $\Delta_{\dot{v}}$-vectors for $\hat{\mu}_1(V_{\bar{w}})$ Table \ref{tableDeltaVecteurs2} and the module is on Figure \ref{figureMutation3}.
			
			\begin{table}[h!t]
				\[
					\begin{array}{|c|c|c|c|c|c|c|c|}
						\hline
						k&1&2&3&4&5&6&7\\
						\hline
						\Delta_{\dot{v}}(R_{k,1})&f_{11}&\textcolor{red}{f_2}&\textcolor{blue}{f_6}&\textcolor{Green}{f_3}+f_{11}&f_{12}&\textcolor{violet}{f_4}+f_{12}&\textcolor{orange}{f_5}+f_{11}\\
						\hline
						k&8&9&10&11&12&13&\\
						\hline
						\Delta_{\dot{v}}(R_{k,1})&\textcolor{blue}{f_6}+f_{10}&\textcolor{Green}{f_3}+f_8&\textcolor{orange}{f_5}+f_{11}+f_{13}&\textcolor{blue}{f_1}+\textcolor{blue}{f_6}+f_{10}&\textcolor{Green}{f_3}+f_8+f_{15}&\textcolor{red}{f_2}+f_7+f_{10}&\\
						\hline
					\end{array}
				\]
				\caption{$\Delta_{\dot{v}}$-vectors of $\hat{\mu}_1(V_{\bar{w}})$}
				\label{tableDeltaVecteurs2}
			\end{table}			
			
			\begin{figure}[ht]
				\begin{center}\resizebox{!}{7cm}{
				\begin{tikzpicture}[>=stealth, scale=1]
					\node[blue] (V1) at (-1,-4) {$V_1^\ast=1$};
					\node[red] (V2) at (-2,-2) {$V_2=\xymatrix@=-5pt{&2\\1&}$};
					\node[blue] (V3) at (-3,-4) {$V_3^\ast=\xymatrix@=-5pt{&2&\\1&&3}$};
					\node[Green] (V4) at (-4,-6) {$V_4=\xymatrix@=-5pt{1&&\\&2&\\&&3}$};
					\node[Orange] (V5) at (-4.5,-8) {$V_5=\xymatrix@=-5pt{1&&&\\&2&&\\&&3&\\&&&4}$};
					\node[violet] (V6) at (-6,-10) {$V_6=\xymatrix@=-5pt{1&&&&\\&2&&&\\&&3&&\\&&&4&\\&&&&5}$};
					\node[Orange] (V7) at (-7,-8) {$V_7=\xymatrix@=-5pt{1&&&&\\&2&&&\\&&3&&5\\&&&4&}$};
					\node[blue] (V8) at (-8,-4) {$V_8^\ast=\xymatrix@=-5pt{&&&&5\\&2&&4&\\1&&3&&}$};
					\node[Green] (V9) at (-9,-6) {$V_9=\xymatrix@=-5pt{&2&&&\\1&&3&&5\\&2&&4&\\&&3&&}$};
					\node[Orange] (V10) at (-10,-8) {$V_{10}=\xymatrix@=-5pt{&2&&&\\1&&3&&\\&2&&4&\\&&3&&5\\&&&4&}$};
					\node[blue] (V11) at (-11,-4) {$V_{11}=\xymatrix@=-5pt{&&&&5\\&2&&4&\\1&&3&&\\&2&&&}$};
					\node[Green] (V12) at (-12,-6) {$V_{12}=\xymatrix@=-5pt{&2&&4&\\1&&3&&5\\&2&&4&\\&&3&&}$};
					\node[red] (V13) at (-13,-2) {$V_{13}=\xymatrix@=-5pt{&&&&5\\&&&4&\\&&3&&\\&2&&&\\1&&&&}$};

					\draw (V1);
					\draw (V2);
					\draw (V3);
					\draw (V4);
					\draw (V5);
					\draw (V6);
					\draw (V7);
					\draw (V8);
					\draw (V9);
					\draw (V10);
					\draw (V11);
					\draw (V12);
					\draw (V13);
					
					
					\draw[->] (V2)--(V13);
					
					\draw[->] (V4)--(V9);
					\draw[->] (V9)--(V12);
					
					\draw[->] (V5)--(V7);
					\draw[->] (V7)--(V10);
					
					\draw[->] (V7)--(V4);
					\draw[->] (V6)--(V5);
					\draw[->] (V10)--(V6);
					\draw[->] (V9)--(V7);
					\draw[->] (V10)--(V9);
					\draw[->] (V12)--(V10);
					\draw[->] (V12)--(V11);
					\draw[->] (V13)--(V11);

					\draw[->] (V1)--(V3);
					\draw[->] (V4)--(V1);
					\draw[->] (V3)--(V4);
					\draw[magenta,very thick,->] (V3)--(V8);
					\draw[magenta,very thick,->] (V8)--(V2);
					\draw[magenta,very thick,->] (V11)--(V8);
					\draw[magenta,very thick,->] (V8)--(V9);
					\draw[magenta,very thick,->] (V9)--(V3);
					
				\end{tikzpicture}}
				\end{center}
				\caption{Module $\hat{\mu}_1(V_{\bar{w}})$}
				\label{figureMutation3}
			\end{figure}

			Having completed $\hat{\mu}_1$, we now rename the summands
			\[
				V_1^\ast\leadsto R_{1,1},\ V_3^\ast\leadsto R_{3,1},\ V_8^\ast\leadsto R_{8,1},\ V_{k}\leadsto R_{k,1},\ k\not\in\{1,3,8\}.
			\]

			We now determine $A_2(\hat{\mu}_1(V_{\bar{w}}))$ et $\hat{\mu}_2$.
			
			We have
			\[
				A_2(R_{1})=\{1\leq i\leq 2\mid\Delta_{\dot{v},2}(R_{i,1})\}=\{2\}
			\]
			and so
			\[
				\hat{\mu}_2=\mu_2=\widetilde{\mu}_2.
			\]
			
			We now apply $\hat{\mu}_2=\mu_2$ to $\hat{\mu}_1(V_{\bar{w}})$, and get Figure \ref{figureMutation4} and $\Delta_{\dot{v}}$-vectors of Table \ref{tableDeltaVecteurs3}.
			
			\begin{table}[h!t]
				\begin{center}
				\resizebox{\textwidth}{!}{
				$
					\begin{array}{|c|c|c|c|c|c|c|c|}
						\hline
						k&1&2&3&4&5&6&7\\
						\hline
						\Delta_{\dot{v}}(R_{k,2})&f_{11}&f_7+f_{10}&\textcolor{blue}{f_6}&\textcolor{Green}{f_3}+f_{11}&f_{12}&\textcolor{violet}{f_4}+f_{12}&\textcolor{orange}{f_5}+f_{11}\\
						\hline
						k&8&9&10&11&12&13&\\
						\hline
						\Delta_{\dot{v}}(R_{k,2})&\textcolor{blue}{f_6}+f_{10}&\textcolor{Green}{f_3}+f_8&\textcolor{orange}{f_5}+f_{11}+f_{13}&\textcolor{blue}{f_1}+\textcolor{blue}{f_6}+f_{10}&\textcolor{Green}{f_3}+f_8+f_{15}&\textcolor{red}{f_2}+f_7+f_{10}&\\
						\hline
					\end{array}
				$
				}
				\end{center}
				\caption{$\Delta_{\dot{v}}$-vectors of $\hat{\mu}_2\circ\hat{\mu}_1(V_{\bar{w}})$}
				\label{tableDeltaVecteurs3}
			\end{table}
			
			\begin{figure}[h!t]
				\begin{center}
				\resizebox{!}{7cm}{
				\begin{tikzpicture}[>=stealth, yscale=0.85]
					\node[blue] (V1) at (-1,-4) {$R_{1,1}=1$};
					\node[red] (V2) at (-2,-2) {$R_{2,1}^\ast=\xymatrix@=-5pt{&&5\\&4&\\3&&}$};
					\node[blue] (V3) at (-3.5,-4) {$R_{3,1}=\xymatrix@=-5pt{&2&\\1&&3}$};
					\node[Green] (V4) at (-4,-6) {$R_{4,1}=\xymatrix@=-5pt{1&&\\&2&\\&&3}$};
					\node[Orange] (V5) at (-4,-8) {$R_{5,1}=\xymatrix@=-5pt{1&&&\\&2&&\\&&3&\\&&&4}$};
					\node[violet] (V6) at (-6,-10) {$R_{6,1}=\xymatrix@=-5pt{1&&&&\\&2&&&\\&&3&&\\&&&4&\\&&&&5}$};
					\node[Orange] (V7) at (-7,-8) {$R_{7,1}=\xymatrix@=-5pt{1&&&&\\&2&&&\\&&3&&5\\&&&4&}$};
					\node[blue] (V8) at (-8,-4) {$R_{8,1}=\xymatrix@=-5pt{&&&&5\\&2&&4&\\1&&3&&}$};
					\node[Green] (V9) at (-9,-6) {$R_{9,1}=\xymatrix@=-5pt{&2&&&\\1&&3&&5\\&2&&4&\\&&3&&}$};
					\node[Orange] (V10) at (-10,-8) {$R_{10,1}=\xymatrix@=-5pt{&2&&&\\1&&3&&\\&2&&4&\\&&3&&5\\&&&4&}$};
					\node[blue] (V11) at (-11,-4) {$R_{11,1}=\xymatrix@=-5pt{&&&&5\\&2&&4&\\1&&3&&\\&2&&&}$};
					\node[Green] (V12) at (-12,-6) {$R_{12,1}=\xymatrix@=-5pt{&2&&4&\\1&&3&&5\\&2&&4&\\&&3&&}$};
					\node[red] (V13) at (-13,-2) {$R_{13,1}=\xymatrix@=-5pt{&&&&5\\&&&4&\\&&3&&\\&2&&&\\1&&&&}$};

					\draw (V1);
					\draw (V2);
					\draw (V3);
					\draw (V4);
					\draw (V5);
					\draw (V6);
					\draw (V7);
					\draw (V8);
					\draw (V9);
					\draw (V10);
					\draw (V11);
					\draw (V12);
					\draw (V13);
					
					\draw[->] (V1)--(V3);
					\draw[magenta,very thick,->] (V13)--(V2);
					\draw[->] (V3)--(V8);
					\draw[->] (V4)--(V9);
					\draw[->] (V5)--(V7);
					\draw[->] (V7)--(V10);
					\draw[->] (V9)--(V12);
					\draw[->] (V11)--(V8);

					\draw[->] (V4)--(V1);
					\draw[magenta,very thick,->] (V2)--(V8);
					\draw[->] (V9)--(V3);
					\draw[->] (V3)--(V4);
					\draw[->] (V7)--(V4);
					\draw[->] (V6)--(V5);
					\draw[->] (V10)--(V6);
					\draw[->] (V9)--(V7);
					\draw[->] (V8)--(V9);
					\draw[->] (V10)--(V9);
					\draw[->] (V12)--(V10);
					\draw[->] (V12)--(V11);
					\draw[->] (V13)--(V11);
					\draw[magenta,very thick,->] (V8)--(V13);
					
				\end{tikzpicture}}
				\end{center}
				\caption{Module $\hat{\mu}_2\circ\hat{\mu}_1(V_{\bar{w}})$}
				\label{figureMutation4}
			\end{figure}

			We again rename the indecomposables factors $R_{2,1}^\ast\leadsto R_{2,2}$ et $R_{k,1}\leadsto R_{k,2}$ pour $k\neq 2$.
			
			We now compute:
			\[
				A_3(R_2)=\{1\leq i\leq 9\mid\Delta_{\dot{v},3}(R_{i,3})\neq 0\}=\{4,9\}
			\]
			and we have $\hat{\mu}_3=\mu_9\circ\mu_4$ and again $\hat{\mu}_3=\widetilde{\mu}_3$.
			
			We get the module $R_3$ represented on Figure \ref{figureMutation5} with $\Delta_{\dot{v}}$-vectors of Table \ref{tableDeltaVecteurs4}.
			
			\begin{figure}[h!t]
				\begin{center}
				\resizebox{!}{7cm}{
				\begin{tikzpicture}[>=stealth, yscale=0.85]
					\node[blue] (V1) at (-1,-4) {$R_{1,2}=1$};
					\node[red] (V2) at (-2,-2) {$R_{2,2}=\xymatrix@=-5pt{&&5\\&4&\\3&&}$};
					\node[blue] (V3) at (-3.5,-4) {$R_{3,2}=\xymatrix@=-5pt{&2&\\1&&3}$};
					\node[Green] (V4) at (-4,-6) {$R_{4,2}^\ast=\xymatrix@=-5pt{&2&&&\\1&&3&&5\\&&&4&}$};
					\node[Orange] (V5) at (-3,-8) {$R_{5,2}=\xymatrix@=-5pt{1&&&\\&2&&\\&&3&\\&&&4}$};
					\node[violet] (V6) at (-6,-10) {$R_{6,2}=\xymatrix@=-5pt{1&&&&\\&2&&&\\&&3&&\\&&&4&\\&&&&5}$};
					\node[Orange] (V7) at (-7,-8) {$R_{7,2}=\xymatrix@=-5pt{1&&&&\\&2&&&\\&&3&&5\\&&&4&}$};
					\node[blue] (V8) at (-8,-4) {$R_{8,2}=\xymatrix@=-5pt{&&&&5\\&2&&4&\\1&&3&&}$};
					\node[Green] (V9) at (-9,-6) {$R_{9,2}^\ast=\xymatrix@=-5pt{&2&&4&\\1&&3&&5\\&&&4&}$};
					\node[Orange] (V10) at (-10,-8) {$R_{10,2}=\xymatrix@=-5pt{&2&&&\\1&&3&&\\&2&&4&\\&&3&&5\\&&&4&}$};
					\node[blue] (V11) at (-11,-4) {$R_{11,2}=\xymatrix@=-5pt{&&&&5\\&2&&4&\\1&&3&&\\&2&&&}$};
					\node[Green] (V12) at (-12,-6) {$R_{12,2}=\xymatrix@=-5pt{&2&&4&\\1&&3&&5\\&2&&4&\\&&3&&}$};
					\node[red] (V13) at (-13,-2) {$R_{13,2}=\xymatrix@=-5pt{&&&&5\\&&&4&\\&&3&&\\&2&&&\\1&&&&}$};

					\draw (V1);
					\draw (V2);
					\draw (V3);
					\draw (V4);
					\draw (V5);
					\draw (V6);
					\draw (V7);
					\draw (V8);
					\draw (V9);
					\draw (V10);
					\draw (V11);
					\draw (V12);
					\draw (V13);
					
					\draw[->] (V13)--(V2);
					\draw[->] (V3)--(V8);
					\draw[->] (V4)--(V9);
					\draw[->] (V5)--(V7);
					\draw[->] (V7)--(V10);
					\draw[magenta,very thick,->] (V12)--(V9);
					\draw[->] (V11)--(V8);

					\draw[magenta,very thick,->] (V1)--(V4);
					\draw[->] (V2)--(V8);
					\draw[magenta,very thick,->] (V4)--(V3);
					\draw[magenta,very thick,->] (V4)--(V7);
					\draw[->] (V6)--(V5);
					\draw[->] (V10)--(V6);
					
					\draw[magenta,very thick,->] (V9)--(V8);
					\draw[magenta,very thick,->] (V9)--(V10);
					\draw[->] (V12)--(V11);
					\draw[->] (V13)--(V11);
					\draw[->] (V8)--(V13);
					\draw[magenta,very thick,->,bend right] (V7)to[bend right](V1);
					\draw[magenta,very thick,->] (V8)--(V4);
					\draw[magenta,very thick,->] (V8)--(V12);
					\draw[magenta,very thick,->] (V10)--(V4);
					
				\end{tikzpicture}}
				\end{center}
				\caption{Module $\hat{\mu}_4\circ\hat{\mu}_3\circ\hat{\mu}_2\circ\hat{\mu}_1(V_{\bar{w}})=\hat{\mu}_3\circ\hat{\mu}_2\circ\hat{\mu}_1(V_{\bar{w}})$}
				\label{figureMutation5}
			\end{figure}
			
			\begin{table}[h!t]
				\begin{center}
				\resizebox{\textwidth}{!}{
				$
					\begin{array}{|c|c|c|c|c|c|c|c|}
						\hline
						k&1&2&3&4&5&6&7\\
						\hline
						\Delta_{\dot{v}}(R_{k,3})&f_{11}&f_7+f_{10}&\textcolor{blue}{f_6}&f_8&f_{12}&\textcolor{violet}{f_4}+f_{12}&\textcolor{orange}{f_5}+f_{11}\\
						\hline
						k&8&9&10&11&12&13&\\
						\hline
						\Delta_{\dot{v}}(R_{k,3})&\textcolor{blue}{f_6}+f_{10}&f_8+f_{15}&\textcolor{orange}{f_5}+f_{11}+f_{13}&\textcolor{blue}{f_1}+\textcolor{blue}{f_6}+f_{10}&\textcolor{Green}{f_3}+f_8+f_{15}&\textcolor{red}{f_2}+f_7+f_{10}&\\
						\hline
					\end{array}
				$
				}
				\end{center}
				\caption{$\Delta_{\dot{v}}$-vectors of $\hat{\mu}_3\circ\cdots\circ\hat{\mu}_1(V_{\bar{w}})$}
				\label{tableDeltaVecteurs4}
			\end{table}

			We rename $R_{4,2}^\ast\leadsto R_{4,3}$, $R_{9,2}^\ast\leadsto R_{9,3}$ et $R_{k,2}\leadsto R_{k,3}$.
			
			We compute
			\[
				A_4(R_3)=\{1\leq i\leq 0|\Delta_{\dot{v},4}(R_{i,3})\neq 0\}=\varnothing
			\]
			and so $\hat{\mu}_4=id=\widetilde{\mu}_4$.			
			
			As $\hat{\mu}_4=id$,we rename $R_{k,3}\leadsto R_{k,4}$.
			
			We compute
			\[
				A_5(R_4)=\{1\leq i\leq 7|\Delta_{\dot{v},5}(R_{i,4})\neq0\}=\{7\}.
			\]
			So $\hat{\mu}_5=\mu_7=\widetilde{\mu}_5$ and then $R_5=\hat{\mu}_5\circ\cdots\circ\hat{\mu}_1(V_{\bar{w}})$ is the module on Figure \ref{figureMutation6} whose $\Delta_{\dot{v}}$-vectors are on Table \ref{tableDeltaVecteurs5}.

			\begin{figure}[ht]
				\begin{center}
				\resizebox{!}{7cm}{
				\begin{tikzpicture}[>=stealth, yscale=0.85]
					\node[blue] (V1) at (-1,-4) {$R_{1,4}=1$};
					\node[red] (V2) at (-2,-2) {$R_{2,4}=\xymatrix@=-5pt{&&5\\&4&\\3&&}$};
					\node[blue] (V3) at (-3.5,-4) {$R_{3,4}=\xymatrix@=-5pt{&2&\\1&&3}$};
					\node[Green] (V4) at (-4,-6) {$R_{4,4}=\xymatrix@=-5pt{&2&&&\\1&&3&&5\\&&&4&}$};
					\node[Orange] (V5) at (-3,-8) {$R_{5,4}=\xymatrix@=-5pt{1&&&\\&2&&\\&&3&\\&&&4}$};
					\node[violet] (V6) at (-6,-11) {$R_{6,4}=\xymatrix@=-5pt{1&&&&\\&2&&&\\&&3&&\\&&&4&\\&&&&5}$};
					\node[Orange] (V7) at (-7,-8) {$R_{7,4}^\ast=\xymatrix@=-5pt{&2&&\\1&&3&\\&&&4}$};
					\node[blue] (V8) at (-8,-4) {$R_{8,4}=\xymatrix@=-5pt{&&&&5\\&2&&4&\\1&&3&&}$};
					\node[Green] (V9) at (-9,-6) {$R_{9,4}=\xymatrix@=-5pt{&2&&4&\\1&&3&&5\\&&&4&}$};
					\node[Orange] (V10) at (-10,-8) {$R_{10,4}=\xymatrix@=-5pt{&2&&&\\1&&3&&\\&2&&4&\\&&3&&5\\&&&4&}$};
					\node[blue] (V11) at (-11,-4) {$R_{11,4}=\xymatrix@=-5pt{&&&&5\\&2&&4&\\1&&3&&\\&2&&&}$};
					\node[Green] (V12) at (-12,-6) {$R_{12,4}=\xymatrix@=-5pt{&2&&4&\\1&&3&&5\\&2&&4&\\&&3&&}$};
					\node[red] (V13) at (-13,-2) {$R_{13,4}=\xymatrix@=-5pt{&&&&5\\&&&4&\\&&3&&\\&2&&&\\1&&&&}$};

					\draw (V1);
					\draw (V2);
					\draw (V3);
					\draw (V4);
					\draw (V5);
					\draw (V6);
					\draw (V7);
					\draw (V8);
					\draw (V9);
					\draw (V10);
					\draw (V11);
					\draw (V12);
					\draw (V13);
					
					\draw[->] (V13)--(V2);
					\draw[->] (V3)--(V8);
					\draw[->] (V4)--(V9);
					\draw[magenta,very thick,->] (V7)--(V5);
					\draw[magenta,very thick,->] (V10)--(V7);
					\draw[->] (V12)--(V9);
					\draw[->] (V11)--(V8);

					\draw[->] (V2)--(V8);
					\draw[->] (V4)--(V3);
					\draw[magenta,very thick,->] (V7)to[bend left](V4);
					\draw[->] (V6)--(V5);
					\draw[->] (V10)--(V6);
					
					\draw[->] (V9)--(V8);
					\draw[->] (V9)--(V10);
					\draw[->] (V12)--(V11);
					\draw[->] (V13)--(V11);
					\draw[->] (V8)--(V13);
					\draw[magenta,very thick,->] (V1)--(V7);
					\draw[->] (V8)--(V4);
					\draw[->] (V8)--(V12);
					
					\draw[magenta,very thick,->] (V5)to[bend left](V10);
					\draw[magenta,very thick,->] (V5)--(V1);
					
				\end{tikzpicture}}
				\end{center}
				\caption{Module $\hat{\mu}_5\circ\hat{\mu}_4\circ\hat{\mu}_3\circ\hat{\mu}_2\circ\hat{\mu}_1(V_{\bar{w}})$}
				\label{figureMutation6}
			\end{figure}
			
			\begin{table}[ht]
				\begin{center}
				\resizebox{\textwidth}{!}{
				$
					\begin{array}{|c|c|c|c|c|c|c|c|}
						\hline
						k&1&2&3&4&5&6&7\\
						\hline
						\Delta_{\dot{v}}(R_{k,5})&f_{11}&f_7+f_{10}&\textcolor{blue}{f_6}&f_8&f_{12}&\textcolor{violet}{f_4}+f_{12}&f_{11}+f_{13}\\
						\hline
						k&8&9&10&11&12&13&\\
						\hline
						\Delta_{\dot{v}}(R_{k,5})&\textcolor{blue}{f_6}+f_{10}&f_8+f_{15}&\textcolor{orange}{f_5}+f_{11}+f_{13}&\textcolor{blue}{f_1}+\textcolor{blue}{f_6}+f_{10}&\textcolor{Green}{f_3}+f_8+f_{15}&\textcolor{red}{f_2}+f_7+f_{10}&\\
						\hline
					\end{array}
				$
				}
				\end{center}
				\caption{$\Delta_{\dot{v}}$-vectors of $\hat{\mu}_5\circ\cdots\circ\hat{\mu}_1(V_{\bar{w}})$}
				\label{tableDeltaVecteurs5}
			\end{table}

			We rename $R_{7,4}^\ast=R_{7,5}$ et $R_{k,4}=R_{k,5}$ et $k\neq 7$.
			
			\[
				A_6=\{1\leq i\leq 3|\Delta_{\dot{v},6}(R_5)\neq 0\}=\{3\}
			\]
			and so we have $\hat{\mu}_6=\mu_3=\widetilde{\mu}_6$.
			
			That gives $\hat{\mu}_6\circ\cdots\circ\hat{\mu}_1(V_{\bar{w}})$ represented on Figure \ref{figureMutation7} whose $\Delta_{\dot{v}}$-vectors are on Table \ref{tableDeltaVecteurs6}.
			
			\begin{table}[ht]
				\begin{center}
				\resizebox{\textwidth}{!}{
				$
					\begin{array}{|c|c|c|c|c|c|c|c|}
						\hline
						k&1&2&3&4&5&6&7\\
						\hline
						\Delta_{\dot{v}}(R_{k,6})&f_{11}&f_7+f_{10}&f_{10}&f_8&f_{12}&\textcolor{violet}{f_4}+f_{12}&f_{11}+f_{13}\\
						\hline
						k&8&9&10&11&12&13&\\
						\hline
						\Delta_{\dot{v}}(R_{k,6})&\textcolor{blue}{f_6}+f_{10}&f_8+f_{15}&\textcolor{orange}{f_5}+f_{11}+f_{13}&\textcolor{blue}{f_1}+\textcolor{blue}{f_6}+f_{10}&\textcolor{Green}{f_3}+f_8+f_{15}&\textcolor{red}{f_2}+f_7+f_{10}&\\
						\hline
					\end{array}
				$
				}
				\end{center}
				\caption{$\Delta_{\dot{v}}$-vectors of $\hat{\mu}_6\circ\cdots\circ\hat{\mu}_1(V_{\bar{w}})$}
				\label{tableDeltaVecteurs6}
			\end{table}
		
			\begin{figure}[h!t]
				\begin{center}
				\resizebox{!}{7cm}{
				\begin{tikzpicture}[>=stealth, yscale=0.8]
					\node[blue] (V1) at (-1,-4) {$R_{1,5}=1$};
					\node[red] (V2) at (-2,-2) {$R_{2,5}=\xymatrix@=-5pt{&&5\\&4&\\3&&}$};
					\node[blue] (V3) at (-3.5,-4) {$R_{3,5}^\ast=\xymatrix@=-5pt{&5\\4&}$};
					\node[Green] (V4) at (-4,-6) {$R_{4,5}=\xymatrix@=-5pt{&2&&&\\1&&3&&5\\&&&4&}$};
					\node[Orange] (V5) at (-3,-8) {$R_{5,5}=\xymatrix@=-5pt{1&&&\\&2&&\\&&3&\\&&&4}$};
					\node[violet] (V6) at (-6,-11) {$R_{6,5}=\xymatrix@=-5pt{1&&&&\\&2&&&\\&&3&&\\&&&4&\\&&&&5}$};
					\node[Orange] (V7) at (-7,-8) {$R_{7,5}=\xymatrix@=-5pt{&2&&\\1&&3&\\&&&4}$};
					\node[blue] (V8) at (-8,-4) {$R_{8,5}=\xymatrix@=-5pt{&&&&5\\&2&&4&\\1&&3&&}$};
					\node[Green] (V9) at (-9,-6) {$R_{9,5}=\xymatrix@=-5pt{&2&&4&\\1&&3&&5\\&&&4&}$};
					\node[Orange] (V10) at (-10,-8) {$R_{10,5}=\xymatrix@=-5pt{&2&&&\\1&&3&&\\&2&&4&\\&&3&&5\\&&&4&}$};
					\node[blue] (V11) at (-11,-4) {$R_{11,5}=\xymatrix@=-5pt{&&&&5\\&2&&4&\\1&&3&&\\&2&&&}$};
					\node[Green] (V12) at (-12,-6) {$R_{12,5}=\xymatrix@=-5pt{&2&&4&\\1&&3&&5\\&2&&4&\\&&3&&}$};
					\node[red] (V13) at (-13,-2) {$R_{13,5}=\xymatrix@=-5pt{&&&&5\\&&&4&\\&&3&&\\&2&&&\\1&&&&}$};

					\draw (V1);
					\draw (V2);
					\draw (V3);
					\draw (V4);
					\draw (V5);
					\draw (V6);
					\draw (V7);
					\draw (V8);
					\draw (V9);
					\draw (V10);
					\draw (V11);
					\draw (V12);
					\draw (V13);
					
					\draw[->] (V13)--(V2);
					\draw[magenta,very thick,->] (V8)--(V3);
					\draw[->] (V4)--(V9);
					\draw[->] (V7)--(V5);
					\draw[->] (V10)--(V7);
					\draw[->] (V12)--(V9);
					\draw[->] (V11)--(V8);

					\draw[->] (V2)--(V8);
					\draw[magenta,very thick,->] (V3)--(V4);
					\draw[->] (V7)to[bend left](V4);
					\draw[->] (V6)--(V5);
					\draw[->] (V10)--(V6);
					
					\draw[->] (V9)--(V8);
					\draw[->] (V9)--(V10);
					\draw[->] (V12)--(V11);
					\draw[->] (V13)--(V11);
					\draw[->] (V8)--(V13);
					\draw[->] (V1)--(V7);
					\draw[->] (V8)--(V12);
					
					\draw[->] (V5)to[bend left](V10);
					\draw[->] (V5)--(V1);
					
				\end{tikzpicture}}
				\end{center}
				\caption{Module $\hat{\mu}_6\circ\hat{\mu}_5\circ\hat{\mu}_4\circ\hat{\mu}_3\circ\hat{\mu}_2\circ\hat{\mu}_1(V_{\bar{w}})$}
				\label{figureMutation7}
			\end{figure}
			
			We rename $R_{3,5}^\ast\leadsto R_{3,6}$ and $R_{k,5}\leadsto R_{k,6}$ for $k\neq 3$.

			In order to get $\mu_\bullet$, we compute $(k_\Max)^{\alpha(k,\ell(v))-}$ for any color. For color $\textcolor{red}{1}$, we have $(2_\Max)^{\alpha(2,6)-}=13^-=2$.  For color $\textcolor{blue}{2}$, we have $(1_\Max)^{\alpha(1,6)-}=11^{2-}=3$. For color $\textcolor{Green}{3}$, we have $(4_\Max)^{\alpha(4,6)-}=12^-=9$. For color $\textcolor{orange}{4}$, we have $(5_\Max)^{\alpha(5,6)-}=10^-=7$. For color $\textcolor{violet}{5}$, we have $(6_\Max)^{\alpha(6,6)-}=6^{-}=0$. 
			
			Then $\mu_\bullet$ removes the indecomposable summands of indices $6,8,10,11,12,13$ and we get module of Figure \ref{figureMutation8} (with $R_{2,6}$ a non-connected vertex).
			
			\begin{figure}[h!t]
				\begin{center}
				\resizebox{!}{3cm}{
				\begin{tikzpicture}[>=stealth, yscale=0.7]
					\node[blue] (V1) at (-1,-4) {$R_{1,6}=1$};
					\node[red] (V2) at (-8,-4) {$R_{2,6}=\xymatrix@=-5pt{&&5\\&4&\\3&&}$};
					\node[blue] (V3) at (-3.5,-4) {$R_{3,6}=\xymatrix@=-5pt{&5\\4&}$};
					\node[Green] (V4) at (-4,-6) {$R_{4,6}=\xymatrix@=-5pt{&2&&&\\1&&3&&5\\&&&4&}$};
					\node[Orange] (V5) at (-3,-8) {$R_{5,6}=\xymatrix@=-5pt{1&&&\\&2&&\\&&3&\\&&&4}$};
					\node[Orange] (V7) at (-7,-8) {$R_{7,6}=\xymatrix@=-5pt{&2&&\\1&&3&\\&&&4}$};
					\node[Green] (V9) at (-9,-6) {$R_{9,6}=\xymatrix@=-5pt{&2&&4&\\1&&3&&5\\&&&4&}$};

					\draw (V1);
					\draw (V2);
					\draw (V3);
					\draw (V4);
					\draw (V5);
					\draw (V7);
					\draw (V9);
					
					\draw[->] (V4)--(V9);
					\draw[->] (V7)--(V5);

					\draw[,->] (V3)--(V4);
					\draw[->] (V7)to[bend left](V4);
					
					\draw[->] (V1)--(V7);
					
					\draw[->] (V5)--(V1);
					
				\end{tikzpicture}}
				\end{center}
				\caption{Module $\mu_\bullet(V_{\bar{w}})$}
				\label{figureMutation8}
			\end{figure}
			
			Then, according to Theorem \ref{theoremMain}, $\mu^\bullet(V_{\bar{w}})$ is a $\C_{v,w}$-maximal rigid basic module, whose quiver is the one of Figure \ref{figureMutation8}.
			\end{ex}
\newpage

\bibliographystyle{alpha}
\bibliography{../biblioGen}
\end{document}